\documentclass[reqno]{amsart}

\usepackage{amsmath}
\usepackage{amssymb} 
\usepackage{amsthm}
\usepackage{cite} 
\usepackage{hyperref}
\usepackage{bbm} 
\usepackage{graphicx} 

\newtheorem{theorem}{Theorem}
\newtheorem{corollary}{Corollary}
\newtheorem{lemma}{Lemma}[section]
\newtheorem{proposition}{Proposition}
\theoremstyle{definition}
\newtheorem{definition}{Definition}[section]
\numberwithin{equation}{section}

\newcommand\gl{\operatorname{gl}}
\newcommand\diag{\operatorname{diag}}
\newcommand\Real{\operatorname{Re}}
\newcommand\Imag{\operatorname{Im}}
\newcommand\adj{\operatorname{adj}}
\newcommand\Arg{\operatorname{Arg}}
\newcommand\Spec{\operatorname{Spec}}

\allowdisplaybreaks

\title[Spectral analysis for well-posed beam deflection]
 {Spectral analysis for the class of integral operators arising from
well-posed boundary value problems
of finite beam deflection on elastic foundation:
characteristic equation}

\author[S. W. Choi]{Sung Woo Choi}
\address{Sung Woo Choi \\ Department of Mathematics \\ Duksung Women's University \\ Seoul 01369, Korea}
\email{swchoi@duksung.ac.kr}

\subjclass{34B09, 47G10, 74K10}
\keywords{
beam,
deflection,
Green's function,
eigenvalue,
spectrum,
integral operator}

\begin{document}

\begin{abstract}
We consider the boundary value problem for the deflection of
a finite beam on an elastic foundation subject to vertical loading.
We construct a one-to-one correspondence $\Gamma$ 
from the set of equivalent well-posed two-point boundary conditions
to $\mathrm{gl}(4,\mathbb{C})$.
Using $\Gamma$,
we derive eigenconditions for the integral operator $\mathcal{K}_\mathbf{M}$
for each well-posed two-point boundary condition represented by
$\mathbf{M} \in \mathrm{gl}(4,8,\mathbb{C})$.
Special features of our eigenconditions include;
(1) they isolate the effect of the boundary condition $\mathbf{M}$ 
on $\mathrm{Spec}\,\mathcal{K}_\mathbf{M}$,
(2) they connect $\mathrm{Spec}\,\mathcal{K}_\mathbf{M}$
to $\mathrm{Spec}\,\mathcal{K}_{l,\alpha,k}$
whose structure has been well understood.
Using our eigenconditions,
we show that, for each nonzero real $\lambda \not \in \mathrm{Spec}\,\mathcal{K}_{l,\alpha,k}$,
there exists a real well-posed boundary condition $\mathbf{M}$ such that 
$\lambda \in \mathrm{Spec}\,\mathcal{K}_\mathbf{M}$.
This in particular shows that the integral operators $\mathcal{K}_\mathbf{M}$
arising from well-posed boundary conditions,
may not be positive nor contractive in general,
as opposed to $\mathcal{K}_{l,\alpha,k}$.
\end{abstract}

\maketitle

\section{Introduction}

We consider the boundary value problem for
the vertical deflection of a linear-shaped beam of finite length $2l$ resting 
horizontally
on an elastic foundation,
while the beam is subject to a vertical loading.
Due to its wide range of applications,
this problem has been one of the main topics in mechanical
engineering for decades~\cite{
Beaufait1980,
Choi2020nonlinear,
ChoiJang,
Galewski2011,
Hetenyi1946,
Kuoetal1994, 
Maetal2009,
Mirandaetal1966,
Timoshenko1953,
Ting1982}.
By the classical Euler beam theory~\cite{Timoshenko1953}, 
the upward vertical beam deflection $u(x)$
satisfies the following linear fourth-order
ordinary differential equation.
\begin{equation}
\label{equation_linearnonhomogeneous-original}
E I \cdot u^{(4)}(x) + k \cdot u(x) = w(x),
\qquad
x \in [-l,l].
\end{equation}
Here, $k$
is the spring constant density of the elastic foundation,
and
$w(x)$ is the downward load density
applied vertically on the beam.
The constants $E$ and $I$ are the Young's modulus
and the mass moment of inertia respectively, 
so that $EI$ is the flexural rigidity of the beam.
Denoting
$
\alpha
 =
\sqrt[4]{k/E I}
 >
0
$,
we transform \eqref{equation_linearnonhomogeneous-original}
into the following equivalent form,
which we call $\mathrm{DE}(w)$.
\begin{equation}
\label{equation_linearnonhomogeneous}
\mathrm{DE}(w):
u^{(4)} + \alpha^4 u
 =
\frac{\alpha^4}{k} \cdot w.
\end{equation}
Throughout this paper,
we will assume that $l$, $\alpha$, $k$ are
fixed positive constants.
The homogeneous version
of \eqref{equation_linearnonhomogeneous} is
\begin{equation}
\label{equation_linearhomogeneous}
\mathrm{DE}(0):
u^{(4)} + \alpha^4 u
 =
0.
\end{equation}

Let $\gl(m,n,\mathbb{C})$ (respectively, $\gl(m,n,\mathbb{R})$)
be the set of $m \times n$ matrices with complex (respectively, real) entries.
When $m = n$, we denote
$
\gl(n,\mathbb{C})
 =
\gl(n,n,\mathbb{C})
$
and
$
\gl(n,\mathbb{R})
 =
\gl(n,n,\mathbb{R})
$.
Define the following linear operator
$\mathcal{B} : C^3[-l,l] \to \gl(8,1,\mathbb{C})$ by
\begin{equation}
\label{equation_calB}
\mathcal{B}[u]
 =
\begin{pmatrix}
u(-l) &
u^\prime(-l) &
u^{\prime\prime}(-l) &
u^{(3)}(-l) &
u(l) &
u^\prime(l) &
u^{\prime\prime}(l) &
u^{(3)}(l)
\end{pmatrix}^T,
\end{equation}
where $C^n[-l,l]$ is the space of $n$ times differentiable complex-valued functions on the interval $[-l,l]$. 
Then any {\em two-point boundary condition} can formally be given
with a $4 \times 8$ matrix $\mathbf{M} \in \gl(4,8,\mathbb{C})$ 
and a $4 \times 1$ matrix
$\mathbf{b} \in \gl(4,1,\mathbb{C})$ as follows.
\begin{equation}
\label{equation_BCb}
\mathbf{M}
\cdot
\mathcal{B}[u]
 =
\mathbf{b}.
\end{equation}
For example,
the boundary condition
$
u(-l) = u_-
$,
$
u^\prime(-l) = u_-^\prime
$,
$
u(l) = u_+
$,
$
u^\prime(l) = u_+^\prime
$
corresponds to the case when
\[
\mathbf{M}
 =
\left(
\begin{array}{cccc|cccc}
1 & 0 & 0 & 0 & 0 & 0 & 0 & 0 \\
0 & 1 & 0 & 0 & 0 & 0 & 0 & 0 \\
0 & 0 & 0 & 0 & 1 & 0 & 0 & 0 \\
0 & 0 & 0 & 0 & 0 & 1 & 0 & 0
\end{array}
\right),
\quad
\mathbf{b}
 =
\begin{pmatrix}
u_- \\
u_-^\prime \\
u_+ \\
u_+^\prime
\end{pmatrix}.
\]
The {\em homogeneous boundary condition} associated to \eqref{equation_BCb},
which we denote by $\mathrm{BC}(\mathbf{M})$, is
\begin{equation}
\label{equation_BC0}
\mathrm{BC}(\mathbf{M}):
\mathbf{M} \cdot \mathcal{B}[u] = \mathbf{0},
\end{equation}
where
$
\mathbf{0}
 =
\begin{pmatrix}
0 & 0 & 0 & 0
\end{pmatrix}^T$.
The boundary value problem consisting of the nonhomogeneous equation
$\mathrm{DE}(w)$
and
the boundary condition
\eqref{equation_BCb}
is {\em well-posed},
if it has a unique solution.
In fact, it is easy to see that 
this boundary value problem is well-posed
for {\em any} fixed $w$ and $\mathbf{b}$,
if and only if
the boundary value problem consisting of the homogeneous equation
$\mathrm{DE}(0)$
and the homogeneous boundary condition
$\mathrm{BC}(\mathbf{M})$
is well-posed,
in which case we will just call $\mathbf{M} \in \gl(4,8,\mathbb{C})$ {\em well-posed}.
We denote the set of all well-posed matrices in $\gl(4,8,\mathbb{C})$
by $\mathrm{wp}(4,8,\mathbb{C})$.

It is well-known from the classical Green's function theory~\cite{Stakgold}
that, for each well-posed $\mathbf{M} \in \mathrm{wp}(4,8,\mathbb{C})$,
there exists a unique function $G_\mathbf{M}(x,\xi)$ defined on $[-l,l] \times [-l,l]$,
called the {\em Green's function} corresponding to $\mathbf{M}$,
such that the unique solution of the boundary value problem consisting of
$\mathrm{DE}(w)$ and $\mathrm{BC}(\mathbf{M})$ is given by
\[
\mathcal{K}_\mathbf{M}[w]
 =
\int_{-l}^l G_\mathbf{M}(x,\xi) w(\xi) \, d\xi
\]
for every continuous function $w$ on $[-l,l]$.
The integral operator $\mathcal{K}_\mathbf{M}$
becomes a compact linear operator
on the Hilbert space $L^2[-l,l]$ of complex-valued square-integrable functions on $[-l,l]$.
Analyzing the structure of
the {\em spectrum} $\Spec\mathcal{K}_\mathbf{M}$,
or the set of eigenvalues,
of the operator $\mathcal{K}_\mathbf{M}$,
is of paramount importance for understanding the boundary value problem
represented by given well-posed $\mathbf{M} \in \mathrm{wp}(4,8,\mathbb{C})$.

We call $\mathbf{M}, \mathbf{N} \in \mathrm{wp}(4,8,\mathbb{C})$
{\em equivalent}, and denote $\mathbf{M} \approx \mathbf{N}$,
when $\mathcal{K}_\mathbf{M} = \mathcal{K}_\mathbf{N}$,
or equivalently,
when $G_\mathbf{M} = G_\mathbf{N}$.
For given $\mathbf{M} \in \mathrm{wp}(4,8,\mathbb{C})$,
denote by
$\left[ \mathbf{M} \right]$
the equivalence class with respect to $\approx$ containing $\mathbf{M}$.
The set of all these equivalence classes,
which is the quotient set $\mathrm{wp}(4,8,\mathbb{C})/\!\!\approx$
of $\mathrm{wp}(4,8,\mathbb{C})$ by the relation $\approx$,
is denoted simply by $\mathrm{wp}(\mathbb{C})$.

In \cite{ChoiBKMS2015,ChoiII},
Choi analyzed
an integral operator 
$
\mathcal{K}_{l,\alpha,k}
 =
\mathcal{K}_l
$
on $L^2[-l,l]$
defined by
\begin{equation}
\label{equation_Klalphak}
\mathcal{K}_{l,\alpha,k}[w](x)
 =
\int_{-l}^l
G(x,\xi)
w(\xi)
\, d\xi,
\end{equation}
where
\begin{equation}
\label{equation_Glalphak}
G(x,\xi)
 =
\frac{\alpha}{2k}
\exp
\left(
-
\frac{\alpha}{\sqrt{2}}
\left| x - \xi \right|
\right)
\sin
\left(
\frac{\alpha}{\sqrt{2}}
\left| x - \xi \right|
+
\frac{\pi}{4}
\right)
\end{equation}
is the Green's function of the boundary value problem
consisting of $\mathrm{DE}(0)$ and 
the boundary condition 
$
\lim_{x \to \pm \infty}
u(x)
 =
0
$
for an {\em infinitely long} beam.
It turns out that 
$
\mathcal{K}_{l,\alpha,k}
 =
\mathcal{K}_\mathbf{Q}$
in our terminology,
where
\begin{equation}
\label{equation_Q}
\mathbf{Q}
 =
\left(
\begin{array}{cccc|cccc}
0 & \alpha^2 & -\sqrt{2} \alpha & 1 & 0 & 0 & 0 & 0 \\
\sqrt{2} \alpha^3 & -\alpha^2 & 0 & 1 & 0 & 0 & 0 & 0 \\
0 & 0 & 0 & 0 & 0 & \alpha^2 & \sqrt{2} \alpha & 1 \\
0 & 0 & 0 & 0 & -\sqrt{2} \alpha^3 & -\alpha^2 & 0 & 1
\end{array}
\right)
\end{equation}
in $\mathrm{wp}(4,8,\mathbb{C})$.
What is special about this particular operator $\mathcal{K}_\mathbf{Q}$ is that
its spectrum is exceptionally well-understood.
In Proposition~\ref{proposition_Q} below, 
$h$ is an explicitly defined strictly increasing function from $[0,\infty)$ to itself
such that $h(0) = 0$ and $\lim_{t \to \infty}{h(t)/t} = L$,
where $L = 2 l \alpha$ is the dimensionless constant called 
the {\em intrinsic length} of the beam.
For two nonnegative functions $f$, $g$ defined either on $[0,\infty)$ or on $\mathbb{N}$,
denote $f(t) \sim g(t)$,
if there exists $T > 0$ such that 
$m \leq f(t)/g(t) \leq M$ for every $t > T$
for some constants $0 < m \leq M < \infty$.
Thus $h^{-1}(t) \sim t/L$ with this notation.

\begin{proposition}[\cite{ChoiII}]
\label{proposition_Q}
The spectrum $\Spec\mathcal{K}_\mathbf{Q}$ of the operator 
$
\mathcal{K}_\mathbf{Q}
 =
\mathcal{K}_{l,\alpha,k}
$
is of the form
\[
\left\{\left. 
\frac{\mu_n}{k} \,\right|\, n = 1,2,3,\cdots 
\right\}
\cup
\left\{\left. \frac{\nu_n}{k} \,\right|\, n = 1,2,3,\cdots \right\}
\subset
\left( 0, \frac{1}{k} \right),
\]
where
$\mu_n$ and $\nu_n$ for $n = 1,2,3,\ldots$ depend only on 
the intrinsic length $L$ of the beam.
$\mu_n \sim \nu_n \sim n^{-4}$,
and
\begin{gather*}
\frac{1}
{
1
+
\left\{
  h^{-1}\left( 2\pi n + \frac{\pi}{2} \right)
\right\}^4
}
 <
\nu_n
 <
\frac{1}
{
1
+
\left\{
  h^{-1}\left( 2\pi n \right)
\right\}^4
} \\
 <
\mu_n
 <
\frac{1}
{
1
+
\left\{
h^{-1}\left( 2\pi n - \frac{\pi}{2} \right)
\right\}^4
},
\quad
n = 1,2,3,\ldots, \\
\frac{1}
{1 + \left\{ h^{-1}\left( 2\pi n - \frac{\pi}{2} \right) \right\}^4}
-
\mu_n
 \sim
\nu_n
-
\frac{1}
{1 + \left\{ h^{-1}\left( 2\pi n + \frac{\pi}{2} \right) \right\}^4}
 \sim
n^{-5} e^{-2\pi n}, \\
\frac{1}
{
1
+
\frac{1}
{L^4}
\left( 2\pi (n-1) - \frac{\pi}{2} \right)^4
}
-
\mu_n
 \sim
\frac{1}
{
1
+
\frac{1}
{L^4}
\left( 2\pi (n-1) + \frac{\pi}{2} \right)^4
}
-
\nu_n
 \sim
n^{-6}.
\end{gather*}
\end{proposition}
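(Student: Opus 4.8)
\emph{Reduction to an ODE and localization of the spectrum.} The plan is to recast the eigenvalue equation $\mathcal{K}_{\mathbf{Q}}w=\lambda w$ as a two‑point boundary value problem for $u^{(4)}=\beta^{4}u$, use the reflection $x\mapsto-x$ to split it into two scalar transcendental equations, and locate their roots against the zeros of $h$. First, $G$ in \eqref{equation_Glalphak} is real and symmetric, so $\mathcal{K}_{\mathbf{Q}}=\mathcal{K}_{l,\alpha,k}$ is compact and self‑adjoint; since $G(x,\xi)$ is the restriction to $[-l,l]^{2}$ of the whole‑line Green's function, whose Fourier symbol is $\alpha^{4}/\bigl(k(\omega^{4}+\alpha^{4})\bigr)$, one has, for $w\in L^{2}[-l,l]$ extended by zero,
\[
\langle\mathcal{K}_{\mathbf{Q}}w,w\rangle
 =
\int_{\mathbb{R}}\frac{\alpha^{4}}{k(\omega^{4}+\alpha^{4})}\,|\widehat{w}(\omega)|^{2}\,d\omega ,
\]
which is positive for $w\neq 0$ and strictly below $\tfrac{1}{k}\|w\|^{2}$ because the symbol is $<1/k$ off $\omega=0$ while $\widehat{w}$ is not a point mass; as $\mathcal{K}_{\mathbf{Q}}$ is compact its top eigenvalue is attained, so $\Spec\mathcal{K}_{\mathbf{Q}}\subset(0,1/k)$, and $\mathcal{K}_{\mathbf{Q}}$ is injective because $\mathcal{K}_{\mathbf{Q}}w$ solves $\mathrm{DE}(w)$. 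For $\lambda\in(0,1/k)$, a unit eigenfunction $w$ and $u:=\mathcal{K}_{\mathbf{Q}}w$ satisfy $\mathrm{DE}(u/\lambda)$ and $\mathrm{BC}(\mathbf{Q})$, i.e.\ $u^{(4)}=\beta^{4}u$ on $[-l,l]$ with $\mathbf{Q}\cdot\mathcal{B}[u]=\mathbf{0}$, where $\beta=\alpha\sqrt[4]{1/(k\lambda)-1}$ runs over $(0,\infty)$ as $\lambda$ runs over $(0,1/k)$; conversely any nonzero such $u$ gives the eigenfunction $u/\lambda$. Writing $s=\beta/\alpha$ we have $k\lambda=(1+s^{4})^{-1}$, so it remains to find the $s>0$ for which $A\cosh\beta x+B\sinh\beta x+C\cos\beta x+D\sin\beta x$ can meet $\mathrm{BC}(\mathbf{Q})$ nontrivially.

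\emph{Symmetry and the characteristic equations.} Because $G(x,\xi)=G(-x,-\xi)$, $\mathcal{K}_{\mathbf{Q}}$ commutes with $u(x)\mapsto u(-x)$; equivalently, row‑reducing $\mathbf{Q}$ shows that $\mathrm{BC}(\mathbf{Q})$ is the pair of ``radiation'' conditions $u''\mp\sqrt{2}\,\alpha u'+\alpha^{2}u=0$ with their first derivatives, evaluated at $x=\mp l$, so $\mathrm{BC}(\mathbf{Q})$ is reflection invariant and for $u$ of fixed parity the conditions at $-l$ follow from those at $l$. Substituting $u=A\cosh\beta x+C\cos\beta x$ (even) or $u=B\sinh\beta x+D\sin\beta x$ (odd) into the two conditions at $x=l$ gives a homogeneous $2\times 2$ system, and $\lambda$ is an even (resp.\ odd) eigenvalue exactly when its determinant vanishes. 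Expanding, dividing by $\cosh\beta l$, and writing $p=\alpha^{2}-\beta^{2}$, $q=\sqrt{2}\,\alpha\beta$, these conditions become
\[
(q-p)\sin\beta l-(p+q)\cos\beta l=O\bigl(e^{-2\beta l}\bigr)\ \ (\text{even}),
\qquad
(p-q)\cos\beta l-(p+q)\sin\beta l=O\bigl(e^{-2\beta l}\bigr)\ \ (\text{odd}),
\]
the remainders being entire in $\beta$ and bounded by a constant times $e^{-2\beta l}=e^{-sL}$, with $L=2l\alpha$.

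\emph{Localization via $h$ and asymptotics.} Since $p/q=(1-s^{2})/(\sqrt{2}\,s)$, dividing by the amplitudes and applying the $\arctan$ addition formula turns the main parts into $\sin\bigl(\tfrac{1}{2}h(s)+\tfrac{\pi}{4}\bigr)=O(e^{-sL})$ and $\sin\bigl(\tfrac{1}{2}h(s)-\tfrac{\pi}{4}\bigr)=O(e^{-sL})$, where one may take $h(s)=Ls+2\arctan\!\bigl(\tfrac{s^{2}-1}{\sqrt{2}\,s}\bigr)+\pi$; this $h$ satisfies $h(0)=0$, $h'>L>0$ and $h(s)/s\to L$, so $h\colon[0,\infty)\to[0,\infty)$ is an increasing bijection with $h^{-1}(t)=t/L-2\pi/L+O(1/t)$. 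Hence the eigenvalue equations read $h(s)=2\pi n-\tfrac{\pi}{2}+O(e^{-sL})$ (even) and $h(s)=2\pi n+\tfrac{\pi}{2}+O(e^{-sL})$ (odd); for each $n\ge 1$ the intermediate value theorem gives exactly one root in the corresponding window, the even and odd roots interlace, and — once the spurious small‑$s$ solution of the odd main equation (which is not an eigenvalue, reconciling the $O(1)$ behaviour there) is discarded, and one passes to $\mu=(1+s^{4})^{-1}$ while tracking the sign of the exponentially small term at the root — one obtains
\[
\frac{1}{1+\{h^{-1}(2\pi n+\tfrac{\pi}{2})\}^{4}}
 <
\nu_{n}
 <
\frac{1}{1+\{h^{-1}(2\pi n)\}^{4}}
 <
\mu_{n}
 <
\frac{1}{1+\{h^{-1}(2\pi n-\tfrac{\pi}{2})\}^{4}} .
\]
From $s_{n}\sim 2\pi n/L$ we get $\mu_{n}\sim\nu_{n}\sim n^{-4}$. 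An implicit‑function estimate at the root shows the exponentially small term displaces $s$ from the corresponding root of the main equation by $O(n^{-1}e^{-2\pi n})$, so multiplying by $|d\mu/ds|\sim n^{-5}$ yields the gaps of order $n^{-5}e^{-2\pi n}$ against $\bigl(1+\{h^{-1}(2\pi n\mp\tfrac{\pi}{2})\}^{4}\bigr)^{-1}$; and inserting $h^{-1}(t)=t/L-2\pi/L+O(1/t)$ (whence the $2\pi(n-1)$) into $\mu=(1+s^{4})^{-1}$ yields the gaps of order $n^{-6}$ against $\bigl(1+L^{-4}(2\pi(n-1)\mp\tfrac{\pi}{2})^{4}\bigr)^{-1}$.

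\emph{Main obstacle.} The reduction, the parity splitting, and the interlacing by the intermediate value theorem are routine; the work lies in (a) carrying out the $2\times 2$ determinant expansions cleanly enough to expose the exact phase $\arctan\!\bigl(\tfrac{s^{2}-1}{\sqrt{2}\,s}\bigr)$ and to certify that the remainders are genuinely $O(e^{-2\beta l})$, with their derivatives controlled, uniformly in $s$ — in particular over the small‑$s$ range, where the $\cosh\beta l$ normalization is not yet in its asymptotic regime, so that the finitely many low eigenvalues and the spurious root are accounted for correctly and the indices match the statement — and (b) the bookkeeping that separates the three scales $n^{-4}\gg n^{-6}\gg n^{-5}e^{-2\pi n}$ and upgrades the $O$‑bounds to the asserted equivalences ``$\sim$'', which needs the leading constants both in the exponential correction and in $h^{-1}(t)-t/L$.
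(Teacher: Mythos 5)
This proposition is not proved in the paper: it carries the citation \texttt{[\cite{ChoiII}]} and is imported as a known result. So there is no ``paper's own proof'' to compare against; what you have written is an independent reconstruction.

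Your overall strategy is the right one and matches the kind of analysis one expects in \cite{ChoiII}: the kernel $G$ is real symmetric, so $\mathcal{K}_{\mathbf{Q}}$ is compact self-adjoint; the Fourier-symbol comparison on the line gives $\Spec\mathcal{K}_{\mathbf{Q}}\subset(0,1/k)$; the eigenvalue equation reduces to $u^{(4)}=\beta^{4}u$ with the boundary conditions obtained by row-reducing $\mathbf{Q}$, and your identification of $\mathrm{BC}(\mathbf{Q})$ as $u''\mp\sqrt{2}\alpha u'+\alpha^{2}u=0$ together with its derivative at $x=\mp l$ is correct (one checks it by taking appropriate linear combinations of the rows of $\mathbf{Q}^{\mp}$). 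The parity decomposition is legitimate because these conditions are reflection-invariant, and expanding the two $2\times 2$ determinants for the even and odd families does produce, after dividing by the exponentially large hyperbolic factor and by $q+P$ where $P=\alpha^{2}+\beta^{2}$, a main term of the form $(q-p)\sin\beta l-(q+p)\cos\beta l$ (even) and its companion, plus a remainder uniformly $O(e^{-2\beta l})$. Converting via $\tan(\tfrac{\pi}{4}+\theta)=\tfrac{1+\tan\theta}{1-\tan\theta}$ to a phase condition in a function $h$ with $h(0)=0$, $h'\to L$, $h(s)/s\to L$ is exactly the right move, and your interlacing/intermediate-value argument and the discarding of the spurious small-$s$ root of the odd main equation are both genuine issues that must be handled.

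Two concrete points you should fix. First, your asymptotic bookkeeping is internally inconsistent: you claim a root displacement $\Delta s = O(n^{-1}e^{-2\pi n})$ and then say multiplying by $|d\mu/ds|\sim n^{-5}$ gives a gap $\sim n^{-5}e^{-2\pi n}$, but those figures multiply to $n^{-6}e^{-2\pi n}$. The correct chain is: the remainder in the characteristic equation is $O(e^{-Ls})$ with an $O(1)$ prefactor (the ratio $(q-P)/(q+P)$ tends to $-1$, it does \emph{not} vanish), $h'\sim L$ so $\Delta s=O(e^{-Ls})=O(e^{-2\pi n})$ with no extra power of $n$, and then $|d\mu/ds|\sim n^{-5}$ gives the gap $\asymp n^{-5}e^{-2\pi n}$. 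Second, the Proposition asserts two-sided equivalences $\sim$, not just upper bounds; to get those you must also show the exponentially small correction is bounded below at the root (i.e.\ that $(q+p)\sin+(q-p)\cos$ does not vanish there), and verify that the sign of the correction is the one that makes the stated strict inequalities hold on the stated side. You flag the sign-tracking, which is good, but the lower bound is an additional step. Finally, the bounds are stated in terms of the specific $h$ of \cite{ChoiII}; your candidate $h(s)=Ls+2\arctan\bigl(\tfrac{s^{2}-1}{\sqrt{2}s}\bigr)+\pi$ has the right qualitative properties, but you need to either match it to the cited $h$ exactly or prove the inequalities for your $h$ directly, since the claimed inequalities are not invariant under arbitrary re-parameterization.
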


In fact, numerical values of $\mu_n$ and $\nu_n$
can be computed
with arbitrary precision for any given $L > 0$.
See \cite{ChoiII} for more details.

In this paper,
we will construct the Green's function $G_\mathbf{M}$ explicitly
for {\em every} $\mathbf{M} \in \mathrm{wp}(4,8,\mathbb{C})$.
As a result,
we construct an explicit map $\mathrm{wp}(4,8,\mathbb{C}) \to \gl(4,\mathbb{C})$,
$\mathbf{M} \mapsto \mathbf{G}_\mathbf{M}$,
in such a way that
$
\mathbf{G}_\mathbf{M}
 =
\mathbf{G}_\mathbf{N}
$,
if and only if
$\mathbf{M} \approx \mathbf{N}$.
This induces a map
$
\Gamma:
\mathrm{wp}(\mathbb{C}) \to \gl(4,\mathbb{C})
$,
where $\Gamma\left( \left[ \mathbf{M} \right] \right) = \mathbf{G}_\mathbf{M}$
for $\mathbf{M} \in \mathrm{wp}(4,8,\mathbb{C})$.
Especially, our construction of the map $\Gamma$ has the following features.

\begin{itemize}
\item[$(\Gamma1)$]
$\Gamma$ is a one-to-one correspondence from
$\mathrm{wp}(\mathbb{C})$ to $\gl(4,\mathbb{C})$.

\item[$(\Gamma2)$]
$
\Gamma\left( \left[ \mathbf{Q} \right] \right)
 =
\mathbf{O}
$.

\item[$(\Gamma3)$]
$\Gamma$ is {\em constructive},
in that
$\Gamma\left( \left[ \mathbf{M} \right] \right)$ can be computed explicitly
for any given $\mathbf{M} \in \mathrm{wp}(4,8,\mathbb{C})$,
and conversely,
a representative of $\Gamma^{-1}\left( \mathbf{G} \right)$ in $\mathrm{wp}(4,8,\mathbb{C})$
can be computed explicitly
for any given $\mathbf{G} \in \gl(4,\mathbb{C})$.
\end{itemize}

By $(\Gamma1)$,
$\Gamma$ can be regarded as a
{\em faithful representation}
of $\mathrm{wp}(\mathbb{C})$
by the algebra $\gl(4,\mathbb{C})$.
$(\Gamma2)$ says that
$\Gamma$ is constructed to incorporate the special boundary condition $\mathbf{Q}$.
This will enable us in Theorem~\ref{theorem_eigencondition-X}
and Corollaries~\ref{corollary_eigencondition-det}, \ref{corollary_eigencondition-Y} below
to obtain an eigencondition and
characteristic equations for the operator $\mathcal{K}_\mathbf{M}$,
which connect $\Spec\mathcal{K}_\mathbf{M}$
for general $\mathbf{M} \in \mathrm{wp}(4,8,\mathbb{C})$ to the well-understood
$\Spec\mathcal{K}_\mathbf{Q}$ in Proposition~\ref{proposition_Q}.
$(\Gamma3)$ means that
our eigencondition and characteristic equations
for $\mathcal{K}_\mathbf{M}$ are constructed explicitly
for each given $\mathbf{M} \in \mathrm{wp}(4,8,\mathbb{C})$.
Conversely, whenever you find a class of matrices in $\gl(4,\mathbb{C})$
with which you can say something about 
the corresponding eigencondition in Theorem~\ref{theorem_eigencondition-X},
you can translate them back to the corresponding boundary conditions explicitly.
In fact, this is exactly what we do in Theorem~\ref{theorem_existence} below.

Among well-posed boundary conditions in $\mathrm{wp}(4,8,\mathbb{C})$,
ones with {\em real} entries are of particular interest.
We denote by $\mathrm{wp}(4,8,\mathbb{R})$,
the set of well-posed matrices in $\mathrm{wp}(4,8,\mathbb{C})$
with real entries.
The set of all equivalence classes $\left[ \mathbf{M} \right]$ in $\mathrm{wp}(\mathbb{C})$
such that $\mathbf{M} \approx \mathbf{N}$ for some $\mathbf{N} \in \mathrm{wp}(4,8,\mathbb{R})$,
is denoted by $\mathrm{wp}(\mathbb{R})$.
To characterize $\mathrm{wp}(\mathbb{R})$,
we introduce an $\mathbb{R}$-algebra $\overline{\pi}(4)$
contained in $\gl(4,\mathbb{C})$.
With $\overline{\pi}(4)$,
we have a faithful representation of
$\mathrm{wp}(\mathbb{R})$,
which is another feature of $\Gamma$.

\begin{itemize}
\item[$(\Gamma4)$]
$
\Gamma\left( \mathrm{wp}(\mathbb{R}) \right)
 =
\overline{\pi}(4)$.
\end{itemize}

The usefulness of
$\overline{\pi}(4)$ is not just limited to characterizing $\mathrm{wp}(\mathbb{R})$.
The $\mathbb{R}$-algebra $\overline{\pi}(4)$ 
is designed to measure an important symmetry of $4 \times 4$ matrices,
which is utilized in proving Theorem~\ref{theorem_existence}.

Using our representation $\Gamma$,
we prove Theorem~\ref{theorem_eigencondition-X} below.
Here, $\mathbf{X}_\lambda(x) \in \gl(4,\mathbb{C})$ 
and 
$\mathbf{y}_\lambda(x) \in \gl(4,1,\mathbb{C})$
will be defined explicitly 
for every $\lambda \in \mathbb{C} \setminus \{ 0 \}$ and $x \in \mathbb{R}$ in Section~\ref{section_X}.
Note that the second statement follows immediately from the first one and $(\Gamma2)$ above.

\begin{theorem}
\label{theorem_eigencondition-X}
Let $\mathbf{M} \in \mathrm{wp}(4,8,\mathbb{C})$,
$0 \neq u \in L^2[-l,l]$, 
and
$\lambda \in \mathbb{C}$.
Then $\mathcal{K}_\mathbf{M}[u] = \lambda \cdot u$,
if and only if
$\lambda \neq 0$
and
there exists 
$
\mathbf{0}
 \neq
\mathbf{c}
\in \gl(4,1,\mathbb{C})
$
such that
$
u
 =
\mathbf{y}_\lambda^T
\mathbf{c}
$
and
$
\left[
\mathbf{G}_\mathbf{M}
\left\{
\mathbf{X}_\lambda(l)
-
\mathbf{X}_\lambda(-l)
\right\}
+
\mathbf{X}_\lambda(l)
\right]
\mathbf{c}
 =
\mathbf{0}
$.
$\mathcal{K}_\mathbf{Q}[u] = \lambda \cdot u$,
if and only if
$\lambda \neq 0$
and
there exists 
$
\mathbf{0}
 \neq
\mathbf{c}
\in \gl(4,1,\mathbb{C})
$
such that
$
u
 =
\mathbf{y}_\lambda^T
\mathbf{c}
$
and
$
\mathbf{X}_\lambda(l)
\cdot
\mathbf{c}
 =
\mathbf{0}
$.
\end{theorem}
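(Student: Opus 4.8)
The plan is to reduce the eigenvalue equation $\mathcal{K}_{\mathbf M}[u]=\lambda u$ to a boundary value problem for an ODE and then translate the resulting finite-dimensional linear system into the stated matrix condition via the representation $\Gamma$. First I would observe that if $\lambda=0$ then, since $\mathcal{K}_{\mathbf M}$ maps into solutions of $\mathrm{DE}(w)$ with $w=k\lambda^{-1}\cdot$(something) — more precisely, since $u=\mathcal{K}_{\mathbf M}[u]$ forces $u$ to be smooth and $\lambda u=\mathcal{K}_{\mathbf M}[u]$ would give $0=\mathcal{K}_{\mathbf M}[u]$, hence $u=0$ by injectivity of $\mathcal{K}_{\mathbf M}$ on continuous functions (well-posedness) — we get a contradiction with $u\neq 0$. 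So we may assume $\lambda\neq 0$. Then $u=\mathcal{K}_{\mathbf M}[u]=\lambda^{-1}\mathcal{K}_{\mathbf M}[\lambda u]$, and since $\mathcal{K}_{\mathbf M}[w]$ is by construction the unique solution of $\mathrm{DE}(w)$ satisfying $\mathrm{BC}(\mathbf M)$, applying this with $w=\lambda^{-1}k\,\alpha^{-4}\cdot(\lambda u)$ rearranged appropriately shows that $u$ solves the ODE $u^{(4)}+\alpha^4 u=\frac{\alpha^4}{k\lambda}u$, i.e. a homogeneous fourth-order constant-coefficient ODE depending on $\lambda$, together with the homogeneous boundary condition $\mathrm{BC}(\mathbf M)$.

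Next I would introduce the fundamental system for this $\lambda$-dependent ODE: writing $\mathbf y_\lambda(x)\in\gl(4,1,\mathbb C)$ for the column of four basis solutions (as defined in Section~\ref{section_X}), the general solution is $u=\mathbf y_\lambda^T\mathbf c$ for some $\mathbf c\in\gl(4,1,\mathbb C)$, and $u\neq 0$ corresponds to $\mathbf c\neq\mathbf 0$. The role of $\mathbf X_\lambda(x)$ should be that its columns encode the values $\mathcal B$ restricted to each endpoint of the basis solutions, so that $\mathcal B[u]$ is assembled from $\mathbf X_\lambda(-l)$ and $\mathbf X_\lambda(l)$ acting on $\mathbf c$. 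The boundary condition $\mathrm{BC}(\mathbf M)$ then becomes a homogeneous linear system in $\mathbf c$; the crux is to show this system is exactly $\bigl[\mathbf G_{\mathbf M}\{\mathbf X_\lambda(l)-\mathbf X_\lambda(-l)\}+\mathbf X_\lambda(l)\bigr]\mathbf c=\mathbf 0$. This is where the explicit construction of $\mathbf G_{\mathbf M}$ (equivalently $\Gamma$) from Section~\ref{section_X} must be invoked: one has to match the way $\mathbf G_{\mathbf M}$ was defined — via the Green's function $G_{\mathbf M}$ and its jump/boundary data — against the combination $\mathbf X_\lambda(l)-\mathbf X_\lambda(-l)$, which presumably represents the "infinite-beam" contribution (the $\mathbf Q$ part) that $\Gamma$ is normalized to kill by $(\Gamma2)$.

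The $\mathbf Q$ case then follows immediately: by $(\Gamma2)$ we have $\mathbf G_{\mathbf Q}=\mathbf O$, so $\mathbf G_{\mathbf Q}\{\mathbf X_\lambda(l)-\mathbf X_\lambda(-l)\}+\mathbf X_\lambda(l)=\mathbf X_\lambda(l)$, giving the condition $\mathbf X_\lambda(l)\mathbf c=\mathbf 0$. Conversely, given $\lambda\neq 0$ and $\mathbf c\neq\mathbf 0$ with the matrix condition holding, I would run the argument backwards: set $u=\mathbf y_\lambda^T\mathbf c$; then $u$ solves the $\lambda$-dependent ODE, the matrix condition is precisely $\mathbf M\cdot\mathcal B[u]=\mathbf 0$, hence $u$ satisfies $\mathrm{BC}(\mathbf M)$ and solves $\mathrm{DE}(\frac{\alpha^4}{k}\cdot\frac{k}{\lambda}\cdot\frac{\text{?}}{\text{?}})$ — cleanly, $\mathrm{DE}(ku/\lambda)$ up to the normalization in \eqref{equation_linearnonhomogeneous} — so by uniqueness of $\mathcal{K}_{\mathbf M}$ we get $\mathcal{K}_{\mathbf M}[ku/\lambda]=u$, i.e. $\mathcal{K}_{\mathbf M}[u]=\lambda u$; and $u\neq 0$ since $\mathbf y_\lambda$'s components are linearly independent and $\mathbf c\neq\mathbf 0$.

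The main obstacle I anticipate is the middle step: verifying that the linear system encoding $\mathrm{BC}(\mathbf M)$ on $\mathbf c$ is exactly $\bigl[\mathbf G_{\mathbf M}\{\mathbf X_\lambda(l)-\mathbf X_\lambda(-l)\}+\mathbf X_\lambda(l)\bigr]\mathbf c=\mathbf 0$ rather than some equivalent-but-differently-packaged system. This requires careful bookkeeping with the definitions of $\mathbf X_\lambda$, $\mathbf y_\lambda$, and $\mathbf G_{\mathbf M}$ from Section~\ref{section_X}, and in particular using the defining property of $\Gamma$ that relates $\mathbf G_{\mathbf M}$ to the particular boundary data that the matrix $\mathbf M$ (modulo the equivalence $\approx$) imposes on solutions. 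Everything else — the $\lambda\neq 0$ dichotomy, the reduction to the ODE, and the reverse direction via uniqueness — should be routine once the representation machinery of the preceding sections is in place.
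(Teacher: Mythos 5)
Your outline follows the paper's strategy exactly: reduce $\mathcal{K}_\mathbf{M}[u]=\lambda u$ to the pair $\mathrm{EDE}(\lambda)$ plus $\mathrm{BC}(\mathbf{M})$ (this is Lemma~\ref{lemma_eigencondition}), write $u=\mathbf{y}_\lambda^T\mathbf{c}$ with $\mathbf{c}\neq\mathbf 0$, translate $\mathrm{BC}(\mathbf{M})$ into a linear system in $\mathbf{c}$, and dispose of the $\mathbf{Q}$ case via $(\Gamma2)$. The problem is that the translation you defer as the ``main obstacle'' is the entire content of the theorem, so what you have written is a restatement of what must be proved rather than a proof. The paper supplies this step in Lemma~\ref{lemma_BCGM}: using the explicit inverse $\Gamma^{-1}$ from \eqref{equation_GMto[M]} one replaces $\mathbf{M}$ by the equivalent representative $\hat{\mathbf{M}}$ with minors $\left\{\diag(0,1,1,0)-\mathbf{G}_\mathbf{M}\mathcal{E}\right\}\mathbf{W}(-l)^{-1}$ and $\left\{\diag(1,0,0,1)+\mathbf{G}_\mathbf{M}\mathcal{E}\right\}\mathbf{W}(l)^{-1}$ (valid by Lemma~\ref{lemma_equivalence} since $\hat{\mathbf{M}}\approx\mathbf{M}$), which turns $\mathrm{BC}(\mathbf{M})$ into a condition in $\mathbf{G}_\mathbf{M}$, $\mathcal{E}$, $\mathbf{W}(\pm l)^{-1}$, and $\mathcal{B}^\pm[u]$. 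After substituting $\mathcal{B}^\pm[u]=\mathbf{W}_\lambda(\pm l)\mathbf{c}$, the identity \eqref{equation_s;fhgs;fovij}, namely $\mathbf{X}_\lambda(l)-\mathbf{X}_\lambda(-l)=\mathcal{E}\left\{\mathbf{W}(l)^{-1}\mathbf{W}_\lambda(l)-\mathbf{W}(-l)^{-1}\mathbf{W}_\lambda(-l)\right\}$, packages the $\mathbf{G}_\mathbf{M}$-term and the two diagonal terms into exactly $\left[\mathbf{G}_\mathbf{M}\left\{\mathbf{X}_\lambda(l)-\mathbf{X}_\lambda(-l)\right\}+\mathbf{X}_\lambda(l)\right]\mathbf{c}=\mathbf 0$. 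None of this is automatic: it depends on the specific normalization $\mathbf{G}_\mathbf{M}=(\mathbf{G}_\mathbf{M}^+-\mathbf{G}_\mathbf{Q}^+)(\mathbf{\Omega}\mathbf{L}^2)^{-1}\mathcal{E}$ and on the computed values $\widetilde{\mathbf{Q}}^{-1}\widetilde{\mathbf{Q}}^-=\diag(0,1,1,0)$, $\widetilde{\mathbf{Q}}^{-1}\widetilde{\mathbf{Q}}^+=\diag(1,0,0,1)$ from Section~\ref{section_Q}, neither of which your proposal invokes.

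A smaller inaccuracy: $\mathbf{X}_\lambda(x)$ is not a matrix whose columns ``encode the values $\mathcal{B}$ restricted to each endpoint of the basis solutions.'' It is $\diag(0,1,1,0)\,\mathbf{W}(-x)^{-1}\mathbf{W}_\lambda(-x)+\diag(1,0,0,1)\,\mathbf{W}(x)^{-1}\mathbf{W}_\lambda(x)$, which interleaves data from both endpoints through the inverse Wronskian $\mathbf{W}(\pm x)^{-1}$ of the \emph{unperturbed} fundamental system $\mathbf{y}$, not of $\mathbf{y}_\lambda$; that interleaving is what makes $(\Gamma2)$ produce the clean $\mathbf{X}_\lambda(l)\mathbf{c}=\mathbf 0$ for $\mathbf{Q}$.
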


Thus, if we focus on the spectrum $\Spec\mathcal{K}_\mathbf{M}$,
we have the following characteristic equation.

\begin{corollary}
\label{corollary_eigencondition-det}
Let $\mathbf{M} \in \mathrm{wp}(4,8,\mathbb{C})$ and $\lambda \in \mathbb{C}$.
Then $\lambda \in \Spec\mathcal{K}_\mathbf{M}$,
if and only if
$\lambda \neq 0$
and
$
\det
\left[
\mathbf{G}_\mathbf{M}
\left\{
\mathbf{X}_\lambda(l)
-
\mathbf{X}_\lambda(-l)
\right\}
+
\mathbf{X}_\lambda(l)
\right]
 =
0
$.
$\lambda \in \Spec\mathcal{K}_\mathbf{Q}$,
if and only if
$\lambda \neq 0$
and
$
\det
\mathbf{X}_\lambda(l)
 =
0
$.
\end{corollary}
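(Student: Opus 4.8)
The plan is to obtain Corollary~\ref{corollary_eigencondition-det} directly from Theorem~\ref{theorem_eigencondition-X} by recognizing that $\lambda \in \Spec\mathcal{K}_\mathbf{M}$ means precisely that there exists a nonzero $u \in L^2[-l,l]$ with $\mathcal{K}_\mathbf{M}[u] = \lambda \cdot u$, so the two statements of the corollary are just the ``solvability'' shadows of the corresponding statements in the theorem.

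First I would fix $\mathbf{M} \in \mathrm{wp}(4,8,\mathbb{C})$ and $\lambda \in \mathbb{C}$, and unwind the definition: $\lambda \in \Spec\mathcal{K}_\mathbf{M}$ iff there is some $0 \neq u \in L^2[-l,l]$ with $\mathcal{K}_\mathbf{M}[u] = \lambda \cdot u$. Applying Theorem~\ref{theorem_eigencondition-X}, this holds iff $\lambda \neq 0$ and there exists $\mathbf{0} \neq \mathbf{c} \in \gl(4,1,\mathbb{C})$ with $u = \mathbf{y}_\lambda^T \mathbf{c}$ and $\left[ \mathbf{G}_\mathbf{M} \left\{ \mathbf{X}_\lambda(l) - \mathbf{X}_\lambda(-l) \right\} + \mathbf{X}_\lambda(l) \right] \mathbf{c} = \mathbf{0}$. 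The point is that, for fixed $\lambda \neq 0$, the existence of such a pair $(u,\mathbf{c})$ reduces to the existence of a nonzero $\mathbf{c}$ solving the homogeneous linear system, because $u$ is then simply \emph{defined} as $\mathbf{y}_\lambda^T \mathbf{c}$.

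The one point requiring a moment's care is the passage from ``there exists $\mathbf{0} \neq \mathbf{c}$ with $u = \mathbf{y}_\lambda^T\mathbf{c}$ and the system holds'' to ``there exists $\mathbf{0} \neq \mathbf{c}$ with the system holds'': the forward direction is immediate by dropping the first clause, and for the converse I need that if $\mathbf{0} \neq \mathbf{c}$ satisfies the matrix equation then $u := \mathbf{y}_\lambda^T\mathbf{c}$ is a genuinely nonzero element of $L^2[-l,l]$, so that it qualifies as an eigenfunction. That is, I would invoke the linear independence over $\mathbb{C}$ of the four component functions of $\mathbf{y}_\lambda$ (which holds since $\lambda \neq 0$ and which is established in Section~\ref{section_X} where $\mathbf{y}_\lambda$ is defined), guaranteeing $\mathbf{y}_\lambda^T \mathbf{c} \neq 0$ whenever $\mathbf{c} \neq \mathbf{0}$. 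Then the existence of a nonzero solution $\mathbf{c}$ of the $4 \times 4$ homogeneous system is, by elementary linear algebra, equivalent to the vanishing of the determinant $\det\left[ \mathbf{G}_\mathbf{M} \left\{ \mathbf{X}_\lambda(l) - \mathbf{X}_\lambda(-l) \right\} + \mathbf{X}_\lambda(l) \right] = 0$. The second assertion of the corollary follows by the same argument applied to the second statement of Theorem~\ref{theorem_eigencondition-X}, or simply by setting $\mathbf{G}_\mathbf{M} = \mathbf{O}$ via $(\Gamma2)$.

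There is essentially no serious obstacle here; the corollary is a routine corollary, and the only thing to be vigilant about is not to conflate ``$\mathbf{c} = \mathbf{0}$'' with ``$u = 0$'' in the wrong direction — the nontriviality of $u$ must come from the nondegeneracy of $\mathbf{y}_\lambda$, not be assumed. I would therefore state the linear-independence fact explicitly (with a reference to its home in Section~\ref{section_X}) and keep the rest to a two-line reduction.
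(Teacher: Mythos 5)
Your proof is correct and follows exactly the route the paper intends (the paper gives the corollary without explicit proof, treating it as immediate from Theorem~\ref{theorem_eigencondition-X}). Your care in noting that the nontriviality of $u = \mathbf{y}_\lambda^T\mathbf{c}$ comes from the linear independence of $y_{\lambda,1},\dots,y_{\lambda,4}$ (established in Section~\ref{section_X} via the fact that they form a fundamental set of solutions of $\mathrm{EDE}(\lambda)$) is precisely the one substantive point, and you get it right.
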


Theorem~\ref{theorem_eigencondition-X} and Corollary~\ref{corollary_eigencondition-det}
reveal an interesting connection between $\Spec\mathcal{K}_\mathbf{M}$
for general $\mathbf{M} \in \mathrm{wp}(4,8,\mathbb{C})$
and the well-analyzed $\Spec\mathcal{K}_\mathbf{Q}$.
The forms of the eigencondition and the characteristic equation for $\mathcal{K}_\mathbf{M}$ in them
isolate the effect $\mathbf{G}_\mathbf{M}$ 
of the boundary condition $\mathbf{M} \in \mathrm{wp}(4,8,\mathbb{C})$,
from the rest that is expressed essentially by
the matrix $\mathbf{X}_\lambda$ which is closely related to
$\Spec\mathcal{K}_\mathbf{Q}$.

By Corollary~\ref{corollary_eigencondition-det}, $\mathbf{X}_\lambda(l)$ is invertible
for every $0 \neq \lambda \not\in \Spec\mathcal{K}_\mathbf{Q}$.
Thus
we can define
$
\mathbf{Y}_\lambda(l)
 =
\mathbf{X}_\lambda(-l)
\mathbf{X}_\lambda(l)^{-1}
-
\mathbf{I}
\in \gl(4,\mathbb{C})
$
for every $0 \neq \lambda \not\in \Spec\mathcal{K}_\mathbf{Q}$,
where $\mathbf{I}$ is the $4 \times 4$ identity matrix.

\begin{corollary}
\label{corollary_eigencondition-Y}
Let $\mathbf{M} \in \mathrm{wp}(4,8,\mathbb{C})$.
Suppose $\lambda \in \mathbb{C} \setminus \Spec\mathcal{K}_\mathbf{Q}$.
Then $\lambda \in \Spec\mathcal{K}_\mathbf{M}$,
if and only if
$\lambda \neq 0$
and
$
\det
\left\{
\mathbf{G}_\mathbf{M}
\mathbf{Y}_\lambda(l)
-
\mathbf{I}
\right\}
 =
0
$.
\end{corollary}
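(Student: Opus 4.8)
The plan is to deduce this directly from Corollary~\ref{corollary_eigencondition-det} by a short determinant manipulation, exploiting the invertibility of $\mathbf{X}_\lambda(l)$ precisely on the range of $\lambda$ under consideration. First I would dispose of the case $\lambda = 0$: by Corollary~\ref{corollary_eigencondition-det} membership $\lambda \in \Spec\mathcal{K}_\mathbf{M}$ forces $\lambda \neq 0$, so $0 \notin \Spec\mathcal{K}_\mathbf{M}$, while the right-hand side of the asserted equivalence is also false since it explicitly requires $\lambda \neq 0$; hence both sides fail and there is nothing to prove. This also sidesteps the fact that $\mathbf{Y}_\lambda(l)$ is only defined for nonzero $\lambda$.

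So assume $\lambda \neq 0$ and $\lambda \notin \Spec\mathcal{K}_\mathbf{Q}$. As observed just before the statement, applying the second part of Corollary~\ref{corollary_eigencondition-det} to $\mathbf{Q}$ gives $\det\mathbf{X}_\lambda(l) \neq 0$, so $\mathbf{X}_\lambda(l)^{-1}$ and $\mathbf{Y}_\lambda(l) = \mathbf{X}_\lambda(-l)\mathbf{X}_\lambda(l)^{-1} - \mathbf{I}$ are well defined. I would then factor $\mathbf{X}_\lambda(l)$ out on the right of the matrix appearing in the characteristic equation of Corollary~\ref{corollary_eigencondition-det}:
\[
\mathbf{G}_\mathbf{M}\left\{\mathbf{X}_\lambda(l) - \mathbf{X}_\lambda(-l)\right\} + \mathbf{X}_\lambda(l)
 =
\left[\mathbf{G}_\mathbf{M}\left\{\mathbf{I} - \mathbf{X}_\lambda(-l)\mathbf{X}_\lambda(l)^{-1}\right\} + \mathbf{I}\right]\mathbf{X}_\lambda(l).
\]
Since $\mathbf{I} - \mathbf{X}_\lambda(-l)\mathbf{X}_\lambda(l)^{-1} = -\mathbf{Y}_\lambda(l)$, the bracketed factor equals $\mathbf{I} - \mathbf{G}_\mathbf{M}\mathbf{Y}_\lambda(l) = -\left(\mathbf{G}_\mathbf{M}\mathbf{Y}_\lambda(l) - \mathbf{I}\right)$.

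Taking determinants and using multiplicativity together with $\det(-\mathbf{A}) = (-1)^4\det\mathbf{A} = \det\mathbf{A}$ for $\mathbf{A} \in \gl(4,\mathbb{C})$, I obtain
\[
\det\left[\mathbf{G}_\mathbf{M}\left\{\mathbf{X}_\lambda(l) - \mathbf{X}_\lambda(-l)\right\} + \mathbf{X}_\lambda(l)\right]
 =
\det\left(\mathbf{G}_\mathbf{M}\mathbf{Y}_\lambda(l) - \mathbf{I}\right)\cdot\det\mathbf{X}_\lambda(l).
\]
Because $\det\mathbf{X}_\lambda(l) \neq 0$, the left-hand side vanishes if and only if $\det\left(\mathbf{G}_\mathbf{M}\mathbf{Y}_\lambda(l) - \mathbf{I}\right) = 0$; combining this equivalence with Corollary~\ref{corollary_eigencondition-det} yields the claim. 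I do not expect any genuine obstacle here: the computation is entirely routine, and the only points that demand a little care are the bookkeeping of the $\lambda = 0$ case (so that $\mathbf{Y}_\lambda(l)$ is never referenced outside its domain of definition) and the observation that the sign $(-1)^4$ produced by extracting $-\mathbf{I}$ from the $4 \times 4$ bracket is harmless.
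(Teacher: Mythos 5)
Your proof is correct and follows exactly the factorization that the paper implicitly intends (the paper states the corollary without proof, relying on the remark that $\mathbf{X}_\lambda(l)$ is invertible off $\Spec\mathcal{K}_\mathbf{Q}$ and the definition $\mathbf{Y}_\lambda(l) = \mathbf{X}_\lambda(-l)\mathbf{X}_\lambda(l)^{-1} - \mathbf{I}$). The right-factoring of $\mathbf{X}_\lambda(l)$, the identification of the bracket as $-(\mathbf{G}_\mathbf{M}\mathbf{Y}_\lambda(l)-\mathbf{I})$, the observation that $(-1)^4 = 1$, and the careful dismissal of $\lambda = 0$ are all sound.
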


Let
$\mathbf{M} \in \mathrm{wp}(4,8,\mathbb{C})$.
We call the dimensionless quantity
$
k
\cdot
\left\|
\mathcal{K}_\mathbf{M}
\right\|_2
$
the {\em intrinsic $L^2$-norm}
of $\mathcal{K}_\mathbf{M}$,
where
$
\left\|
\mathcal{K}_\mathbf{M}
\right\|_2
$
is the usual {\em $L^2$-norm}
of
$\mathcal{K}_\mathbf{M}$,
which is 
equal to the {\em spectral radius} 
$
\max
\left\{
|\lambda|
:
\lambda \in \Spec\mathcal{K}_\mathbf{M}
\right\}
$
of
$\mathcal{K}_\mathbf{M}$.
For each $\lambda \in \Spec\mathcal{K}_\mathbf{M}$,
we call the dimensionless quantity $k \cdot \lambda$ an {\em intrinsic eigenvalue}.
By Proposition~\ref{proposition_Q},
the operator $\mathcal{K}_\mathbf{Q}$
is {\em positive}
in that all of its intrinsic eigenvalues are positive,
and is {\em contractive}
in that its intrinsic $L^2$-norm,
which equals to its largest intrinsic eigenvalue $\mu_1$,
is less than $1$.
Since these properties of $\mathcal{K}_\mathbf{Q}$
are important in analyzing nonlinear non-uniform problem corresponding to 
$\mathrm{DE}(w)$ in \eqref{equation_linearnonhomogeneous}~\cite{Choi2020nonlinear,ChoiJang},
one immediate question
is whether or not they are also shared by other general $\mathcal{K}_\mathbf{M}$.
Using Corollary~\ref{corollary_eigencondition-Y},
we prove the following negative answer.

\begin{theorem}
\label{theorem_existence}
For each 
$
0
 \neq
\lambda 
\in \mathbb{R} \setminus \Spec\mathcal{K}_\mathbf{Q}$,
there exists $\mathbf{M} \in \mathrm{wp}(4,8,\mathbb{R})$
such that $\lambda \in \Spec\mathcal{K}_\mathbf{M}$.
\end{theorem}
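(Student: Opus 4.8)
The plan is to feed the characteristic equation of Corollary~\ref{corollary_eigencondition-Y} into the faithful representation $\Gamma$, reducing the statement to a one-line question in linear algebra, and then to answer that question by a rank-one construction; the reflection symmetry $x\mapsto-x$ of the beam (together with the hypothesis $\lambda\in\mathbb{R}$) enters through the algebra $\overline{\pi}(4)$.

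\emph{Reduction.} Fix $0\neq\lambda\in\mathbb{R}\setminus\Spec\mathcal{K}_\mathbf{Q}$. By $(\Gamma4)$, any $\mathbf{G}\in\overline{\pi}(4)$ can be written as $\mathbf{G}=\mathbf{G}_\mathbf{N}$ for some $\mathbf{N}\in\mathrm{wp}(4,8,\mathbb{R})$: indeed $\mathbf{G}=\Gamma([\mathbf{M}_0])$ for some $[\mathbf{M}_0]\in\mathrm{wp}(\mathbb{R})$, and by the definition of $\mathrm{wp}(\mathbb{R})$ the class $[\mathbf{M}_0]$ has a representative $\mathbf{N}\in\mathrm{wp}(4,8,\mathbb{R})$, with $\mathbf{G}_\mathbf{N}=\Gamma([\mathbf{N}])=\mathbf{G}$ and $\mathcal{K}_\mathbf{N}=\mathcal{K}_{\mathbf{M}_0}$. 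Since $\lambda\neq0$ and $\lambda\notin\Spec\mathcal{K}_\mathbf{Q}$, Corollary~\ref{corollary_eigencondition-Y} applied to $\mathbf{N}$ says $\lambda\in\Spec\mathcal{K}_\mathbf{N}$ if and only if $\det\{\mathbf{G}\,\mathbf{Y}_\lambda(l)-\mathbf{I}\}=0$. Hence it suffices to produce $\mathbf{G}\in\overline{\pi}(4)$ with $1\in\Spec\big(\mathbf{G}\,\mathbf{Y}_\lambda(l)\big)$.

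\emph{Structural input (the main obstacle).} I would show that, for real $\lambda\notin\Spec\mathcal{K}_\mathbf{Q}\cup\{0\}$, the matrix $\mathbf{Y}_\lambda(l)=\mathbf{X}_\lambda(-l)\,\mathbf{X}_\lambda(l)^{-1}-\mathbf{I}$ lies in $\overline{\pi}(4)$, and that $\mathbf{Y}_\lambda(l)\neq\mathbf{O}$. The first claim is exactly the ``symmetry of $4\times4$ matrices'' that $\overline{\pi}(4)$ is built to detect: it should come from the fact that $\mathbf{X}_\lambda(-l)$ is obtained from $\mathbf{X}_\lambda(l)$ via the action of $x\mapsto-x$ on boundary data, an action that carries the real structure encoded by $\overline{\pi}(4)$ precisely when $\lambda$ is real; here one must unwind the explicit definitions of $\mathbf{X}_\lambda(\pm l)$ from Section~\ref{section_X} and of $\overline{\pi}(4)$ and check that they match. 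The second claim, $\mathbf{Y}_\lambda(l)\neq\mathbf{O}$, is indispensable (if $\mathbf{Y}_\lambda(l)=\mathbf{O}$ then $\det\{\mathbf{G}_\mathbf{M}\,\mathbf{Y}_\lambda(l)-\mathbf{I}\}=\det(-\mathbf{I})\neq0$ for every $\mathbf{M}$, so $\lambda$ would lie in no $\Spec\mathcal{K}_\mathbf{M}$), and I expect it to drop out of a short computation with the explicit trigonometric--hyperbolic expressions for $\mathbf{X}_\lambda(\pm l)$. This paragraph is where essentially all of the real work lies.

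\emph{Linear algebra.} Granting the structural input, and writing $\overline{\pi}(4)=\mathbf{S}\,\gl(4,\mathbb{R})\,\mathbf{S}^{-1}$ for a fixed invertible $\mathbf{S}$ (the expected form of this $\mathbb{R}$-algebra; any model of $\overline{\pi}(4)$ rich enough to contain the matrix built below will do), put $\mathbf{N}=\mathbf{S}^{-1}\mathbf{Y}_\lambda(l)\,\mathbf{S}\in\gl(4,\mathbb{R})$, which is nonzero. Choose $\mathbf{v}\in\mathbb{R}^4$ with $\mathbf{w}:=\mathbf{N}\mathbf{v}\neq\mathbf{0}$ and any $\mathbf{H}\in\gl(4,\mathbb{R})$ with $\mathbf{H}\mathbf{w}=\mathbf{v}$ (extend $\mathbf{w}$ to a basis of $\mathbb{R}^4$ and prescribe $\mathbf{H}$ on it). Then $\mathbf{H}\mathbf{N}\mathbf{v}=\mathbf{v}$ with $\mathbf{v}\neq\mathbf{0}$, so $1\in\Spec(\mathbf{H}\mathbf{N})$, i.e. $\det(\mathbf{H}\mathbf{N}-\mathbf{I})=0$; with $\mathbf{G}:=\mathbf{S}\,\mathbf{H}\,\mathbf{S}^{-1}\in\overline{\pi}(4)$ this reads $\det\{\mathbf{G}\,\mathbf{Y}_\lambda(l)-\mathbf{I}\}=\det(\mathbf{H}\mathbf{N}-\mathbf{I})=0$. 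By the reduction step, a real representative $\mathbf{M}\in\mathrm{wp}(4,8,\mathbb{R})$ of $\Gamma^{-1}(\mathbf{G})$ (which exists by $(\Gamma1)$ and $(\Gamma4)$) then satisfies $\lambda\in\Spec\mathcal{K}_\mathbf{M}$, as required. Apart from the structural input, this is pure bookkeeping with $\Gamma$ plus a one-eigenvector argument.
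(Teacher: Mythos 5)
Your proposal matches the paper's proof essentially step for step: the reduction via Corollary~\ref{corollary_eigencondition-Y} and $(\Gamma4)$, the two structural facts $\mathbf{Y}_\lambda(l)\in\overline{\pi}(4)$ and $\mathbf{Y}_\lambda(l)\neq\mathbf{O}$ (the paper's Lemmas~\ref{lemma_Ylambdainpibar} and \ref{lemma_Ylambda(x)neqO}), the isomorphism $\overline{\pi}(4)\cong\gl(4,\mathbb{R})$ by conjugation (the paper's Lemma~\ref{lemma_pibar2n=gl2nR}, with $\mathbf{S}=\mathbf{U}^T$), and the pick-a-nonzero-vector argument forcing $1$ to be an eigenvalue (the paper's Lemma~\ref{lemma_existence}). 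You have also correctly located where the genuine work lies, namely in the two structural facts, whose verification the paper carries out via the symmetry analysis of $\mathbf{X}_\lambda$, $\mathbf{Y}_\lambda$ in Section~\ref{section_symmetry} and Appendices~\ref{appendix_Y1/kpibar}--\ref{appendix_Ylambda(x)neqO}.
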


Thus, one cannot expect $\mathcal{K}_\mathbf{M}$
to be positive nor contractive even for {\em real} $\mathbf{M} \in \mathrm{wp}(4,8,\mathbb{R})$.
This result,
which shows the diversity of general well-posed boundary conditions,
and might have been tricky to obtain otherwise,
demonstrates the usefulness of our presentation of the eigencondition and the characteristic equations
for operators $\mathcal{K}_\mathbf{M}$ 
with general $\mathbf{M} \in \mathrm{wp}(4,8,\mathbb{C})$.

The rest of the paper is organized as follows.
In Section~\ref{section_preliminaries},
we present basic mathematical terminologies we use,
and introduce some specific matrices useful to our problems.
In Section~\ref{section_Green}, the Green's function $G_\mathbf{M}$
is explicitly constructed,
and an initial form of eigencondition for the operator $\mathcal{K}_\mathbf{M}$ is presented
for each well-posed $\mathbf{M} \in \mathrm{wp}(4,8,\mathbb{C})$.
Using the results in Section~\ref{section_Green},
the two intermediate representations $\Gamma^-$, $\Gamma^+$ of $\mathrm{wp}(\mathbb{C})$
are constructed and analyzed in
Section~\ref{section_Gammapm}.
In Section~\ref{section_real}, 
the $\mathbb{R}$-algebra $\overline{\pi}(n)$ is introduced,
and $\overline{\pi}(4)$ is used to characterize the real boundary conditions $\mathrm{wp}(\mathbb{R})$.
In Section~\ref{section_Q},
explicit computations on the boundary condition $\mathbf{Q}$
in \eqref{equation_Q} are performed,
resulting in explicit forms of 
$
\Gamma^-\left( \left[ \mathbf{Q} \right] \right)
 =
\mathbf{G}_\mathbf{Q}^-
$
and
$
\Gamma^+\left( \left[ \mathbf{Q} \right] \right)
 =
\mathbf{G}_\mathbf{Q}^+
$.
In Section~\ref{section_Gamma},
the representation $\Gamma$ is constructed,
and is shown to have the features $(\Gamma1)$, $(\Gamma2)$, $(\Gamma3)$, and $(\Gamma4)$ above.
In Section~\ref{section_X},
the matrices $\mathbf{X}_\lambda(x)$ and $\mathbf{y}_\lambda(x)$
are defined explicitly,
and
Theorem~\ref{theorem_eigencondition-X} is proved.
In Section~\ref{section_symmetry},
some of the symmetries of $\mathbf{X}_\lambda(x)$, $\mathbf{Y}_\lambda(x)$ are explored,
and in particular, we show that $\mathbf{Y}_\lambda(l) \in \overline{\pi}(4)$
for every $0 \neq \lambda \in \mathbb{R} \setminus \Spec\mathcal{K}_\mathbf{Q}$
and $l > 0$.
Using the results in Section~\ref{section_symmetry},
we prove Theorem~\ref{theorem_existence}
in Section~\ref{section_existence}.
Finally, brief comments on future directions are given in Section~\ref{section_discussion}.

\section{Preliminaries}
\label{section_preliminaries}

\subsection{Terminologies}

We denote
$\mathbbm{i} = \sqrt{-1}$.
When
the $(i,j)$th entry of $\mathbf{A} \in \gl(m,n,\mathbb{C})$
is $a_{i,j}$, $1 \leq i \leq m$, $1 \leq j \leq n$,
we write
$
\mathbf{A}
 =
\left(
a_{i,j}
\right)_{1 \leq i \leq m, \, 1 \leq j \leq n}
$.
In case $m = n$,
we also write
$
\mathbf{A}
 =
\left(
a_{i,j}
\right)_{1 \leq i, j \leq n}
$.
For $\mathbf{A} \in \gl(m,n,\mathbb{C})$,
we denote the $(i,j)$th entry of $\mathbf{A}$
by $\mathbf{A}_{i,j}$.
The complex conjugate,
the transpose,
and
the conjugate transpose of $\mathbf{A} \in \gl(m,n,\mathbb{C})$
are denoted
respectively by
$
\overline{\mathbf{A}}
$,
$
\mathbf{A}^T
$,
$
\mathbf{A}^*
$.
For $\mathbf{A} \in \gl(n,\mathbb{C})$,
$
\adj{\mathbf{A}}
$
is the classical adjoint of $\mathbf{A}$,
so that,
if $\mathbf{A}$ is invertible,
then
$
\mathbf{A}^{-1}
 =
\adj{\mathbf{A}}
/\det\mathbf{A}.
$

For $n \in \mathbb{N}$,
let
$GL(n,\mathbb{C})$
(respectively, $GL(n,\mathbb{R})$)
be 
the set of invertible matrices in $\gl(n,\mathbb{C})$
(respectively, in $\gl(n,\mathbb{R})$).
$\mathbf{A} \in GL(n,\mathbb{C})$
is {\em orthogonal},
if $\mathbf{A}^{-1} = \mathbf{A}^T$,
and
is {\em unitary},
if $\mathbf{A}^{-1} = \mathbf{A}^*$.
For $n \in \mathbb{N}$,
let
$O(n)$
and
$U(n)$
be
the set of orthogonal matrices
and
the set of unitary matrices in $GL(n,\mathbb{C})$
respectively.
Regardless of their sizes,
we denote by $\mathbf{I}$ and $\mathbf{O}$,
the identity matrix and the zero matrix
respectively.
In case of possible confusion with size,
we denote 
$\mathbf{I} = \mathbf{I}_n \in \gl(n,\mathbb{C})$,
$\mathbf{O} = \mathbf{O}_{mn} \in \gl(m,n,\mathbb{C})$,
$\mathbf{O} = \mathbf{O}_n \in \gl(n,\mathbb{C})$.
In particular, we denote the zero column vector by
$\mathbf{0} = \mathbf{0}_n = \mathbf{O}_{n1}
\in \gl(n,1,\mathbb{C})$.
The diagonal matrix with entries
$
c_1, c_2, \cdots, c_n
$
is denoted by
$\diag\left( c_1, c_2, \cdots, c_n \right)$.

\subsection{Frequently used matrices}

Here, we introduce some special matrices
which will be used extensively in this paper.
They are useful for dealing with various symmetries in our problem,
and readers are recommended to be acquainted with their properties.

\begin{definition}
\label{definition_omega}
Denote
$
\omega_j
 =
e^{\mathbbm{i} \frac{\pi}{4} (2j - 1)}
$
for $j \in \mathbb{Z}$,
$
\mathbf{\Omega}
 =
\diag\left( \omega_1, \omega_2, \omega_3, \omega_4 \right)
$,
and
$
\mathbf{W}_0
 =
\left(
\omega_j^{i-1}
\right)_{1 \leq i, j \leq 4}
$.
\end{definition}

$\omega_1$, $\omega_2$, $\omega_3$, $\omega_4$
are the primitive $4$th roots of $-1$,
and
$\omega_{j+4} = \omega_j$, $j \in \mathbb{Z}$,
hence
\begin{gather}
\label{equation_omega}
\omega_j^4
 = 
-1,
\qquad
\overline{\omega_j}
 =
\omega_j^{-1},
\qquad
\mathbbm{i}
\omega_j
 =
\omega_{j+1},
\qquad
j \in \mathbb{Z}, \\
\label{equation_omegaR}
\omega_4
 =
\overline{\omega_1},
\qquad
\omega_3
 =
\overline{\omega_2}, \\
\label{equation_omegaL2}
\omega_3
 =
-
\omega_1,
\qquad
\omega_2
 =
-
\omega_4, \\
\label{equation_Omega}
\mathbf{\Omega}^4
 =
-\mathbf{I},
\qquad
\overline{\mathbf{\Omega}}
 =
\mathbf{\Omega}^{-1}.
\end{gather}

\begin{definition}
\label{definition_epsilon}
Let
$
\epsilon_1 = \epsilon_4 = 1
$,
$
\epsilon_2 = \epsilon_3 = -1
$,
and
$
\epsilon_{j+4} = \epsilon_j
$,
$j \in \mathbb{Z}$.
Denote
$
\mathcal{E}
 =
\diag
\left( \epsilon_1, \epsilon_2, \epsilon_3, \epsilon_4 \right)
 =
\diag(1,-1,-1,1)
$.
\end{definition}
Note that
\begin{equation}
\label{equation_omegaReIm}
\Real{\omega_j}
 = 
\frac{\epsilon_j}{\sqrt{2}},
\qquad
\Imag{\omega_j}
 = 
\frac{\epsilon_{j-1}}{\sqrt{2}},
\qquad
j \in \mathbb{Z}.
\end{equation}

\begin{definition}
\label{definition_W(x)}
Denote
$
y_j(x)
 =
e^{\omega_j \alpha x}
$,
$j = 1,2,3,4$,
and
\[
\mathbf{y}(x)
 =
\begin{pmatrix}
y_1(x) & y_2(x) & y_3(x) & y_4(x)
\end{pmatrix}^T
 =
\begin{pmatrix}
e^{\omega_1 \alpha x} &
e^{\omega_2 \alpha x} &
e^{\omega_3 \alpha x} &
e^{\omega_4 \alpha x}
\end{pmatrix}^T.
\]
Denote the Wronskian matrix corresponding to $y_1(x)$, $y_2(x)$, $y_3(x)$, $y_4(x)$ by
\[
\mathbf{W}(x)
 =
\begin{pmatrix}
y_1(x) & y_2(x) & y_3(x) & y_4(x) \\
y_1^\prime(x) & y_2^\prime(x) & y_3^\prime(x) & y_4^\prime(x) \\
y_1''(x) & y_2''(x) & y_3''(x) & y_4''(x) \\
y_1'''(x) & y_2'''(x) & y_3'''(x) & y_4'''(x)
\end{pmatrix}
 =
\begin{pmatrix}
\mathbf{y}(x)^T \\
\mathbf{y}^\prime(x)^T \\
\mathbf{y}^{\prime\prime}(x)^T \\
\mathbf{y}^{\prime\prime\prime}(x)^T
\end{pmatrix}.
\]
\end{definition}

Note that
\begin{equation}
\label{equation_y'x}
\mathbf{y}^\prime(x)
 =
\frac{d}{dx}
\begin{pmatrix}
e^{\omega_1 \alpha x} \\
e^{\omega_2 \alpha x} \\
e^{\omega_3 \alpha x} \\
e^{\omega_4 \alpha x}
\end{pmatrix}
 =
\begin{pmatrix}
\omega_1 \alpha \cdot e^{\omega_1 \alpha x} \\
\omega_2 \alpha \cdot e^{\omega_2 \alpha x} \\
\omega_3 \alpha \cdot e^{\omega_3 \alpha x} \\
\omega_4 \alpha \cdot e^{\omega_4 \alpha x}
\end{pmatrix}
 =
\alpha
\mathbf{\Omega}
\cdot
\mathbf{y}(x).
\end{equation}
By Definitions~\ref{definition_omega} and \ref{definition_W(x)},
\begin{align}
\mathbf{W}(x)
 &=
\left(
\left( \omega_j \alpha \right)^{i-1}
e^{\omega_j \alpha x}
\right)_{1 \leq i, j \leq 4}
\nonumber \\
 &=
\diag
\left( 1, \alpha, \alpha^2, \alpha^3 \right)
\cdot
\left(
\omega_j^{i-1}
\right)_{1 \leq i, j \leq 4}
\cdot
\diag
\left(
e^{\omega_1 \alpha x},
e^{\omega_2 \alpha x},
e^{\omega_3 \alpha x},
e^{\omega_4 \alpha x}
\right)
\nonumber \\
 &=
\diag
\left( 1, \alpha, \alpha^2, \alpha^3 \right)
\cdot
\mathbf{W}_0
e^{\mathbf{\Omega} \alpha x}.
\label{equation_Wxdecomposed}
\end{align}
By \eqref{equation_omega},
\begin{equation}
\label{equation_W0*}
\mathbf{W}_0^*
 =
\left(
\overline{\omega_i^{j-1}}
\right)_{1 \leq i,j \leq 4}
 =
\left(
\overline{\omega_i}^{j-1}
\right)_{1 \leq i,j \leq 4}
 =
\left(
\omega_i^{1-j}
\right)_{1 \leq i,j \leq 4}.
\end{equation}

\begin{lemma}
\label{lemma_W0}
$
\mathbf{W}_0^{-1}
 =
\frac{1}{4}
\mathbf{W}_0^*
$.
\end{lemma}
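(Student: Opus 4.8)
The plan is to verify directly that $\mathbf{W}_0 \mathbf{W}_0^* = 4\mathbf{I}$, which by squareness immediately yields $\mathbf{W}_0^{-1} = \frac{1}{4}\mathbf{W}_0^*$. Using the expression for $\mathbf{W}_0^*$ recorded in \eqref{equation_W0*}, namely $\left( \mathbf{W}_0^* \right)_{j,k} = \omega_j^{1-k}$, together with the defining relation $\left( \mathbf{W}_0 \right)_{i,j} = \omega_j^{i-1}$ from Definition~\ref{definition_omega}, the $(i,k)$th entry of the product is
\[
\left( \mathbf{W}_0 \mathbf{W}_0^* \right)_{i,k}
 =
\sum_{j=1}^4 \omega_j^{i-1}\,\omega_j^{1-k}
 =
\sum_{j=1}^4 \omega_j^{i-k},
\qquad 1 \le i,k \le 4.
\]
So the whole statement reduces to evaluating the power sums $S_m := \sum_{j=1}^4 \omega_j^{m}$ for the integers $m = i - k$, which range over $\{-3,-2,\dots,3\}$.

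For the power sums the plan is to use the relation $\mathbbm{i}\,\omega_j = \omega_{j+1}$ from \eqref{equation_omega}, which gives $\omega_j = \mathbbm{i}^{\,j-1}\omega_1$ and hence $\omega_j^{m} = \omega_1^{m}\,\mathbbm{i}^{\,m(j-1)}$. Therefore
\[
S_m
 =
\omega_1^{m} \sum_{j=1}^4 \left( \mathbbm{i}^{m} \right)^{j-1}.
\]
When $m = 0$ this equals $4$; when $m \in \{\pm 1,\pm 2,\pm 3\}$ one has $\mathbbm{i}^{m} \neq 1$ while $\left( \mathbbm{i}^{m} \right)^4 = \mathbbm{i}^{4m} = 1$, so the geometric sum $\sum_{j=1}^4 (\mathbbm{i}^{m})^{j-1} = \tfrac{\mathbbm{i}^{4m}-1}{\mathbbm{i}^{m}-1}$ vanishes. (Equivalently, $\omega_1,\dots,\omega_4$ are precisely the four roots of $z^4 = -1$, so their power sums of order not divisible by $4$ are zero.) Consequently $S_{i-k} = 4\,\delta_{i,k}$, i.e. $\mathbf{W}_0 \mathbf{W}_0^* = 4\mathbf{I}$, which is the lemma.

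There is no serious obstacle here; the only point to watch is that the exponent $i-k$ stays within $\{-3,\dots,3\}$, so among these values only $m=0$ is divisible by $4$ and contributes the nonzero term — which is exactly what makes the off-diagonal entries vanish and the diagonal entries equal $4$. An alternative is to note that $\mathbf{W}_0$ is a row-reshuffling of a Vandermonde matrix in the distinct nodes $\omega_1,\dots,\omega_4$ (essentially a scaled discrete Fourier matrix) and invoke orthogonality of the associated characters, but the one-line geometric-series computation above is the most self-contained route and also records $\mathbf{W}_0/2 \in U(4)$ as a byproduct.
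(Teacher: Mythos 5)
Your proof is correct and is essentially the same argument as the paper's: compute the product of $\mathbf{W}_0$ and $\mathbf{W}_0^*$ entry-by-entry, recognize a geometric series whose ratio is a fourth root of $\pm 1$, and kill the off-diagonal entries using $\omega_j^4 = -1$. The only cosmetic difference is that you verify $\mathbf{W}_0\mathbf{W}_0^* = 4\mathbf{I}$ (giving power sums $\sum_j \omega_j^{i-k}$, which you reduce to a geometric series via $\omega_j = \mathbbm{i}^{j-1}\omega_1$), whereas the paper verifies $\mathbf{W}_0^*\mathbf{W}_0 = 4\mathbf{I}$ and reads the $(i,j)$th entry directly as $\sum_{r=1}^4(\omega_j/\omega_i)^{r-1}$; since $\mathbf{W}_0$ is square, either one-sided identity gives the inverse.
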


\begin{proof}
By \eqref{equation_W0*},
\begin{align*}
\mathbf{W}_0^*
\mathbf{W}_0
 &=
\left(
\omega_i^{1-j}
\right)_{1 \leq i,j \leq 4}
\cdot
\left(
\omega_j^{i-1}
\right)_{1 \leq i,j \leq 4}
 =
\left(
\sum_{r=1}^4
\omega_i^{1-r}
\cdot
\omega_j^{r-1}
\right)_{1 \leq i,j \leq 4} \\
 &=
\left(
\sum_{r=1}^4
\left(
\frac{\omega_j}{\omega_i}
\right)^{r-1}
\right)_{1 \leq i,j \leq 4},
\end{align*}
hence
\[
\left(
\mathbf{W}_0^*
\mathbf{W}_0
\right)_{i,j}
 =
\left\{
\begin{array}{ll}
4,
 &
\text{if }
i = j,
 \\
\frac
{
1
-
\left(
\frac{\omega_j}{\omega_i}
\right)^4
}
{
1
-
\left(
\frac{\omega_j}{\omega_i}
\right)
},
 &
\text{if }
i \neq j.
\end{array}
\right.
\]
If $i \neq j$,
then
$
1
-
\left(
\omega_j/\omega_i
\right)^4
 =
1
-
\omega_j^4/\omega_i^4
 =
1
-
(-1)/(-1)
 =
0
$
by \eqref{equation_omega}.
Thus
$
\mathbf{W}_0^*
\mathbf{W}_0
 =
4
\mathbf{I}$,
from which the result follows.
\end{proof}

The inverse $\mathbf{W}(x)^{-1}$ of $\mathbf{W}(x)$
is well-defined for every $x \in \mathbb{R}$,
and
by \eqref{equation_Wxdecomposed} and Lemma~\ref{lemma_W0},
\begin{align}
\mathbf{W}(x)^{-1}
 &=
e^{-\mathbf{\Omega} \alpha x}
\mathbf{W}_0^{-1}
\cdot
\diag
\left( 1, \alpha, \alpha^2, \alpha^3 \right)^{-1}
\nonumber \\
 &=
\frac{1}{4}
e^{-\mathbf{\Omega} \alpha x}
\mathbf{W}_0^*
\cdot
\diag
\left( 1, \alpha, \alpha^2, \alpha^3 \right)^{-1}.
\label{equation_Wx-1decomposed}
\end{align}

\begin{definition}
\label{definition_R}
Regardless of size, we denote
\[
\mathbf{R}
 =
\begin{pmatrix}
0 & 0 & 0 & 1 \\
0 & 0 & \rotatebox{90}{$\ddots$} & 0 \\
0 & 1 & 0 & 0 \\
1 & 0 & 0 & 0
\end{pmatrix}
\qquad
\in O(n).
\]
In case of possible confusion,
we denote 
$\mathbf{R} = \mathbf{R}_n \in O(n)$.
\end{definition}

Note that
$
\mathbf{R}^T
 =
\mathbf{R}^*
 =
\mathbf{R}
 =
\mathbf{R}^{-1}
$.
When multiplied to the left (respectively, to the right) of a matrix,
$\mathbf{R}$ {\em reverses} the order of the rows (respectively, the columns)
of that matrix.
Hence by \eqref{equation_omegaR},
\begin{equation}
\label{equation_R}
\mathbf{R}
\mathbf{\Omega}
 =
\overline{\mathbf{\Omega}}
\mathbf{R},
\quad
\mathbf{W}_0
\mathbf{R}
 =
\overline{
\mathbf{W}_0
},
\quad
\mathbf{R}
\cdot
\mathbf{y}(x)
 =
\overline{
\mathbf{y}(x)
},
\quad
\mathbf{W}(x)
\mathbf{R}
 =
\overline{
\mathbf{W}(x)
}.
\end{equation}

\begin{definition}
\label{definition_L}
Denote
\[
\mathbf{L}
 =
\begin{pmatrix}
0 & 1 & 0 & 0 \\
0 & 0 & 1 & 0 \\
0 & 0 & 0 & 1 \\
1 & 0 & 0 & 0
\end{pmatrix}
\qquad
\in O(4).
\]
\end{definition}

Note that,
when multiplied to the left (respectively, to the right) of a matrix,
$\mathbf{L}$ {\em lifts up} the rows cyclically 
by one row
(respectively, moves the columns to the right cyclically
by one column).
Thus by \eqref{equation_omega},
\begin{align}
\mathbf{L}
\mathbf{\Omega}
\mathbf{L}^{-1}
 &=
\begin{pmatrix}
0 & \omega_2 & 0 & 0 \\
0 & 0 & \omega_3 & 0 \\
0 & 0 & 0 & \omega_4 \\
\omega_1 & 0 & 0 & 0
\end{pmatrix}
\mathbf{L}^{-1}
 =
\diag
\left( \omega_2, \omega_3, \omega_4, \omega_1 \right)
 =
\mathbbm{i}
\mathbf{\Omega},
\label{equation_LOmegaL-1} \\
\mathbf{W}_0
\mathbf{L}^{-1}
 &=
\left(
\omega_{j+1}^{i-1}
\right)_{1 \leq i,j \leq 4}
 =
\left(
\left(
\mathbbm{i}
\omega_j
\right)^{i-1}
\right)_{1 \leq i,j \leq 4}
 =
\diag
\left( 1, \mathbbm{i}, \mathbbm{i}^2, \mathbbm{i}^3 \right)
\cdot
\mathbf{W}_0.
\label{equation_W0L-1}
\end{align}
In particular, 
\[
\mathbf{L}^2
 =
\begin{pmatrix}
0 & 0 & 1 & 0 \\
0 & 0 & 0 & 1 \\
1 & 0 & 0 & 0 \\
0 & 1 & 0 & 0
\end{pmatrix}
\qquad
\in
O(4)
\]
is also frequently used.
Note that
$
\left(
\mathbf{L}^2
\right)^T
 =
\left(
\mathbf{L}^2
\right)^*
 =
\mathbf{L}^2
 =
\left(
\mathbf{L}^2
\right)^{-1}
$.
By \eqref{equation_omegaL2},
\begin{equation}
\label{equation_L2}
\mathbf{L}^2
\mathbf{\Omega}
 =
-
\mathbf{\Omega}
\mathbf{L}^2,
\qquad
\mathbf{L}^2
\cdot
\mathbf{y}(x)
 =
\mathbf{y}(-x).
\end{equation}

\begin{definition}
\label{definition_calBpm}
Define
$\mathcal{B}^-, \mathcal{B}^+ : C^3[-l,l] \to \gl(4,1,\mathbb{C})$
by
\begin{align*}
\mathcal{B}^-[u]
 &=
\begin{pmatrix}
u(-l) & u^\prime(-l) & u^{\prime\prime}(-l) & u^{\prime\prime\prime}(-l)
\end{pmatrix}^T, \\
\mathcal{B}^+[u]
 &=
\begin{pmatrix}
u(l) & u^\prime(l) & u^{\prime\prime}(l) & u^{\prime\prime\prime}(l)
\end{pmatrix}^T.
\end{align*}
\end{definition}

Let $u \in C^3[-l,l]$.
Note that
\begin{equation}
\label{equation_calBpm}
\mathcal{B}[u]
 =
\begin{pmatrix}
\mathcal{B}^-[u] \\
\mathcal{B}^+[u]
\end{pmatrix},
\end{equation}
where $\mathcal{B}$ is defined by \eqref{equation_calB}.
Let
$\mathbf{M}^-, \mathbf{M}^+ \in \gl(4,\mathbb{C})$,
so that
$
\mathbf{M}
 = 
\begin{pmatrix}
\mathbf{M}^-  & \mathbf{M}^+ 
\end{pmatrix}
\in \gl(4,8,\mathbb{C})
$.
Then by \eqref{equation_calBpm},
\begin{equation}
\label{equation_MBpm}
\mathbf{M} \cdot \mathcal{B}[u]
 =
\begin{pmatrix}
\mathbf{M}^- & \mathbf{M}^+
\end{pmatrix}
\begin{pmatrix}
\mathcal{B}^-[u] \\ \mathcal{B}^+[u]
\end{pmatrix}
 =
\mathbf{M}^-
\cdot
\mathcal{B}^-[u]
+
\mathbf{M}^+
\cdot
\mathcal{B}^+[u].
\end{equation}

By \eqref{equation_omega},
the functions $y_1, y_2, y_3, y_4$ in Definition~\ref{definition_W(x)} form
a fundamental set of solutions of
the linear homogeneous equation $\mathrm{DE}(0)$:
$u^{(4)} + \alpha^4 u = 0$ in \eqref{equation_linearhomogeneous}.
Thus $u \in L^2[-l,l]$ is a solution
of $\mathrm{DE}(0)$,
if and only if
$
u(x)
 = 
\sum_{j=1}^4 c_j \cdot y_j(x)
 =
\mathbf{y}(x)^T
\mathbf{c}
$
for some 
$
\mathbf{c}
 =
\begin{pmatrix}
c_1 & c_2 & c_3 & c_4
\end{pmatrix}^T
\in \gl(4,1,\mathbb{C})
$,
in which case we have
\begin{align}
\mathcal{B}^\pm[u]
 &=
\mathcal{B}^\pm
\left[
\sum_{j=1}^4 c_j \cdot y_j
\right]
 =
\sum_{j=1}^4
c_j
\cdot
\mathcal{B}^\pm\left[ y_j \right]
 =
\sum_{j=1}^4
\begin{pmatrix}
y_j(\pm l) \\ y_j^\prime(\pm l) \\ y_j^{\prime\prime}(\pm l) \\ y_j^{\prime\prime\prime}(\pm l)
\end{pmatrix}
\cdot
c_j
\nonumber \\
 &=
\begin{pmatrix}
\mathbf{y}(\pm l)^T \\
\mathbf{y}^\prime( \pm l)^T \\
\mathbf{y}^{\prime\prime}(\pm l)^T \\
\mathbf{y}^{\prime\prime\prime}(\pm l)^T
\end{pmatrix}
\mathbf{c}
 =
\mathbf{W}(\pm l)
\mathbf{c}.
\label{equation_WBpm}
\end{align}

\section{Green's functions for well-posed boundary conditions}
\label{section_Green}

\begin{definition}
\label{definition_wellposed}
$\mathbf{M} \in \gl(4,8,\mathbb{C})$ is called {\em well-posed},
if the boundary value problem 
consisting of
$\mathrm{DE}(0)$:
$
u^{(4)} + \alpha^4 u
 =
0
$
and
$\mathrm{BC}(\mathbf{M})$:
$\mathbf{M} \cdot \mathcal{B}[u] = \mathbf{0}$ in \eqref{equation_BC0},
has the unique trivial solution $u = 0$
in $L^2[-l,l]$.
The set of well-posed matrices in $\gl(4,8,\mathbb{C})$
is denoted by 
$\mathrm{wp}(4,8,\mathbb{C})$.
\end{definition}

\begin{definition}
\label{definition_Mtilde}
For
$
\mathbf{M}
\in \gl(4,8,\mathbb{C})
$,
we denote
$
\widetilde{\mathbf{M}}^-
 =
\mathbf{M}^- \mathbf{W}(-l)
$,
$
\widetilde{\mathbf{M}}^+
 =
\mathbf{M}^+ \mathbf{W}(l)
$,
and
$
\widetilde{\mathbf{M}}
 =
\widetilde{\mathbf{M}}^- + \widetilde{\mathbf{M}}^+
\in \gl(4,\mathbb{C})
$,
where
$\mathbf{M}^-, \mathbf{M}^+ \in \gl(4,\mathbb{C})$
are the $4 \times 4$ minors of 
$\mathbf{M}$ such that
$
\mathbf{M}
 = 
\begin{pmatrix}
\mathbf{M}^-  & \mathbf{M}^+ 
\end{pmatrix}
$.
\end{definition}

\begin{lemma}
\label{lemma_wellposed}
Let
$
\mathbf{M}
\in \gl(4,8,\mathbb{C})
$.
Then
$
\mathbf{M}
\in \mathrm{wp}(4,8,\mathbb{C})
$,
if and only if
$
\det
\widetilde{\mathbf{M}}
 \neq
0
$.
\end{lemma}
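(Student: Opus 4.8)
The plan is to unwind the definition of well-posedness and translate the solvability of the homogeneous boundary value problem into a linear-algebra condition on $\widetilde{\mathbf{M}}$. First I would recall that $y_1, y_2, y_3, y_4$ form a fundamental set of solutions of $\mathrm{DE}(0)$, so that every solution $u \in L^2[-l,l]$ of $\mathrm{DE}(0)$ is of the form $u = \mathbf{y}^T \mathbf{c}$ for a unique $\mathbf{c} \in \gl(4,1,\mathbb{C})$, and conversely every such $\mathbf{y}^T\mathbf{c}$ solves $\mathrm{DE}(0)$. This is already established in \eqref{equation_WBpm} and the discussion preceding it. Thus the solution space of $\mathrm{DE}(0)$ in $L^2[-l,l]$ is parametrized bijectively and linearly by $\mathbf{c} \in \gl(4,1,\mathbb{C})$, and $u = \mathbf{y}^T\mathbf{c}$ is the trivial solution if and only if $\mathbf{c} = \mathbf{0}$.

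Next I would impose the homogeneous boundary condition $\mathrm{BC}(\mathbf{M})$ on such a $u$. Using \eqref{equation_MBpm} and \eqref{equation_WBpm}, for $u = \mathbf{y}^T\mathbf{c}$ we get
\begin{align*}
\mathbf{M}\cdot\mathcal{B}[u]
 &=
\mathbf{M}^-\cdot\mathcal{B}^-[u]
+
\mathbf{M}^+\cdot\mathcal{B}^+[u]
 =
\mathbf{M}^-\mathbf{W}(-l)\mathbf{c}
+
\mathbf{M}^+\mathbf{W}(l)\mathbf{c} \\
 &=
\left(\widetilde{\mathbf{M}}^- + \widetilde{\mathbf{M}}^+\right)\mathbf{c}
 =
\widetilde{\mathbf{M}}\,\mathbf{c},
\end{align*}
where the last equalities use Definition~\ref{definition_Mtilde}. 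Hence $u = \mathbf{y}^T\mathbf{c}$ satisfies $\mathrm{BC}(\mathbf{M})$ if and only if $\widetilde{\mathbf{M}}\,\mathbf{c} = \mathbf{0}$. Combining with the previous paragraph: the boundary value problem has only the trivial solution in $L^2[-l,l]$ if and only if the only $\mathbf{c} \in \gl(4,1,\mathbb{C})$ with $\widetilde{\mathbf{M}}\,\mathbf{c} = \mathbf{0}$ is $\mathbf{c} = \mathbf{0}$, i.e. if and only if $\widetilde{\mathbf{M}}$ has trivial kernel, i.e. if and only if $\det\widetilde{\mathbf{M}} \neq 0$. This is precisely the claimed equivalence, so by Definition~\ref{definition_wellposed} we conclude $\mathbf{M} \in \mathrm{wp}(4,8,\mathbb{C})$ iff $\det\widetilde{\mathbf{M}} \neq 0$.

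There is no serious obstacle here; the argument is essentially bookkeeping once the right objects are in place. The only point requiring a moment's care is the claim that every $L^2$-solution of the fourth-order linear ODE $\mathrm{DE}(0)$ is a classical (smooth) solution lying in the span of $y_1,\dots,y_4$ — that is, that working in $L^2[-l,l]$ rather than in $C^4[-l,l]$ does not enlarge the solution set. This follows from standard ODE regularity: an $L^2$ function satisfying $u^{(4)} = -\alpha^4 u$ in the distributional sense is automatically smooth, hence is a genuine solution and lies in the four-dimensional classical solution space. I would state this once, cite it to the classical theory already referenced (e.g.~\cite{Stakgold}), and then the rest is the linear-algebra reduction above. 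The invertibility of $\mathbf{W}(\pm l)$, noted after \eqref{equation_Wx-1decomposed}, is what guarantees $\mathbf{c}\mapsto u=\mathbf{y}^T\mathbf{c}$ is injective on the level of boundary data, but in fact injectivity already follows from $y_1,\dots,y_4$ being a fundamental system, so no extra input is needed.
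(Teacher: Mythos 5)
Your proof is correct and follows essentially the same route as the paper's: parametrize the solution space of $\mathrm{DE}(0)$ by $\mathbf{c}$, reduce $\mathrm{BC}(\mathbf{M})$ to $\widetilde{\mathbf{M}}\,\mathbf{c}=\mathbf{0}$ via \eqref{equation_MBpm}, \eqref{equation_WBpm}, and Definition~\ref{definition_Mtilde}, and conclude by triviality of the kernel. The only difference is your explicit remark on $L^2$ regularity (that distributional solutions of $\mathrm{DE}(0)$ are smooth and hence lie in the classical four-dimensional solution space), which the paper takes for granted; this is a reasonable clarification but not a different argument.
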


\begin{proof}
Let
$\mathbf{M}^-, \mathbf{M}^+ \in \gl(4,\mathbb{C})$
be the $4 \times 4$ minors of 
$\mathbf{M}$ such that
$
\mathbf{M}
 = 
\begin{pmatrix}
\mathbf{M}^-  & \mathbf{M}^+ 
\end{pmatrix}
$.
Suppose $u \in L^2[-l,l]$ is a solution of 
$\mathrm{DE}(0)$,
so that
$
u(x)
 =
\mathbf{y}(x)^T
\mathbf{c}
$
for some 
$
\mathbf{c}
\in \gl(4,1,\mathbb{C})
$.
Then by \eqref{equation_MBpm}, \eqref{equation_WBpm}, and Definition~\ref{definition_Mtilde},
the boundary condition $\mathrm{BC}(\mathbf{M})$ becomes
\begin{equation}
\label{equation_mvas;odfijv}
\mathbf{0}
 =
\mathbf{M} \cdot \mathcal{B}[u]
 =
\mathbf{M}^-
\cdot
\mathbf{W}(-l)
\mathbf{c}
+
\mathbf{M}^+
\cdot
\mathbf{W}(l)
\mathbf{c}
 =
\left(
\widetilde{\mathbf{M}}^-
+
\widetilde{\mathbf{M}}^+
\right)
\mathbf{c}
 =
\widetilde{\mathbf{M}}
\mathbf{c}.
\end{equation}
By Definition~\ref{definition_wellposed},
$
\mathbf{M}
\in \mathrm{wp}(4,8,\mathbb{C})
$,
if and only if
$\mathbf{c} = \mathbf{0}$
is the only solution
in $\gl(4,1,\mathbb{C})$
satisfying \eqref{equation_mvas;odfijv},
which is equivalent to
$\det\widetilde{\mathbf{M}} = 0$.
\end{proof}

Since
$
\widetilde{\mathbf{M}}
$
is invertible
for every 
$
\mathbf{M}
\in \mathrm{wp}(4,8,\mathbb{C})
$,
the following is well-defined.

\begin{definition}
\label{definition_GMpm}
For
$
\mathbf{M}
\in \mathrm{wp}(4,8,\mathbb{C})
$,
we denote
$
\mathbf{G}_\mathbf{M}^-
 =
\widetilde{\mathbf{M}}^{-1}
\widetilde{\mathbf{M}}^-
\mathbf{\Omega}
\mathbf{L}^2
$
and
$
\mathbf{G}_\mathbf{M}^+
 =
\widetilde{\mathbf{M}}^{-1}
\widetilde{\mathbf{M}}^+
\mathbf{\Omega}
\mathbf{L}^2
$.
\end{definition}

By Definition~\ref{definition_Mtilde},
$
\mathbf{G}_\mathbf{M}^-
+
\mathbf{G}_\mathbf{M}^+
 =
\widetilde{\mathbf{M}}^{-1}
\widetilde{\mathbf{M}}^-
\mathbf{\Omega}
\mathbf{L}^2
+
\widetilde{\mathbf{M}}^{-1}
\widetilde{\mathbf{M}}^+
\mathbf{\Omega}
\mathbf{L}^2
 =
\widetilde{\mathbf{M}}^{-1}
\widetilde{\mathbf{M}}
\mathbf{\Omega}
\mathbf{L}^2
$,
hence
we have
\begin{equation}
\label{equation_GM-+GM+}
\mathbf{G}_\mathbf{M}^-
+
\mathbf{G}_\mathbf{M}^+
 =
\mathbf{\Omega}
\mathbf{L}^2,
\qquad
\mathbf{M}
\in \mathrm{wp}(4,8,\mathbb{C}).
\end{equation}

\begin{definition}
\label{definition_GMKM}
Let $\mathbf{M} \in \mathrm{wp}(4,8,\mathbb{C})$.
Define the function
$G_\mathbf{M}: [-l,l] \times [-l,l] \to \mathbb{C}$,
called the {\em Green's function}
corresponding to $\mathbf{M}$,
by
\[
G_\mathbf{M}(x,\xi)
 =
\frac{\alpha}{4k}
\cdot
\left\{
\begin{array}{ll}
\mathbf{y}(x)^T
\cdot
\mathbf{G}_\mathbf{M}^+
\cdot
\mathbf{y}(\xi),
 &
\text{if }
x \leq \xi, \\
-
\mathbf{y}(x)^T
\cdot
\mathbf{G}_\mathbf{M}^-
\cdot
\mathbf{y}(\xi),
 &
\text{if }
\xi \leq x.
\end{array}
\right.
\]
Define the operator
$\mathcal{K}_\mathbf{M}: L^2[-l,l] \to L^2[-l,l]$
by
\[
\mathcal{K}_\mathbf{M}[w](x)
 =
\int_{-l}^l G_\mathbf{M}(x,\xi) w(\xi) \, d\xi
,
\qquad
w \in L^2[-l,l],
\
x \in [-l,l].
\]
\end{definition}

Note that $G_\mathbf{M}$ is bounded on $[-l,l] \times [-l,l]$.
It is well-known~\cite{Stakgold} that
an integral operator of the form
$w \mapsto \int_{-l}^l g(x,\xi) w(\xi) \, dx$
with bounded kernel $g(x,\xi)$,
is a compact linear operator on $L^2[-l,l]$.
Thus $\mathcal{K}_\mathbf{M}$ is a well-defined compact linear operator on $L^2[-l,l]$.
Note from Definitions~\ref{definition_GMpm} and \ref{definition_GMKM} 
that the function $G_\mathbf{M}$
is defined constructively in terms of
given $\mathbf{M} \in \mathrm{wp}(4,8,\mathbb{C})$. 
Lemma~\ref{lemma_KM} below,
whose proof is in Appendix~\ref{appendix_Green},
shows that
$G_\mathbf{M}$ is the usual Green's function
for the boundary value problem consisting of $\mathrm{DE}(0)$ and $\mathrm{BC}(\mathbf{M})$.

\begin{lemma}
\label{lemma_KM}
Let $\mathbf{M} \in \mathrm{wp}(4,8,\mathbb{C})$
and $w \in L^2[-l,l]$.
Then
$\mathcal{K}_\mathbf{M}[w]$ is the unique solution of the boundary value problem 
consisting of
$\mathrm{DE}(w)$ and $\mathrm{BC}(\mathbf{M})$.
\end{lemma}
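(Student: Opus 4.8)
The strategy is to verify directly that $u := \mathcal{K}_\mathbf{M}[w]$ satisfies $\mathrm{DE}(w)$ and $\mathrm{BC}(\mathbf{M})$, and then invoke well-posedness for uniqueness. First I would split the defining integral at $\xi = x$, writing
\[
u(x)
 =
\frac{\alpha}{4k}
\left(
\int_{-l}^x
\left\{-\mathbf{y}(x)^T \mathbf{G}_\mathbf{M}^- \mathbf{y}(\xi)\right\} w(\xi)\,d\xi
+
\int_x^l
\mathbf{y}(x)^T \mathbf{G}_\mathbf{M}^+ \mathbf{y}(\xi) w(\xi)\,d\xi
\right),
\]
and differentiate under the integral sign using Leibniz's rule. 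The boundary terms at $\xi = x$ coming from the two pieces are $-\mathbf{y}(x)^T \mathbf{G}_\mathbf{M}^- \mathbf{y}(x) w(x)$ and $-\mathbf{y}(x)^T \mathbf{G}_\mathbf{M}^+ \mathbf{y}(x) w(x)$ (with signs from the orientation of the integrals); their sum is $-\mathbf{y}(x)^T(\mathbf{G}_\mathbf{M}^- + \mathbf{G}_\mathbf{M}^+)\mathbf{y}(x) w(x) = -\mathbf{y}(x)^T \mathbf{\Omega}\mathbf{L}^2 \mathbf{y}(x) w(x)$ by \eqref{equation_GM-+GM+}. Using \eqref{equation_y'x} (so that $\frac{d^m}{dx^m}\mathbf{y}(x) = \alpha^m \mathbf{\Omega}^m \mathbf{y}(x)$) and $\mathbf{L}^2 \mathbf{y}(x) = \mathbf{y}(-x)$ from \eqref{equation_L2}, one checks that $\mathbf{y}(x)^T\mathbf{\Omega}\mathbf{L}^2\mathbf{y}(x)$, as well as the analogous quadratic forms with $\mathbf{\Omega}^2$, vanish for the first three derivatives; this is the standard fact that the $j$-th derivative of the Green kernel is continuous across $\xi = x$ for $j \le 2$. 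Only at the fourth derivative does the jump term survive, and it must be normalized to produce exactly $\frac{\alpha^4}{k}w(x)$ — this fixes the choice of the constant $\alpha/(4k)$ and the matrix factor $\mathbf{\Omega}\mathbf{L}^2$ in Definition~\ref{definition_GMpm}. For the homogeneous part one uses $(\omega_j\alpha)^4 = \alpha^4\omega_j^4 = -\alpha^4$ from \eqref{equation_omega}, so that $\mathbf{y}^{(4)}(x) + \alpha^4\mathbf{y}(x) = \mathbf{0}$ componentwise, killing the terms where both derivatives fall on the $\mathbf{y}(x)$ factors.

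Next I would verify the boundary condition. Evaluating the derivatives at $x = \pm l$, the representation collapses: at $x = -l$ the first integral is empty, so $\mathcal{B}^-[u] = \frac{\alpha}{4k}\mathbf{W}(-l)\mathbf{G}_\mathbf{M}^+ \int_{-l}^l \mathbf{y}(\xi)w(\xi)\,d\xi$, and at $x = l$ the second integral is empty, so $\mathcal{B}^+[u] = -\frac{\alpha}{4k}\mathbf{W}(l)\mathbf{G}_\mathbf{M}^- \int_{-l}^l \mathbf{y}(\xi)w(\xi)\,d\xi$ (here I am using the Wronskian matrix $\mathbf{W}$ of Definition~\ref{definition_W(x)} to package the four derivatives, exactly as in \eqref{equation_WBpm}). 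Writing $\mathbf{v} = \frac{\alpha}{4k}\int_{-l}^l \mathbf{y}(\xi)w(\xi)\,d\xi$, formula \eqref{equation_MBpm} gives
\[
\mathbf{M}\cdot\mathcal{B}[u]
 =
\mathbf{M}^-\mathbf{W}(-l)\mathbf{G}_\mathbf{M}^+\mathbf{v}
-
\mathbf{M}^+\mathbf{W}(l)\mathbf{G}_\mathbf{M}^-\mathbf{v}
 =
\left(\widetilde{\mathbf{M}}^-\mathbf{G}_\mathbf{M}^+ - \widetilde{\mathbf{M}}^+\mathbf{G}_\mathbf{M}^-\right)\mathbf{v}.
\]
Substituting Definition~\ref{definition_GMpm}, $\widetilde{\mathbf{M}}^-\mathbf{G}_\mathbf{M}^+ - \widetilde{\mathbf{M}}^+\mathbf{G}_\mathbf{M}^- = \left(\widetilde{\mathbf{M}}^-\widetilde{\mathbf{M}}^{-1}\widetilde{\mathbf{M}}^+ - \widetilde{\mathbf{M}}^+\widetilde{\mathbf{M}}^{-1}\widetilde{\mathbf{M}}^-\right)\mathbf{\Omega}\mathbf{L}^2$; since $\widetilde{\mathbf{M}}^- + \widetilde{\mathbf{M}}^+ = \widetilde{\mathbf{M}}$, we have $\widetilde{\mathbf{M}}^+ = \widetilde{\mathbf{M}} - \widetilde{\mathbf{M}}^-$, and the bracket becomes $\widetilde{\mathbf{M}}^-\widetilde{\mathbf{M}}^{-1}(\widetilde{\mathbf{M}} - \widetilde{\mathbf{M}}^-) - (\widetilde{\mathbf{M}} - \widetilde{\mathbf{M}}^-)\widetilde{\mathbf{M}}^{-1}\widetilde{\mathbf{M}}^- = \widetilde{\mathbf{M}}^- - \widetilde{\mathbf{M}}^-\widetilde{\mathbf{M}}^{-1}\widetilde{\mathbf{M}}^- - \widetilde{\mathbf{M}}^- + \widetilde{\mathbf{M}}^-\widetilde{\mathbf{M}}^{-1}\widetilde{\mathbf{M}}^- = \mathbf{O}$. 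Hence $\mathbf{M}\cdot\mathcal{B}[u] = \mathbf{0}$, i.e. $\mathrm{BC}(\mathbf{M})$ holds. Finally, since $\mathbf{M}$ is well-posed, the difference of any two solutions of $\mathrm{DE}(w)$, $\mathrm{BC}(\mathbf{M})$ solves $\mathrm{DE}(0)$, $\mathrm{BC}(\mathbf{M})$ and is therefore $0$ by Definition~\ref{definition_wellposed}, so $\mathcal{K}_\mathbf{M}[w]$ is the unique solution.

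I expect the main obstacle to be the careful bookkeeping of the jump terms across $\xi = x$: one must track the signs from differentiating each of the two integrals, confirm that the $j$-th order jump vanishes for $j = 0,1,2$ and equals the right nonzero quantity for $j = 4$ (with the $j = 3$ jump also needing attention depending on how derivatives are distributed), and match it to the normalization $\alpha^4/k$ on the right side of $\mathrm{DE}(w)$. This amounts to computing the quantities $\mathbf{y}(x)^T\mathbf{\Omega}^m\mathbf{L}^2\mathbf{y}(x)$ for $m = 0,1,2,3$ and showing the first three vanish identically in $x$ — this follows from the algebraic identities $\mathbf{\Omega}^4 = -\mathbf{I}$, $\mathbf{L}^2\mathbf{\Omega} = -\mathbf{\Omega}\mathbf{L}^2$, $(\mathbf{L}^2)^2 = \mathbf{I}$ together with the orthogonality-type relation behind Lemma~\ref{lemma_W0}, but assembling it cleanly is the delicate step. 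Everything else — the boundary-condition computation above and the uniqueness argument — is then a short matrix manipulation. Because the excerpt defers this proof to an appendix, I would present it there at the level of detail just sketched.
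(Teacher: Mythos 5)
Your plan is correct and matches the paper's Appendix A proof in all essentials: split the integral at $\xi=x$, track the Leibniz boundary terms which reduce to $\mathbf{y}(x)^T\mathbf{\Omega}^{n+1}\mathbf{L}^2\mathbf{y}(x) = \sum_{j}\omega_j^{n+1}$, use the power-sum identities $\sum_j\omega_j = \sum_j\omega_j^2 = \sum_j\omega_j^3 = 0$ and $\sum_j\omega_j^4 = -4$, and verify $\mathrm{BC}(\mathbf{M})$ via the commutator-type cancellation $\widetilde{\mathbf{M}}^-\widetilde{\mathbf{M}}^{-1}\widetilde{\mathbf{M}}^+ - \widetilde{\mathbf{M}}^+\widetilde{\mathbf{M}}^{-1}\widetilde{\mathbf{M}}^- = \mathbf{O}$, with uniqueness from well-posedness. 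The only slip is an off-by-one in your closing paragraph: the quantities that vanish are $\mathbf{y}(x)^T\mathbf{\Omega}^m\mathbf{L}^2\mathbf{y}(x) = \sum_j\omega_j^m$ for $m = 1,2,3$ (not $m = 0,1,2$; for $m=0$ the sum is $4$), and the surviving jump is $m = 4$ giving $\sum_j\omega_j^4 = -4$; this is a bookkeeping error that the detailed writeup would catch, and the paper handles it cleanly by introducing $\mathbf{f}(x) = -\mathbf{G}_\mathbf{M}^-\int_{-l}^x\mathbf{y}(\xi)w(\xi)\,d\xi + \mathbf{G}_\mathbf{M}^+\int_x^l\mathbf{y}(\xi)w(\xi)\,d\xi$ and iterating $\mathcal{K}_\mathbf{M}[w]^{(n)}(x) = \frac{\alpha^{n+1}}{4k}\mathbf{y}(x)^T\mathbf{\Omega}^n\mathbf{f}(x)$.
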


By Lemma~\ref{lemma_KM},
we have
\begin{align}
\label{equation_KMwDE}
\mathcal{K}_\mathbf{M}[u]^{(4)} 
+ 
\alpha^4 
\cdot 
\mathcal{K}_\mathbf{M}[u]
 &=
\frac{\alpha^4}{k} 
\cdot 
u,
 &
\mathbf{M} \in \mathrm{wp}(4,8,\mathbb{C}),
\
u \in L^2[-l,l], \\
\label{equation_KMwBC}
\mathbf{M} 
\cdot 
\mathcal{B}
\left[ 
\mathcal{K}_\mathbf{M}[u]
\right]
 &=
\mathbf{0},
 &
\mathbf{M} \in \mathrm{wp}(4,8,\mathbb{C}),
\
u \in L^2[-l,l].
\end{align}

Note that \eqref{equation_KMwDE} in particular implies that
the linear operator $\mathcal{K}_\mathbf{M}$ is one-to-one,
or {\em injective}, for every $\mathbf{M} \in \mathrm{wp}(4,8,\mathbb{C})$.

\begin{definition}
\label{definition_EDE}
For $0 \neq \lambda \in \mathbb{C}$,
we denote by $\mathrm{EDE}(\lambda)$,
the homogeneous equation
\[
\mathrm{EDE}(\lambda):
u^{(4)} + \left( 1 - \frac{1}{\lambda k} \right) \alpha^4 u
 =
0.
\]
\end{definition}

Note that
$
1
-
1/(\lambda k)
 \neq
1
$
for any $\lambda \in \mathbb{C}$.
In fact, the homogeneous equation $\mathrm{DE}(0)$:
$u^{(4)} + \alpha^4 u = 0$ can be regarded as the limiting case $\mathrm{EDE}(\infty)$.
In terms of
given $\mathbf{M} \in \mathrm{wp}(4,8,\mathbb{C})$,
$\mathcal{K}_\mathbf{M}$ has the following eigencondition.

\begin{lemma}
\label{lemma_eigencondition}
Let
$
\mathbf{M}
\in \mathrm{wp}(4,8,\mathbb{C})$,
$0 \neq u \in L^2[-l,l]$,
and
$\lambda \in \mathbb{C}$.
Then $\mathcal{K}_\mathbf{M}[u] = \lambda \cdot u$,
if and only if
$\lambda \neq 0$
and
$u$ satisfies $\mathrm{EDE}(\lambda)$
and $\mathrm{BC}(\mathbf{M})$.
\end{lemma}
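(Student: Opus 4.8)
The plan is to prove Lemma~\ref{lemma_eigencondition} by reducing the eigenvalue equation $\mathcal{K}_\mathbf{M}[u] = \lambda \cdot u$ to a differential equation plus boundary condition, using the characterization of $\mathcal{K}_\mathbf{M}[u]$ from Lemma~\ref{lemma_KM}. First I would dispose of the case $\lambda = 0$: since \eqref{equation_KMwDE} shows that $\mathcal{K}_\mathbf{M}$ is injective, $\mathcal{K}_\mathbf{M}[u] = 0$ forces $u = 0$, contradicting $u \neq 0$; hence any eigenvalue with eigenfunction $u \neq 0$ must be nonzero, and conversely if $\lambda = 0$ the equation cannot hold. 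So throughout the rest we may assume $\lambda \neq 0$.

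For the main equivalence, suppose $\lambda \neq 0$. If $\mathcal{K}_\mathbf{M}[u] = \lambda \cdot u$, then $u = \frac{1}{\lambda}\mathcal{K}_\mathbf{M}[u]$; in particular $u$ inherits the smoothness of $\mathcal{K}_\mathbf{M}[u]$, so $u \in C^3[-l,l]$ (indeed $C^4$) and $\mathcal{B}[u]$ makes sense. Substituting $\mathcal{K}_\mathbf{M}[u] = \lambda u$ into \eqref{equation_KMwDE} gives $\lambda\, u^{(4)} + \alpha^4 \lambda\, u = \frac{\alpha^4}{k} u$, i.e. $u^{(4)} + \alpha^4 u = \frac{\alpha^4}{\lambda k} u$, which rearranges to $u^{(4)} + \left(1 - \frac{1}{\lambda k}\right)\alpha^4 u = 0$, precisely $\mathrm{EDE}(\lambda)$. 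Likewise, substituting into \eqref{equation_KMwBC} gives $\mathbf{M}\cdot\mathcal{B}[\lambda u] = \lambda\,\mathbf{M}\cdot\mathcal{B}[u] = \mathbf{0}$, and since $\lambda \neq 0$ this is $\mathrm{BC}(\mathbf{M})$. Conversely, suppose $u \neq 0$ satisfies $\mathrm{EDE}(\lambda)$ and $\mathrm{BC}(\mathbf{M})$. Set $w = \frac{\alpha^4}{\lambda k}u \in L^2[-l,l]$; then $\mathrm{EDE}(\lambda)$ says exactly $u^{(4)} + \alpha^4 u = w$, i.e. $u$ solves $\mathrm{DE}(w)$ (after the scaling in \eqref{equation_linearnonhomogeneous}, $\frac{\alpha^4}{k}w$ would be the right side — one must be careful to track the normalization; since $w$ here already equals $\frac{\alpha^4}{\lambda k}u$ I would instead take $w = \frac{1}{\lambda}u$ so that $\frac{\alpha^4}{k}w = \frac{\alpha^4}{\lambda k}u$ matches $\mathrm{EDE}(\lambda)$). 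Together with $\mathrm{BC}(\mathbf{M})$, $u$ is then a solution of the boundary value problem consisting of $\mathrm{DE}(w)$ and $\mathrm{BC}(\mathbf{M})$; by the uniqueness half of Lemma~\ref{lemma_KM}, $u = \mathcal{K}_\mathbf{M}[w] = \mathcal{K}_\mathbf{M}[\frac{1}{\lambda}u] = \frac{1}{\lambda}\mathcal{K}_\mathbf{M}[u]$, hence $\mathcal{K}_\mathbf{M}[u] = \lambda u$, as desired.

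The only genuinely delicate point is bookkeeping of the constant: I must verify that the right-hand side dictated by $\mathrm{EDE}(\lambda)$ is consistent with the normalization $\frac{\alpha^4}{k}$ appearing in $\mathrm{DE}(w)$, so that the function $w$ fed into Lemma~\ref{lemma_KM} is genuinely $\frac{1}{\lambda}u$ and the self-referential identity $u = \mathcal{K}_\mathbf{M}[\frac{1}{\lambda}u]$ closes up correctly. The forward direction uses \eqref{equation_KMwDE}–\eqref{equation_KMwBC} directly and is routine; the reverse direction is where the uniqueness statement of Lemma~\ref{lemma_KM} does the real work, converting "$u$ satisfies the right ODE and BC" into "$u$ is the image under $\mathcal{K}_\mathbf{M}$ of the appropriate source term." A minor regularity remark — that an $L^2$ solution of the fourth-order ODE $\mathrm{EDE}(\lambda)$ is automatically smooth, being a linear combination of exponentials — should be noted so that $\mathcal{B}[u]$ and $\mathrm{BC}(\mathbf{M})$ are meaningful. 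No step should present a serious obstacle; the lemma is essentially a restatement of Lemma~\ref{lemma_KM} specialized to the homogeneous source proportional to $u$.
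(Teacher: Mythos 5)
Your proof is correct and uses the same key tools as the paper: injectivity of $\mathcal{K}_\mathbf{M}$ from \eqref{equation_KMwDE} to rule out $\lambda=0$, the identities \eqref{equation_KMwDE}--\eqref{equation_KMwBC} for the forward direction, and uniqueness for the reverse. The forward direction is identical to the paper's. In the reverse direction you take a mildly different route: after fixing the constant so that $w=\tfrac{1}{\lambda}u$ makes $\mathrm{EDE}(\lambda)$ equivalent to $\mathrm{DE}(w)$, you invoke the uniqueness clause of Lemma~\ref{lemma_KM} directly to conclude $u=\mathcal{K}_\mathbf{M}\bigl[\tfrac{1}{\lambda}u\bigr]$, whereas the paper instead forms $\hat{u}=\mathcal{K}_\mathbf{M}[u]-\lambda u$, verifies that $\hat{u}$ solves the homogeneous problem $\mathrm{DE}(0)$ with $\mathrm{BC}(\mathbf{M})$, and concludes $\hat{u}=0$ from Definition~\ref{definition_wellposed}. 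The two arguments rest on the same well-posedness fact and are logically interchangeable; yours is perhaps slightly more transparent in showing how $u$ is recovered from $\mathcal{K}_\mathbf{M}$ acting on a rescaling of itself, while the paper's subtraction trick avoids the normalization bookkeeping you flagged as the one delicate point. Your regularity remark (that an $L^2$ solution of $\mathrm{EDE}(\lambda)$ is smooth, so $\mathcal{B}[u]$ makes sense) is correct and worth noting even though the paper passes over it silently.
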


\begin{proof}
Let $u \neq 0 \in L^2[-l,l]$,
$\lambda \in \mathbb{C}$.
Suppose $\mathcal{K}_\mathbf{M}[u] = \lambda \cdot u$.
If $\lambda = 0$,
then $\mathcal{K}_\mathbf{M}[u] = 0 \cdot u = 0$,
hence $u = 0$ by \eqref{equation_KMwDE},
contradicting the assumption $u \neq 0$.
Thus $\lambda \neq 0$.
By \eqref{equation_KMwDE},
we have
\begin{align*}
\lambda \cdot u^{(4)}
 &=
(\lambda \cdot u)^{(4)}
 =
\mathcal{K}_\mathbf{M}[u]^{(4)}
 =
-\alpha^4
\cdot
\mathcal{K}_\mathbf{M}[u]
+
\frac{\alpha^4}{k}
\cdot
u \\
 &=
-\alpha^4
\cdot
(\lambda \cdot u)
+
\frac{\alpha^4}{k}
\cdot
u
 =
-
\left(
\lambda - \frac{1}{k}
\right)
\alpha^4
\cdot
u,
\end{align*}
which shows that $u$ satisfies $\mathrm{EDE}(\lambda)$.
By \eqref{equation_KMwBC},
we have
$
\mathbf{M}
\cdot
\mathcal{B}\left[ u \right]
 =
(1/\lambda)
\cdot
\mathbf{M}
\cdot
\mathcal{B}\left[ \lambda \cdot u \right]
 =
(1/\lambda)
\cdot
\mathbf{M}
\cdot
\mathcal{B}
\left[
\mathcal{K}_\mathbf{M}[u]
\right]
 =
\mathbf{0}
$,
hence $u$ satisfies $\mathrm{BC}(\mathbf{M})$.

Conversely,
suppose $\lambda \neq 0$
and 
$u$ satisfies $\mathrm{EDE}(\lambda)$
and $\mathrm{BC}(\mathbf{M})$.
Let 
$
\hat{u}
 =
\mathcal{K}_\mathbf{M}[u] - \lambda \cdot u
$.
Then by \eqref{equation_KMwDE},
we have
\begin{align*}
\lefteqn{
\hat{u}^{(4)}
+
\alpha^4
\hat{u}
 =
\left(
\mathcal{K}_\mathbf{M}[u] - \lambda \cdot u
\right)^{(4)}
+
\alpha^4
\left(
\mathcal{K}_\mathbf{M}[u] - \lambda \cdot u
\right)
} \\
 &=
\left(
\mathcal{K}_\mathbf{M}[u]^{(4)}
+
\alpha^4
\cdot
\mathcal{K}_\mathbf{M}[u]
\right)
-
\lambda
\left(
u^{(4)}
+
\alpha^4
u
\right)
 =
\frac{\alpha^4}{k}
\cdot
u
-
\lambda
\left(
u^{(4)}
+
\alpha^4
u
\right) \\
 &=
-\lambda
\left\{
u^{(4)}
+
\left(
1
-
\frac{1}{\lambda k}
\right)
\alpha^4
u
\right\},
\end{align*}
hence
$
\hat{u}^{(4)}
+
\alpha^4
\hat{u}
 =
0
$,
since $u$ satisfies $\mathrm{EDE}(\lambda)$.
By \eqref{equation_KMwBC},
we have
$
\mathbf{M}
\cdot
\mathcal{B}
\left[
\hat{u}
\right]
 =
\mathbf{M}
\cdot
\mathcal{B}
\left[
\mathcal{K}_\mathbf{M}[u] - \lambda \cdot u
\right]
 =
\mathbf{M}
\cdot
\mathcal{B}
\left[
\mathcal{K}_\mathbf{M}[u]
\right]
-
\lambda
\mathbf{M}
\cdot
\mathcal{B}\left[ u \right]
 =
-
\lambda
\mathbf{M}
\cdot
\mathcal{B}\left[ u \right]
$,
hence
$
\mathbf{M}
\cdot
\mathcal{B}\left[ \hat{u} \right]
 =
\mathbf{0}
$,
since $u$ satisfies $\mathrm{BC}(\mathbf{M})$.
It follows that
$
\hat{u}
 =
\mathcal{K}_\mathbf{M}[u] - \lambda \cdot u
$
is the unique solution of the boundary value problem
consisting of 
$\mathrm{DE}(0)$ and $\mathrm{BC}(\mathbf{M})$,
which is $0$ by Definition~\ref{definition_wellposed}.
Thus we have $\mathcal{K}_\mathbf{M}[u] = \lambda \cdot u$,
and the proof is complete.
\end{proof}

\section{Representation of well-posed boundary conditions}

\subsection{The representations $\protect\Gamma^-$ and $\protect\Gamma^+$}
\label{section_Gammapm}

\begin{definition}
\label{definition_equivalence}
$\mathbf{M}, \mathbf{N} \in \mathrm{wp}(4,8,\mathbb{C})$
are called {\em equivalent},
and denote $\mathbf{M} \approx \mathbf{N}$,
if $\mathcal{K}_\mathbf{M} = \mathcal{K}_\mathbf{N}$.
The quotient set
$
\left.
\mathrm{wp}(4,8,\mathbb{C})
\right/\!\!\approx
$
of $\mathrm{wp}(4,8,\mathbb{C})$
with respect to the equivalence relation $\approx$,
is denoted by
$\mathrm{wp}(\mathbb{C})$.
For $\mathbf{M} \in \mathrm{wp}(4,8,\mathbb{C})$,
we denote by $\left[ \mathbf{M} \right] \in \mathrm{wp}(\mathbb{C})$
the equivalence class with respect to $\approx$ containing
$\mathbf{M}$.
\end{definition}

Note from Definitions~\ref{definition_GMKM} and \ref{definition_equivalence} that
$\mathbf{M} \approx \mathbf{N}$,
if and only if $G_\mathbf{M} = G_{\mathbf{N}}$.

\begin{lemma}
\label{lemma_equivalence}
For
$\mathbf{M}, \mathbf{N} \in \mathrm{wp}(4,8,\mathbb{C})$,
the following {\rm (a)}, {\rm (b)}, {\rm (c)}, {\rm (d)} are equivalent.
{\rm (a)}
$\mathbf{M} \approx \mathbf{N}$.
{\rm (b)}
$\mathbf{G}_\mathbf{M}^+ = \mathbf{G}_\mathbf{N}^+$.
{\rm (c)}
$\mathbf{G}_\mathbf{M}^- = \mathbf{G}_\mathbf{N}^-$.
{\rm (d)}
$
\mathbf{N}
 =
\mathbf{P}
\mathbf{M}$
for some $\mathbf{P} \in GL(4,\mathbb{C})$.
\end{lemma}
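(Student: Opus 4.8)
The plan is to route all four conditions through (b), establishing (b)$\Leftrightarrow$(c), (b)$\Leftrightarrow$(a), and (b)$\Leftrightarrow$(d). The equivalence (b)$\Leftrightarrow$(c) is immediate from \eqref{equation_GM-+GM+}: since $\mathbf{G}_\mathbf{M}^- + \mathbf{G}_\mathbf{M}^+ = \mathbf{\Omega}\mathbf{L}^2 = \mathbf{G}_\mathbf{N}^- + \mathbf{G}_\mathbf{N}^+$ irrespective of $\mathbf{M}$ and $\mathbf{N}$, the differences $\mathbf{G}_\mathbf{M}^+ - \mathbf{G}_\mathbf{N}^+$ and $\mathbf{G}_\mathbf{M}^- - \mathbf{G}_\mathbf{N}^-$ are negatives of each other, so one vanishes iff the other does. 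The implication (b)\,(together with (c))\,$\Rightarrow$\,(a) is equally direct from Definition~\ref{definition_GMKM}: if $\mathbf{G}_\mathbf{M}^\pm = \mathbf{G}_\mathbf{N}^\pm$, the piecewise formula defining $G_\mathbf{M}$ agrees with that defining $G_\mathbf{N}$ on all of $[-l,l]\times[-l,l]$, hence $\mathcal{K}_\mathbf{M} = \mathcal{K}_\mathbf{N}$, i.e.\ $\mathbf{M}\approx\mathbf{N}$.

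The one genuinely analytic step, and the step I expect to be the main (though routine) obstacle, is (a)$\Rightarrow$(b). Assuming $G_\mathbf{M} = G_\mathbf{N}$, restrict to the nonempty open set $\{(x,\xi) : -l \le x < \xi \le l\}$, where the formula reads $\frac{\alpha}{4k}\,\mathbf{y}(x)^T\mathbf{G}_\mathbf{M}^+\mathbf{y}(\xi)$; hence $\mathbf{y}(x)^T\bigl(\mathbf{G}_\mathbf{M}^+ - \mathbf{G}_\mathbf{N}^+\bigr)\mathbf{y}(\xi) = 0$ throughout that region. Writing $\mathbf{G}_\mathbf{M}^+ - \mathbf{G}_\mathbf{N}^+ = (a_{i,j})_{1\le i,j\le 4}$, this says $\sum_{i,j} a_{i,j}\,e^{\omega_i\alpha x}e^{\omega_j\alpha\xi} = 0$. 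Since $\omega_1,\omega_2,\omega_3,\omega_4$ are distinct, the exponentials $e^{\omega_i\alpha t}$ ($i=1,\dots,4$) are linearly independent on every nonempty interval; freezing $\xi$ and letting $x$ vary forces $\sum_j a_{i,j}\,e^{\omega_j\alpha\xi} = 0$ for each $i$ and all admissible $\xi$, and then letting $\xi$ vary forces $a_{i,j}=0$ for all $i,j$. Thus $\mathbf{G}_\mathbf{M}^+ = \mathbf{G}_\mathbf{N}^+$. (Everything after this is pure linear algebra.)

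It remains to connect (b) with (d). For (d)$\Rightarrow$(b): if $\mathbf{N} = \mathbf{P}\mathbf{M}$ with $\mathbf{P}\in GL(4,\mathbb{C})$, then splitting into $4\times 4$ blocks gives $\mathbf{N}^\pm = \mathbf{P}\mathbf{M}^\pm$, hence $\widetilde{\mathbf{N}}^\pm = \mathbf{P}\widetilde{\mathbf{M}}^\pm$ and $\widetilde{\mathbf{N}} = \mathbf{P}\widetilde{\mathbf{M}}$ by Definition~\ref{definition_Mtilde}, so $\mathbf{G}_\mathbf{N}^+ = \widetilde{\mathbf{N}}^{-1}\widetilde{\mathbf{N}}^+\mathbf{\Omega}\mathbf{L}^2 = \widetilde{\mathbf{M}}^{-1}\mathbf{P}^{-1}\mathbf{P}\widetilde{\mathbf{M}}^+\mathbf{\Omega}\mathbf{L}^2 = \mathbf{G}_\mathbf{M}^+$. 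Conversely, for (b)\,(with (c))\,$\Rightarrow$(d): since $\mathbf{\Omega}\mathbf{L}^2$ is invertible, $\mathbf{G}_\mathbf{M}^\pm = \mathbf{G}_\mathbf{N}^\pm$ gives $\widetilde{\mathbf{M}}^{-1}\widetilde{\mathbf{M}}^\pm = \widetilde{\mathbf{N}}^{-1}\widetilde{\mathbf{N}}^\pm$; recalling from Lemma~\ref{lemma_wellposed} that $\widetilde{\mathbf{M}}$ and $\widetilde{\mathbf{N}}$ are invertible, set $\mathbf{P} = \widetilde{\mathbf{N}}\widetilde{\mathbf{M}}^{-1} \in GL(4,\mathbb{C})$. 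Then $\widetilde{\mathbf{N}}^\pm = \mathbf{P}\widetilde{\mathbf{M}}^\pm$, i.e.\ $\mathbf{N}^\pm\mathbf{W}(\pm l) = \mathbf{P}\mathbf{M}^\pm\mathbf{W}(\pm l)$, and cancelling the invertible $\mathbf{W}(\pm l)$ yields $\mathbf{N}^\pm = \mathbf{P}\mathbf{M}^\pm$, hence $\mathbf{N} = \mathbf{P}\mathbf{M}$. This closes the cycle of equivalences.
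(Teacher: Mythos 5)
Your proposal is correct and follows essentially the same route as the paper's own proof: establish (b)$\Leftrightarrow$(c) from \eqref{equation_GM-+GM+}, identify (a) with $G_\mathbf{M}=G_\mathbf{N}$ which splits into (b)\,\&\,(c) via linear independence of $y_1,\dots,y_4$, and then prove (b)$\Leftrightarrow$(d) by taking $\mathbf{P}=\widetilde{\mathbf{N}}\widetilde{\mathbf{M}}^{-1}$ in one direction and unwinding Definition~\ref{definition_GMpm} in the other. The only difference is that you unpack the linear-independence step (separation of variables on the triangle $x<\xi$) more explicitly than the paper does, which is helpful but not a different argument.
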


\begin{proof}
The equivalence of (b) and (c) follows immediately,
since
$
\mathbf{G}_\mathbf{M}^-
+
\mathbf{G}_\mathbf{M}^+
 =
\mathbf{\Omega}
\mathbf{L}^2
 =
\mathbf{G}_\mathbf{N}^-
+
\mathbf{G}_\mathbf{N}^+
$
by \eqref{equation_GM-+GM+}.
Since the entries $y_1, y_2, y_3, y_4$
of $\mathbf{y}$ in Definition~\ref{definition_W(x)}
are linearly independent,
it follows from Definition~\ref{definition_GMKM}
that 
$G_\mathbf{M} = G_\mathbf{N}$,
if and only if 
$\mathbf{G}_\mathbf{M}^- = \mathbf{G}_\mathbf{N}^-$
and
$\mathbf{G}_\mathbf{M}^+ = \mathbf{G}_\mathbf{N}^+$.
Thus (a), (b), (c) are equivalent,
and hence it is sufficient to show the equivalence of (b) and (d).

Let
$\mathbf{M}^-, \mathbf{M}^+ \in \gl(4,\mathbb{C})$
and
$\mathbf{N}^-, \mathbf{N}^+ \in \gl(4,\mathbb{C})$
be the $4 \times 4$ minors of 
$\mathbf{M}$
and
$\mathbf{N}$
respectively
such that
$
\mathbf{M}
 = 
\begin{pmatrix}
\mathbf{M}^-  & \mathbf{M}^+ 
\end{pmatrix}
$,
$
\mathbf{N}
 = 
\begin{pmatrix}
\mathbf{N}^-  & \mathbf{N}^+ 
\end{pmatrix}
$.
Suppose (d).
Then
$
\begin{pmatrix}
\mathbf{N}^-  & \mathbf{N}^+ 
\end{pmatrix}
 =
\mathbf{N}
 =
\mathbf{P}
\mathbf{M}
 =
\mathbf{P}
\begin{pmatrix}
\mathbf{M}^-  & \mathbf{M}^+ 
\end{pmatrix}
 =
\begin{pmatrix}
\mathbf{P}
\mathbf{M}^-  &
\mathbf{P}
\mathbf{M}^+ 
\end{pmatrix}
$
for some
$
\mathbf{P}
\in GL(4,\mathbb{C})
$.
So we have
$
\mathbf{N}^-
 =
\mathbf{P}
\mathbf{M}^-
$
and
$
\mathbf{N}^+
 =
\mathbf{P}
\mathbf{M}^+
$,
and hence by Definition~\ref{definition_Mtilde},
$
\widetilde{\mathbf{N}}^\pm
 =
\mathbf{N}^\pm
\mathbf{W}(\pm l)
 =
\mathbf{P}
\mathbf{M}^\pm
\mathbf{W}(\pm l)
 =
\mathbf{P}
\widetilde{\mathbf{M}}^\pm
$
and
$
\widetilde{\mathbf{N}}
 =
\widetilde{\mathbf{N}}^-
+
\widetilde{\mathbf{N}}^+
 =
\mathbf{P}
\widetilde{\mathbf{M}}^-
+
\mathbf{P}
\widetilde{\mathbf{M}}^+
 =
\mathbf{P}
\widetilde{\mathbf{M}}
$.
Thus by Definition~\ref{definition_GMpm},
\begin{align*}
\mathbf{G}_\mathbf{N}^+
 &=
\widetilde{\mathbf{N}}^{-1}
\widetilde{\mathbf{N}}^+
\mathbf{\Omega}
\mathbf{L}^2
 =
\left(
\mathbf{P}
\widetilde{\mathbf{M}}
\right)^{-1}
\left(
\mathbf{P}
\widetilde{\mathbf{M}}^+
\right)
\mathbf{\Omega}
\mathbf{L}^2
 =
\widetilde{\mathbf{M}}^{-1}
\mathbf{P}^{-1}
\mathbf{P}
\widetilde{\mathbf{M}}^+
\mathbf{\Omega}
\mathbf{L}^2 \\
 &=
\widetilde{\mathbf{M}}^{-1}
\widetilde{\mathbf{M}}^+
\mathbf{\Omega}
\mathbf{L}^2
 =
\mathbf{G}_\mathbf{M}^+,
\end{align*}
hence we have (b).

Conversely, suppose (b),
so that $\mathbf{G}_\mathbf{M}^+ = \mathbf{G}_\mathbf{N}^+$.
Since (b) and (c) are equivalent,
we also have
$\mathbf{G}_\mathbf{M}^- = \mathbf{G}_\mathbf{N}^-$.
By Definition~\ref{definition_GMpm},
we have
$
\widetilde{\mathbf{M}}^{-1}
\widetilde{\mathbf{M}}^-
 =
\widetilde{\mathbf{N}}^{-1}
\widetilde{\mathbf{N}}^-
$
and
$
\widetilde{\mathbf{M}}^{-1}
\widetilde{\mathbf{M}}^+
 =
\widetilde{\mathbf{N}}^{-1}
\widetilde{\mathbf{N}}^+
$,
since
$
\mathbf{\Omega}
\mathbf{L}^2
$
is invertible.
So we have
$
\widetilde{\mathbf{N}}^\pm
 =
\widetilde{\mathbf{N}}
\widetilde{\mathbf{M}}^{-1}
\cdot
\widetilde{\mathbf{M}}^\pm
$,
hence
$
\mathbf{N}^\pm
\mathbf{W}(\pm l)
 =
\widetilde{\mathbf{N}}
\widetilde{\mathbf{M}}^{-1}
\cdot
\mathbf{M}^\pm
\mathbf{W}(\pm l)
$
by Definition~\ref{definition_Mtilde}.
Since $\mathbf{W}(\pm l)$ are invertible,
we have
$
\mathbf{N}^\pm
 =
\widetilde{\mathbf{N}}
\widetilde{\mathbf{M}}^{-1}
\cdot
\mathbf{M}^\pm
$,
hence
\[
\mathbf{N}
 =
\begin{pmatrix}
\mathbf{N}^-  & \mathbf{N}^+ 
\end{pmatrix}
 =
\begin{pmatrix}
\widetilde{\mathbf{N}}
\widetilde{\mathbf{M}}^{-1}
\cdot
\mathbf{M}^-  & 
\widetilde{\mathbf{N}}
\widetilde{\mathbf{M}}^{-1}
\cdot
\mathbf{M}^+ 
\end{pmatrix}
 =
\widetilde{\mathbf{N}}
\widetilde{\mathbf{M}}^{-1}
\cdot
\mathbf{M}.
\]
Thus we have (d),
since
$
\widetilde{\mathbf{N}}
\widetilde{\mathbf{M}}^{-1}
\in GL(4,\mathbb{C})
$
by Lemma~\ref{lemma_wellposed},
and the proof is complete.
\end{proof}

\begin{definition}
\label{definition_Gammapm}
Define
$\Gamma^-, \Gamma^+ : \mathrm{wp}(\mathbb{C}) \to \gl(4,\mathbb{C})$
by
$
\Gamma^-\left( \left[ \mathbf{M} \right] \right)
 =
\mathbf{G}_\mathbf{M}^-
$
and
$
\Gamma^+\left( \left[ \mathbf{M} \right] \right)
 =
\mathbf{G}_\mathbf{M}^+
$
for
$\mathbf{M} \in \mathrm{wp}(4,8,\mathbb{C})$.
\end{definition}

By Lemma~\ref{lemma_equivalence},
$\Gamma^-$, $\Gamma^+$ 
are well-defined and one-to-one.
Lemma~\ref{lemma_GMpmtoM} below shows that
$\Gamma^-$, $\Gamma^+$
are also onto,
and hence are one-to-one correspondences
from $\mathrm{wp}(\mathbb{C})$ to $\gl(4,\mathbb{C})$.

\begin{lemma}
\label{lemma_GMpmtoM}
Suppose $\mathbf{G}^-, \mathbf{G}^+ \in \gl(4,\mathbb{C})$
satisfy $\mathbf{G}^- + \mathbf{G}^+ = \mathbf{\Omega} \mathbf{L}^2$.
Then there exists
$
\mathbf{M}
\in \mathrm{wp}(4,8,\mathbb{C})$
such that
$
\mathbf{G}_\mathbf{M}^-
 =
\mathbf{G}^-$,
$
\mathbf{G}_\mathbf{M}^+
 =
\mathbf{G}^+$.
In particular,
$\mathbf{M}$ can be taken by
$
\mathbf{M}
 =
\begin{pmatrix}
\mathbf{M}^- & \mathbf{M}^+
\end{pmatrix}
$,
where
$
\mathbf{M}^-
 =
\mathbf{G}^-
\left(
\mathbf{\Omega}
\mathbf{L}^2
\right)^{-1}
\mathbf{W}(-l)^{-1}
$,
$
\mathbf{M}_+
 =
\mathbf{G}^+
\left(
\mathbf{\Omega}
\mathbf{L}^2
\right)^{-1}
\mathbf{W}(l)^{-1}
$.
\end{lemma}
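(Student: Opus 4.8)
The plan is a direct verification using the explicit candidate $\mathbf{M}$ supplied in the statement. Set $\mathbf{M}^- = \mathbf{G}^-\left(\mathbf{\Omega}\mathbf{L}^2\right)^{-1}\mathbf{W}(-l)^{-1}$ and $\mathbf{M}^+ = \mathbf{G}^+\left(\mathbf{\Omega}\mathbf{L}^2\right)^{-1}\mathbf{W}(l)^{-1}$, and let $\mathbf{M} = \begin{pmatrix} \mathbf{M}^- & \mathbf{M}^+ \end{pmatrix}$. First I would record that these matrices make sense: $\mathbf{W}(\pm l)$ are invertible for every $l$ (noted right after Lemma~\ref{lemma_W0}), and $\mathbf{\Omega}\mathbf{L}^2$ is invertible because $\mathbf{\Omega} = \diag\left(\omega_1,\omega_2,\omega_3,\omega_4\right)$ has nonzero diagonal entries and $\mathbf{L} \in O(4)$; hence the products defining $\mathbf{M}^-$ and $\mathbf{M}^+$ exist.

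The heart of the matter is the computation of $\widetilde{\mathbf{M}}$. By Definition~\ref{definition_Mtilde}, $\widetilde{\mathbf{M}}^\pm = \mathbf{M}^\pm\mathbf{W}(\pm l) = \mathbf{G}^\pm\left(\mathbf{\Omega}\mathbf{L}^2\right)^{-1}\mathbf{W}(\pm l)^{-1}\mathbf{W}(\pm l) = \mathbf{G}^\pm\left(\mathbf{\Omega}\mathbf{L}^2\right)^{-1}$, so that $\widetilde{\mathbf{M}} = \widetilde{\mathbf{M}}^- + \widetilde{\mathbf{M}}^+ = \left(\mathbf{G}^- + \mathbf{G}^+\right)\left(\mathbf{\Omega}\mathbf{L}^2\right)^{-1} = \mathbf{I}$, where the last equality is exactly the hypothesis $\mathbf{G}^- + \mathbf{G}^+ = \mathbf{\Omega}\mathbf{L}^2$. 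Since $\det\widetilde{\mathbf{M}} = 1 \neq 0$, Lemma~\ref{lemma_wellposed} gives $\mathbf{M} \in \mathrm{wp}(4,8,\mathbb{C})$, so $\mathbf{G}_\mathbf{M}^\pm$ is defined. Feeding $\widetilde{\mathbf{M}}^{-1} = \mathbf{I}$ into Definition~\ref{definition_GMpm} then yields $\mathbf{G}_\mathbf{M}^\pm = \widetilde{\mathbf{M}}^{-1}\widetilde{\mathbf{M}}^\pm\mathbf{\Omega}\mathbf{L}^2 = \mathbf{G}^\pm\left(\mathbf{\Omega}\mathbf{L}^2\right)^{-1}\mathbf{\Omega}\mathbf{L}^2 = \mathbf{G}^\pm$, which is the asserted identity; and the $\mathbf{M}$ we used is literally the one in the ``in particular'' clause.

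I do not anticipate a genuine obstacle here. The hypothesis $\mathbf{G}^- + \mathbf{G}^+ = \mathbf{\Omega}\mathbf{L}^2$ is precisely the compatibility relation forced on all pairs $\mathbf{G}_\mathbf{M}^\pm$ by \eqref{equation_GM-+GM+}, and it is chosen so that the natural candidate $\mathbf{M}$ has $\widetilde{\mathbf{M}} = \mathbf{I}$, i.e.\ is well-posed. The only point calling for mild care is that $\gl(4,\mathbb{C})$ is noncommutative, so I would keep the factors in the order written above and invoke only the cancellations $\mathbf{W}(\pm l)^{-1}\mathbf{W}(\pm l) = \mathbf{I}$ and $\left(\mathbf{\Omega}\mathbf{L}^2\right)^{-1}\mathbf{\Omega}\mathbf{L}^2 = \mathbf{I}$, which indeed occur in the correct positions.
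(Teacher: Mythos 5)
Your proof is correct and follows the same direct verification the paper uses: compute $\widetilde{\mathbf{M}}^\pm = \mathbf{G}^\pm(\mathbf{\Omega}\mathbf{L}^2)^{-1}$, conclude $\widetilde{\mathbf{M}} = \mathbf{I}$ from the hypothesis, apply Lemma~\ref{lemma_wellposed} for well-posedness, and then read off $\mathbf{G}_\mathbf{M}^\pm = \mathbf{G}^\pm$ from Definition~\ref{definition_GMpm}. The only addition is your preliminary remark on the invertibility of the factors, which the paper leaves implicit.
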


\begin{proof}
Let
$
\mathbf{M}^-
 =
\mathbf{G}^-
\left(
\mathbf{\Omega}
\mathbf{L}^2
\right)^{-1}
\mathbf{W}(-l)^{-1}
$,
$
\mathbf{M}_+
 =
\mathbf{G}^+
\left(
\mathbf{\Omega}
\mathbf{L}^2
\right)^{-1}
\mathbf{W}(l)^{-1}
$,
and let
$
\mathbf{M}
 =
\begin{pmatrix}
\mathbf{M}^- & \mathbf{M}^+
\end{pmatrix}
\in \gl(4,8,\mathbb{C})
$.
Then by Definition~\ref{definition_Mtilde},
we have
\begin{equation}
\label{equation_sdgbhso}
\widetilde{\mathbf{M}}^\pm
 =
\mathbf{M}^\pm
\mathbf{W}(\pm l)
 =
\mathbf{G}^\pm
\left(
\mathbf{\Omega}
\mathbf{L}^2
\right)^{-1}
\mathbf{W}(\pm l)^{-1}
\cdot
\mathbf{W}(\pm l)
 =
\mathbf{G}^\pm
\left(
\mathbf{\Omega}
\mathbf{L}^2
\right)^{-1},
\end{equation}
hence
\begin{align}
\widetilde{\mathbf{M}}
 &=
\widetilde{\mathbf{M}}_- + \widetilde{\mathbf{M}}_+
 =
\mathbf{G}^-
\left(
\mathbf{\Omega}
\mathbf{L}^2
\right)^{-1}
+
\mathbf{G}^+
\left(
\mathbf{\Omega}
\mathbf{L}^2
\right)^{-1}
 =
\left(
\mathbf{G}^- + \mathbf{G}^+
\right)
\left(
\mathbf{\Omega}
\mathbf{L}^2
\right)^{-1}
\nonumber \\
 &=
\mathbf{\Omega}
\mathbf{L}^2
\cdot
\left(
\mathbf{\Omega}
\mathbf{L}^2
\right)^{-1}
 =
\mathbf{I}.
\label{equation_aho;fjv}
\end{align}
Thus
$\mathbf{M} \in \mathrm{wp}(4,8,\mathbb{C})$ 
by Lemma~\ref{lemma_wellposed},
since
$
\widetilde{\mathbf{M}}
 =
\mathbf{I}
$
is invertible.
By Definition~\ref{definition_GMpm}
and
\eqref{equation_sdgbhso}, \eqref{equation_aho;fjv},
we have
$
\mathbf{G}_\mathbf{M}^\pm
 =
\widetilde{\mathbf{M}}^{-1}
\widetilde{\mathbf{M}}^\pm
\mathbf{\Omega}
\mathbf{L}^2
 =
\mathbf{I}^{-1}
\cdot
\mathbf{G}^\pm
\left(
\mathbf{\Omega}
\mathbf{L}^2
\right)^{-1}
\cdot
\mathbf{\Omega}
\mathbf{L}^2
 =
\mathbf{G}^\pm
$,
hence the proof is complete.
\end{proof}

Note from Definitions~\ref{definition_Mtilde}, \ref{definition_GMpm}, and \ref{definition_Gammapm}
that the maps $\Gamma^-$ and $\Gamma^+$ are constructive,
in that 
$\Gamma^-\left( [\mathbf{M}] \right) = \mathbf{G}_\mathbf{M}^-$, 
$\Gamma^+\left( [\mathbf{M}] \right) = \mathbf{G}_\mathbf{M}^+$
can be computed explicitly in terms of given $\mathbf{M} \in \mathrm{wp}(4,8,\mathbb{C})$.
In fact, 
the inverses $\left( \Gamma^- \right)^{-1}$, $\left( \Gamma^+ \right)^{-1}$
are also constructive.
Lemma~\ref{lemma_GMpmtoM} implies that
\begin{align}
\lefteqn{
\left( \Gamma^- \right)^{-1}(\mathbf{G})
}
\nonumber \\
 &=
\left[
\left(
\begin{array}{c|c}
\mathbf{G}
\left(
\mathbf{\Omega}
\mathbf{L}^2
\right)^{-1}
\mathbf{W}(-l)^{-1}
 &
\left( 
\mathbf{\Omega}
\mathbf{L}^2
-
\mathbf{G}
\right)
\left(
\mathbf{\Omega}
\mathbf{L}^2
\right)^{-1}
\mathbf{W}(l)^{-1}
\end{array}
\right)
\right],
\label{equation_GM-toM} \\
\lefteqn{
\left( \Gamma^+ \right)^{-1}(\mathbf{G})
}
\nonumber \\
 &=
\left[
\left(
\begin{array}{c|c}
\left( 
\mathbf{\Omega}
\mathbf{L}^2
-
\mathbf{G}
\right)
\left(
\mathbf{\Omega}
\mathbf{L}^2
\right)^{-1}
\mathbf{W}(-l)^{-1}
 &
\mathbf{G}
\left(
\mathbf{\Omega}
\mathbf{L}^2
\right)^{-1}
\mathbf{W}(l)^{-1}
\end{array}
\right)
\right]
\label{equation_GM+toM}
\end{align}
for every 
$\mathbf{G} \in \gl(4,\mathbb{C})$.

\subsection{Real boundary conditions and the algebra $\protect\overline{\pi}(4)$}
\label{section_real}

Of particular interest among boundary conditions in $\mathrm{wp}(4,8,\mathbb{C})$
are those with {\em real} entries.
We characterize
this important class of real boundary conditions
in terms of the maps $\Gamma^-$ and $\Gamma^+$
in Definition~\ref{definition_Gammapm}.

\begin{definition}
\label{definition_wpR}
Denote
$
\mathrm{wp}(4,8,\mathbb{R})
 =
\mathrm{wp}(4,8,\mathbb{C})
\cap
\gl(4,8,\mathbb{R})
$
and
\[
\mathrm{wp}(\mathbb{R})
 =
\left\{
\left[ \mathbf{M} \right]
\in \mathrm{wp}(\mathbb{C})
\,|\,
\mathbf{M}
\in \mathrm{wp}(4,8,\mathbb{R})
\right\}
 \subset
\mathrm{wp}(\mathbb{C}).
\]
\end{definition}

Let $\mathbf{M} \in \mathrm{wp}(4,8,\mathbb{R})$.
By Lemma~\ref{lemma_KM},
it is clear that
$
\mathcal{K}_\mathbf{M}[w](x)
$ is real-valued for every real-valued $w \in L^2[-l,l]$.
Thus it follows that
$G_\mathbf{M}(x,\xi)$ is real-valued for every $\mathbf{M} \in \mathrm{wp}(4,8,\mathbb{R})$.

\begin{lemma}
\label{lemma_MtoGMpmReal}
$
\mathbf{R}
\overline{\mathbf{G}_\mathbf{M}^-}
\mathbf{R}
 =
\mathbf{G}_\mathbf{M}^-
$
and
$
\mathbf{R}
\overline{\mathbf{G}_\mathbf{M}^+}
\mathbf{R}
 =
\mathbf{G}_\mathbf{M}^+
$
for every
$\mathbf{M} \in \mathrm{wp}(4,8,\mathbb{R})$.
\end{lemma}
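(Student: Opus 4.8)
The plan is to exploit the matrix identity \eqref{equation_R}, namely $\mathbf{W}(x)\mathbf{R} = \overline{\mathbf{W}(x)}$, together with the fact established just above that $G_\mathbf{M}(x,\xi)$ is real-valued when $\mathbf{M} \in \mathrm{wp}(4,8,\mathbb{R})$. First I would take complex conjugates in Definition~\ref{definition_GMKM}: since $G_\mathbf{M}(x,\xi)$ is real, for $x \leq \xi$ we have $\mathbf{y}(x)^T \mathbf{G}_\mathbf{M}^+ \mathbf{y}(\xi) = \overline{\mathbf{y}(x)^T \mathbf{G}_\mathbf{M}^+ \mathbf{y}(\xi)} = \overline{\mathbf{y}(x)}^T \overline{\mathbf{G}_\mathbf{M}^+}\, \overline{\mathbf{y}(\xi)}$, and similarly for $x \geq \xi$ with $\mathbf{G}_\mathbf{M}^-$. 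By \eqref{equation_R} we have $\overline{\mathbf{y}(x)} = \mathbf{R}\,\mathbf{y}(x)$, so the right-hand side becomes $\mathbf{y}(x)^T \mathbf{R}\, \overline{\mathbf{G}_\mathbf{M}^+}\, \mathbf{R}\, \mathbf{y}(\xi)$ (using $\mathbf{R}^T = \mathbf{R}$). Thus $\mathbf{y}(x)^T \bigl( \mathbf{G}_\mathbf{M}^+ - \mathbf{R}\,\overline{\mathbf{G}_\mathbf{M}^+}\,\mathbf{R} \bigr) \mathbf{y}(\xi) = 0$ for all $x \leq \xi$ in $[-l,l]$, and likewise with $\mathbf{G}_\mathbf{M}^-$ for all $x \geq \xi$.

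The second step is to conclude from these bilinear identities that the matrix factors themselves vanish, i.e. $\mathbf{G}_\mathbf{M}^+ = \mathbf{R}\,\overline{\mathbf{G}_\mathbf{M}^+}\,\mathbf{R}$ and $\mathbf{G}_\mathbf{M}^- = \mathbf{R}\,\overline{\mathbf{G}_\mathbf{M}^-}\,\mathbf{R}$. This follows because the functions $y_1,y_2,y_3,y_4$ are linearly independent (as already used in the proof of Lemma~\ref{lemma_equivalence}): if $\mathbf{y}(x)^T \mathbf{A}\, \mathbf{y}(\xi) = 0$ for all $(x,\xi)$ in a set containing an open subset of $[-l,l]^2$ — which the region $x \leq \xi$ certainly does — then since $\{y_i(x) y_j(\xi)\}_{i,j}$ are linearly independent as functions on that open set, every entry of $\mathbf{A}$ is zero. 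Applying this with $\mathbf{A} = \mathbf{G}_\mathbf{M}^+ - \mathbf{R}\,\overline{\mathbf{G}_\mathbf{M}^+}\,\mathbf{R}$ on $\{x \leq \xi\}$ and with $\mathbf{A} = \mathbf{G}_\mathbf{M}^- - \mathbf{R}\,\overline{\mathbf{G}_\mathbf{M}^-}\,\mathbf{R}$ on $\{x \geq \xi\}$ gives both claimed identities.

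Alternatively — and perhaps more cleanly — one can argue purely algebraically from Definitions~\ref{definition_Mtilde} and \ref{definition_GMpm} without invoking the Green's function: when $\mathbf{M}$ is real, $\overline{\mathbf{M}^\pm} = \mathbf{M}^\pm$, so $\overline{\widetilde{\mathbf{M}}^\pm} = \mathbf{M}^\pm \overline{\mathbf{W}(\pm l)} = \mathbf{M}^\pm \mathbf{W}(\pm l)\mathbf{R} = \widetilde{\mathbf{M}}^\pm \mathbf{R}$ by \eqref{equation_R}, and likewise $\overline{\widetilde{\mathbf{M}}} = \widetilde{\mathbf{M}}\mathbf{R}$, hence $\overline{\widetilde{\mathbf{M}}}^{-1} = \mathbf{R}\,\widetilde{\mathbf{M}}^{-1}$ (using $\mathbf{R}^{-1} = \mathbf{R}$). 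Then one computes, using $\overline{\mathbf{\Omega}} = \mathbf{\Omega}^{-1}$, $\mathbf{R}\mathbf{\Omega} = \overline{\mathbf{\Omega}}\,\mathbf{R}$, and $\overline{\mathbf{L}^2} = \mathbf{L}^2$, that $\overline{\mathbf{G}_\mathbf{M}^\pm} = \overline{\widetilde{\mathbf{M}}}^{-1}\,\overline{\widetilde{\mathbf{M}}^\pm}\,\overline{\mathbf{\Omega}}\,\mathbf{L}^2 = \mathbf{R}\,\widetilde{\mathbf{M}}^{-1}\widetilde{\mathbf{M}}^\pm \mathbf{R}\,\mathbf{\Omega}^{-1}\mathbf{L}^2$, and one needs the identity $\mathbf{R}\,\mathbf{\Omega}^{-1}\mathbf{L}^2 = \mathbf{\Omega}\mathbf{L}^2\mathbf{R}$, which should follow from \eqref{equation_omegaR}--\eqref{equation_omegaL2} (the permutation $\mathbf{R}$ reverses indices, $\mathbf{L}^2$ shifts by two, and $\omega_{j}^{-1} = \overline{\omega_j} = \omega_{5-j}$), yielding $\overline{\mathbf{G}_\mathbf{M}^\pm} = \mathbf{R}\,\mathbf{G}_\mathbf{M}^\pm\,\mathbf{R}$, which is the desired identity after conjugating with $\mathbf{R}$. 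The main obstacle in this second route is pinning down the commutation relation between $\mathbf{R}$, $\mathbf{\Omega}^{-1}$, and $\mathbf{L}^2$ correctly — a small but delicate bookkeeping check on the index permutations. I would present the first (Green's-function) route as the primary argument since it leans only on facts already in hand, and it is the more natural given that the real-valuedness of $G_\mathbf{M}$ was just recorded.
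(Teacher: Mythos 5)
Your primary (Green's-function) argument is correct and is essentially the same as the paper's proof: both take complex conjugates in Definition~\ref{definition_GMKM}, use the real-valuedness of $G_\mathbf{M}$ together with $\overline{\mathbf{y}(x)} = \mathbf{R}\,\mathbf{y}(x)$ from \eqref{equation_R}, and then invoke the linear independence of $y_1,\dots,y_4$ to kill the difference of matrices. Your alternative algebraic route via Definitions~\ref{definition_Mtilde} and \ref{definition_GMpm} is also sound: the commutation identity you flag as the crux, $\mathbf{R}\mathbf{\Omega}^{-1}\mathbf{L}^2 = \mathbf{\Omega}\mathbf{L}^2\mathbf{R}$, does hold, because $\mathbf{R}\mathbf{\Omega}^{-1} = \mathbf{\Omega}\mathbf{R}$ (from $\mathbf{R}\mathbf{\Omega} = \overline{\mathbf{\Omega}}\mathbf{R}$) and $\mathbf{R}$ and $\mathbf{L}^2$ commute (both are order-$2$ permutation matrices and a direct check shows $\mathbf{R}\mathbf{L}^2 = \mathbf{L}^2\mathbf{R}$); so that route gives $\overline{\mathbf{G}_\mathbf{M}^\pm} = \mathbf{R}\,\mathbf{G}_\mathbf{M}^\pm\,\mathbf{R}$ purely algebraically, bypassing the Green's function entirely.
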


\begin{proof}
Let $\mathbf{M} \in \mathrm{wp}(4,8,\mathbb{R})$.
Since $G_\mathbf{M}(x,\xi)$ is real-valued,
we have
$\overline{G_\mathbf{M}(x,\xi)} = G_\mathbf{M}(x,\xi)$ for $(x,\xi) \in [-l,l] \times [-l,l]$.
By Definition~\ref{definition_GMKM} and \eqref{equation_R},
we have
\begin{align*}
\overline{G_\mathbf{M}(x,\xi)}
 &=
\frac{\alpha}{4k}
\cdot
\left\{
\begin{array}{ll}
\overline{
\mathbf{y}(x)
}^T
\cdot
\overline{\mathbf{G}_\mathbf{M}^+}
\cdot
\overline{\mathbf{y}(\xi)},
 &
\text{if }
x \leq \xi, \\
-
\overline{
\mathbf{y}(x)
}^T
\cdot
\overline{\mathbf{G}_\mathbf{M}^-}
\cdot
\overline{\mathbf{y}(\xi)},
 &
\text{if }
\xi \leq x
\end{array}
\right.
\nonumber \\
 &=
\frac{\alpha}{4k}
\cdot
\left\{
\begin{array}{ll}
\left\{
\mathbf{R}
\cdot
\mathbf{y}(x)
\right\}^T
\cdot
\overline{\mathbf{G}_\mathbf{M}^+}
\cdot
\left\{
\mathbf{R}
\cdot
\mathbf{y}(\xi)
\right\},
 &
\text{if }
x \leq \xi, \\
-
\left\{
\mathbf{R}
\cdot
\mathbf{y}(x)
\right\}^T
\cdot
\overline{\mathbf{G}_\mathbf{M}^-}
\cdot
\left\{
\mathbf{R}
\cdot
\mathbf{y}(\xi)
\right\},
 &
\text{if }
\xi \leq x
\end{array}
\right.
\nonumber \\
 &=
\frac{\alpha}{4k}
\cdot
\left\{
\begin{array}{ll}
\mathbf{y}(x)^T
\cdot
\mathbf{R}
\overline{\mathbf{G}_\mathbf{M}^+}
\mathbf{R}
\cdot
\mathbf{y}(\xi),
 &
\text{if }
x \leq \xi, \\
-
\mathbf{y}(x)^T
\cdot
\mathbf{R}
\overline{\mathbf{G}_\mathbf{M}^-}
\mathbf{R}
\cdot
\mathbf{y}(\xi),
 &
\text{if }
\xi \leq x,
\end{array}
\right.
\end{align*}
hence 
\[
0
 =
\frac{4k}{\alpha}
\left\{
\overline{G_\mathbf{M}(x,\xi)}
-
G_\mathbf{M}(x,\xi)
\right\}
 =
\left\{
\begin{array}{ll}
\mathbf{y}(x)^T
\cdot
\left(
\mathbf{R}
\overline{\mathbf{G}_\mathbf{M}^+}
\mathbf{R}
-
\mathbf{G}_\mathbf{M}^+
\right)
\cdot
\mathbf{y}(\xi),
 &
\text{if }
x \leq \xi, \\
-
\mathbf{y}(x)^T
\cdot
\left(
\mathbf{R}
\overline{\mathbf{G}_\mathbf{M}^-}
\mathbf{R}
-
\mathbf{G}_\mathbf{M}^-
\right)
\cdot
\mathbf{y}(\xi),
 &
\text{if }
\xi \leq x, \\
\end{array}
\right.
\]
which is equivalent to
$
\mathbf{R}
\overline{\mathbf{G}_\mathbf{M}^+}
\mathbf{R}
-
\mathbf{G}_\mathbf{M}^+
 =
\mathbf{R}
\overline{\mathbf{G}_\mathbf{M}^-}
\mathbf{R}
-
\mathbf{G}_\mathbf{M}^-
 =
\mathbf{O}$,
since the entries $y_1, y_2, y_3, y_4$
of $\mathbf{y}$ in Definition~\ref{definition_W(x)}
are linearly independent.
\end{proof}

Lemma~\ref{lemma_MtoGMpmReal} leads us to the following definition.

\begin{definition}
\label{definition_pibar}
For $n \in \mathbb{N}$,
we denote 
$
\overline{\pi}(n)
 =
\left\{
\mathbf{A} \in \gl(n,\mathbb{C})
\,|\,
\mathbf{R}
\overline{\mathbf{A}}
\mathbf{R}
 =
\mathbf{A}
\right\}$.
\end{definition}

Note that
$\overline{\pi}(n)$ is the set of matrices in $\gl(n,\mathbb{C})$
invariant under the transformation
$
\mathbf{A}
\mapsto
\mathbf{R}
\overline{\mathbf{A}}
\mathbf{R}$,
which is the complex conjugation with the $180^\circ$ rotation of matrix entries.
Lemma~\ref{lemma_pibar} below,
whose proof is immediate from Definition~\ref{definition_pibar},
shows in particular that
$\overline{\pi}(n)$
forms an $\mathbb{R}$-algebra.

\begin{lemma}
\label{lemma_pibar}
For $n \in \mathbb{N}$, we have the following.
\begin{itemize}
\item[{\rm (a)}]
If $\mathbf{A}, \mathbf{B} \in \overline{\pi}(n)$,
then
$a \mathbf{A} + b \mathbf{B} \in \overline{\pi}(n)$
for every $a, b \in \mathbb{R}$.

\item[{\rm (b)}]
If $\mathbf{A}, \mathbf{B} \in \overline{\pi}(n)$,
then
$\mathbf{A} \mathbf{B} \in \overline{\pi}(n)$.

\item[{\rm (c)}]
If $\mathbf{A} \in \overline{\pi}(n)$ is invertible,
then
$\mathbf{A}^{-1} \in \overline{\pi}(n)$

\item[{\rm (d)}]
If $\mathbf{A} \in \overline{\pi}(n)$,
then
$\mathbf{A}^T \in \overline{\pi}(n)$

\item[{\rm (e)}]
$
\mathbf{O}_n, \mathbf{I}_n, \mathbf{R}_n
\in \overline{\pi}(n)$
and
$\mathbf{\Omega}, \mathbf{L}^2, \mathcal{E} \in \overline{\pi}(4)$.
\end{itemize}
\end{lemma}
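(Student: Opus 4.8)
The plan is to realize $\overline{\pi}(n)$ as the fixed-point set of a single involution and then observe that this involution is compatible with every operation in the statement. Concretely, I would introduce the map $\phi : \gl(n,\mathbb{C}) \to \gl(n,\mathbb{C})$, $\phi(\mathbf{A}) = \mathbf{R}\,\overline{\mathbf{A}}\,\mathbf{R}$; by Definition~\ref{definition_pibar}, $\overline{\pi}(n)$ is exactly $\{\mathbf{A} \in \gl(n,\mathbb{C}) : \phi(\mathbf{A}) = \mathbf{A}\}$. Since $\mathbf{R} = \mathbf{R}^{-1}$ and complex conjugation squares to the identity, $\phi \circ \phi = \mathrm{id}$, so $\phi$ is a bijection of $\gl(n,\mathbb{C})$; this is the only structural fact that really needs to be isolated.

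The bulk of the work is then a short list of compatibility properties of $\phi$, each a one-line check. First, $\phi$ is additive and fixes real scalars, so $\phi(a\mathbf{A} + b\mathbf{B}) = a\,\phi(\mathbf{A}) + b\,\phi(\mathbf{B})$ for $a,b \in \mathbb{R}$. Second, $\phi$ is multiplicative, because $\mathbf{R}\,\overline{\mathbf{A}\mathbf{B}}\,\mathbf{R} = (\mathbf{R}\,\overline{\mathbf{A}}\,\mathbf{R})(\mathbf{R}\,\overline{\mathbf{B}}\,\mathbf{R})$ using $\mathbf{R}^2 = \mathbf{I}$; consequently $\phi$ commutes with inversion of invertible matrices. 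Third, $\phi$ commutes with transposition, since $\mathbf{R}^T = \mathbf{R}$ gives $(\mathbf{R}\,\overline{\mathbf{A}}\,\mathbf{R})^T = \mathbf{R}\,\overline{\mathbf{A}}^T\mathbf{R} = \phi(\mathbf{A}^T)$. With these in hand, parts (a)--(d) are immediate: applying $\phi$ to $a\mathbf{A} + b\mathbf{B}$, to $\mathbf{A}\mathbf{B}$, to $\mathbf{A}^{-1}$, and to $\mathbf{A}^T$, with $\mathbf{A},\mathbf{B} \in \overline{\pi}(n)$, returns the same matrix in each case.

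For (e), $\phi(\mathbf{O}_n) = \mathbf{O}_n$ and $\phi(\mathbf{I}_n) = \mathbf{R}^2 = \mathbf{I}_n$ are trivial, and $\phi(\mathbf{R}_n) = \mathbf{R}^3 = \mathbf{R}_n$ since $\mathbf{R}_n$ is real. For the three distinguished $4 \times 4$ matrices I would reuse the relations already recorded in Section~\ref{section_preliminaries}: right-multiplying $\mathbf{R}\mathbf{\Omega} = \overline{\mathbf{\Omega}}\mathbf{R}$ from \eqref{equation_R} by $\mathbf{R}$ gives $\mathbf{R}\mathbf{\Omega}\mathbf{R} = \overline{\mathbf{\Omega}}$, and conjugating (recall $\mathbf{R}$ is real) yields $\mathbf{R}\,\overline{\mathbf{\Omega}}\,\mathbf{R} = \mathbf{\Omega}$, so $\mathbf{\Omega} \in \overline{\pi}(4)$; while $\mathbf{L}^2$ and $\mathcal{E}$ are real, so one only checks $\mathbf{R}\mathbf{L}^2\mathbf{R} = \mathbf{L}^2$ and $\mathbf{R}\mathcal{E}\mathbf{R} = \mathcal{E}$, both clear because reversing the order of the rows and then of the columns leaves $\mathbf{L}^2$, and the diagonal pattern $(1,-1,-1,1)$ of $\mathcal{E}$, unchanged.

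I do not expect any genuine obstacle here; the statement really is a formal consequence of the definition. The one point to keep in mind is that $\phi$ is conjugate-linear rather than $\mathbb{C}$-linear, which is exactly why part (a) is restricted to real coefficients; everything else reduces to the two identities $\mathbf{R}^2 = \mathbf{I}$ and $\overline{\overline{\mathbf{A}}} = \mathbf{A}$.
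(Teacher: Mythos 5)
Your proof is correct, and since the paper omits the proof entirely (it only says the lemma is ``immediate from Definition~\ref{definition_pibar}''), you have simply filled in the details the author left to the reader. Your organizing device — viewing $\overline{\pi}(n)$ as the fixed-point set of the conjugate-linear involution $\phi(\mathbf{A})=\mathbf{R}\overline{\mathbf{A}}\mathbf{R}$ and checking that $\phi$ respects the relevant operations — is the natural way to do this, and all the individual verifications (additivity over $\mathbb{R}$, multiplicativity via $\mathbf{R}^2=\mathbf{I}$, compatibility with transpose and inverse, and the case-by-case check of $\mathbf{\Omega},\mathbf{L}^2,\mathcal{E}$ using \eqref{equation_R} and the realness of $\mathbf{L}^2,\mathcal{E}$) are accurate.
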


In particular, we have
\begin{align*}
\overline{\pi}(4)
 &=
\left\{
\left.
\begin{pmatrix}
a_{11} & a_{12} & a_{13} & a_{14} \\
a_{21} & a_{22} & a_{23} & a_{24} \\
\overline{a_{24}} & \overline{a_{23}} & \overline{a_{22}} & \overline{a_{21}} \\
\overline{a_{14}} & \overline{a_{13}} & \overline{a_{12}} & \overline{a_{11}}
\end{pmatrix}
\,\right|\,
a_{ij} \in \mathbb{C},
\,
1=1,2,
\,
j = 1,2,3,4
\right\},
\end{align*}
which shows that the dimension of $\overline{\pi}(4)$
as an $\mathbb{R}$-algebra is $16$.
In fact, it will be shown in Section~\ref{section_existence}
that  
$\overline{\pi}(2n)$ is isomorphic to $\gl(2n,\mathbb{R})$ for $n \in \mathbb{N}$.

Lemma~\ref{lemma_MtoGMpmReal} shows that
the images of $\mathrm{wp}(\mathbb{R})$ under
$\Gamma^-$ and $\Gamma^+$ in Definition~\ref{definition_Gammapm} are subsets of
$\overline{\pi}(4)$.
Lemma~\ref{lemma_GMpmtoMReal} below shows that,
in fact,
$
\Gamma^-\left( \mathrm{wp}(\mathbb{R}) \right)
 =
\Gamma^+\left( \mathrm{wp}(\mathbb{R}) \right)
 =
\overline{\pi}(4)
$,
by constructing representatives
{\em in} $\mathrm{wp}(4,8,\mathbb{R})$
of the inverses $\left( \Gamma^- \right)^{-1}\left( \mathbf{G} \right)$,
$\left( \Gamma^+ \right)^{-1}\left( \mathbf{G} \right)$
of $\mathbf{G} \in \overline{\pi}(4)$.
Denote
\begin{equation}
\label{equation_U}
\mathbf{U}
 =
\frac{1}{\sqrt{2}}
\begin{pmatrix}
1 & 0 & 0 & 1 \\
0 & 1 & 1 & 0 \\
0 & \mathbbm{i} & -\mathbbm{i} & 0 \\
\mathbbm{i} & 0 & 0 & -\mathbbm{i}
\end{pmatrix}
 =
\begin{pmatrix}
\mathbf{I} & \mathbf{R} \\
\mathbbm{i} \mathbf{R} & -\mathbbm{i} \mathbf{I}
\end{pmatrix}
\qquad
\in U(4).
\end{equation}
Note that
\begin{equation}
\label{equation_Ubar=UR}
\overline{\mathbf{U}}
 = 
\mathbf{U}
\mathbf{R}.
\end{equation}

\begin{lemma}
\label{lemma_GMpmtoMReal}
Suppose $\mathbf{G}^-, \mathbf{G}^+ \in \overline{\pi}(4)$
satisfy $\mathbf{G}^- + \mathbf{G}^+ = \mathbf{\Omega} \mathbf{L}^2$.
Then there exists
$
\mathbf{M}
\in \mathrm{wp}(4,8,\mathbb{R})$
such that
$
\mathbf{G}_\mathbf{M}^-
 =
\mathbf{G}^-$,
$
\mathbf{G}_\mathbf{M}^+
 =
\mathbf{G}^+$.
In particular,
$\mathbf{M}$ can be taken by
$
\mathbf{M}
 =
\begin{pmatrix}
\mathbf{M}^- & \mathbf{M}^+
\end{pmatrix}
$,
where
$
\mathbf{M}^-
 =
\mathbf{U}
\mathbf{G}^-
\left(
\mathbf{\Omega}
\mathbf{L}^2
\right)^{-1}
\mathbf{W}(-l)^{-1}
$,
$
\mathbf{M}_+
 =
\mathbf{U}
\mathbf{G}^+
\left(
\mathbf{\Omega}
\mathbf{L}^2
\right)^{-1}
\mathbf{W}(l)^{-1}
$.
\end{lemma}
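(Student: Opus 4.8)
The plan is to reuse the construction in the proof of Lemma~\ref{lemma_GMpmtoM} verbatim, but with the unitary matrix $\mathbf{U}$ of \eqref{equation_U} inserted on the left of each block, and then to establish two things: that the resulting $\mathbf{M}$ is still well-posed with $\mathbf{G}_\mathbf{M}^\pm=\mathbf{G}^\pm$, and that $\mathbf{M}$ now has real entries. The point is that $\mathbf{U}$ is chosen precisely so that its conjugation identity \eqref{equation_Ubar=UR} cancels the $\mathbf{R}$'s produced by conjugating the remaining factors.

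First I would set $\mathbf{M}^\pm=\mathbf{U}\mathbf{G}^\pm(\mathbf{\Omega}\mathbf{L}^2)^{-1}\mathbf{W}(\pm l)^{-1}$ and $\mathbf{M}=(\,\mathbf{M}^-\mid\mathbf{M}^+\,)\in\gl(4,8,\mathbb{C})$. Exactly as in \eqref{equation_sdgbhso}, Definition~\ref{definition_Mtilde} gives $\widetilde{\mathbf{M}}^\pm=\mathbf{U}\mathbf{G}^\pm(\mathbf{\Omega}\mathbf{L}^2)^{-1}$, whence $\widetilde{\mathbf{M}}=\mathbf{U}(\mathbf{G}^-+\mathbf{G}^+)(\mathbf{\Omega}\mathbf{L}^2)^{-1}=\mathbf{U}$ by the hypothesis $\mathbf{G}^-+\mathbf{G}^+=\mathbf{\Omega}\mathbf{L}^2$. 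Since $\mathbf{U}\in U(4)$ is invertible, Lemma~\ref{lemma_wellposed} gives $\mathbf{M}\in\mathrm{wp}(4,8,\mathbb{C})$, and Definition~\ref{definition_GMpm} yields $\mathbf{G}_\mathbf{M}^\pm=\widetilde{\mathbf{M}}^{-1}\widetilde{\mathbf{M}}^\pm\mathbf{\Omega}\mathbf{L}^2=\mathbf{U}^{-1}\mathbf{U}\mathbf{G}^\pm(\mathbf{\Omega}\mathbf{L}^2)^{-1}\mathbf{\Omega}\mathbf{L}^2=\mathbf{G}^\pm$, as required.

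The only genuinely new step is to check $\overline{\mathbf{M}^\pm}=\mathbf{M}^\pm$. For this I would collect four conjugation identities: $\overline{\mathbf{U}}=\mathbf{U}\mathbf{R}$ from \eqref{equation_Ubar=UR}; $\overline{\mathbf{G}^\pm}=\mathbf{R}\mathbf{G}^\pm\mathbf{R}$ since $\mathbf{G}^\pm\in\overline{\pi}(4)$ by hypothesis; $\overline{(\mathbf{\Omega}\mathbf{L}^2)^{-1}}=\mathbf{R}(\mathbf{\Omega}\mathbf{L}^2)^{-1}\mathbf{R}$, valid because $\mathbf{\Omega},\mathbf{L}^2\in\overline{\pi}(4)$ and $\overline{\pi}(4)$ is closed under products and inverses by Lemma~\ref{lemma_pibar}(b),(c); and $\overline{\mathbf{W}(\pm l)^{-1}}=\mathbf{R}\mathbf{W}(\pm l)^{-1}$, obtained by inverting $\mathbf{W}(x)\mathbf{R}=\overline{\mathbf{W}(x)}$ from \eqref{equation_R} together with $\mathbf{R}^{-1}=\mathbf{R}$. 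Conjugating the product defining $\mathbf{M}^\pm$ and substituting these gives $\overline{\mathbf{M}^\pm}=(\mathbf{U}\mathbf{R})(\mathbf{R}\mathbf{G}^\pm\mathbf{R})(\mathbf{R}(\mathbf{\Omega}\mathbf{L}^2)^{-1}\mathbf{R})(\mathbf{R}\mathbf{W}(\pm l)^{-1})$; every internal $\mathbf{R}\mathbf{R}=\mathbf{I}$ collapses, the leading factor has no preceding $\mathbf{R}$ and the trailing factor no following $\mathbf{R}$, so the product telescopes back to $\mathbf{U}\mathbf{G}^\pm(\mathbf{\Omega}\mathbf{L}^2)^{-1}\mathbf{W}(\pm l)^{-1}=\mathbf{M}^\pm$. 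Hence $\mathbf{M}\in\gl(4,8,\mathbb{R})$, and therefore $\mathbf{M}\in\mathrm{wp}(4,8,\mathbb{R})$.

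I do not expect a real obstacle here; the computation is mechanical once the role of $\mathbf{U}$ is recognized. The one thing to be careful about is bookkeeping of the $\mathbf{R}$'s in the last step — in particular that $\overline{\mathbf{U}}$ contributes exactly one trailing $\mathbf{R}$ while the other three factors each contribute a flanking pair, so that all of them cancel pairwise and no stray $\mathbf{R}$ survives. This is exactly the reason $\mathbf{U}$ in \eqref{equation_U} is defined to satisfy \eqref{equation_Ubar=UR}.
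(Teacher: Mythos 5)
Your proof is correct and follows essentially the same approach as the paper's: the same choice of $\mathbf{M}^\pm$, the same observation $\widetilde{\mathbf{M}}=\mathbf{U}$, the same conclusion $\mathbf{G}_\mathbf{M}^\pm=\mathbf{G}^\pm$, and the same conjugation bookkeeping using $\overline{\mathbf{U}}=\mathbf{U}\mathbf{R}$ together with the $\overline{\pi}(4)$-invariance of the middle factors and $\overline{\mathbf{W}(\pm l)^{-1}}=\mathbf{R}\mathbf{W}(\pm l)^{-1}$. The only (inessential) difference is that you verify well-posedness and $\mathbf{G}_\mathbf{M}^\pm=\mathbf{G}^\pm$ by directly recomputing $\widetilde{\mathbf{M}}^\pm$, whereas the paper instead observes $\mathbf{M}=\mathbf{U}\hat{\mathbf{M}}$ with $\hat{\mathbf{M}}$ the representative from Lemma~\ref{lemma_GMpmtoM} and invokes the equivalence Lemma~\ref{lemma_equivalence} to transfer $\mathbf{G}_{\hat{\mathbf{M}}}^\pm=\mathbf{G}^\pm$; both arguments are valid and amount to the same computation.
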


\begin{proof}
Let
$
\mathbf{M}^-
 =
\mathbf{U}
\mathbf{G}^-
\left(
\mathbf{\Omega}
\mathbf{L}^2
\right)^{-1}
\mathbf{W}(-l)^{-1}
$,
$
\mathbf{M}_+
 =
\mathbf{U}
\mathbf{G}^+
\left(
\mathbf{\Omega}
\mathbf{L}^2
\right)^{-1}
\mathbf{W}(l)^{-1}
$,
and
$
\mathbf{M}
 =
\begin{pmatrix}
\mathbf{M}^- & \mathbf{M}^+
\end{pmatrix}
$.
Let
\[
\hat{\mathbf{M}}
 =
\left(
\begin{array}{c|c}
\mathbf{G}^-
\left(
\mathbf{\Omega}
\mathbf{L}^2
\right)^{-1}
\mathbf{W}(-l)^{-1} &
\mathbf{G}^+
\left(
\mathbf{\Omega}
\mathbf{L}^2
\right)^{-1}
\mathbf{W}(l)^{-1}
\end{array}
\right).
\]
Then
$
\mathbf{M}
 =
\mathbf{U}
\hat{\mathbf{M}}
$,
hence
$\mathbf{M} \approx \hat{\mathbf{M}}$ by Lemma~\ref{lemma_equivalence},
since $\mathbf{U}$ is invertible.
So by Definition~\ref{definition_Gammapm} and Lemma~\ref{lemma_GMpmtoM},
we have
$
\mathbf{G}_\mathbf{M}^\pm
 =
\Gamma^\pm
\left(
\left[ \mathbf{M} \right]
\right)
 =
\Gamma^\pm
\left(
\left[ \hat{\mathbf{M}} \right]
\right)
 =
\mathbf{G}_{\hat{\mathbf{M}}}^\pm
 =
\mathbf{G}^\pm
$,
since $\mathbf{G}^-, \mathbf{G}^+ \in \gl(4,\mathbb{C})$.
Thus it is sufficient to show that
$\mathbf{M}^-, \mathbf{M}^+ \in \gl(4,\mathbb{R})$.
By Definition~\ref{definition_pibar},
\begin{equation}
\label{equation_jbsgbjs}
\overline{
\mathbf{G}^\pm
\left(
\mathbf{\Omega}
\mathbf{L}^2
\right)^{-1}
}
 =
\mathbf{R}
\cdot
\mathbf{G}^\pm
\left(
\mathbf{\Omega}
\mathbf{L}^2
\right)^{-1}
\cdot
\mathbf{R},
\end{equation}
since
$
\mathbf{G}^-
\left(
\mathbf{\Omega}
\mathbf{L}^2
\right)^{-1},
\mathbf{G}^+
\left(
\mathbf{\Omega}
\mathbf{L}^2
\right)^{-1}
\in \overline{\pi}(4)$
by Lemma~\ref{lemma_pibar}.
Since
$\overline{\mathbf{W}(x)}
 =
\mathbf{W}(x)
\mathbf{R}$
by \eqref{equation_R},
we have
\begin{equation}
\label{equation_bgbjfv}
\overline{\mathbf{W}(\pm l)^{-1}}
 =
\left\{
\overline{\mathbf{W}(\pm l)}
\right\}^{-1}
 =
\left\{
\mathbf{W}(\pm l)
\mathbf{R}
\right\}^{-1}
 =
\mathbf{R}
\cdot
\mathbf{W}(\pm l)^{-1}.
\end{equation}
Thus by \eqref{equation_Ubar=UR}, \eqref{equation_jbsgbjs}, \eqref{equation_bgbjfv},
we have
\begin{align*}
\overline{\mathbf{M}^\pm}
 &=
\overline{\mathbf{U}}
\cdot
\overline{
\mathbf{G}^\pm
\left(
\mathbf{\Omega}
\mathbf{L}^2
\right)^{-1}
}
\cdot
\overline{
\mathbf{W}(\pm l)^{-1}
} \\
 &=
\mathbf{U}
\mathbf{R}
\cdot
\left\{
\mathbf{R}
\cdot
\mathbf{G}^\pm
\left(
\mathbf{\Omega}
\mathbf{L}^2
\right)^{-1}
\cdot
\mathbf{R}
\right\}
\cdot
\left\{
\mathbf{R}
\cdot
\mathbf{W}(\pm l)^{-1}
\right\} \\
 &=
\mathbf{U}
\mathbf{G}^\pm
\left(
\mathbf{\Omega}
\mathbf{L}^2
\right)^{-1}
\mathbf{W}(\pm l)^{-1}
 =
\mathbf{M}^\pm.
\end{align*}
This shows
$\mathbf{M}^-, \mathbf{M}^+ \in \gl(4,\mathbb{R})$,
and the proof is complete.
\end{proof}

\section{The boundary condition $\protect\mathbf{Q}$ 
and the operator $\protect\mathcal{K}_\mathbf{Q} = \mathcal{K}_{l,\alpha,k}$}
\label{section_Q}

Let
\[
\mathbf{Q}^-
 =
\begin{pmatrix}
0 & \alpha^2 & -\sqrt{2} \alpha & 1 \\
\sqrt{2} \alpha^3 & -\alpha^2 & 0 & 1 \\
0 & 0 & 0 & 0 & \\
0 & 0 & 0 & 0 &
\end{pmatrix},
\
\mathbf{Q}^+
 =
\begin{pmatrix}
0 & 0 & 0 & 0 \\
0 & 0 & 0 & 0 \\
0 & \alpha^2 & \sqrt{2} \alpha & 1 \\
-\sqrt{2} \alpha^3 & -\alpha^2 & 0 & 1
\end{pmatrix},
\]
so that 
$
\begin{pmatrix}
\mathbf{Q}^- & \mathbf{Q}^+
\end{pmatrix}
=
\mathbf{Q} 
$
in \eqref{equation_Q}.
In this section, we apply Definitions~\ref{definition_Mtilde} and \ref{definition_GMpm} to $\mathbf{Q}$
to obtain
explicit forms of 
$
\mathbf{G}_\mathbf{Q}^-
 =
\Gamma^-\left( \left[ \mathbf{Q} \right] \right)
$
and 
$
\mathbf{G}_\mathbf{Q}^+
 =
\Gamma^+\left( \left[ \mathbf{Q} \right] \right)
$.
In addition to being needed to construct the map $\Gamma$ in Section~\ref{section_Gamma},
this will also serve as a concrete example of computing $\Gamma^-$ and $\Gamma^+$.
We also show in Lemma~\ref{lemma_GQ=Glalphak} below that
$
\mathcal{K}_\mathbf{Q}
 =
\mathcal{K}_{l,\alpha,k}
$,
where $\mathcal{K}_{l,\alpha,k}$
is the integral operator defined in \eqref{equation_Klalphak}.

By Definition~\ref{definition_omega}, we have
\begin{align}
\lefteqn{
\mathbf{Q}^-
\cdot
\diag
\left( 1, \alpha, \alpha^2, \alpha^3 \right)
\cdot
\mathbf{W}_0
}
\nonumber \\
 &=
\alpha^3
\begin{pmatrix}
0 & 1 & -\sqrt{2} & 1 \\
\sqrt{2} & -1 & 0 & 1 \\
0 & 0 & 0 & 0 & \\
0 & 0 & 0 & 0 &
\end{pmatrix}
\left(
\omega_j^{i-1}
\right)_{1 \leq i,j \leq 4}
 =
\alpha^3
\begin{pmatrix}
a_1^- & a_2^- & a_3^- & a_4^- \\
b_1^- & b_2^- & b_3^- & b_4^- \\
0 & 0 & 0 & 0 \\
0 & 0 & 0 & 0
\end{pmatrix},
\label{equation_sbdfgbl;jk-} \\
\lefteqn{
\mathbf{Q}^+
\cdot
\diag
\left( 1, \alpha, \alpha^2, \alpha^3 \right)
\cdot
\mathbf{W}_0
}
\nonumber \\
 &=
\alpha^3
\begin{pmatrix}
0 & 0 & 0 & 0 \\
0 & 0 & 0 & 0 \\
0 & 1 & \sqrt{2} & 1 \\
-\sqrt{2} & -1 & 0 & 1
\end{pmatrix}
\left(
\omega_j^{i-1}
\right)_{1 \leq i,j \leq 4}
 =
\alpha^3
\begin{pmatrix}
0 & 0 & 0 & 0 \\
0 & 0 & 0 & 0 \\
a_1^+ & a_2^+ & a_3^+ & a_4^+ \\
b_1^+ & b_2^+ & b_3^+ & b_4^+
\end{pmatrix},
\label{equation_sbdfgbl;jk+}
\end{align}
where we put
$
a_j^\pm
 =
\omega_j \pm \sqrt{2} \omega_j^2 + \omega_j^3
$,
$
b_j^\pm
 =
\mp \sqrt{2} - \omega_j + \omega_j^3
$,
$j = 1,2,3,4$.
Note that
$
\omega_j^2
 = 
\left\{
e^{\mathbbm{i} \frac{\pi}{4} (2j - 1)}
\right\}^2
 =
\mathbbm{i}^{2j - 1}
 =
(-1)^{j+1}
\mathbbm{i}
$
by Definition~\ref{definition_omega},
and
$
\omega_j + \omega_j^3
 =
\omega_j - \overline{\omega_j}
 =
2
\mathbbm{i}
\Imag{\omega_j}
 =
\sqrt{2}
\epsilon_{j-1}
\mathbbm{i}
$, 
$
-\omega_j + \omega_j^3
 =
-\omega_j - \overline{\omega_j}
 =
-
2
\Real{\omega_j}
 =
-
\sqrt{2}
\epsilon_j
$
by \eqref{equation_omega}, \eqref{equation_omegaReIm}.
Hence, for $j = 1,2,3,4$,
we have
\begin{align}
a_j^\pm
 &=
\omega_j \pm \sqrt{2} \omega_j^2 + \omega_j^3
 =
\sqrt{2}
\epsilon_{j-1}
\mathbbm{i}
\pm
\sqrt{2}
\cdot
(-1)^{j+1}
\mathbbm{i}
\nonumber \\
 &=
2 \sqrt{2}
\cdot
\frac
{
\epsilon_{j-1}
\pm
(-1)^{j+1}
}
{2}
\mathbbm{i},
\label{equation_a;sdova;sdvjh} \\
b_j^\pm
 &=
\mp \sqrt{2} - \omega_j + \omega_j^3
 =
-
\sqrt{2}
\epsilon_j
\mp
\sqrt{2}
 =
2 \sqrt{2}
\cdot
\frac
{
-
\epsilon_j
\mp
1}
{2}.
\label{equation_svfvjhasdfoi}
\end{align}
By \eqref{equation_sbdfgbl;jk-}, \eqref{equation_sbdfgbl;jk+},
\eqref{equation_a;sdova;sdvjh}, \eqref{equation_svfvjhasdfoi},
we have
\begin{align*}
\lefteqn{
\mathbf{Q}^-
\cdot
\diag
\left( 1, \alpha, \alpha^2, \alpha^3 \right)
\cdot
\mathbf{W}_0
} \\
 &=
2 \sqrt{2}
\alpha^3
\begin{pmatrix}
\frac{1 - 1}{2} \mathbbm{i}
 & 
\frac{1 + 1}{2} \mathbbm{i}
 & 
\frac{-1 - 1}{2} \mathbbm{i}
 & 
\frac{-1 + 1}{2} \mathbbm{i}
 \\
\frac{-1 + 1}{2}
 & 
\frac{1 + 1}{2}
 & 
\frac{1 + 1}{2}
 & 
\frac{-1 + 1}{2}
 \\
0 & 0 & 0 & 0 \\ 
0 & 0 & 0 & 0
\end{pmatrix}
 =
2 \sqrt{2}
\alpha^3
\begin{pmatrix}
0 & \mathbbm{i} & -\mathbbm{i} & 0 \\
0 & 1 & 1 & 0 \\
0 & 0 & 0 & 0 \\ 
0 & 0 & 0 & 0
\end{pmatrix}, \\
\lefteqn{
\mathbf{Q}^+
\cdot
\diag
\left( 1, \alpha, \alpha^2, \alpha^3 \right)
\cdot
\mathbf{W}_0
} \\
 &=
2 \sqrt{2}
\alpha^3
\begin{pmatrix}
0 & 0 & 0 & 0 \\ 
0 & 0 & 0 & 0 \\
\frac{1 + 1}{2} \mathbbm{i}
 & 
\frac{1 - 1}{2} \mathbbm{i}
 & 
\frac{-1 + 1}{2} \mathbbm{i}
 & 
\frac{-1 - 1}{2} \mathbbm{i}
 \\
\frac{-1 - 1}{2}
 & 
\frac{1 - 1}{2}
 & 
\frac{1 - 1}{2}
 & 
\frac{-1 - 1}{2}
\end{pmatrix}
 =
2 \sqrt{2}
\alpha^3
\begin{pmatrix}
0 & 0 & 0 & 0 \\ 
0 & 0 & 0 & 0 \\
\mathbbm{i} & 0 & 0 & -\mathbbm{i} \\
-1 & 0 & 0 & -1
\end{pmatrix},
\end{align*}
hence by Definition~\ref{definition_Mtilde} and \eqref{equation_Wxdecomposed},
\begin{align}
\lefteqn{
\widetilde{\mathbf{Q}}^-
 =
\mathbf{Q}^-
\mathbf{W}(-l)
 =
\mathbf{Q}^-
\cdot
\diag
\left( 1, \alpha, \alpha^2, \alpha^3 \right)
\cdot
\mathbf{W}_0
e^{-\mathbf{\Omega} \alpha l}
}
\nonumber \\
 &=
2 \sqrt{2}
\alpha^3
\begin{pmatrix}
0 & \mathbbm{i} & -\mathbbm{i} & 0 \\
0 & 1 & 1 & 0 \\
0 & 0 & 0 & 0 \\ 
0 & 0 & 0 & 0
\end{pmatrix}
e^{-\mathbf{\Omega} \alpha l}
 =
2 \sqrt{2}
\alpha^3
\begin{pmatrix}
0 & \mathbbm{i} e^{-\omega_2 \alpha l} & -\mathbbm{i} e^{-\omega_3 \alpha l} & 0 \\
0 & e^{-\omega_2 \alpha l} & e^{-\omega_3 \alpha l} & 0 \\
0 & 0 & 0 & 0 \\ 
0 & 0 & 0 & 0
\end{pmatrix},
\label{equation_o;sdifgbj-} \\
\lefteqn{
\widetilde{\mathbf{Q}}^+
 =
\mathbf{Q}^+
\mathbf{W}(l)
 =
\mathbf{Q}^+
\cdot
\diag
\left( 1, \alpha, \alpha^2, \alpha^3 \right)
\cdot
\mathbf{W}_0
e^{\mathbf{\Omega} \alpha l}
}
\nonumber \\
 &=
2 \sqrt{2}
\alpha^3
\begin{pmatrix}
0 & 0 & 0 & 0 \\ 
0 & 0 & 0 & 0 \\
\mathbbm{i} & 0 & 0 & -\mathbbm{i} \\
-1 & 0 & 0 & -1
\end{pmatrix}
e^{\mathbf{\Omega} \alpha l}
 =
2 \sqrt{2}
\alpha^3
\begin{pmatrix}
0 & 0 & 0 & 0 \\ 
0 & 0 & 0 & 0 \\
\mathbbm{i} e^{\omega_1 \alpha l} & 0 & 0 & -\mathbbm{i} e^{\omega_4 \alpha l} \\
-e^{\omega_1 \alpha l} & 0 & 0 & -e^{\omega_4 \alpha l}
\end{pmatrix}, 
\label{equation_o;sdifgbj+}
\end{align}
\begin{equation}
\label{equation_o;sdifgbj}
\widetilde{\mathbf{Q}}
 =
\widetilde{\mathbf{Q}}^-
+
\widetilde{\mathbf{Q}}^+
 =
2 \sqrt{2}
\alpha^3
\begin{pmatrix}
0 & \mathbbm{i} e^{-\omega_2 \alpha l} & -\mathbbm{i} e^{-\omega_3 \alpha l} & 0 \\
0 & e^{-\omega_2 \alpha l} & e^{-\omega_3 \alpha l} & 0 \\
\mathbbm{i} e^{\omega_1 \alpha l} & 0 & 0 & -\mathbbm{i} e^{\omega_4 \alpha l} \\
-e^{\omega_1 \alpha l} & 0 & 0 & -e^{\omega_4 \alpha l}
\end{pmatrix}.
\end{equation}
Let
\[
\hat{\mathbf{U}}
 =
\frac{1}{\sqrt{2}}
\begin{pmatrix}
0 & \mathbbm{i} & -\mathbbm{i} & 0 \\
0 & 1 & 1 & 0 \\
\mathbbm{i} & 0 & 0 & -\mathbbm{i} \\
-1 & 0 & 0 & -1
\end{pmatrix}
\quad
\in U(4).
\]
Then by \eqref{equation_o;sdifgbj} and Definition~\ref{definition_epsilon},
\begin{align*}
\widetilde{\mathbf{Q}}
 &=
4
\alpha^3
\cdot
\frac{1}{\sqrt{2}}
\begin{pmatrix}
0 & \mathbbm{i} & -\mathbbm{i} & 0 \\
0 & 1 & 1 & 0 \\
\mathbbm{i} & 0 & 0 & -\mathbbm{i} \\
-1 & 0 & 0 & -1
\end{pmatrix}
\cdot
\diag
\left(
e^{\omega_1 \alpha l},
e^{-\omega_2 \alpha l},
e^{-\omega_3 \alpha l},
e^{\omega_4 \alpha l}
\right) \\
 &=
4
\alpha^3
\hat{\mathbf{U}}
e^{\mathcal{E} \mathbf{\Omega} \alpha l},
\end{align*}
hence we have
\begin{equation}
\label{equation_Qtilde-1}
\widetilde{\mathbf{Q}}^{-1}
 =
\frac{1}{4 \alpha^3}
e^{-\mathcal{E} \mathbf{\Omega} \alpha l}
\hat{\mathbf{U}}^{-1}
 =
\frac{1}{4 \alpha^3}
e^{-\mathcal{E} \mathbf{\Omega} \alpha l}
\hat{\mathbf{U}}^*,
\end{equation}
since $\hat{\mathbf{U}}$ is unitary.
Note that this in particular shows that
$\mathbf{Q}$ is well-posed
by Lemma~\ref{lemma_wellposed}.

Let
\[
\hat{\mathbf{U}}^-
 =
\frac{1}{\sqrt{2}}
\begin{pmatrix}
0 & \mathbbm{i} & -\mathbbm{i} & 0 \\
0 & 1 & 1 & 0 \\
0 & 0 & 0 & 0 \\ 
0 & 0 & 0 & 0
\end{pmatrix},
\qquad
\hat{\mathbf{U}}^+
 =
\frac{1}{\sqrt{2}}
\begin{pmatrix}
0 & 0 & 0 & 0 \\ 
0 & 0 & 0 & 0 \\
\mathbbm{i} & 0 & 0 & -\mathbbm{i} \\
-1 & 0 & 0 & -1
\end{pmatrix}.
\]
Then by \eqref{equation_o;sdifgbj-}, \eqref{equation_o;sdifgbj+},
\begin{align}
\widetilde{\mathbf{Q}}^-
 &=
4
\alpha^3
\cdot
\frac{1}{\sqrt{2}}
\begin{pmatrix}
0 & \mathbbm{i} & -\mathbbm{i} & 0 \\
0 & 1 & 1 & 0 \\
0 & 0 & 0 & 0 \\
0 & 0 & 0 & 0
\end{pmatrix}
\cdot
\diag
\left(
e^{\omega_1 \alpha l},
e^{-\omega_2 \alpha l},
e^{-\omega_3 \alpha l},
e^{\omega_4 \alpha l}
\right)
\nonumber \\
 &=
4
\alpha^3
\hat{\mathbf{U}}^-
e^{\mathcal{E} \mathbf{\Omega} \alpha l},
\label{equation_Qtilde-} \\
\widetilde{\mathbf{Q}}^+
 &=
4
\alpha^3
\cdot
\frac{1}{\sqrt{2}}
\begin{pmatrix}
0 & 0 & 0 & 0 \\ 
0 & 0 & 0 & 0 \\
\mathbbm{i} & 0 & 0 & -\mathbbm{i} \\
-1 & 0 & 0 & -1
\end{pmatrix}
\cdot
\diag
\left(
e^{\omega_1 \alpha l},
e^{-\omega_2 \alpha l},
e^{-\omega_3 \alpha l},
e^{\omega_4 \alpha l}
\right)
\nonumber \\
 &=
4
\alpha^3
\hat{\mathbf{U}}^+
e^{\mathcal{E} \mathbf{\Omega} \alpha l}.
\label{equation_Qtilde+}
\end{align}
By \eqref{equation_Qtilde-1}, \eqref{equation_Qtilde-}, \eqref{equation_Qtilde+},
we have
\begin{equation}
\label{equation_;asoifjvbasdf}
\widetilde{\mathbf{Q}}^{-1}
\widetilde{\mathbf{Q}}^\pm
 =
\frac{1}{4 \alpha^3}
e^{-\mathcal{E} \mathbf{\Omega} \alpha l}
\hat{\mathbf{U}}^*
\cdot
4 \alpha^3
\hat{\mathbf{U}}^\pm
e^{\mathcal{E} \mathbf{\Omega} \alpha l}
 =
e^{-\mathcal{E} \mathbf{\Omega} \alpha l}
\hat{\mathbf{U}}^*
\hat{\mathbf{U}}^\pm
e^{\mathcal{E} \mathbf{\Omega} \alpha l}.
\end{equation}
Note that 
\begin{align*}
\hat{\mathbf{U}}^*
\hat{\mathbf{U}}^-
 &=
\frac{1}{\sqrt{2}}
\begin{pmatrix}
0 & 0 & -\mathbbm{i} & -1 \\
-\mathbbm{i} & 1 & 0 & 0 \\
\mathbbm{i} & 1 & 0 & 0 \\
0 & 0 & \mathbbm{i} & -1
\end{pmatrix}
\cdot
\frac{1}{\sqrt{2}}
\begin{pmatrix}
0 & \mathbbm{i} & -\mathbbm{i} & 0 \\
0 & 1 & 1 & 0 \\
0 & 0 & 0 & 0 \\ 
0 & 0 & 0 & 0
\end{pmatrix}
 =
\diag(0,1,1,0), \\
\hat{\mathbf{U}}^*
\hat{\mathbf{U}}^+
 &=
\frac{1}{\sqrt{2}}
\begin{pmatrix}
0 & 0 & -\mathbbm{i} & -1 \\
-\mathbbm{i} & 1 & 0 & 0 \\
\mathbbm{i} & 1 & 0 & 0 \\
0 & 0 & \mathbbm{i} & -1
\end{pmatrix}
\cdot
\frac{1}{\sqrt{2}}
\begin{pmatrix}
0 & 0 & 0 & 0 \\ 
0 & 0 & 0 & 0 \\
\mathbbm{i} & 0 & 0 & -\mathbbm{i} \\
-1 & 0 & 0 & -1
\end{pmatrix}
 =
\diag(1,0,0,1),
\end{align*}
hence by \eqref{equation_;asoifjvbasdf},
\begin{align}
\widetilde{\mathbf{Q}}^{-1}
\widetilde{\mathbf{Q}}^-
 &=
e^{-\mathcal{E} \mathbf{\Omega} \alpha l}
\cdot
\diag(0,1,1,0)
\cdot
e^{\mathcal{E} \mathbf{\Omega} \alpha l}
 =
\diag(0,1,1,0),
\label{equation_Q-1Q-} \\
\widetilde{\mathbf{Q}}^{-1}
\widetilde{\mathbf{Q}}^+
 &=
e^{-\mathcal{E} \mathbf{\Omega} \alpha l}
\cdot
\diag(1,0,0,1)
\cdot
e^{\mathcal{E} \mathbf{\Omega} \alpha l}
 =
\diag(1,0,0,1).
\label{equation_Q-1Q+}
\end{align}
Thus by Definition~\ref{definition_GMpm},
we finally have
\begin{align}
\mathbf{G}_\mathbf{Q}^-
 &=
\widetilde{\mathbf{Q}}^{-1}
\widetilde{\mathbf{Q}}^-
\mathbf{\Omega}
\mathbf{L}^2
\nonumber \\
 &=
\diag(0,1,1,0)
\cdot
\begin{pmatrix}
0 & 0 & \omega_1 & 0 \\
0 & 0 & 0 & \omega_2 \\
\omega_3 & 0 & 0 & 0 \\
0 & \omega_4 & 0 & 0
\end{pmatrix}
 =
\begin{pmatrix}
0 & 0 & 0 & 0 \\
0 & 0 & 0 & \omega_2 \\
\omega_3 & 0 & 0 & 0 \\
0 & 0 & 0 & 0
\end{pmatrix},
\label{equation_GQ-} \\
\mathbf{G}_\mathbf{Q}^+
 &=
\widetilde{\mathbf{Q}}^{-1}
\widetilde{\mathbf{Q}}^+
\mathbf{\Omega}
\mathbf{L}^2
\nonumber \\
 &=
\diag(1,0,0,1)
\cdot
\begin{pmatrix}
0 & 0 & \omega_1 & 0 \\
0 & 0 & 0 & \omega_2 \\
\omega_3 & 0 & 0 & 0 \\
0 & \omega_4 & 0 & 0
\end{pmatrix}
 =
\begin{pmatrix}
0 & 0 & \omega_1 & 0 \\
0 & 0 & 0 & 0 \\
0 & 0 & 0 & 0 \\
0 & \omega_4 & 0 & 0
\end{pmatrix}.
\label{equation_GQ+}
\end{align}
Note that 
$\mathbf{Q} \in \mathrm{wp}(4,8,\mathbb{R})$
and
$\mathbf{G}_\mathbf{Q}^-, \mathbf{G}_\mathbf{Q}^+ \in \overline{\pi}(4)$,
satisfying Lemma~\ref{lemma_MtoGMpmReal}.

\begin{lemma}
\label{lemma_GQ=Glalphak}
$
\mathcal{K}_\mathbf{Q}
 =
\mathcal{K}_{l,\alpha,k}$.
\end{lemma}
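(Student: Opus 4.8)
The plan is to prove the operator identity by matching the two integral kernels: if $G_\mathbf{Q}(x,\xi) = G(x,\xi)$ for every $(x,\xi) \in [-l,l]\times[-l,l]$, where $G_\mathbf{Q}$ is the kernel of $\mathcal{K}_\mathbf{Q}$ from Definition~\ref{definition_GMKM} and $G$ is the kernel of $\mathcal{K}_{l,\alpha,k}$ from \eqref{equation_Glalphak}, then $\mathcal{K}_\mathbf{Q} = \mathcal{K}_{l,\alpha,k}$ at once. The branch $x\le\xi$ of $G_\mathbf{Q}$ involves $\mathbf{G}_\mathbf{Q}^+$ and the branch $\xi\le x$ involves $\mathbf{G}_\mathbf{Q}^-$, and both of these $4\times 4$ matrices were computed explicitly in \eqref{equation_GQ-} and \eqref{equation_GQ+}, each having only two nonzero entries. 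So the whole argument reduces to expanding the two bilinear forms $\mathbf{y}(x)^T\mathbf{G}_\mathbf{Q}^\pm\mathbf{y}(\xi)$, recalling $y_j(x) = e^{\omega_j\alpha x}$ from Definition~\ref{definition_W(x)}, and recognizing the result.

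Concretely, for $x\le\xi$, reading off the nonzero entries of $\mathbf{G}_\mathbf{Q}^+$ from \eqref{equation_GQ+} gives
\[
\mathbf{y}(x)^T\mathbf{G}_\mathbf{Q}^+\mathbf{y}(\xi)
 =
\omega_1 e^{\omega_1\alpha x}e^{\omega_3\alpha\xi}
+
\omega_4 e^{\omega_4\alpha x}e^{\omega_2\alpha\xi}.
\]
I would then substitute $\omega_3 = -\omega_1$ and $\omega_2 = -\omega_4$ from \eqref{equation_omegaL2} and set $t = \xi - x = |x - \xi| \ge 0$, turning this into $\omega_1 e^{-\omega_1\alpha t} + \omega_4 e^{-\omega_4\alpha t}$; since $\omega_4 = \overline{\omega_1}$ by \eqref{equation_omegaR}, this equals $2\,\Real\!\left(\omega_1 e^{-\omega_1\alpha t}\right)$. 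Writing $\omega_1 = e^{\mathbbm{i}\pi/4}$ and using $\Real\omega_1 = \Imag\omega_1 = 1/\sqrt 2$ from \eqref{equation_omegaReIm}, one gets $\omega_1 e^{-\omega_1\alpha t} = e^{-\alpha t/\sqrt 2}\,e^{\mathbbm{i}(\pi/4 - \alpha t/\sqrt 2)}$, whose real part is $e^{-\alpha t/\sqrt 2}\cos(\alpha t/\sqrt 2 - \pi/4) = e^{-\alpha t/\sqrt 2}\sin(\alpha t/\sqrt 2 + \pi/4)$ by the elementary identity $\cos(\theta - \pi/4) = \sin(\theta + \pi/4)$. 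Multiplying by $\alpha/(4k)$ as in Definition~\ref{definition_GMKM} reproduces exactly $G(x,\xi)$ in \eqref{equation_Glalphak}.

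For $\xi\le x$, the analogous expansion of $\mathbf{G}_\mathbf{Q}^-$ from \eqref{equation_GQ-} gives $\mathbf{y}(x)^T\mathbf{G}_\mathbf{Q}^-\mathbf{y}(\xi) = \omega_2 e^{\omega_2\alpha x}e^{\omega_4\alpha\xi} + \omega_3 e^{\omega_3\alpha x}e^{\omega_1\alpha\xi}$, and with the same substitutions and $s = x - \xi = |x-\xi|\ge 0$ this collapses to $-\left(\omega_1 e^{-\omega_1\alpha s} + \omega_4 e^{-\omega_4\alpha s}\right) = -2\, e^{-\alpha s/\sqrt 2}\sin(\alpha s/\sqrt 2 + \pi/4)$; the leading minus sign cancels against the $-\mathbf{G}_\mathbf{Q}^-$ appearing in the $\xi\le x$ branch of Definition~\ref{definition_GMKM}, so once more $G_\mathbf{Q}(x,\xi) = \frac{\alpha}{2k}e^{-\frac{\alpha}{\sqrt2}|x-\xi|}\sin\!\left(\frac{\alpha}{\sqrt2}|x-\xi| + \frac{\pi}{4}\right) = G(x,\xi)$. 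This covers both branches, hence $G_\mathbf{Q} \equiv G$ and therefore $\mathcal{K}_\mathbf{Q} = \mathcal{K}_{l,\alpha,k}$. I do not expect any genuine obstacle here: the only points needing care are keeping the $\omega_j$-indices straight and checking that the $x\le\xi$ and $\xi\le x$ branches yield the same function of $|x-\xi|$, which they do precisely because of the conjugation symmetry $\omega_4 = \overline{\omega_1}$ built into $\mathbf{Q}$.
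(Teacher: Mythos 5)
Your proof is correct and follows essentially the same route as the paper: reduce to showing $G_\mathbf{Q} = G$, expand the two bilinear forms $\mathbf{y}(x)^T\mathbf{G}_\mathbf{Q}^\pm\mathbf{y}(\xi)$ using the explicit matrices from \eqref{equation_GQ-}--\eqref{equation_GQ+}, and collapse them via $\omega_3=-\omega_1$, $\omega_2=-\omega_4$, $\omega_4=\overline{\omega_1}$ into $2\Real(\omega_1 e^{-\omega_1\alpha|x-\xi|})$. The only cosmetic difference is that the paper combines the two branches into a single $|x-\xi|$ expression before taking the real part, while you treat $x\le\xi$ and $\xi\le x$ separately; both reach the identical final formula.
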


\begin{proof}
By \eqref{equation_Klalphak} and Definition~\ref{definition_GMKM},
it is sufficient to show that $G_\mathbf{Q} = G$,
where $G$ is defined by \eqref{equation_Glalphak}.
By Definition~\ref{definition_W(x)}, 
and \eqref{equation_omegaL2}, \eqref{equation_GQ-}, \eqref{equation_GQ+},
we have
\begin{align*}
\mathbf{y}(x)^T
\cdot
\mathbf{G}_\mathbf{Q}^-
\cdot
\mathbf{y}(\xi)
 &=
\omega_3
e^{\omega_3 \alpha x}
e^{\omega_1 \alpha \xi}
+
\omega_2
e^{\omega_2 \alpha x}
e^{\omega_4 \alpha \xi} \\
 &=
-
\omega_1
e^{-\omega_1 \alpha (x - \xi)}
-
\omega_4
e^{-\omega_4 \alpha (x - \xi)}, \\
\mathbf{y}(x)^T
\cdot
\mathbf{G}_\mathbf{Q}^+
\cdot
\mathbf{y}(\xi)
 &=
\omega_1
e^{\omega_1 \alpha x}
e^{\omega_3 \alpha \xi}
+
\omega_4
e^{\omega_4 \alpha x}
e^{\omega_2 \alpha \xi} \\
 &=
\omega_1
e^{-\omega_1 \alpha (\xi - x)}
+
\omega_4
e^{-\omega_4 \alpha (\xi - x)},
\end{align*}
hence by Definitions~\ref{definition_omega}, \ref{definition_GMKM}, and \eqref{equation_omegaR},
\begin{align*}
\lefteqn{
G_\mathbf{Q}(x,\xi)
 =
\frac{\alpha}{4k}
\cdot
\left\{
\begin{array}{ll}
\omega_1
e^{-\omega_1 \alpha (\xi - x)}
+
\omega_4
e^{-\omega_4 \alpha (\xi - x)}, &
x \leq \xi \\
\omega_1
e^{-\omega_1 \alpha (x - \xi)}
+
\omega_4
e^{-\omega_4 \alpha (x - \xi)}, &
\xi \leq x
\end{array}
\right.
} \\
 &=
\frac{\alpha}{4k}
\left(
\omega_1
e^{-\omega_1 \alpha |x - \xi|}
+
\omega_4
e^{-\omega_4 \alpha |x - \xi|}
\right)
 =
\frac{\alpha}{4k}
\cdot
2
\Real
\left(
\omega_1
e^{-\omega_1 \alpha |x - \xi|}
\right) \\
 &=
\frac{\alpha}{2k}
\Real
\left(
e^{\mathbbm{i} \frac{\pi}{4}}
e^{-\left( \frac{1}{\sqrt{2}} + \mathbbm{i} \frac{1}{\sqrt{2}} \right) \alpha |x - \xi|}
\right)
 =
\frac{\alpha}{2k}
\Real
e^{
-
\frac{\alpha}{\sqrt{2}} |x - \xi|
+
\mathbbm{i}
\left(
\frac{\pi}{4}
-
\frac{\alpha}{\sqrt{2}} |x - \xi|
\right)
} \\
 &=
\frac{\alpha}{2k}
e^{
-
\frac{\alpha}{\sqrt{2}} |x - \xi|
}
\cos
\left(
\frac{\pi}{4}
-
\frac{\alpha}{\sqrt{2}} |x - \xi|
\right)
 =
\frac{\alpha}{2k}
e^{
-
\frac{\alpha}{\sqrt{2}} |x - \xi|
}
\sin
\left(
\frac{\alpha}{\sqrt{2}} |x - \xi|
+
\frac{\pi}{4}
\right),
\end{align*}
which is identical to $G(x,\xi)$ in \eqref{equation_Glalphak}.
Thus we have the proof.
\end{proof}

\section{The representation $\protect\Gamma$ and proof of Theorem~\ref{theorem_eigencondition-X}}

\subsection{The representation $\protect\Gamma$}
\label{section_Gamma}

\begin{definition}
\label{definition_GM}
For $\mathbf{M} \in \mathrm{wp}(4,8,\mathbb{C})$,
denote
$
\mathbf{G}_\mathbf{M}
 =
\left(
\mathbf{G}_\mathbf{M}^+
-
\mathbf{G}_\mathbf{Q}^+
\right)
\left(
\mathbf{\Omega}
\mathbf{L}^2
\right)^{-1}
\mathcal{E}
$.
Define
$\Gamma : \mathrm{wp}(\mathbb{C}) \to \gl(4,\mathbb{C})$
by
$
\Gamma\left( \left[ \mathbf{M} \right] \right)
 =
\mathbf{G}_\mathbf{M}
$
for $\mathbf{M} \in \mathrm{wp}(4,8,\mathbb{C})$.
\end{definition}

Readers should be cautious to distinguish the {\em $4 \times 4$ matrix} $\mathbf{G}_\mathbf{M}$
in Definition~\ref{definition_GM}
from the {\em Green's function} $G_\mathbf{M}$ in Definition~\ref{definition_GMKM}.
Note that the map $\Gamma$ is well-defined,
since
\begin{equation}
\label{equation_GammaGamma+}
\Gamma\left( \left[ \mathbf{M} \right] \right)
 =
\left(
\mathbf{G}_\mathbf{M}^+
-
\mathbf{G}_\mathbf{Q}^+
\right)
\left(
\mathbf{\Omega}
\mathbf{L}^2
\right)^{-1}
\mathcal{E}
 =
\left\{
\Gamma^+\left( \left[ \mathbf{M} \right] \right)
-
\mathbf{G}_\mathbf{Q}^+
\right\}
\left(
\mathbf{\Omega}
\mathbf{L}^2
\right)^{-1}
\mathcal{E}
\end{equation}
by Definition~\ref{definition_Gammapm}.
By \eqref{equation_GM-+GM+},
\[
\mathbf{G}_\mathbf{M}
 =
\left\{
\left(
\mathbf{\Omega}
\mathbf{L}^2
-
\mathbf{G}_\mathbf{M}^-
\right)
-
\left(
\mathbf{\Omega}
\mathbf{L}^2
-
\mathbf{G}_\mathbf{Q}^-
\right)
\right\}
\left(
\mathbf{\Omega}
\mathbf{L}^2
\right)^{-1}
\mathcal{E}
 =
-
\left(
\mathbf{G}_\mathbf{M}^-
-
\mathbf{G}_\mathbf{Q}^-
\right)
\left(
\mathbf{\Omega}
\mathbf{L}^2
\right)^{-1}
\mathcal{E},
\]
which could have been used for an alternative definition of $\mathbf{G}_\mathbf{M}$.
Note also that
\begin{equation}
\label{equation_GQ=O}
\mathbf{G}_\mathbf{Q}
 =
\mathbf{O}.
\end{equation}

\begin{lemma}
\label{lemma_GM}
$\Gamma : \mathrm{wp}(\mathbb{C}) \to \gl(4,\mathbb{C})$
is a one-to-one correspondence,
and
$
\Gamma\left( \mathrm{wp}(\mathbb{R}) \right)
 =
\overline{\pi}(4)
$.
\end{lemma}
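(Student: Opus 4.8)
The claim has two parts: (i) $\Gamma:\mathrm{wp}(\mathbb{C})\to\gl(4,\mathbb{C})$ is a bijection, and (ii) $\Gamma\left(\mathrm{wp}(\mathbb{R})\right)=\overline{\pi}(4)$. Both should follow by transferring known facts about $\Gamma^+$ through the formula \eqref{equation_GammaGamma+}, namely $\Gamma\left(\left[\mathbf{M}\right]\right)=\left\{\Gamma^+\left(\left[\mathbf{M}\right]\right)-\mathbf{G}_\mathbf{Q}^+\right\}\left(\mathbf{\Omega}\mathbf{L}^2\right)^{-1}\mathcal{E}$. The key observation is that the map $\Phi:\gl(4,\mathbb{C})\to\gl(4,\mathbb{C})$, $\mathbf{A}\mapsto\left(\mathbf{A}-\mathbf{G}_\mathbf{Q}^+\right)\left(\mathbf{\Omega}\mathbf{L}^2\right)^{-1}\mathcal{E}$, is an affine bijection of $\gl(4,\mathbb{C})$ (a translation by $-\mathbf{G}_\mathbf{Q}^+$ followed by right-multiplication by the invertible matrix $\left(\mathbf{\Omega}\mathbf{L}^2\right)^{-1}\mathcal{E}$), with explicit inverse $\mathbf{G}\mapsto\mathbf{G}\,\mathcal{E}^{-1}\mathbf{\Omega}\mathbf{L}^2+\mathbf{G}_\mathbf{Q}^+$. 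Since $\Gamma=\Phi\circ\Gamma^+$ and $\Gamma^+$ is already known (from Lemma~\ref{lemma_equivalence} and Lemma~\ref{lemma_GMpmtoM}, as noted right after Definition~\ref{definition_Gammapm}) to be a one-to-one correspondence from $\mathrm{wp}(\mathbb{C})$ onto $\gl(4,\mathbb{C})$, part (i) is immediate: a composition of two bijections is a bijection.

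For part (ii), I would argue $\Gamma\left(\mathrm{wp}(\mathbb{R})\right)=\Phi\left(\Gamma^+\left(\mathrm{wp}(\mathbb{R})\right)\right)=\Phi\left(\overline{\pi}(4)\right)$, using the fact established by Lemmas~\ref{lemma_MtoGMpmReal} and \ref{lemma_GMpmtoMReal} that $\Gamma^+\left(\mathrm{wp}(\mathbb{R})\right)=\overline{\pi}(4)$. So it remains to check that $\Phi$ maps $\overline{\pi}(4)$ onto $\overline{\pi}(4)$. Since $\Phi$ is a bijection of $\gl(4,\mathbb{C})$, it suffices to show $\Phi\left(\overline{\pi}(4)\right)\subseteq\overline{\pi}(4)$ and $\Phi^{-1}\left(\overline{\pi}(4)\right)\subseteq\overline{\pi}(4)$; equivalently, just that both $\Phi$ and $\Phi^{-1}$ preserve $\overline{\pi}(4)$. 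This reduces to: (a) $\mathbf{G}_\mathbf{Q}^+\in\overline{\pi}(4)$, which is recorded explicitly in the remark after \eqref{equation_GQ+} (``$\mathbf{G}_\mathbf{Q}^-,\mathbf{G}_\mathbf{Q}^+\in\overline{\pi}(4)$''); (b) $\left(\mathbf{\Omega}\mathbf{L}^2\right)^{-1}\mathcal{E}\in\overline{\pi}(4)$, which follows from Lemma~\ref{lemma_pibar}(e) (each of $\mathbf{\Omega}$, $\mathbf{L}^2$, $\mathcal{E}\in\overline{\pi}(4)$) together with Lemma~\ref{lemma_pibar}(b),(c) (closure under products and inverses); and then (c) the conclusion that $\mathbf{A}\in\overline{\pi}(4)$ implies $\left(\mathbf{A}-\mathbf{G}_\mathbf{Q}^+\right)\left(\mathbf{\Omega}\mathbf{L}^2\right)^{-1}\mathcal{E}\in\overline{\pi}(4)$ by Lemma~\ref{lemma_pibar}(a) (real-linear combinations, here the difference) and (b) (product). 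The same three ingredients applied to the explicit inverse $\mathbf{G}\mapsto\mathbf{G}\,\mathcal{E}^{-1}\mathbf{\Omega}\mathbf{L}^2+\mathbf{G}_\mathbf{Q}^+$ show $\Phi^{-1}$ preserves $\overline{\pi}(4)$, completing (ii).

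I do not anticipate a serious obstacle here; the proof is essentially bookkeeping on top of Lemma~\ref{lemma_pibar} and the already-established surjectivity/injectivity of $\Gamma^+$. The one point requiring a little care is making sure the algebra of $\Phi$ is set up so that right-multiplication (not left) by $\left(\mathbf{\Omega}\mathbf{L}^2\right)^{-1}\mathcal{E}$ is what appears, so that Lemma~\ref{lemma_pibar}(b) applies directly; since $\overline{\pi}(4)$ is closed under arbitrary products of its members, side does not actually matter, but I would state it cleanly anyway. A second minor point is that I should explicitly note $\left(\mathbf{\Omega}\mathbf{L}^2\right)^{-1}\mathcal{E}$ is invertible (clear, as $\mathbf{\Omega}$, $\mathbf{L}^2$, $\mathcal{E}$ are all invertible by \eqref{equation_Omega}, Definition~\ref{definition_L}, Definition~\ref{definition_epsilon}), so that $\Phi$ is genuinely bijective and part (i) goes through. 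Everything else is a direct appeal to results stated earlier in the excerpt.
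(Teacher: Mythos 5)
Your proposal is correct and follows essentially the same route as the paper: both arguments view $\Gamma$ as the composition of $\Gamma^+$ with the affine map $\mathbf{A}\mapsto\left(\mathbf{A}-\mathbf{G}_\mathbf{Q}^+\right)\left(\mathbf{\Omega}\mathbf{L}^2\right)^{-1}\mathcal{E}$ (the paper does this implicitly via \eqref{equation_GammaGamma+}, you do it explicitly as $\Phi$), and both invoke the bijectivity and $\overline{\pi}(4)$-preservation of that affine map via Lemma~\ref{lemma_pibar} together with $\mathbf{G}_\mathbf{Q}^+,\left(\mathbf{\Omega}\mathbf{L}^2\right)^{-1}\mathcal{E}\in\overline{\pi}(4)$. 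Your version is just a more spelled-out rendering of the paper's two-sentence proof; no gap.
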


\begin{proof}
Since $\Gamma^+$ 
is a one-to-one correspondence
and 
$
\left(
\mathbf{\Omega}
\mathbf{L}^2
\right)^{-1}
\mathcal{E}
$ is invertible,
it follows from \eqref{equation_GammaGamma+}
that $\Gamma$ also is a one-to-one correspondence.
Since 
$
\Gamma^+\left( \mathrm{wp}(4,\mathbb{R}) \right)
$
$
 =
\overline{\pi}(4)
$
and
$
\mathbf{G}_\mathbf{Q}^+,
\left(
\mathbf{\Omega}
\mathbf{L}^2
\right)^{-1}
\mathcal{E}
\in \overline{\pi}(4)
$,
we have
$
\Gamma\left( \mathrm{wp}(4,\mathbb{R}) \right)
 =
\overline{\pi}(4)
$
by \eqref{equation_GammaGamma+}
and Lemma~\ref{lemma_pibar}.
\end{proof}

Thus we finally have our representation 
$\mathbf{M} \mapsto \mathbf{G}_\mathbf{M}$ ,
from the set of well-posed boundary conditions
$\mathrm{wp}(4,8,\mathbb{C})$ to the algebra $\gl(4,\mathbb{C})$,
and from
the set of well-posed {\em real} boundary conditions $\mathrm{wp}(4,8,\mathbb{R})$
to the $\mathbb{R}$-algebra $\overline{\pi}(4)$.
Note that this representation is {\em constructive} in both directions,
in that $\mathbf{G}_\mathbf{M} \in \gl(4,\mathbb{C})$ is expressed explicitly
in terms of given $\mathbf{M} \in \mathrm{wp}(4,8,\mathbb{C})$,
and conversely,
$\mathbf{M} \in \mathrm{wp}(4,8,\mathbb{C})$ 
such that $\mathbf{G}_\mathbf{M} = \mathbf{G}$
can be chosen explicitly
in terms of given $\mathbf{G} \in \gl(4,\mathbb{C})$.
Especially,
given 
$
\mathbf{M}
 =
\begin{pmatrix}
\mathbf{M}^- & \mathbf{M}^+
\end{pmatrix}
\in \mathrm{wp}(4,8,\mathbb{C})
$,
$
\mathbf{M}^-,
\mathbf{M}^+
\in \gl(4,\mathbb{C})
$,
we have
\begin{align}
\mathbf{G}_\mathbf{M}
 &=
\left(
\mathbf{G}_\mathbf{M}^+
-
\mathbf{G}_\mathbf{Q}^+
\right)
\left(
\mathbf{\Omega}
\mathbf{L}^2
\right)^{-1}
\mathcal{E}
\nonumber \\
 &=
\left(
\widetilde{\mathbf{M}}^{-1}
\widetilde{\mathbf{M}}^+
\mathbf{\Omega}
\mathbf{L}^2
-
\widetilde{\mathbf{Q}}^{-1}
\widetilde{\mathbf{Q}}^+
\mathbf{\Omega}
\mathbf{L}^2
\right)
\left(
\mathbf{\Omega}
\mathbf{L}^2
\right)^{-1}
\mathcal{E}
 =
\left(
\widetilde{\mathbf{M}}^{-1}
\widetilde{\mathbf{M}}^+
-
\widetilde{\mathbf{Q}}^{-1}
\widetilde{\mathbf{Q}}^+
\right)
\mathcal{E}
\nonumber \\
 &=
\left\{
\mathbf{M}^-
\mathbf{W}(-l)
+
\mathbf{M}^+
\mathbf{W}(l)
\right\}^{-1}
\mathbf{M}^+
\mathbf{W}(l)
\mathcal{E}
-
\diag(1,0,0,1)
\label{equation_MtoGM}
\end{align}
by combining Definitions~\ref{definition_Mtilde}, \ref{definition_GMpm}, \ref{definition_GM},
and \eqref{equation_Q-1Q+}.
Conversely, 
suppose $\mathbf{G} \in \gl(4,\mathbb{C})$ is given.
By \eqref{equation_GammaGamma+},
we have
$
\Gamma^{-1}(\mathbf{G})
 =
\left( \Gamma^+ \right)^{-1}
\left(
\mathbf{G}
\mathcal{E}
\mathbf{\Omega}
\mathbf{L}^2
+
\mathbf{G}_\mathbf{Q}^+
\right)
$,
hence by \eqref{equation_GM+toM},  \eqref{equation_Q-1Q+},
and Definition~\ref{definition_GMpm},
$
\mathbf{M}
 =
\begin{pmatrix}
\mathbf{M}^- & \mathbf{M}^+
\end{pmatrix}
\in \mathrm{wp}(4,8,\mathbb{C})
$
is a representative
of
$
\Gamma^{-1}(\mathbf{G})
\in \mathrm{wp}(\mathbb{C})$,
where
\begin{align*}
\mathbf{M}^-
 &=
\left\{
\mathbf{\Omega}
\mathbf{L}^2
-
\left(
\mathbf{G}
\mathcal{E}
\mathbf{\Omega}
\mathbf{L}^2
+
\mathbf{G}_\mathbf{Q}^+
\right)
\right\}
\left(
\mathbf{\Omega}
\mathbf{L}^2
\right)^{-1}
\mathbf{W}(-l)^{-1} \\
 &=
\left(
\mathbf{\Omega}
\mathbf{L}^2
-
\mathbf{G}
\mathcal{E}
\mathbf{\Omega}
\mathbf{L}^2
-
\widetilde{\mathbf{Q}}^{-1}
\widetilde{\mathbf{Q}}^+
\mathbf{\Omega}
\mathbf{L}^2
\right)
\left(
\mathbf{\Omega}
\mathbf{L}^2
\right)^{-1}
\mathbf{W}(-l)^{-1} \\
 &=
\left\{
\diag(0,1,1,0)
-
\mathbf{G}
\mathcal{E}
\right\}
\mathbf{W}(-l)^{-1}, \\
\mathbf{M}^+
 &=
\left(
\mathbf{G}
\mathcal{E}
\mathbf{\Omega}
\mathbf{L}^2
+
\mathbf{G}_\mathbf{Q}^+
\right)
\left(
\mathbf{\Omega}
\mathbf{L}^2
\right)^{-1}
\mathbf{W}(l)^{-1} \\
 &=
\left(
\mathbf{G}
\mathcal{E}
\mathbf{\Omega}
\mathbf{L}^2
+
\widetilde{\mathbf{Q}}^{-1}
\widetilde{\mathbf{Q}}^+
\mathbf{\Omega}
\mathbf{L}^2
\right)
\left(
\mathbf{\Omega}
\mathbf{L}^2
\right)^{-1}
\mathbf{W}(l)^{-1} \\
 &=
\left\{
\diag(1,0,0,1)
+
\mathbf{G}
\mathcal{E}
\right\}
\mathbf{W}(l)^{-1}.
\end{align*}
Thus,
given $\mathbf{G} \in \gl(4,\mathbb{C})$,
we have
\begin{align}
\begin{split}
\Gamma^{-1}(\mathbf{G})
 &=
\left[
\left(
\left\{
\diag(0,1,1,0)
-
\mathbf{G}
\mathcal{E}
\right\}
\mathbf{W}(-l)^{-1}
\right|
\right. \\
 &\qquad\qquad\qquad\qquad
\left.
\left|
\left\{
\diag(1,0,0,1)
+
\mathbf{G}
\mathcal{E}
\right\}
\mathbf{W}(l)^{-1}
\right)
\right]
\end{split}
\label{equation_GMto[M]}
\end{align}
in $\mathrm{wp}(\mathbb{C})$.
When $\mathbf{G} \in \overline{\pi}(4)$,
a representative of $\Gamma^{-1}\left( \mathbf{G} \right)$
{\em in} $\mathrm{wp}(4,8,\mathbb{R})$ is
\[
\mathbf{U}
\cdot
\left(
\begin{array}{c|c}
\left\{
\diag(0,1,1,0)
-
\mathbf{G}
\mathcal{E}
\right\}
\mathbf{W}(-l)^{-1}
 &
\left\{
\diag(1,0,0,1)
+
\mathbf{G}
\mathcal{E}
\right\}
\mathbf{W}(l)^{-1}
\end{array}
\right)
\]
by Lemma~\ref{lemma_GMpmtoMReal},
where $\mathbf{U}$ is defined by \eqref{equation_U}.

The boundary condition $\mathrm{BC}(\mathbf{M})$
in Lemma~\ref{lemma_eigencondition} 
is transformed into the following equivalent condition
which is expressed now in terms of $\mathbf{G}_\mathbf{M}$.

\begin{lemma}
\label{lemma_BCGM}
For $\mathbf{M} \in \mathrm{wp}(4,8,\mathbb{C})$,
the boundary condition 
$\mathrm{BC}(\mathbf{M})$:
$\mathbf{M} \cdot \mathcal{B}[u] = \mathbf{0}$
is equivalent to 
\begin{align*}
\mathbf{0}
 &=
\mathbf{G}_\mathbf{M}
\mathcal{E}
\left\{
\mathbf{W}(l)^{-1}
\mathcal{B}^+[u]
-
\mathbf{W}(-l)^{-1}
\mathcal{B}^-[u]
\right\}
\nonumber \\
 &\quad
+
\diag(0,1,1,0)
\cdot
\mathbf{W}(-l)^{-1}
\mathcal{B}^-[u]
+
\diag(1,0,0,1)
\cdot
\mathbf{W}(l)^{-1}
\mathcal{B}^+[u].
\end{align*}
\end{lemma}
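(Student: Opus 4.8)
The plan is to transform $\mathrm{BC}(\mathbf{M})$ through a short chain of equivalences and rewritings until it becomes the asserted identity. Writing $\mathbf{M} = \begin{pmatrix} \mathbf{M}^- & \mathbf{M}^+ \end{pmatrix}$, equation \eqref{equation_MBpm} puts $\mathrm{BC}(\mathbf{M})$ in the form $\mathbf{M}^- \mathcal{B}^-[u] + \mathbf{M}^+ \mathcal{B}^+[u] = \mathbf{0}$. Since $\mathbf{M} \in \mathrm{wp}(4,8,\mathbb{C})$, the matrix $\widetilde{\mathbf{M}}$ is invertible by Lemma~\ref{lemma_wellposed}, so this is \emph{equivalent} to $\widetilde{\mathbf{M}}^{-1}\mathbf{M}^- \mathcal{B}^-[u] + \widetilde{\mathbf{M}}^{-1}\mathbf{M}^+ \mathcal{B}^+[u] = \mathbf{0}$.

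Next I would bring in the Wronskian matrices. By Definition~\ref{definition_Mtilde}, $\mathbf{M}^\pm = \widetilde{\mathbf{M}}^\pm \mathbf{W}(\pm l)^{-1}$ (recall $\mathbf{W}(\pm l)$ is invertible), and by Definition~\ref{definition_GMpm}, $\widetilde{\mathbf{M}}^{-1}\widetilde{\mathbf{M}}^\pm = \mathbf{G}_\mathbf{M}^\pm (\mathbf{\Omega}\mathbf{L}^2)^{-1}$; hence the previous line reads
\[
\mathbf{G}_\mathbf{M}^- (\mathbf{\Omega}\mathbf{L}^2)^{-1} \mathbf{W}(-l)^{-1} \mathcal{B}^-[u] + \mathbf{G}_\mathbf{M}^+ (\mathbf{\Omega}\mathbf{L}^2)^{-1} \mathbf{W}(l)^{-1} \mathcal{B}^+[u] = \mathbf{0}.
\]
It then remains to express $\mathbf{G}_\mathbf{M}^\pm (\mathbf{\Omega}\mathbf{L}^2)^{-1}$ through $\mathbf{G}_\mathbf{M}$. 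Multiplying Definition~\ref{definition_GM} on the right by $\mathcal{E}$ and using $\mathcal{E}^2 = \mathbf{I}$ gives $\mathbf{G}_\mathbf{M}^+ (\mathbf{\Omega}\mathbf{L}^2)^{-1} = \mathbf{G}_\mathbf{M}\mathcal{E} + \mathbf{G}_\mathbf{Q}^+ (\mathbf{\Omega}\mathbf{L}^2)^{-1}$, and doing the same with the (unnumbered) identity for $\mathbf{G}_\mathbf{M}$ in terms of $\mathbf{G}_\mathbf{M}^-$ displayed between \eqref{equation_GammaGamma+} and \eqref{equation_GQ=O} gives $\mathbf{G}_\mathbf{M}^- (\mathbf{\Omega}\mathbf{L}^2)^{-1} = -\mathbf{G}_\mathbf{M}\mathcal{E} + \mathbf{G}_\mathbf{Q}^- (\mathbf{\Omega}\mathbf{L}^2)^{-1}$. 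Finally, by Definition~\ref{definition_GMpm} together with the explicit computations \eqref{equation_Q-1Q-} and \eqref{equation_Q-1Q+} from Section~\ref{section_Q}, one has $\mathbf{G}_\mathbf{Q}^- (\mathbf{\Omega}\mathbf{L}^2)^{-1} = \widetilde{\mathbf{Q}}^{-1}\widetilde{\mathbf{Q}}^- = \diag(0,1,1,0)$ and $\mathbf{G}_\mathbf{Q}^+ (\mathbf{\Omega}\mathbf{L}^2)^{-1} = \widetilde{\mathbf{Q}}^{-1}\widetilde{\mathbf{Q}}^+ = \diag(1,0,0,1)$. Substituting these four identities into the displayed equation and collecting the two terms that carry the common left factor $\mathbf{G}_\mathbf{M}\mathcal{E}$ produces exactly the claimed equation.

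I expect no genuine difficulty: the argument is a sequence of substitutions of already-established identities, and the only place demanding care is the last reduction — keeping straight the opposite signs attached to the $\mathbf{G}^+$ and $\mathbf{G}^-$ forms of $\mathbf{G}_\mathbf{M}$, and noting that the trailing involution $\mathcal{E}$ in Definition~\ref{definition_GM} simply cancels (rather than having to be commuted past $\mathbf{\Omega}\mathbf{L}^2$) when one right-multiplies by $\mathcal{E}$. No ingredient beyond Lemma~\ref{lemma_wellposed}, Definitions~\ref{definition_Mtilde}, \ref{definition_GMpm}, \ref{definition_GM}, and equations \eqref{equation_MBpm}, \eqref{equation_GammaGamma+}, \eqref{equation_Q-1Q-}, \eqref{equation_Q-1Q+} is required.
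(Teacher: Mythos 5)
Your proof is correct, and the chain of substitutions checks out. Starting from $\mathrm{BC}(\mathbf{M})$ in the form $\mathbf{M}^-\mathcal{B}^-[u]+\mathbf{M}^+\mathcal{B}^+[u]=\mathbf{0}$, left-multiplying by the invertible $\widetilde{\mathbf{M}}^{-1}$, rewriting $\widetilde{\mathbf{M}}^{-1}\mathbf{M}^\pm = \widetilde{\mathbf{M}}^{-1}\widetilde{\mathbf{M}}^\pm\mathbf{W}(\pm l)^{-1} = \mathbf{G}_\mathbf{M}^\pm(\mathbf{\Omega}\mathbf{L}^2)^{-1}\mathbf{W}(\pm l)^{-1}$, and then expressing $\mathbf{G}_\mathbf{M}^\pm(\mathbf{\Omega}\mathbf{L}^2)^{-1}$ as $\pm\mathbf{G}_\mathbf{M}\mathcal{E}$ plus the appropriate $\diag$ matrix via Definition~\ref{definition_GM} (together with its companion identity for $\mathbf{G}_\mathbf{M}^-$), \eqref{equation_Q-1Q-}, and \eqref{equation_Q-1Q+}, does produce the claimed identity on the nose. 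The one point you should make fully explicit is that $\mathcal{E}^2 = \mathbf{I}$, which holds because $\mathcal{E}=\diag(1,-1,-1,1)$.

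The route differs from the paper's in presentation. The paper invokes the previously established inverse-map formula \eqref{equation_GMto[M]}: it produces from $\mathbf{G}_\mathbf{M}$ an explicit representative $\hat{\mathbf{M}}$ of the equivalence class $\Gamma^{-1}(\mathbf{G}_\mathbf{M})$, appeals to Lemma~\ref{lemma_equivalence} to conclude $\mathbf{M}=\mathbf{P}\hat{\mathbf{M}}$ for some $\mathbf{P}\in GL(4,\mathbb{C})$, and then expands $\hat{\mathbf{M}}\cdot\mathcal{B}[u]$. You instead carry out the normalization directly by multiplying through by $\widetilde{\mathbf{M}}^{-1}$ and substituting the definitions, bypassing \eqref{equation_GMto[M]} and Lemma~\ref{lemma_equivalence} entirely. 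The two arguments are in essence the same arithmetic --- your $\widetilde{\mathbf{M}}^{-1}\mathbf{M}$ is exactly the paper's $\hat{\mathbf{M}}$ --- but yours is self-contained and more elementary, while the paper's reuses the already-built machinery for $\Gamma^{-1}$, which keeps the lemma's proof short once that machinery is in place.
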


\begin{proof}
By \eqref{equation_GMto[M]},
$
\left[ \hat{\mathbf{M}} \right]
 =
\Gamma^{-1}\left( \mathbf{G}_\mathbf{M} \right)
\in \mathrm{wp}(\mathbb{C})
$,
where
\begin{align}
\begin{split}
\hat{\mathbf{M}}
 &=
\left(
\left\{
\diag(0,1,1,0)
-
\mathbf{G}_\mathbf{M}
\mathcal{E}
\right\}
\mathbf{W}(-l)^{-1}
\right| \\
 &\qquad\qquad\qquad\qquad
\left|
\left\{
\diag(1,0,0,1)
+
\mathbf{G}_\mathbf{M}
\mathcal{E}
\right\}
\mathbf{W}(l)^{-1}
\right).
\end{split}
\label{equation_n;afhvldfjv}
\end{align}
So by Definition~\ref{definition_GM},
$
\left[ \mathbf{M} \right]
 =
\Gamma^{-1}\left( \mathbf{G}_\mathbf{M} \right)
 =
\left[ \hat{\mathbf{M}} \right]
$,
hence by Lemma~\ref{lemma_equivalence},
there exists $\mathbf{P} \in GL(4,\mathbb{C})$
such that
$
\mathbf{M}
 =
\mathbf{P}
\hat{\mathbf{M}}
$.
Thus the condition $\mathrm{BC}(\mathbf{M})$:  
$
\mathbf{M}
\cdot
\mathcal{B}[u]
 =
\mathbf{0}
$
is equivalent to
$
\hat{\mathbf{M}}
\cdot
\mathcal{B}[u]
 =
\mathbf{0}
$,
since $\mathbf{P}$ is invertible.
By \eqref{equation_calBpm}, \eqref{equation_n;afhvldfjv},
\begin{align*}
\lefteqn{
\hat{\mathbf{M}}
\cdot
\mathcal{B}[u]
 =
\hat{\mathbf{M}}
\cdot
\begin{pmatrix}
\mathcal{B}^-[u] \\
\mathcal{B}^+[u]
\end{pmatrix}
} \\
 &=
\left\{
\diag(0,1,1,0)
-
\mathbf{G}_\mathbf{M}
\mathcal{E}
\right\}
\mathbf{W}(-l)^{-1}
\cdot
\mathcal{B}^-[u] \\
 &\quad
+
\left\{
\diag(1,0,0,1)
+
\mathbf{G}_\mathbf{M}
\mathcal{E}
\right\}
\mathbf{W}(l)^{-1}
\cdot
\mathcal{B}^+[u] \\
 &=
\mathbf{G}_\mathbf{M}
\mathcal{E}
\left\{
\mathbf{W}(l)^{-1}
\mathcal{B}^+[u]
-
\mathbf{W}(-l)^{-1}
\mathcal{B}^-[u]
\right\} \\
 &\quad
+
\diag(0,1,1,0)
\cdot
\mathbf{W}(-l)^{-1}
\mathcal{B}^-[u]
+
\diag(1,0,0,1)
\cdot
\mathbf{W}(l)^{-1}
\mathcal{B}^+[u],
\end{align*}
hence the lemma follows.
\end{proof}

\subsection{Proof of Theorem~\ref{theorem_eigencondition-X}}
\label{section_X}

Note that the solution space of the linear homogeneous equation 
$\mathrm{EDE}(\lambda)$ in Definition~\ref{definition_EDE}
depends on the value $\lambda \neq 0$.
In particular,
depending on whether $1 - 1/(\lambda k) = 0$ or not,
or equivalently, whether $\lambda = 1/k$ or not,
$\mathrm{EDE}(\lambda)$ becomes as follows.

\begin{itemize}
\item[(I)]
When $\lambda = 1/k$:
$\mathrm{EDE}(\lambda)$ becomes $u^{(4)} = 0$.

\item[(II)]
When $\lambda \neq 1/k$:
$\mathrm{EDE}(\lambda)$ becomes 
$
u^{(4)} + \left( \kappa \alpha \right)^4 u
 =
0
$,
where $\kappa = \chi(\lambda) \neq 0$
is defined in Definition~\ref{definition_kappa} below.
\end{itemize}

\begin{definition}
\label{definition_kappa}
For $\lambda \in \mathbb{C} \setminus \left\{ 0, 1/k \right\}$,
define
$\chi(\lambda)$ to be the unique complex number satisfying
$
\chi(\lambda)^4
 = 
1 - 1/(\lambda k)
$
and
$
0 \leq \Arg{\chi(\lambda)} < \pi/2
$.
\end{definition}

Note that $\chi(\lambda) \neq 0$ and $\chi(\lambda)^4 \neq 1$
for $\lambda \in \mathbb{C} \setminus \left\{ 0, 1/k \right\}$.
In fact,
$\chi$ is a one-to-one correspondence
from $\mathbb{C} \setminus \left\{ 0, 1/k \right\}$
to 
$
\left\{
\kappa \in \mathbb{C}
\,|\,
0
 \leq
\Arg{\kappa}
 <
\frac{\pi}{2}
\right\}
\setminus
\{ 0, 1 \}
$,
and its inverse is given by
$
\chi^{-1}(\kappa)
 =
1/
\left\{
k
\left( 1 - \kappa^4 \right)
\right\}
$.

\begin{definition}
\label{definition_Wlambda(x)}
For $0 \neq \lambda \in \mathbb{C}$,
denote
\[
y_{\lambda,j}(x)
 =
\left\{
\begin{array}{ll}
\frac{x^{j-1}}{(j-1)!},
 &
\text{if }
\lambda = \frac{1}{k},
 \\
e^{\omega_j \kappa \alpha x},
 &
\text{if }
\lambda \neq \frac{1}{k},
\end{array}
\right.
\qquad
j = 1,2,3,4,
\]
where $\kappa = \chi(\lambda)$.
Denote
$\mathbf{y}_\lambda(x)
 =
\begin{pmatrix}
y_{\lambda,1}(x) &
y_{\lambda,2}(x) &
y_{\lambda,3}(x) &
y_{\lambda,4}(x)
\end{pmatrix}^T
$
and
\[
\mathbf{W}_\lambda(x)
 =
\left( y_{\lambda,j}^{(i-1)}(x) \right)_{1 \leq i,j \leq 4}
 =
\begin{pmatrix}
\mathbf{y}_\lambda(x)^T \\
\mathbf{y}_\lambda^\prime(x)^T \\
\mathbf{y}_\lambda^{\prime\prime}(x)^T \\
\mathbf{y}_\lambda^{\prime\prime\prime}(x)^T
\end{pmatrix}.
\]
\end{definition}

Note that the functions
$y_{\lambda,1}, y_{\lambda,2}, y_{\lambda,3}, y_{\lambda,4}$
form a fundamental set of solutions of $\mathrm{EDE}(\lambda)$
for every $0 \neq \lambda \in \mathbf{C}$,
and

\begin{equation}
\label{equation_Wlambdax}
\mathbf{W}_\lambda(x)
 =
\left\{
\begin{array}{ll}
\left(
H(j-i)
\frac{x^{j-i}}{(j - i)!}
\right)_{1 \leq i,j \leq 4}
 =
\begin{pmatrix}
1 & x & \frac{x^2}{2} & \frac{x^3}{6} \\
0 & 1 & x & \frac{x^2}{2} \\
0 & 0 & 1 & x \\
0 & 0 & 0 & 1
\end{pmatrix},
 &
\lambda = \frac{1}{k},
 \\
\left(
\left( \omega_j \kappa \alpha \right)^{i-1}
e^{\omega_j \kappa \alpha x}
\right)_{1 \leq i,j \leq 4},
 &
\lambda \neq \frac{1}{k},
\end{array}
\right.
\end{equation}
where
$\kappa = \chi(\lambda)$
and
$
H(t)
 =
\left\{
\begin{array}{ll}
1, & \text{if } t \geq 0, \\
0, & \text{if } t < 0.
\end{array}
\right.
$

Let $\mathbf{M} \in \mathrm{wp}(4,8,\mathbb{C})$.
By Lemma~\ref{lemma_eigencondition},
$\lambda \in \mathbb{C}$ is an eigenvalue of $\mathcal{K}_\mathbf{M}$
and $u \neq 0$ is a corresponding eigenfunction,
if and only if
$\lambda \neq 0$ and
$u$ is a nontrivial solution of $\mathrm{EDE}(\lambda)$
satisfying the boundary condition $\mathrm{BC}(\mathbf{M})$.
Note that
$u$ is a nontrivial solution of $\mathrm{EDE}(\lambda)$,
if and only if
there exists 
$
\mathbf{0}
 \neq
\mathbf{c}
 = 
\begin{pmatrix}
c_1 & c_2 & c_3 & c_4
\end{pmatrix}^T
 \in
\gl(4,1,\mathbb{C})
$ 
such that
$
u
 = 
\sum_{j=1}^4
c_j  y_{\lambda,j}
 =
\mathbf{y}_{\lambda}^T
\mathbf{c}
$.
If 
$
u
 = 
\sum_{j=1}^4
c_j  y_{\lambda,j}
$,
then by Definitions~\ref{definition_calBpm} and \ref{definition_Wlambda(x)},
\[
\mathcal{B}^\pm[u]
 =
\sum_{j=1}^4
c_j
\cdot
\mathcal{B}^\pm\left[ y_{\lambda,j} \right]
 =
\sum_{j=1}^4
\begin{pmatrix}
y_{\lambda,j}(\pm l) \\
y_{\lambda,j}^\prime(\pm l) \\
y_{\lambda,j}^{\prime\prime}(\pm l) \\
y_{\lambda,j}^{\prime\prime\prime}(\pm l)
\end{pmatrix}
\cdot
c_j
 =
\begin{pmatrix}
\mathbf{y}_\lambda(\pm l)^T \\
\mathbf{y}_\lambda^\prime((\pm l)^T \\
\mathbf{y}_\lambda^{\prime\prime}((\pm l)^T \\
\mathbf{y}_\lambda^{\prime\prime\prime}((\pm l)^T
\end{pmatrix}
\mathbf{c}
 =
\mathbf{W}_\lambda(\pm l)
\mathbf{c}.
\]
It follows from Lemma~\ref{lemma_BCGM}
that
the condition $\mathrm{BC}(\mathbf{M})$
is equivalent to
\begin{align}
\mathbf{0}
 &=
\mathbf{G}_\mathbf{M}
\mathcal{E}
\left\{
\mathbf{W}(l)^{-1}
\mathbf{W}_\lambda(l)
\mathbf{c}
-
\mathbf{W}(-l)^{-1}
\mathbf{W}_\lambda(-l)
\mathbf{c}
\right\}
\nonumber \\
 &\quad
+
\diag(0,1,1,0)
\cdot
\mathbf{W}(-l)^{-1}
\mathbf{W}_\lambda(-l)
\mathbf{c}
+
\diag(1,0,0,1)
\cdot
\mathbf{W}(l)^{-1}
\mathbf{W}_\lambda(l)
\mathbf{c}
\nonumber \\
\begin{split}
 &=
\left[
\mathbf{G}_\mathbf{M}
\mathcal{E}
\left\{
\mathbf{W}(l)^{-1}
\mathbf{W}_\lambda(l)
-
\mathbf{W}(-l)^{-1}
\mathbf{W}_\lambda(-l)
\right\}
\right. \\
 &\quad
\left.
+
\diag(0,1,1,0)
\cdot
\mathbf{W}(-l)^{-1}
\mathbf{W}_\lambda(-l)
+
\diag(1,0,0,1)
\cdot
\mathbf{W}(l)^{-1}
\mathbf{W}_\lambda(l)
\right]
\mathbf{c}.
\end{split}
\label{equation_afsdhbv;afovjh}
\end{align}
Thus Lemma~\ref{lemma_eigencondition} can be rephrased as the following.

\begin{lemma}
\label{lemma_eigencondition-rephrased}
Let
$
\mathbf{M}
\in \mathrm{wp}(4,8,\mathbb{C})$,
$0 \neq u \in L^2[-l,l]$,
and
$\lambda \in \mathbb{C}$.
Then $\mathcal{K}_\mathbf{M}[u] = \lambda \cdot u$,
if and only if
$\lambda \neq 0$
and
$
u
 = 
\mathbf{y}_{\lambda}^T
\mathbf{c}
$
for some
$
\mathbf{0}
 \neq
\mathbf{c}
 \in
\gl(4,1,\mathbb{C})$ 
which satisfies
\eqref{equation_afsdhbv;afovjh}.
\end{lemma}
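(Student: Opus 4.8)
The plan is to assemble the statement directly from Lemma~\ref{lemma_eigencondition} together with the reformulation of the boundary condition in Lemma~\ref{lemma_BCGM}; in fact essentially all of the computation has already been carried out in the paragraph preceding the statement, so the proof amounts to organizing those observations into a chain of equivalences. First I would invoke Lemma~\ref{lemma_eigencondition}: for $0 \neq u \in L^2[-l,l]$ and $\lambda \in \mathbb{C}$, the relation $\mathcal{K}_\mathbf{M}[u] = \lambda \cdot u$ holds if and only if $\lambda \neq 0$ and $u$ satisfies both $\mathrm{EDE}(\lambda)$ and $\mathrm{BC}(\mathbf{M})$. It therefore suffices to show, for a fixed $\lambda \neq 0$, that ``$u \neq 0$ satisfies $\mathrm{EDE}(\lambda)$ and $\mathrm{BC}(\mathbf{M})$'' is equivalent to ``$u = \mathbf{y}_\lambda^T \mathbf{c}$ for some $\mathbf{0} \neq \mathbf{c} \in \gl(4,1,\mathbb{C})$ satisfying \eqref{equation_afsdhbv;afovjh}.''

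Next I would treat the two conditions in turn. For $\mathrm{EDE}(\lambda)$: by Definition~\ref{definition_Wlambda(x)} and the remark following it, the functions $y_{\lambda,1}, y_{\lambda,2}, y_{\lambda,3}, y_{\lambda,4}$ form a fundamental set of solutions of the fourth-order linear equation $\mathrm{EDE}(\lambda)$ for every $0 \neq \lambda \in \mathbb{C}$ (in both cases $\lambda = 1/k$ and $\lambda \neq 1/k$), so a function $u$, which as a solution of a constant-coefficient linear ODE is automatically smooth and hence lies in $L^2[-l,l]$, solves $\mathrm{EDE}(\lambda)$ precisely when $u = \sum_{j=1}^4 c_j y_{\lambda,j} = \mathbf{y}_\lambda^T \mathbf{c}$ for some $\mathbf{c} = (c_1, c_2, c_3, c_4)^T \in \gl(4,1,\mathbb{C})$; linear independence of the $y_{\lambda,j}$ makes $u \neq 0$ equivalent to $\mathbf{c} \neq \mathbf{0}$. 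For $\mathrm{BC}(\mathbf{M})$ applied to such a $u$: applying $\mathcal{B}^\pm$ termwise and using Definition~\ref{definition_Wlambda(x)} gives $\mathcal{B}^\pm[u] = \mathbf{W}_\lambda(\pm l)\mathbf{c}$, exactly as recorded before the statement. Substituting these two identities into the reformulated boundary condition of Lemma~\ref{lemma_BCGM} and factoring $\mathbf{c}$ out to the right yields precisely the bracketed matrix equation \eqref{equation_afsdhbv;afovjh}. Chaining the three equivalences (eigenpair $\Leftrightarrow$ $\mathrm{EDE}(\lambda)$ plus $\mathrm{BC}(\mathbf{M})$; $\mathrm{EDE}(\lambda)$ $\Leftrightarrow$ $u = \mathbf{y}_\lambda^T \mathbf{c}$ with $\mathbf{c} \neq \mathbf{0}$; $\mathrm{BC}(\mathbf{M})$ $\Leftrightarrow$ \eqref{equation_afsdhbv;afovjh}) completes the argument.

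I do not expect a genuine obstacle here: the lemma is a notational repackaging of earlier results, and the only points that warrant a word of care are (i) that solutions of $\mathrm{EDE}(\lambda)$ automatically possess the regularity required for $\mathcal{B}^\pm$ to be defined and to lie in $L^2[-l,l]$, which holds because such solutions are entire, and (ii) that the passage between the statement about $u$ and the statement about $\mathbf{c}$ preserves the nonvanishing of the data, which is immediate from the linear independence of $y_{\lambda,1}, \dots, y_{\lambda,4}$. Beyond these, the proof is a direct combination of Lemma~\ref{lemma_eigencondition}, Lemma~\ref{lemma_BCGM}, and the identity $\mathcal{B}^\pm[u] = \mathbf{W}_\lambda(\pm l)\mathbf{c}$.
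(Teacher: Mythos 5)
Your argument matches the paper's own reasoning exactly: the lemma is obtained by chaining Lemma~\ref{lemma_eigencondition} (eigenpair $\Leftrightarrow$ $\mathrm{EDE}(\lambda)$ plus $\mathrm{BC}(\mathbf{M})$), the fact that $y_{\lambda,1},\dots,y_{\lambda,4}$ form a fundamental set for $\mathrm{EDE}(\lambda)$ so that $u=\mathbf{y}_\lambda^T\mathbf{c}$ with $\mathbf{c}\neq\mathbf{0}$, the identity $\mathcal{B}^\pm[u]=\mathbf{W}_\lambda(\pm l)\mathbf{c}$, and Lemma~\ref{lemma_BCGM} to rewrite $\mathrm{BC}(\mathbf{M})$ as \eqref{equation_afsdhbv;afovjh}. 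This is precisely the computation the paper carries out in the paragraph preceding the lemma, so your proposal is correct and follows the same route.
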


The following matrix 
$\mathbf{X}_\lambda(x)$ will have a key role in our discussions.

\begin{definition}
\label{definition_Xlambda(x)}
For $0 \neq  \lambda \in \mathbb{C}$
and $x \in \mathbb{R}$,
we denote
\[
\mathbf{X}_\lambda(x)
 =
\diag(0,1,1,0)
\cdot
\mathbf{W}(-x)^{-1}
\mathbf{W}_\lambda(-x)
+
\diag(1,0,0,1)
\cdot
\mathbf{W}(x)^{-1}
\mathbf{W}_\lambda(x).
\]
\end{definition}

Note from Definitions~\ref{definition_W(x)} and \ref{definition_Wlambda(x)} 
that,
for each $0 \neq \lambda \in \mathbb{C}$ and $x \in \mathbb{R}$,
$\mathbf{X}_\lambda(x)$ is a concrete $4 \times 4$ matrix
which does {\em not} depend on $\mathbf{M}$. 
By Definition~\ref{definition_Xlambda(x)},
\begin{align}
\lefteqn{
\mathbf{X}_\lambda(x)
-
\mathbf{X}_\lambda(-x)
}
\nonumber \\
 &=
\left\{
\diag(0,1,1,0)
\cdot
\mathbf{W}(-x)^{-1}
\mathbf{W}_\lambda(-x)
+
\diag(1,0,0,1)
\cdot
\mathbf{W}(x)^{-1}
\mathbf{W}_\lambda(x)
\right\}
\nonumber \\
 &\quad
-
\left\{
\diag(0,1,1,0)
\cdot
\mathbf{W}(x)^{-1}
\mathbf{W}_\lambda(x)
+
\diag(1,0,0,1)
\cdot
\mathbf{W}(-x)^{-1}
\mathbf{W}_\lambda(-x)
\right\}
\nonumber \\
 &=
\mathcal{E}
\left\{
\mathbf{W}(x)^{-1}
\mathbf{W}_\lambda(x)
-
\mathbf{W}(-x)^{-1}
\mathbf{W}_\lambda(-x)
\right\},
\qquad
0 \neq \lambda \in \mathbb{C},
\
x \in \mathbb{R}.
\label{equation_s;fhgs;fovij}
\end{align}

Now we are ready to prove Theorem~\ref{theorem_eigencondition-X}.

\begin{proof}[Proof of Theorem~\ref{theorem_eigencondition-X}]
By Definition~\ref{definition_Xlambda(x)} and \eqref{equation_s;fhgs;fovij},
the condition \eqref{equation_afsdhbv;afovjh} is equivalent to
$
\left[
\mathbf{G}_\mathbf{M}
\left\{
\mathbf{X}_\lambda(l)
-
\mathbf{X}_\lambda(-l)
\right\}
+
\mathbf{X}_\lambda(l)
\right]
\mathbf{c}
 =
\mathbf{0}
$.
Thus the first assertion follows from Lemma~\ref{lemma_eigencondition-rephrased}. 
The second assertion follows from the first one,
since $\mathbf{G}_\mathbf{Q} = \mathbf{O}$ by \eqref{equation_GQ=O}.
\end{proof}

\section{Symmetries of $\protect\mathbf{X}_\lambda$ and $\protect\mathbf{Y}_\lambda$}
\label{section_symmetry}

As a consequence of Theorem~\ref{theorem_eigencondition-X}
and Corollary~\ref{corollary_eigencondition-det},
the matrix $\mathbf{X}_\lambda(l)$ is invertible
for every $0 \neq \lambda \in \mathbb{C}$ which is not in $\Spec\mathcal{K}_\mathbf{Q}$.
In fact, this is true for arbitrary $l > 0$
by Proposition~\ref{proposition_Q}.

\begin{definition}
\label{definition_Ylambda(x)}
Denote
$
\mathbf{Y}_\lambda(x)
 =
\mathbf{X}_\lambda(-x)
\mathbf{X}_\lambda(x)^{-1}
-
\mathbf{I}
$
for $0 \neq \lambda \in \mathbb{C}$
and $x > 0$
such that $\det\mathbf{X}_\lambda(x) \neq 0$.
\end{definition}

In view of 
Theorem~\ref{theorem_eigencondition-X}
and
Corollaries~\ref{corollary_eigencondition-det}, \ref{corollary_eigencondition-Y},
it is now apparent that
analysis on
the $4 \times 4$ matrices $\mathbf{X}_\lambda(x)$ and $\mathbf{Y}_\lambda(x)$
are important.
It turns out that they have various symmetries,
and some of them are explored in this section.
In particular, we will obtain the following result,
which is crucial in proving Theorem~\ref{theorem_existence}.

\begin{lemma}
\label{lemma_Ylambdainpibar}
$\mathbf{Y}_\lambda(x) \in \overline{\pi}(4)$
for every $0 \neq \lambda \in \mathbb{R}$ and $x > 0$
such that $\det\mathbf{X}_\lambda(x) \neq 0$.
\end{lemma}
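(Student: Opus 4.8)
The plan is to reduce the statement to one \emph{conjugation identity} for $\mathbf{X}_\lambda$ and then let the surplus factors cancel inside $\mathbf{Y}_\lambda$. Recall from Definition~\ref{definition_pibar} that $\mathbf{A} \in \overline{\pi}(4)$ means $\mathbf{R}\,\overline{\mathbf{A}}\,\mathbf{R} = \mathbf{A}$, so it suffices to prove $\mathbf{R}\,\overline{\mathbf{Y}_\lambda(x)}\,\mathbf{R} = \mathbf{Y}_\lambda(x)$. The first goal is the identity
\[
\overline{\mathbf{X}_\lambda(x)}
 =
\mathbf{R}\,\mathbf{X}_\lambda(x)\,\mathbf{P}_\lambda,
\qquad
0 \neq \lambda \in \mathbb{R},
\quad
x \in \mathbb{R},
\]
where $\mathbf{P}_\lambda$ is a permutation matrix depending on $\lambda$ but \emph{not} on $x$.

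The key input is the conjugation behaviour of $\mathbf{W}_\lambda$: for $0 \neq \lambda \in \mathbb{R}$ there is a permutation matrix $\mathbf{P}_\lambda$ with $\overline{\mathbf{W}_\lambda(x)} = \mathbf{W}_\lambda(x)\,\mathbf{P}_\lambda$ for every $x \in \mathbb{R}$. When $\lambda = 1/k$ the entries of $\mathbf{W}_\lambda(x)$ in \eqref{equation_Wlambdax} are real, so $\mathbf{P}_\lambda = \mathbf{I}$. When $\lambda \neq 1/k$, write $\kappa = \chi(\lambda)$; since $\lambda$ is real, $\kappa^4 = 1 - 1/(\lambda k)$ is real, so by \eqref{equation_omega} the four distinct numbers $\omega_1\kappa,\dots,\omega_4\kappa$ are exactly the fourth roots of the real number $1/(\lambda k) - 1$ and hence form a conjugation-invariant set. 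Thus there is a permutation $\sigma_\lambda$ of $\{1,2,3,4\}$ with $\overline{\omega_j\kappa} = \omega_{\sigma_\lambda(j)}\kappa$, and by the explicit form \eqref{equation_Wlambdax} complex conjugation permutes the columns of $\mathbf{W}_\lambda(x)$ according to $\sigma_\lambda$, which is the claimed identity with $\mathbf{P}_\lambda$ the permutation matrix of $\sigma_\lambda$. (One can check that $\mathbf{P}_\lambda$ is $\mathbf{I}$, $\mathbf{R}$, or the transposition matrix of the indices $1$ and $3$, according as $\lambda = 1/k$, $1 - 1/(\lambda k) > 0$, or $1 - 1/(\lambda k) < 0$, but the argument does not use this.)

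I would then conjugate Definition~\ref{definition_Xlambda(x)} term by term. By \eqref{equation_R} one has $\overline{\mathbf{W}(y)^{-1}} = \mathbf{R}\,\mathbf{W}(y)^{-1}$ for every real $y$, and $\mathbf{R}$ commutes with $\diag(0,1,1,0)$ and $\diag(1,0,0,1)$ (both are polynomials in $\mathcal{E}$, and $\mathbf{R}\mathcal{E}\mathbf{R} = \mathcal{E}$ directly from Definitions~\ref{definition_R} and \ref{definition_epsilon}). Hence
\begin{align*}
\overline{\mathbf{X}_\lambda(x)}
 &=
\diag(0,1,1,0)\cdot\overline{\mathbf{W}(-x)^{-1}}\cdot\overline{\mathbf{W}_\lambda(-x)}
+
\diag(1,0,0,1)\cdot\overline{\mathbf{W}(x)^{-1}}\cdot\overline{\mathbf{W}_\lambda(x)} \\
 &=
\mathbf{R}
\left\{
\diag(0,1,1,0)\cdot\mathbf{W}(-x)^{-1}\mathbf{W}_\lambda(-x)
+
\diag(1,0,0,1)\cdot\mathbf{W}(x)^{-1}\mathbf{W}_\lambda(x)
\right\}
\mathbf{P}_\lambda \\
 &=
\mathbf{R}\,\mathbf{X}_\lambda(x)\,\mathbf{P}_\lambda,
\end{align*}
and the same holds with $x$ replaced by $-x$, with the \emph{same} $\mathbf{P}_\lambda$ (which depends only on $\lambda$).

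Finally, plugging this into Definition~\ref{definition_Ylambda(x)}: since $\det\mathbf{X}_\lambda(x) \neq 0$, the identity gives $\overline{\mathbf{X}_\lambda(x)^{-1}} = \mathbf{P}_\lambda^{-1}\,\mathbf{X}_\lambda(x)^{-1}\,\mathbf{R}$ (using $\mathbf{R}^{-1} = \mathbf{R}$), so
\[
\overline{\mathbf{Y}_\lambda(x)}
 =
\overline{\mathbf{X}_\lambda(-x)}\;\overline{\mathbf{X}_\lambda(x)^{-1}} - \mathbf{I}
 =
\mathbf{R}\,\mathbf{X}_\lambda(-x)\,\mathbf{P}_\lambda\mathbf{P}_\lambda^{-1}\,\mathbf{X}_\lambda(x)^{-1}\,\mathbf{R} - \mathbf{I}
 =
\mathbf{R}\,\mathbf{X}_\lambda(-x)\mathbf{X}_\lambda(x)^{-1}\,\mathbf{R} - \mathbf{I},
\]
and conjugating by $\mathbf{R}$ with $\mathbf{R}^2 = \mathbf{I}$ yields $\mathbf{R}\,\overline{\mathbf{Y}_\lambda(x)}\,\mathbf{R} = \mathbf{X}_\lambda(-x)\mathbf{X}_\lambda(x)^{-1} - \mathbf{I} = \mathbf{Y}_\lambda(x)$, i.e. $\mathbf{Y}_\lambda(x) \in \overline{\pi}(4)$. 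The step I expect to be the main obstacle is producing a \emph{single, $x$-independent} permutation $\mathbf{P}_\lambda$ out of the conjugation of $\mathbf{W}_\lambda$: this forces the split $\lambda = 1/k$ versus $\lambda \neq 1/k$ and, in the latter case, the observation that the characteristic exponents $\omega_j\kappa$ close up under conjugation precisely because $\kappa^4$ is real. It is worth emphasizing that $\mathbf{X}_\lambda(x)$ itself generally does \emph{not} lie in $\overline{\pi}(4)$ — the residual factor $\mathbf{R}$, together with $\mathbf{P}_\lambda$ when it is nontrivial, obstructs this — and it is exactly the cancellation of these factors in $\mathbf{X}_\lambda(-x)\mathbf{X}_\lambda(x)^{-1}$ that places $\mathbf{Y}_\lambda(x)$ in $\overline{\pi}(4)$; everything else is routine bookkeeping with the commutation relations of Section~\ref{section_preliminaries}.
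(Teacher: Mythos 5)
Your proof is correct, and it takes a genuinely different — and arguably cleaner — route than the paper. The paper splits $\lambda \ne 1/k$ from $\lambda = 1/k$: for $\lambda \ne 1/k$ it introduces the two-variable matrix $\mathbf{X}(z,\kappa)$ in Definition~\ref{definition_X(z,kappa)}, proves the conjugation and $\mathbf{L}$-rotation identities of Lemma~\ref{lemma_X(z,kappa)}, and derives the stronger relation $\mathbf{R}\,\overline{\mathbf{Y}_\lambda(x)}\,\mathbf{R} = \mathbf{Y}_{\overline{\lambda}}(x)$ valid for all complex $\lambda \notin\{0,1/k\}$ (Lemma~\ref{lemma_XYlambdaneq1/k}); for $\lambda = 1/k$ it performs the rather heavy Appendix~\ref{appendix_Y1/kpibar} computation with the polynomials $p_n(z)$, the block decomposition $\mathbf{V}\mathbf{P}(z)\hat{\mathbf{V}}$, and the $2\times 2$ matrices $\mathbf{P}^\pm(z)$. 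Your argument treats both cases uniformly by reducing everything to the single observation that for real nonzero $\lambda$ the columns of $\mathbf{W}_\lambda(x)$ are permuted by complex conjugation via an $x$-independent permutation matrix $\mathbf{P}_\lambda$ (identity when $\lambda = 1/k$; $\mathbf{R}$ when $\kappa^4 > 0$; the $(1\,3)$-transposition when $\kappa^4 < 0$). The conjugation identity $\overline{\mathbf{X}_\lambda(x)} = \mathbf{R}\,\mathbf{X}_\lambda(x)\,\mathbf{P}_\lambda$ follows at once from \eqref{equation_R} and the fact that $\mathbf{R}$ commutes with the two corner diagonal matrices, and the surplus $\mathbf{P}_\lambda$ cancels inside $\mathbf{X}_\lambda(-x)\mathbf{X}_\lambda(x)^{-1}$ — nicely isolating why $\mathbf{Y}_\lambda$ lands in $\overline{\pi}(4)$ even though $\mathbf{X}_\lambda$ generally does not. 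What the paper's route buys in exchange is the relation for complex $\lambda$ (needed nowhere for this lemma but recorded in Lemma~\ref{lemma_XYlambdaneq1/k}) and the $\mathbf{L}$-symmetry of Lemma~\ref{lemma_X(z,kappa)}(a); what yours buys is a shorter and more transparent proof of the stated lemma that avoids Appendix~\ref{appendix_Y1/kpibar} altogether.
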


The proof of Lemma~\ref{lemma_Ylambdainpibar} will be given at the end of
Section~\ref{section_1/k}.
To facilitate our analysis, we introduce 
the following change of variables
\begin{equation}
\label{equation_z}
z = \alpha x,
\end{equation}
which will be used extensively for the rest of the paper.
By \eqref{equation_Wxdecomposed}, \eqref{equation_Wx-1decomposed},
and \eqref{equation_z},
\begin{align}
\mathbf{W}(x)
 &=
\diag
\left( 1, \alpha, \alpha^2, \alpha^3 \right)
\cdot
\mathbf{W}_0
e^{\mathbf{\Omega} z}, 
\label{equation_Wxdecomposedz} \\
\mathbf{W}(x)^{-1}
 &=
\frac{1}{4}
e^{-\mathbf{\Omega} z}
\mathbf{W}_0^*
\cdot
\diag
\left( 1, \alpha, \alpha^2, \alpha^3 \right)^{-1}.
\label{equation_Wx-1decomposedz}
\end{align}

The following form of $\mathbf{X}_\lambda(x)$
will be useful.

\begin{lemma}
\label{lemma_Xlambda(x)2}
For $0 \neq \lambda \in \mathbb{C}$ and $x \in \mathbb{R}$,
\begin{align*}
\mathbf{X}_\lambda(x)
 &=
\frac{1}{4}
e^{-\mathcal{E} \mathbf{\Omega} z}
\left\{
\diag(0,1,1,0)
\cdot
\mathbf{W}_0^*
\cdot
\diag
\left( 1, \alpha, \alpha^2, \alpha^3 \right)^{-1}
\mathbf{W}_\lambda(-x)
\right. \\
 &\qquad\qquad\quad
\left.
+
\diag(1,0,0,1)
\cdot
\mathbf{W}_0^*
\cdot
\diag
\left( 1, \alpha, \alpha^2, \alpha^3 \right)^{-1}
\mathbf{W}_\lambda(x)
\right\}.
\end{align*}
\end{lemma}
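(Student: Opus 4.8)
The plan is to start directly from Definition~\ref{definition_Xlambda(x)}, which writes $\mathbf{X}_\lambda(x)$ as a sum of two terms, one built from $\mathbf{W}(-x)^{-1}\mathbf{W}_\lambda(-x)$ and one from $\mathbf{W}(x)^{-1}\mathbf{W}_\lambda(x)$, and then substitute the decomposition \eqref{equation_Wx-1decomposedz} of $\mathbf{W}(x)^{-1}$ into each term. After substitution, the first term reads
\[
\diag(0,1,1,0)
\cdot
\tfrac14
e^{\mathbf{\Omega} z}
\mathbf{W}_0^*
\cdot
\diag\left( 1, \alpha, \alpha^2, \alpha^3 \right)^{-1}
\mathbf{W}_\lambda(-x),
\]
where I have used that replacing $x$ by $-x$ sends $z=\alpha x$ to $-z$, so $e^{-\mathbf{\Omega}(-z)} = e^{\mathbf{\Omega} z}$; similarly the second term carries the factor $e^{-\mathbf{\Omega} z}$ to the right of $\diag(1,0,0,1)$.

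First I would observe that $\diag(0,1,1,0) = \mathbf{I} - \diag(1,0,0,1)$ and, more to the point, that the two diagonal projectors are exactly the $\pm1$ eigenprojections associated with $\mathcal{E} = \diag(1,-1,-1,1)$; concretely, $\diag(0,1,1,0)\,\mathbf{\Omega} = \mathbf{\Omega}\,\diag(0,1,1,0)$ since both are diagonal, and the key commutation is that $\diag(0,1,1,0)\cdot e^{\mathbf{\Omega} z}$ agrees with $e^{-\mathcal{E}\mathbf{\Omega} z}\cdot\diag(0,1,1,0)$ while $\diag(1,0,0,1)\cdot e^{-\mathbf{\Omega} z}$ agrees with $e^{-\mathcal{E}\mathbf{\Omega} z}\cdot\diag(1,0,0,1)$. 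Both of these follow entrywise: on the $j$th coordinate with $\epsilon_j = 1$ ($j=1,4$) we have $e^{-\epsilon_j\omega_j z} = e^{-\omega_j z}$, matching the $\diag(1,0,0,1)$ block; on the $j$th coordinate with $\epsilon_j = -1$ ($j=2,3$) we have $e^{-\epsilon_j\omega_j z} = e^{\omega_j z}$, matching the $\diag(0,1,1,0)$ block. Hence in each of the two summands I can pull a common left factor $\tfrac14 e^{-\mathcal{E}\mathbf{\Omega} z}$ out front, and what remains inside the braces is precisely the bracketed expression in the statement.

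The remaining work is purely bookkeeping: after factoring out $\tfrac14 e^{-\mathcal{E}\mathbf{\Omega} z}$, collect the two surviving terms
\[
\diag(0,1,1,0)\cdot\mathbf{W}_0^*\cdot\diag\left(1,\alpha,\alpha^2,\alpha^3\right)^{-1}\mathbf{W}_\lambda(-x)
\]
and
\[
\diag(1,0,0,1)\cdot\mathbf{W}_0^*\cdot\diag\left(1,\alpha,\alpha^2,\alpha^3\right)^{-1}\mathbf{W}_\lambda(x),
\]
and verify they sum to the expression displayed in Lemma~\ref{lemma_Xlambda(x)2}. I expect the only genuinely delicate point to be the sign/conjugation check in the commutation step — namely confirming that the projector $\diag(0,1,1,0)$ converts $e^{\mathbf{\Omega} z}$ into $e^{-\mathcal{E}\mathbf{\Omega} z}$ with the correct sign on each diagonal entry and that $\diag(1,0,0,1)$ does the analogous thing for $e^{-\mathbf{\Omega} z}$, using $\epsilon_1=\epsilon_4=1$, $\epsilon_2=\epsilon_3=-1$ from Definition~\ref{definition_epsilon}; everything else is a direct substitution of \eqref{equation_Wx-1decomposedz} and a rearrangement of scalar factors.
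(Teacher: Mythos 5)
Your proposal is correct and matches the paper's own proof: both substitute \eqref{equation_Wx-1decomposedz} into Definition~\ref{definition_Xlambda(x)}, then establish the two diagonal-matrix commutation identities $\diag(0,1,1,0)\,e^{\mathbf{\Omega} z} = e^{-\mathcal{E}\mathbf{\Omega} z}\,\diag(0,1,1,0)$ and $\diag(1,0,0,1)\,e^{-\mathbf{\Omega} z} = e^{-\mathcal{E}\mathbf{\Omega} z}\,\diag(1,0,0,1)$ using $\epsilon_1=\epsilon_4=1$, $\epsilon_2=\epsilon_3=-1$, and finally pull the common factor $\tfrac14 e^{-\mathcal{E}\mathbf{\Omega} z}$ to the front. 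The sign check on the diagonal entries is exactly the step the paper spells out in its displays \eqref{equation_aflvha;fvn0110}--\eqref{equation_aflvha;fvn1001}, and you identify it as the only delicate point, which is accurate.
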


\begin{proof}
By Definition~\ref{definition_Xlambda(x)}
and 
\eqref{equation_z}, \eqref{equation_Wx-1decomposedz},
we have
\begin{equation}
\label{equation_bhsofibjh}
\begin{split}
\mathbf{X}_\lambda(x)
 &=
\frac{1}{4}
\left\{
\diag(0,1,1,0)
\cdot
e^{\mathbf{\Omega} z}
\mathbf{W}_0^*
\cdot
\diag
\left( 1, \alpha, \alpha^2, \alpha^3 \right)^{-1}
\mathbf{W}_\lambda(-x)
\right. \\
 &\qquad
\left.
+
\diag(1,0,0,1)
\cdot
e^{-\mathbf{\Omega} z}
\mathbf{W}_0^*
\cdot
\diag
\left( 1, \alpha, \alpha^2, \alpha^3 \right)^{-1}
\mathbf{W}_\lambda(x)
\right\}.
\end{split}
\end{equation}
Note that
\begin{align}
\diag(0,1,1,0)
\cdot
e^{\mathbf{\Omega} z}
 &=
\diag
\left( 
0, 
e^{\omega_2 z}, 
e^{\omega_3 z}, 
0
\right)
\nonumber \\
 &=
\diag
\left( 
e^{-\omega_1 z}, 
e^{\omega_2 z}, 
e^{\omega_3 z}, 
e^{-\omega_4 z}
\right)
\cdot
\diag(0,1,1,0)
\nonumber \\
 &=
e^{-\mathcal{E} \mathbf{\Omega} z}
\cdot
\diag(0,1,1,0),
\label{equation_aflvha;fvn0110} \\
\diag(1,0,0,1)
\cdot
e^{-\mathbf{\Omega} z}
 &=
\diag
\left( 
e^{-\omega_1 z}, 
0, 
0, 
e^{-\omega_4 z}
\right)
\nonumber \\
 &=
\diag
\left( 
e^{-\omega_1 z}, 
e^{\omega_2 z}, 
e^{\omega_3 z}, 
e^{-\omega_4 z}
\right)
\cdot
\diag(1,0,0,1)
\nonumber \\
 &=
e^{-\mathcal{E} \mathbf{\Omega} z}
\cdot
\diag(1,0,0,1).
\label{equation_aflvha;fvn1001}
\end{align}
Now the lemma follows from
\eqref{equation_bhsofibjh},
\eqref{equation_aflvha;fvn0110},
\eqref{equation_aflvha;fvn1001}.
\end{proof}

\subsection{The case $\protect\lambda \neq 1/k$}
\label{section_neq1/k}

In this section,
we assume 
$\lambda$ is a complex number such that
$\lambda \neq 0$, $\lambda \neq 1/k$.
Let $\kappa = \chi(\lambda)$
as in Definition~\ref{definition_kappa}.
By Definition~\ref{definition_omega}, 
and \eqref{equation_Wlambdax}, \eqref{equation_z},
we have
\begin{align}
\mathbf{W}_\lambda(x)
 &=
\left(
\left( \omega_j \kappa \alpha \right)^{i-1}
e^{\omega_j \kappa \alpha x}
\right)_{1 \leq i,j \leq 4}
 =
\left(
\alpha^{i-1} \kappa^{i-1} \omega_j^{i-1}
e^{\omega_j \kappa z}
\right)_{1 \leq i,j \leq 4}
\nonumber \\
 &=
\diag
\left( 1, \alpha, \alpha^2, \alpha^3 \right)
\cdot
\diag
\left( 1, \kappa, \kappa^2, \kappa^3 \right)
\cdot
\mathbf{W}_0
e^{\mathbf{\Omega} \kappa z},
\label{equation_Wlambdaxdecomposedz}
\end{align}
hence
$
\diag
\left( 1, \alpha, \alpha^2, \alpha^3 \right)^{-1}
\mathbf{W}_\lambda(x)
 =
\diag
\left( 1, \kappa, \kappa^2, \kappa^3 \right)
\cdot
\mathbf{W}_0
e^{\mathbf{\Omega} \kappa z}
$.
Thus by \eqref{equation_z} and
Lemma~\ref{lemma_Xlambda(x)2}, 
we have
\begin{equation}
\label{equation_Xz,kappa}
\begin{split}
\mathbf{X}_\lambda(x)
 &=
\frac{1}{4}
e^{-\mathcal{E} \mathbf{\Omega} z}
\left\{
\diag(0,1,1,0)
\cdot
\mathbf{W}_0^*
\cdot
\diag
\left( 1, \kappa, \kappa^2, \kappa^3 \right)
\cdot
\mathbf{W}_0
e^{-\mathbf{\Omega} \kappa z}
\right. \\
 &\qquad\qquad
\left.
+
\diag(1,0,0,1)
\cdot
\mathbf{W}_0^*
\cdot
\diag
\left( 1, \kappa, \kappa^2, \kappa^3 \right)
\cdot
\mathbf{W}_0
e^{\mathbf{\Omega} \kappa z}
\right\},
\end{split}
\end{equation}
where
$
\kappa = \chi(\lambda)
$.
Note that \eqref{equation_Xz,kappa} is well-defined for every $z, \kappa \in \mathbb{C}$,
though we originally restricted the domains of $z, \kappa$.

\begin{definition}
\label{definition_X(z,kappa)}
For $z, \kappa \in \mathbb{C}$,
define
\begin{align*}
\mathbf{X}(z,\kappa)
 &=
\frac{1}{4}
e^{-\mathcal{E} \mathbf{\Omega} z}
\left\{
\diag(0,1,1,0)
\cdot
\mathbf{W}_0^*
\cdot
\diag
\left( 1, \kappa, \kappa^2, \kappa^3 \right)
\cdot
\mathbf{W}_0
e^{-\mathbf{\Omega} \kappa z}
\right.
\nonumber \\
 &\qquad\qquad\quad
\left.
+
\diag(1,0,0,1)
\cdot
\mathbf{W}_0^*
\cdot
\diag
\left( 1, \kappa, \kappa^2, \kappa^3 \right)
\cdot
\mathbf{W}_0
e^{\mathbf{\Omega} \kappa z}
\right\}.
\end{align*}

By \eqref{equation_z}, \eqref{equation_Xz,kappa},
we have
\begin{equation}
\label{equation_zkappaxlambda}
\mathbf{X}_\lambda(x)
 =
\mathbf{X}\left( \alpha x, \chi(\lambda) \right),
\qquad
x \in \mathbb{R},
\
\lambda \in \mathbb{C} \setminus \left\{ 0, 1/k \right\}.
\end{equation}
\end{definition}

\begin{lemma}
\label{lemma_X(z,kappa)}
\begin{itemize}
\item[{\rm (a)}]
$
\mathbf{X}(z,\mathbbm{i} \kappa)
 =
\mathbf{X}(z,\kappa)
\cdot
\mathbf{L}^{-1}
$
for every
$z, \kappa \in \mathbb{C}$.

\item[{\rm (b)}]
$
\mathbf{R}
\overline{
\mathbf{X}(z,\kappa)
}
\mathbf{R}
 =
\mathbf{X}\left( z, \overline{\kappa} \right)
$
for every $z \in \mathbb{R}$
and
$\kappa \in \mathbb{C}$.
\end{itemize}
\end{lemma}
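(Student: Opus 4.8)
\textbf{Proof proposal for Lemma~\ref{lemma_X(z,kappa)}.}
The plan is to verify both identities directly from the explicit formula for $\mathbf{X}(z,\kappa)$ in Definition~\ref{definition_X(z,kappa)}, pushing the operations $\mathbf{L}^{-1}$ (on the right) and $\mathbf{R}\,\overline{(\cdot)}\,\mathbf{R}$ through the three structured factors $\mathbf{W}_0$, $\diag(1,\kappa,\kappa^2,\kappa^3)$, $e^{\pm\mathbf{\Omega}\kappa z}$, and the diagonal projectors $\diag(0,1,1,0)$, $\diag(1,0,0,1)$, $e^{-\mathcal{E}\mathbf{\Omega} z}$, using only the algebraic relations \eqref{equation_omega}--\eqref{equation_L2}, \eqref{equation_R}, \eqref{equation_W0*}, \eqref{equation_LOmegaL-1}, \eqref{equation_W0L-1} already established in Section~\ref{section_preliminaries}.

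For part (a): multiplying on the right by $\mathbf{L}^{-1}$ amounts to replacing $\kappa$ by $\mathbbm{i}\kappa$. I would track what $\kappa \mapsto \mathbbm{i}\kappa$ does to each right-hand factor. Since $\mathbf{X}(z,\kappa)$ ends (reading left to right, inside each summand) with $\mathbf{W}_0 e^{\pm\mathbf{\Omega}\kappa z}$, and since $e^{\pm\mathbf{\Omega}(\mathbbm{i}\kappa) z} = e^{\pm(\mathbbm{i}\mathbf{\Omega})\kappa z}$, I want to move the extra $\mathbbm{i}$ from the exponent's argument into a right factor $\mathbf{L}^{-1}$. By \eqref{equation_LOmegaL-1}, $\mathbbm{i}\mathbf{\Omega} = \mathbf{L}\mathbf{\Omega}\mathbf{L}^{-1}$, so $e^{\pm(\mathbbm{i}\mathbf{\Omega})\kappa z} = \mathbf{L}\, e^{\pm\mathbf{\Omega}\kappa z}\,\mathbf{L}^{-1}$; and by \eqref{equation_W0L-1}, $\mathbf{W}_0\mathbf{L}^{-1} = \diag(1,\mathbbm{i},\mathbbm{i}^2,\mathbbm{i}^3)\mathbf{W}_0$, equivalently $\mathbf{W}_0\mathbf{L} = \mathbf{W}_0\mathbf{L}^{-1}\mathbf{L}^2 \cdot \mathbf{L}^{-1}$-type manipulations give $\mathbf{W}_0\mathbf{L} = \diag(1,\mathbbm{i}^{-1},\mathbbm{i}^{-2},\mathbbm{i}^{-3})\mathbf{W}_0$ after inverting. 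Also $\diag(1,(\mathbbm{i}\kappa),(\mathbbm{i}\kappa)^2,(\mathbbm{i}\kappa)^3) = \diag(1,\mathbbm{i},\mathbbm{i}^2,\mathbbm{i}^3)\cdot\diag(1,\kappa,\kappa^2,\kappa^3)$. Threading these through, the factor $\diag(1,\mathbbm{i},\mathbbm{i}^2,\mathbbm{i}^3)$ introduced by the diagonal and the one introduced by $\mathbf{W}_0\mathbf{L}$ should cancel (being inverse to each other), and the conjugation $\mathbf{L}(\cdot)\mathbf{L}^{-1}$ on $e^{\pm\mathbf{\Omega}\kappa z}$ should, after also noting $\mathbf{W}_0^*\mathbf{L}$ behaves compatibly via \eqref{equation_W0*} and \eqref{equation_W0L-1}, collapse to a single trailing $\mathbf{L}^{-1}$. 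One must also check that $\mathbf{L}$ commutes appropriately with the leftmost factors; here the key point is that the left factors ($e^{-\mathcal{E}\mathbf{\Omega} z}$ and the two diagonal projectors) are unaffected by the substitution $\kappa\mapsto\mathbbm{i}\kappa$, so only the $\kappa$-dependent right block is in play. I expect the arithmetic of the two $\diag(1,\mathbbm{i},\mathbbm{i}^2,\mathbbm{i}^3)$-type factors — making sure they really are mutual inverses and appear in the right order relative to $\mathbf{W}_0$ versus $\mathbf{W}_0^*$ — to be the step most prone to a sign/index slip, so that is the one I would write out carefully.

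For part (b): the operation $\mathbf{A}\mapsto\mathbf{R}\overline{\mathbf{A}}\mathbf{R}$ is a ring homomorphism (it is the defining involution of $\overline{\pi}$; cf. Lemma~\ref{lemma_pibar}), so it suffices to compute it on each factor and multiply. Using $z\in\mathbb{R}$: $\overline{e^{-\mathcal{E}\mathbf{\Omega} z}} = e^{-\mathcal{E}\overline{\mathbf{\Omega}} z} = e^{\mathcal{E}\mathbf{\Omega}^{-1}z}$ by \eqref{equation_Omega}, and then $\mathbf{R}e^{\mathcal{E}\mathbf{\Omega}^{-1}z}\mathbf{R} = e^{\mathcal{E}\mathbf{\Omega} z}$ using $\mathbf{R}\mathbf{\Omega} = \overline{\mathbf{\Omega}}\mathbf{R}$ from \eqref{equation_R} together with $\mathbf{R}\mathcal{E}\mathbf{R}=\mathcal{E}$ (since $\mathcal{E}=\diag(1,-1,-1,1)$ is palindromic); one gets $e^{-\mathcal{E}\mathbf{\Omega} z}$ back after care with the two $\mathbf{R}$'s — I should double-check the net sign on $\mathcal{E}$. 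The projectors $\diag(0,1,1,0)$ and $\diag(1,0,0,1)$ are real and $\mathbf{R}$-palindromic, hence fixed. For $\mathbf{W}_0^*$: it is $\left(\omega_i^{1-j}\right)$ by \eqref{equation_W0*}, so $\overline{\mathbf{W}_0^*} = \left(\overline{\omega_i}^{1-j}\right) = \left(\omega_i^{j-1}\right) = \mathbf{W}_0$ by \eqref{equation_omega}, and then $\mathbf{R}\,\mathbf{W}_0$ reverses rows; combined with the right $\mathbf{R}$ one should recover $\mathbf{W}_0^*$ via $\mathbf{W}_0\mathbf{R} = \overline{\mathbf{W}_0}$ from \eqref{equation_R}. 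For $\diag(1,\kappa,\kappa^2,\kappa^3)$: conjugation gives $\diag(1,\overline\kappa,\overline\kappa^2,\overline\kappa^3)$, which is exactly the $\overline\kappa$-version — and $\mathbf{R}\diag(1,\overline\kappa,\overline\kappa^2,\overline\kappa^3)\mathbf{R} = \diag(\overline\kappa^3,\overline\kappa^2,\overline\kappa,1)$, so I must make sure the $\mathbf{R}$'s from neighbouring factors consume this reversal correctly (they should, because $\mathbf{W}_0^*\,\diag(1,\kappa,\kappa^2,\kappa^3)\,\mathbf{W}_0$ is the natural object, and its image under the involution is $\mathbf{W}_0^*\,\diag(1,\overline\kappa,\ldots)\,\mathbf{W}_0$). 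Finally $\mathbf{W}_0 e^{\pm\mathbf{\Omega}\kappa z}$: its conjugate (for real $z$) is $\overline{\mathbf{W}_0}e^{\pm\overline{\mathbf{\Omega}}\,\overline\kappa z} = \mathbf{W}_0\mathbf{R}\,e^{\pm\mathbf{\Omega}^{-1}\overline\kappa z}$, and conjugating by $\mathbf{R}$ on both sides and using \eqref{equation_R} again returns $\mathbf{W}_0 e^{\pm\mathbf{\Omega}\overline\kappa z}$. Assembling all factors, and checking that every internal $\mathbf{R}$ pairs off with a neighbour (this is automatic because the involution distributes over products and each individual factor maps to its own $\overline\kappa$-analogue), yields $\mathbf{R}\overline{\mathbf{X}(z,\kappa)}\mathbf{R} = \mathbf{X}(z,\overline\kappa)$. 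The main obstacle here is bookkeeping: ensuring that the two $\mathbf{R}$'s flanking the whole expression correctly telescope through the five-or-six factor product without a stray reversal; I would organize the computation as "apply the involution factor by factor, using that it is multiplicative," which sidesteps having to track the $\mathbf{R}$'s individually.
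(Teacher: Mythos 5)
Your plan matches the paper's proof: part (a) via \eqref{equation_LOmegaL-1} and \eqref{equation_W0L-1} to absorb the factor $\mathbbm{i}$ in $\kappa\mapsto\mathbbm{i}\kappa$ into a trailing $\mathbf{L}^{-1}$, and part (b) by pushing $\mathbf{R}\,\overline{(\cdot)}\,\mathbf{R}$ through each summand of $\hat{\mathbf{X}}(z,\kappa)$ with the $\mathbf{R}$-relations of Section~\ref{section_preliminaries}. Two small cautions. In (a), no $\mathbf{L}$ ever meets $\mathbf{W}_0^*$ — the substitution only touches $\diag(1,\kappa,\kappa^2,\kappa^3)\cdot\mathbf{W}_0\,e^{\pm\mathbf{\Omega}\kappa z}$, where $\diag(1,\mathbbm{i},\mathbbm{i}^2,\mathbbm{i}^3)\mathbf{W}_0=\mathbf{W}_0\mathbf{L}^{-1}$ meets the $\mathbf{L}$ emerging from $e^{\pm\mathbf{\Omega}(\mathbbm{i}\kappa)z}=\mathbf{L}e^{\pm\mathbf{\Omega}\kappa z}\mathbf{L}^{-1}$ and cancels; your worry about $\mathbf{W}_0^*\mathbf{L}$ is moot. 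In (b), your closing parenthetical — that ``each individual factor maps to its own $\overline\kappa$-analogue'' under $\mathbf{A}\mapsto\mathbf{R}\overline{\mathbf{A}}\mathbf{R}$ — contradicts your own earlier (correct) observation that $\mathbf{R}\,\diag(1,\overline\kappa,\overline\kappa^2,\overline\kappa^3)\,\mathbf{R}$ reverses the diagonal; in fact neither $\mathbf{W}_0^*$, nor $\diag(1,\kappa,\kappa^2,\kappa^3)$, nor $\mathbf{W}_0$ is individually sent to its $\overline\kappa$-version. The resolution is the one you identified first and the one the paper uses: pair the two flanking $\mathbf{R}$'s with $\mathbf{W}_0^*$ on the left and $\mathbf{W}_0$ on the right (via $\mathbf{R}\mathbf{W}_0^T=\mathbf{W}_0^*$ and $\overline{\mathbf{W}_0}\mathbf{R}=\mathbf{W}_0$ from \eqref{equation_R}), leaving $\overline{\diag(1,\kappa,\kappa^2,\kappa^3)}=\diag(1,\overline\kappa,\overline\kappa^2,\overline\kappa^3)$ untouched in the middle; do not insert $\mathbf{R}\mathbf{R}$ between these three factors.
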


\begin{proof}
By \eqref{equation_LOmegaL-1}, \eqref{equation_W0L-1},
we have
\begin{align*}
\lefteqn{
\diag
\left( 
1, 
\mathbbm{i} \kappa, 
\left( \mathbbm{i} \kappa \right)^2,
\left( \mathbbm{i} \kappa \right)^3
\right)
\cdot
\mathbf{W}_0
e^{\pm \mathbf{\Omega} \left( \mathbbm{i} \kappa \right) z}
} \\
 &=
\diag
\left( 1, \kappa, \kappa^2, \kappa^3 \right)
\cdot
\diag
\left( 1, \mathbbm{i}, \mathbbm{i}^2, \mathbbm{i}^3 \right)
\cdot
\mathbf{W}_0
e^{\pm \left( \mathbbm{i} \mathbf{\Omega} \right) \kappa z} \\
 &=
\diag
\left( 1, \kappa, \kappa^2, \kappa^3 \right)
\cdot
\mathbf{W}_0
\mathbf{L}^{-1}
\cdot
e^{
\pm 
\mathbf{L}
\mathbf{\Omega}
\mathbf{L}^{-1}
\kappa z
} \\
 &=
\diag
\left( 1, \kappa, \kappa^2, \kappa^3 \right)
\cdot
\mathbf{W}_0
\mathbf{L}^{-1}
\cdot
\mathbf{L}
e^{
\pm 
\mathbf{\Omega}
\kappa z
}
\mathbf{L}^{-1} \\
 &=
\diag
\left( 1, \kappa, \kappa^2, \kappa^3 \right)
\cdot
\mathbf{W}_0
e^{
\pm 
\mathbf{\Omega}
\kappa z
}
\cdot
\mathbf{L}^{-1},
\end{align*}
which implies (a) by Definition~\ref{definition_X(z,kappa)}.
Suppose $z \in \mathbb{R}$,
and let
\begin{equation}
\label{equation_afovjhadifv},
\begin{split}
\hat{\mathbf{X}}(z,\kappa)
 &=
\diag(0,1,1,0)
\cdot
\mathbf{W}_0^*
\cdot
\diag
\left( 1, \kappa, \kappa^2, \kappa^3 \right)
\cdot
\mathbf{W}_0
e^{-\mathbf{\Omega} \kappa z} \\
 &\quad
+
\diag(1,0,0,1)
\cdot
\mathbf{W}_0^*
\cdot
\diag
\left( 1, \kappa, \kappa^2, \kappa^3 \right)
\cdot
\mathbf{W}_0
e^{\mathbf{\Omega} \kappa z}
\end{split}
\end{equation}
so that by Definition~\ref{definition_X(z,kappa)},
\begin{equation}
\label{equation_va;ofhva}
\mathbf{X}(z,\kappa)
 =
\frac{1}{4}
e^{-\mathcal{E} \mathbf{\Omega} z}
\hat{\mathbf{X}}(z,\kappa).
\end{equation}
Note that
$
\diag(0,1,1,0),
\diag(0,1,1,0)
\in \overline{\pi}(4)
$.
Since $z \in \mathbb{R}$,
$
e^{-\mathcal{E} \mathbf{\Omega} z}
\in \overline{\pi}(4)
$,
and
$
\mathbf{R}
\overline{
e^{
\pm
\mathbf{\Omega} \kappa z}
}
\mathbf{R}
 =
\mathbf{R}
e^{
\pm
\overline{
\mathbf{\Omega}
}
\overline{\kappa}
z}
\mathbf{R}
 =
\mathbf{R}
e^{
\pm
\mathbf{R}
\mathbf{\Omega}
\mathbf{R}
\overline{\kappa}
z}
\mathbf{R}
 =
\mathbf{R}
\cdot
\mathbf{R}
e^{
\pm
\mathbf{\Omega}
\overline{\kappa}
z}
\mathbf{R}
\cdot
\mathbf{R}
 =
e^{\pm
\mathbf{\Omega}
\overline{\kappa}
z}
$
by \eqref{equation_R}.
Hence by \eqref{equation_afovjhadifv},
we have
\begin{align*}
\mathbf{R}
\overline{
\hat{\mathbf{X}}(z,\kappa)
}
\mathbf{R}
 &=
\mathbf{R}
\overline{
\left\{
\diag(0,1,1,0)
\cdot
\mathbf{W}_0^*
\cdot
\diag
\left( 1, \kappa, \kappa^2, \kappa^3 \right)
\cdot
\mathbf{W}_0
\cdot
e^{-\mathbf{\Omega} \kappa z}
\right\}
}
\mathbf{R} \\
 &\quad
+
\mathbf{R}
\overline{
\left\{
\diag(1,0,0,1)
\cdot
\mathbf{W}_0^*
\cdot
\diag
\left( 1, \kappa, \kappa^2, \kappa^3 \right)
\cdot
\mathbf{W}_0
\cdot
e^{\mathbf{\Omega} \kappa z}
\right\}
}
\mathbf{R} \\
 &=
\mathbf{R}
\overline{
\diag(0,1,1,0)
}
\mathbf{R}
\cdot
\mathbf{R}
\mathbf{W}_0^T
\cdot
\overline{
\diag
\left( 1, \kappa, \kappa^2, \kappa^3 \right)
}
\cdot
\overline{
\mathbf{W}_0
}
\mathbf{R}
\cdot
\mathbf{R}
\overline{
e^{-\mathbf{\Omega} \kappa z}
}
\mathbf{R} \\
 &\quad
+
\mathbf{R}
\overline{
\diag(1,0,0,1)
}
\mathbf{R}
\cdot
\mathbf{R}
\mathbf{W}_0^T
\cdot
\overline{
\diag
\left( 1, \kappa, \kappa^2, \kappa^3 \right)
}
\cdot
\overline{
\mathbf{W}_0
}
\mathbf{R}
\cdot
\mathbf{R}
\overline{
e^{\mathbf{\Omega} \kappa z}
}
\mathbf{R} \\
 &=
\diag(0,1,1,0)
\cdot
\left(
\mathbf{W}_0
\mathbf{R}
\right)^T
\cdot
\diag
\left( 1, \overline{\kappa}, \overline{\kappa}^2, \overline{\kappa}^3 \right)
\cdot
\overline{
\mathbf{W}_0
\mathbf{R}
}
\cdot
e^{-\mathbf{\Omega} \overline{\kappa} z} \\
 &\quad
+
\diag(1,0,0,1)
\cdot
\left(
\mathbf{W}_0
\mathbf{R}
\right)^T
\cdot
\diag
\left( 1, \overline{\kappa}, \overline{\kappa}^2, \overline{\kappa}^3 \right)
\cdot
\overline{
\mathbf{W}_0
\mathbf{R}
}
\cdot
e^{\mathbf{\Omega} \overline{\kappa} z} \\
 &=
\hat{\mathbf{X}}\left( z, \overline{\kappa} \right),
\end{align*}
since
$
\left(
\mathbf{W}_0
\mathbf{R}
\right)^T
 =
\overline{
\mathbf{W}_0
}^T
 =
\mathbf{W}_0^*
$
and
$
\overline{
\mathbf{W}_0
\mathbf{R}
}
 =
\overline{
\overline{
\mathbf{W}_0
}
}
 =
\mathbf{W}_0
$
by \eqref{equation_R}.
Thus by \eqref{equation_va;ofhva},
\begin{align*}
\mathbf{R}
\overline{
\mathbf{X}(z,\kappa)
}
\mathbf{R}
 &=
\mathbf{R}
\overline{
\left\{
\frac{1}{4}
e^{-\mathcal{E} \mathbf{\Omega} z}
\hat{\mathbf{X}}(z,\kappa)
\right\}
}
\mathbf{R}
 =
\frac{1}{4}
\mathbf{R}
\overline{
e^{-\mathcal{E} \mathbf{\Omega} z}
}
\mathbf{R}
\cdot
\mathbf{R}
\overline{
\hat{\mathbf{X}}(z,\kappa)
}
\mathbf{R} \\
 &=
\frac{1}{4}
e^{-\mathcal{E} \mathbf{\Omega} z}
\hat{\mathbf{X}}\left( z, \overline{\kappa} \right)
 =
\mathbf{X}\left( z, \overline{\kappa} \right),
\end{align*}
since
$
e^{-\mathcal{E} \mathbf{\Omega} z}
\in \overline{\pi}(4)$.
This shows (b), and the proof is complete.
\end{proof}

Interpreting Lemma~\ref{lemma_X(z,kappa)}
in terms of $\mathbf{X}_\lambda(x)$ and $\mathbf{Y}_\lambda(x)$,
we have the following.

\begin{lemma}
\label{lemma_XYlambdaneq1/k}
\begin{itemize}
\item[{\rm (a)}]
For
$
\lambda 
\in \mathbb{C} \setminus \left\{0, 1/k \right\}$ and $x \in \mathbb{R}$,
\[
\mathbf{R}
\overline{
\mathbf{X}_\lambda(x)
}
\mathbf{R}
 =
\left\{
\begin{array}{ll}
\mathbf{X}_\lambda(x), &
\text{if }
\lambda \in (-\infty,0) \cup \left( \frac{1}{k}, \infty \right) \\
\mathbf{X}_{\overline{\lambda}}(x)
\cdot
\mathbf{L}, &
\text{otherwise}
\end{array}
\right..
\]
In particular,
$
\mathbf{X}_\lambda(x)
\in \overline{\pi}(4)
$
for every 
$\lambda \in (-\infty,0) \cup \left( 1/k, \infty \right)$
and
$x \in \mathbb{R}$.

\item[{\rm (b)}]
$
\mathbf{R}
\overline{
\mathbf{Y}_\lambda(x)
}
\mathbf{R}
 =
\mathbf{Y}_{\overline{\lambda}}(x)
$
for every 
$
\lambda 
\in 
\mathbb{C} 
\setminus 
\left\{ 0, 1/k \right\}
$ 
and $x > 0$
such that $\det\mathbf{X}_\lambda(x) \neq 0$.
In particular,
$
\mathbf{Y}_\lambda(x)
\in \overline{\pi}(4)
$
for every
$
\lambda 
\in 
\mathbb{R} 
\setminus 
\left\{ 0, 1/k \right\}
$ 
and $x > 0$
such that $\det\mathbf{X}_\lambda(x) \neq 0$.
\end{itemize}
\end{lemma}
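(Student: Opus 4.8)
The plan is to reduce both parts to Lemma~\ref{lemma_X(z,kappa)} through the substitution $z = \alpha x$, which is where the hypothesis $x \in \mathbb{R}$ (hence $z \in \mathbb{R}$) enters. Fix $\lambda \in \mathbb{C}\setminus\{0,1/k\}$ and $x \in \mathbb{R}$, and set $z = \alpha x$, $\kappa = \chi(\lambda)$. By \eqref{equation_zkappaxlambda} and Lemma~\ref{lemma_X(z,kappa)}(b), $\mathbf{R}\,\overline{\mathbf{X}_\lambda(x)}\,\mathbf{R} = \mathbf{R}\,\overline{\mathbf{X}(z,\kappa)}\,\mathbf{R} = \mathbf{X}(z,\overline\kappa)$. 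Since $k$ is real, $\chi(\overline\lambda)^4 = 1 - 1/(\overline\lambda k) = \overline{1 - 1/(\lambda k)} = \overline{\kappa}^{\,4}$, so $\chi(\overline\lambda)$ and $\overline\kappa$ are fourth roots of the same complex number and therefore $\chi(\overline\lambda) = \mathbbm{i}^{m}\,\overline\kappa$ for a unique $m \in \{0,1,2,3\}$. Iterating Lemma~\ref{lemma_X(z,kappa)}(a) gives $\mathbf{X}\bigl(z,\mathbbm{i}^{m}\overline\kappa\bigr) = \mathbf{X}(z,\overline\kappa)\,\mathbf{L}^{-m}$, hence $\mathbf{X}(z,\overline\kappa) = \mathbf{X}\bigl(z,\chi(\overline\lambda)\bigr)\mathbf{L}^{m} = \mathbf{X}_{\overline\lambda}(x)\,\mathbf{L}^{m}$ by \eqref{equation_zkappaxlambda} once more. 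Thus $\mathbf{R}\,\overline{\mathbf{X}_\lambda(x)}\,\mathbf{R} = \mathbf{X}_{\overline\lambda}(x)\,\mathbf{L}^{m}$, and everything hinges on identifying $m$.

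To pin down $m$ I would unwind the normalization in Definition~\ref{definition_kappa}: writing $\kappa = r\,e^{\mathbbm{i}\theta}$ with $r > 0$ and $\theta \in [0,\pi/2)$, we have $\overline\kappa = r\,e^{-\mathbbm{i}\theta}$, so if $\theta = 0$ then $\overline\kappa$ itself has argument in $[0,\pi/2)$, forcing $\chi(\overline\lambda) = \overline\kappa$ and $m = 0$; if $\theta \in (0,\pi/2)$ then $\mathbbm{i}\,\overline\kappa = r\,e^{\mathbbm{i}(\pi/2 - \theta)}$ has argument in $(0,\pi/2)$, forcing $\chi(\overline\lambda) = \mathbbm{i}\,\overline\kappa$ and $m = 1$. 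It then remains to note that $\theta = 0$ holds exactly when $\kappa^{4} = 1 - 1/(\lambda k)$ is a positive real, which, since $k > 0$, happens precisely for $\lambda \in (-\infty,0)\cup(1/k,\infty)$; the complementary set $(0,1/k)\cup(\mathbb{C}\setminus\mathbb{R})$ is the ``otherwise'' case and gives $\theta \in (0,\pi/2)$. Substituting $\mathbf{L}^{0} = \mathbf{I}$ and $\mathbf{L}^{1} = \mathbf{L}$, and using $\overline\lambda = \lambda$ when $\lambda$ is real, this gives part~(a); the ``in particular'' clause follows from Definition~\ref{definition_pibar}, since on that set $\mathbf{R}\,\overline{\mathbf{X}_\lambda(x)}\,\mathbf{R} = \mathbf{X}_\lambda(x)$.

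For part~(b), I would start from $\mathbf{Y}_\lambda(x) = \mathbf{X}_\lambda(-x)\,\mathbf{X}_\lambda(x)^{-1} - \mathbf{I}$ (Definition~\ref{definition_Ylambda(x)}) and conjugate. Using $\overline{\mathbf{A}^{-1}} = \overline{\mathbf{A}}^{\,-1}$, $\mathbf{R}^{2} = \mathbf{I}$, and $\mathbf{R}\,\overline{\mathbf{A}}^{\,-1}\,\mathbf{R} = \bigl(\mathbf{R}\,\overline{\mathbf{A}}\,\mathbf{R}\bigr)^{-1}$, one gets $\mathbf{R}\,\overline{\mathbf{Y}_\lambda(x)}\,\mathbf{R} = \bigl(\mathbf{R}\,\overline{\mathbf{X}_\lambda(-x)}\,\mathbf{R}\bigr)\bigl(\mathbf{R}\,\overline{\mathbf{X}_\lambda(x)}\,\mathbf{R}\bigr)^{-1} - \mathbf{I}$. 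Applying part~(a) with argument $-x$ and with argument $x$ attaches the \emph{same} power $\mathbf{L}^{m}$ on the right of each factor, since $m$ depends only on $\lambda$; hence in $\mathbf{X}_{\overline\lambda}(-x)\mathbf{L}^{m}\bigl(\mathbf{X}_{\overline\lambda}(x)\mathbf{L}^{m}\bigr)^{-1}$ the powers of $\mathbf{L}$ cancel and the result collapses to $\mathbf{X}_{\overline\lambda}(-x)\mathbf{X}_{\overline\lambda}(x)^{-1} - \mathbf{I} = \mathbf{Y}_{\overline\lambda}(x)$. Well-definedness of $\mathbf{Y}_{\overline\lambda}(x)$ is automatic here: $\det\bigl(\mathbf{R}\,\overline{\mathbf{X}_\lambda(x)}\,\mathbf{R}\bigr) = \overline{\det\mathbf{X}_\lambda(x)} \neq 0$, and this equals $(\det\mathbf{L})^{m}\det\mathbf{X}_{\overline\lambda}(x)$, so $\det\mathbf{X}_{\overline\lambda}(x) \neq 0$. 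For real $\lambda$ we have $\overline\lambda = \lambda$, whence $\mathbf{Y}_\lambda(x) \in \overline{\pi}(4)$ by Definition~\ref{definition_pibar}.

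I expect the only genuinely delicate point to be the middle step above: correctly tracking which fourth root of $1 - 1/(\lambda k)$ is selected by the argument normalization of $\chi$ in Definition~\ref{definition_kappa}, so that $m$ is identified as $0$ on $(-\infty,0)\cup(1/k,\infty)$ and $1$ elsewhere, and then checking that these powers of $\mathbf{L}$ cancel cleanly in the $\mathbf{Y}$-computation. The remaining manipulations with $\mathbf{R}$, $\mathbf{L}$, and complex conjugation, together with Lemma~\ref{lemma_X(z,kappa)}, are routine.
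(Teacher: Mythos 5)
Your proof is correct and follows essentially the same route as the paper: reduce to Lemma~\ref{lemma_X(z,kappa)} via the substitution $z=\alpha x$, identify the power of $\mathbf{L}$ by tracking which fourth root of $1-1/(\lambda k)$ the normalization in Definition~\ref{definition_kappa} selects, and then cancel the $\mathbf{L}$-factors in the $\mathbf{Y}$-computation. The only cosmetic difference is that the paper computes $\chi(\overline\lambda)=\mathbbm{i}\,\overline{\chi(\lambda)}$ directly in the non-positive-real case and splits part (b) into two cases, whereas you phrase the root-tracking as an abstract $\mathbbm{i}^{m}$ and then pin down $m$, and handle (b) uniformly by $\mathbf{L}^{m}$-cancellation (also supplying an explicit well-definedness check for $\mathbf{Y}_{\overline\lambda}(x)$ that the paper leaves implicit).
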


\begin{proof}
Suppose
$
\lambda 
\in \mathbb{C} \setminus \left\{0, 1/k \right\}$ and $x \in \mathbb{R}$.
Let
$
r e^{\mathbbm{i} \theta}
 =
1 - 1/(\lambda k)
$,
$r > 0$, 
$0 \leq \theta < 2\pi$.
Then by Definition~\ref{definition_kappa},
$
\chi(\lambda)
 =
\sqrt[4]{r}
e^{\mathbbm{i} \frac{\theta}{4}}
$.
Suppose $\theta = 0$,
which is equivalent to
$
\lambda \in (-\infty,0) \cup \left( 1/k, \infty \right)
$.
Then
$
\overline{\chi(\lambda)}
 =
\sqrt[4]{r}
 =
\chi(\lambda)
$,
hence by \eqref{equation_zkappaxlambda} and Lemma~\ref{lemma_X(z,kappa)} (b),
$
\mathbf{R}
\overline{
\mathbf{X}_\lambda(x)
}
\mathbf{R}
 =
\mathbf{R}
\overline{
\mathbf{X}\left( \alpha x, \chi(\lambda) \right)
}
\mathbf{R}
 =
\mathbf{X}\left( \alpha x, \overline{\chi(\lambda)} \right)
 =
\mathbf{X}\left( \alpha x, \chi(\lambda) \right)
 =
\mathbf{X}_\lambda(x)
$.
Suppose $0 < \theta < 2\pi$.
Then by Definition~\ref{definition_kappa},
$
\chi\left( \overline{\lambda} \right)
 =
\chi\left( r e^{-\mathbbm{i} \theta} \right)
 =
\sqrt[4]{r} e^{\mathbbm{i} \left( \frac{\pi}{2} - \frac{\theta}{4} \right)}
 =
\mathbbm{i}
\cdot
\sqrt[4]{r} e^{-\mathbbm{i} \frac{\theta}{4}}
 =
\mathbbm{i}
\cdot
\overline{\chi(\lambda)}
$,
since
$
0
 <
\Arg
\left(
\sqrt[4]{r} e^{\mathbbm{i} \left( \frac{\pi}{2} - \frac{\theta}{4} \right)}
\right)
 =
\pi/2 - \theta/4
 <
\pi/2
$.
Thus by \eqref{equation_zkappaxlambda} and Lemma~\ref{lemma_X(z,kappa)} (a),
$
\mathbf{X}_{\overline{\lambda}}(x)
 =
\mathbf{X}\left( \alpha x, \chi\left( \overline{\lambda} \right) \right)
 =
\mathbf{X}\left( \alpha x, \mathbbm{i} \cdot \overline{\chi(\lambda)} \right)
 =
\mathbf{X}\left( \alpha x, \overline{\chi(\lambda)} \right)
\cdot
\mathbf{L}^{-1}
$,
and hence by \eqref{equation_zkappaxlambda} and Lemma~\ref{lemma_X(z,kappa)} (b),
$
\mathbf{R}
\overline{
\mathbf{X}_\lambda(x)
}
\mathbf{R}
 =
\mathbf{R}
\overline{
\mathbf{X}\left( \alpha x, \chi(\lambda) \right)
}
\mathbf{R}
 =
\mathbf{X}\left( \alpha x, \overline{\chi(\lambda)} \right)
 =
\mathbf{X}_{\overline{\lambda}}(x)
\cdot
\mathbf{L}
$.
This shows (a).

Suppose
$
\lambda 
\in 
\mathbb{C} 
\setminus 
\left\{ 0, 1/k \right\}
$,
$x > 0$,
and
$\det\mathbf{X}_\lambda(x) \neq 0$.
Suppose first that
$\lambda \in (-\infty,0) \cup \left( 1/k, \infty \right)$.
Then by (a),
$
\mathbf{X}_\lambda(x),
\mathbf{X}_\lambda(-x)
\in \overline{\pi}(4)
$.
Thus by Lemma~\ref{lemma_pibar} and Definition~\ref{definition_Ylambda(x)},
$
\mathbf{Y}_\lambda(x)
 =
\mathbf{X}_\lambda(-x)
\mathbf{X}_\lambda(x)^{-1}
-
\mathbf{I}
\in \overline{\pi}(4)
$,
and hence
$
\mathbf{R}
\overline{
\mathbf{Y}_\lambda(x)
}
\mathbf{R}
 =
\mathbf{Y}_\lambda(x)
 =
\mathbf{Y}_{\overline{\lambda}}(x)
$,
since $\lambda$ is real.
Suppose 
$\lambda \not\in (-\infty,0) \cup \left( 1/k, \infty \right)$.
Then 
by Definition~\ref{definition_Ylambda(x)}
and (a),
\begin{align*}
\mathbf{R}
\overline{
\mathbf{Y}_\lambda(x)
}
\mathbf{R}
 &=
\mathbf{R}
\overline{
\left\{
\mathbf{X}_\lambda(-x)
\mathbf{X}_\lambda(x)^{-1}
-
\mathbf{I}
\right\}
}
\mathbf{R}
 =
\mathbf{R}
\overline{
\mathbf{X}_\lambda(-x)
}
\mathbf{R}
\cdot
\mathbf{R}
\overline{
\mathbf{X}_\lambda(x)^{-1}
}
\mathbf{R}
-
\mathbf{I} \\
 &=
\left\{
\mathbf{X}_{\overline{\lambda}}(-x)
\mathbf{L}
\right\}
\left\{
\mathbf{X}_{\overline{\lambda}}(x)
\mathbf{L}
\right\}^{-1}
-
\mathbf{I}
 =
\mathbf{X}_{\overline{\lambda}}(-x)
\mathbf{X}_{\overline{\lambda}}(x)^{-1}
-
\mathbf{I}
 =
\mathbf{Y}_{\overline{\lambda}}(x).
\end{align*}
Thus we showed (b),
and the proof is complete.
\end{proof}

\subsection{The case $\protect\lambda = 1/k$}
\label{section_1/k}

By \eqref{equation_Wlambdax}, \eqref{equation_z},
we have
\begin{align*}
\lefteqn{
\left\{
\diag
\left( 1, \alpha, \alpha^2, \alpha^3 \right)^{-1}
\cdot
\mathbf{W}_\frac{1}{k}(x)
\cdot
\diag
\left( 1, \alpha, \alpha^2, \alpha^3 \right)
\right\}_{i,j}
}
\nonumber \\
 &=
\alpha^{1-i}
\cdot
\left\{
\mathbf{W}_\frac{1}{k}(x)
\right\}_{i,j}
\cdot
\alpha^{j-1}
 =
\alpha^{j-i}
\cdot
H(j-i)
\frac{x^{j - i}}{(j - i)!}
 =
H(j-i)
\frac{(\alpha x)^{j - i}}{(j - i)!}
\nonumber \\
 &=
H(j-i)
\frac{z^{j - i}}{(j - i)!}
 =
\left\{
\mathbf{W}_\frac{1}{k}(z)
\right\}_{i,j},
\qquad
1 \leq i,j \leq 4,
\end{align*}
hence
$
\diag
\left( 1, \alpha, \alpha^2, \alpha^3 \right)^{-1}
\cdot
\mathbf{W}_\frac{1}{k}(x)
 =
\mathbf{W}_\frac{1}{k}(z)
\cdot
\diag
\left( 1, \alpha, \alpha^2, \alpha^3 \right)^{-1}
$.
Thus by 
\newline
Lemma~\ref{lemma_Xlambda(x)2},
\begin{align*}
\lefteqn{
\mathbf{X}_\frac{1}{k}(x)
 =
\frac{1}{4}
e^{-\mathcal{E} \mathbf{\Omega} z}
\left\{
\diag(0,1,1,0)
\cdot
\mathbf{W}_0^*
\mathbf{W}_\frac{1}{k}(-z)
\cdot
\diag
\left( 1, \alpha, \alpha^2, \alpha^3 \right)^{-1}
\right.
} \\
 &\qquad\qquad\qquad\qquad
\left.
+
\diag(1,0,0,1)
\cdot
\mathbf{W}_0^*
\mathbf{W}_\frac{1}{k}(z)
\cdot
\diag
\left( 1, \alpha, \alpha^2, \alpha^3 \right)^{-1}
\right\} \\
 &=
\frac{1}{4}
e^{-\mathcal{E} \mathbf{\Omega} z}
\left\{
\diag(0,1,1,0)
\cdot
\mathbf{W}_0^*
\mathbf{W}_\frac{1}{k}(-z)
+
\diag(1,0,0,1)
\cdot
\mathbf{W}_0^*
\mathbf{W}_\frac{1}{k}(z)
\right\}
\cdot \\
 &\qquad
\cdot 
\diag
\left( 1, \alpha, \alpha^2, \alpha^3 \right)^{-1}.
\end{align*}
For $z \in \mathbb{R}$,
denote
\begin{equation}
\label{equation_Pz}
\mathbf{P}(z)
 =
\diag(0,1,1,0)
\cdot
\mathbf{W}_0^*
\mathbf{W}_\frac{1}{k}(-z)
+
\diag(1,0,0,1)
\cdot
\mathbf{W}_0^*
\mathbf{W}_\frac{1}{k}(z),
\end{equation}
so that
\begin{equation}
\label{equation_anvrfuvh;aof}
\mathbf{X}_\frac{1}{k}(x)
 =
\frac{1}{4}
e^{-\mathcal{E} \mathbf{\Omega} z}
\mathbf{P}(z)
\cdot
\diag
\left( 1, \alpha, \alpha^2, \alpha^3 \right)^{-1}.
\end{equation}

See Appendix~\ref{appendix_Y1/kpibar} for the proof of Lemma~\ref{lemma_Y1/kpibar}.

\begin{lemma}
\label{lemma_Y1/kpibar}
$
\mathbf{Y}_\frac{1}{k}(x)
\in \overline{\pi}(4)
$
for every $x > 0$.
\end{lemma}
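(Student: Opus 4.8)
The plan is to adapt the two-step argument of Section~\ref{section_neq1/k} (Lemma~\ref{lemma_X(z,kappa)} followed by Lemma~\ref{lemma_XYlambdaneq1/k}) to the degenerate value $\lambda = 1/k$. Unlike the real case with $\lambda \neq 1/k$, one should \emph{not} expect $\mathbf{X}_{\frac{1}{k}}(x)$ itself to lie in $\overline{\pi}(4)$; instead I would aim to prove the ``twisted'' identity
\[
\mathbf{R}\,\overline{\mathbf{X}_{\frac{1}{k}}(x)}\,\mathbf{R}
 =
\mathbf{X}_{\frac{1}{k}}(x)\,\mathbf{R},
\qquad
x \in \mathbb{R},
\]
which plays here the role that $\mathbf{R}\,\overline{\mathbf{X}_\lambda(x)}\,\mathbf{R} = \mathbf{X}_{\overline{\lambda}}(x)\,\mathbf{L}$ played in Lemma~\ref{lemma_XYlambdaneq1/k}. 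Granting this identity, the lemma follows in a line: writing $\mathbf{Y}_{\frac{1}{k}}(x) = \mathbf{X}_{\frac{1}{k}}(-x)\,\mathbf{X}_{\frac{1}{k}}(x)^{-1} - \mathbf{I}$ as in Definition~\ref{definition_Ylambda(x)}, distributing the involution $\mathbf{A}\mapsto\mathbf{R}\overline{\mathbf{A}}\mathbf{R}$ over the product and the inverse (it respects both, since $\mathbf{R}=\mathbf{R}^{-1}$), and applying the twisted identity at both $x$ and $-x$, one gets $(\mathbf{X}_{\frac{1}{k}}(-x)\mathbf{R})(\mathbf{X}_{\frac{1}{k}}(x)\mathbf{R})^{-1} - \mathbf{I} = \mathbf{X}_{\frac{1}{k}}(-x)\,\mathbf{X}_{\frac{1}{k}}(x)^{-1} - \mathbf{I} = \mathbf{Y}_{\frac{1}{k}}(x)$, because the two copies of $\mathbf{R}$ cancel. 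This is exactly the computation in the second half of the proof of Lemma~\ref{lemma_XYlambdaneq1/k}(b), with $\mathbf{L}$ replaced by $\mathbf{R}$.

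To prove the twisted identity I would start from the factorization $\mathbf{X}_{\frac{1}{k}}(x) = \tfrac{1}{4}\, e^{-\mathcal{E}\mathbf{\Omega} z}\,\mathbf{P}(z)\,\diag(1,\alpha,\alpha^2,\alpha^3)^{-1}$ in \eqref{equation_anvrfuvh;aof}, with $z = \alpha x$ and $\mathbf{P}(z)$ as in \eqref{equation_Pz}, and handle the three factors in turn. The outer exponential is fixed by $\mathbf{A}\mapsto\mathbf{R}\overline{\mathbf{A}}\mathbf{R}$ for real $z$, since $\mathcal{E},\mathbf{\Omega}\in\overline{\pi}(4)$ by Lemma~\ref{lemma_pibar}(e) and $\overline{\pi}(4)$ is a closed $\mathbb{R}$-subspace closed under products, so $e^{-\mathcal{E}\mathbf{\Omega} z}\in\overline{\pi}(4)$. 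For the middle factor I would establish the auxiliary relation $\mathbf{R}\,\overline{\mathbf{P}(z)}\,\mathbf{R} = \mathbf{P}(z)\,\mathbf{R}$ for real $z$: the matrices $\mathbf{W}_{\frac{1}{k}}(\pm z)$, $\diag(0,1,1,0)$, $\diag(1,0,0,1)$ are real, the two diagonal ones commute with $\mathbf{R}$, $\overline{\mathbf{W}_0^*} = \mathbf{W}_0^T$, $\mathbf{R}\mathbf{W}_0^T = \mathbf{W}_0^*$ (transpose of the relation $\mathbf{W}_0\mathbf{R}=\overline{\mathbf{W}_0}$ in \eqref{equation_R}), and $\mathbf{R}\,\mathbf{W}_{\frac{1}{k}}(z)\,\mathbf{R} = \mathbf{W}_{\frac{1}{k}}(z)^T$ is immediate from the triangular form of $\mathbf{W}_{1/k}$ in \eqref{equation_Wlambdax}; combining these, the three reversals telescope and $\mathbf{R}\overline{\mathbf{P}(z)}\mathbf{R}$ collapses to $\mathbf{P}(z)$ followed by a single trailing $\mathbf{R}$. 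Finally, for the diagonal factor, $\mathbf{R}$ conjugates a diagonal matrix to its entry-reversal and $\diag(1,\alpha,\alpha^2,\alpha^3)$ is real, so $\mathbf{R}\,\overline{\diag(1,\alpha,\alpha^2,\alpha^3)^{-1}}\,\mathbf{R}$ is the reversed diagonal; combining it with the trailing $\mathbf{R}$ coming from $\mathbf{P}(z)$ and using again that $\mathbf{R}$ reverses diagonals, the $\alpha$-scaling is reproduced and exactly one $\mathbf{R}$ remains on the far right, giving $\mathbf{X}_{\frac{1}{k}}(x)\,\mathbf{R}$.

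I expect the one delicate point to be this last piece of bookkeeping: one must track how the trailing $\mathbf{R}$ generated by $\mathbf{P}(z)$ passes through $\diag(1,\alpha,\alpha^2,\alpha^3)^{-1}$, cancelling the reversal introduced by conjugating the $\alpha$-scaling, so that a \emph{single} $\mathbf{R}$ (and not $\mathbf{R}^2=\mathbf{I}$ or some other constant) survives — getting this wrong would break the final cancellation in the $\mathbf{Y}$-computation. Everything else is a routine transcription of the $\lambda\neq 1/k$ argument, and is in fact simpler here: $\mathbf{W}_{1/k}$ has real polynomial entries (so no $\chi(\overline\lambda)=\mathbbm{i}\,\overline{\chi(\lambda)}$ case analysis is needed), and $\mathbf{R}=\mathbf{R}^{-1}$, so there is no analogue of the $\mathbf{L}$ versus $\mathbf{L}^{-1}$ distinction of Lemma~\ref{lemma_X(z,kappa)}(a) and Lemma~\ref{lemma_XYlambdaneq1/k}(a).
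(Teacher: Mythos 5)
Your proof is correct, and it takes a genuinely different and noticeably shorter route than the paper. The paper's Appendix~\ref{appendix_Y1/kpibar} computes the explicit polynomial form \eqref{equation_Pzexplicit} of $\mathbf{P}(z)$, block-diagonalizes it by orthogonal conjugation as $\mathbf{V}\mathbf{P}(z)\hat{\mathbf{V}}=\sqrt{2}\,\diag\bigl(\mathbf{P}^+(z),\mathbf{P}^-(z)\bigr)$ with $2\times 2$ blocks, verifies by an explicit adjugate computation that $\mathbf{P}^{\pm}(-z)\mathbf{P}^{\pm}(z)^{-1}\in\overline{\pi}(2)$, and then reassembles the answer using $\mathbf{R}\mathbf{V}\mathbf{R}=\mathbf{V}^T$ and the form of $\mathbf{V}^2$. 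Your argument bypasses all of that: you prove the single twisted identity $\mathbf{R}\,\overline{\mathbf{X}_{1/k}(x)}\,\mathbf{R}=\mathbf{X}_{1/k}(x)\,\mathbf{R}$ factor by factor from the factorization \eqref{equation_anvrfuvh;aof}, after which the $\mathbf{Y}$-computation is a one-liner since the two copies of $\mathbf{R}$ cancel. I checked your ingredients and they all hold: $e^{-\mathcal{E}\mathbf{\Omega}z}\in\overline{\pi}(4)$ for $z\in\mathbb{R}$ (which the paper also uses inside Lemma~\ref{lemma_X(z,kappa)}); $\mathbf{R}\,\overline{\mathbf{P}(z)}\,\mathbf{R}=\mathbf{P}(z)\mathbf{R}$, which follows from $\mathbf{R}\,\overline{\mathbf{W}_0^*}=\mathbf{R}\mathbf{W}_0^T=\mathbf{W}_0^*$, $\mathbf{R}\mathbf{W}_{1/k}(\pm z)\mathbf{R}=\mathbf{W}_{1/k}(\pm z)^T$, and the $\mathbf{R}$-invariance of $\diag(0,1,1,0)$ and $\diag(1,0,0,1)$; and the ``delicate'' diagonal bookkeeping, which is in fact cleaner than you feared, since the trailing $\mathbf{R}$ emitted by $\mathbf{P}(z)\mathbf{R}$ simply absorbs the leading $\mathbf{R}$ of $\mathbf{R}\,\diag(1,\alpha,\alpha^2,\alpha^3)^{-1}\mathbf{R}$ to leave $\diag(1,\alpha,\alpha^2,\alpha^3)^{-1}\mathbf{R}$, with no stray reversal of the $\alpha$-scaling. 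Two side remarks: you implicitly use that $\mathbf{Y}_{1/k}(x)$ is defined for all $x>0$, i.e.\ $\det\mathbf{X}_{1/k}(x)\neq 0$, which the paper justifies from Proposition~\ref{proposition_Q}, Corollary~\ref{corollary_eigencondition-det}, and Lemma~\ref{lemma_GQ=Glalphak}; and part of the reason the paper's computation is heavier is that the same block decomposition also yields the determinant formulas \eqref{equation_vfhvafvjvasf+}--\eqref{equation_vfhvafvjvasf-} used for that nonvanishing. For the single statement of Lemma~\ref{lemma_Y1/kpibar}, however, your structural symmetry argument is decidedly the simpler proof.
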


\begin{proof}[Proof of Lemma~\ref{lemma_Ylambdainpibar}]
The statement follows from Lemmas~\ref{lemma_XYlambdaneq1/k} (b)
and \ref{lemma_Y1/kpibar}
respectively for the case $\lambda \neq 1/k$
and
for the case $\lambda = 1/k$.
\end{proof}

\section{Proof of Theorem~\ref{theorem_existence}}
\label{section_existence}

Let
\begin{equation}
\label{equation_U2n}
\mathbf{U}_{2n}
 =
\frac{1}{\sqrt{2}}
\begin{pmatrix}
\mathbf{I}_n & 
\mathbf{R}_n \\
\mathbbm{i}
\mathbf{R}_n &
-
\mathbbm{i}
\mathbf{I}_n
\end{pmatrix}
\in
\gl(2n,\mathbb{C}),
\qquad
n \in \mathbb{N}.
\end{equation}
$\mathbf{U}_{2n}$ is unitary,
since
\begin{align*}
\mathbf{U}_{2n}^*
\mathbf{U}_{2n}
 &=
\frac{1}{\sqrt{2}}
\begin{pmatrix}
\mathbf{I}_n & 
-
\mathbbm{i}
\mathbf{R}_n \\
\mathbf{R}_n &
\mathbbm{i}
\mathbf{I}_n
\end{pmatrix}
\cdot
\frac{1}{\sqrt{2}}
\begin{pmatrix}
\mathbf{I}_n & 
\mathbf{R}_n \\
\mathbbm{i}
\mathbf{R}_n &
-
\mathbbm{i}
\mathbf{I}_n
\end{pmatrix}
 =
\begin{pmatrix}
\mathbf{I}_n & \mathbf{O}_n \\
\mathbf{O}_n & \mathbf{I}_n
\end{pmatrix}
 =
\mathbf{I}.
\end{align*}
Note also that
\begin{equation}
\label{equation_U2nR=U2nbar}
\mathbf{U}_{2n}
\mathbf{R}
 =
\frac{1}{\sqrt{2}}
\begin{pmatrix}
\mathbf{I}_n & 
\mathbf{R}_n \\
\mathbbm{i}
\mathbf{R}_n &
-
\mathbbm{i}
\mathbf{I}_n
\end{pmatrix}
\begin{pmatrix}
\mathbf{O}_n & \mathbf{R}_n \\
\mathbf{R}_n & \mathbf{O}_n
\end{pmatrix}
 =
\frac{1}{\sqrt{2}}
\begin{pmatrix}
\mathbf{I}_n & 
\mathbf{R}_n \\
-
\mathbbm{i}
\mathbf{R}_n &
\mathbbm{i}
\mathbf{I}_n
\end{pmatrix}
 =
\overline{
\mathbf{U}_{2n}
}.
\end{equation}

\begin{lemma}
\label{lemma_pibar2n=gl2nR}
For $n \in \mathbb{N}$,
The map 
$
\mathbf{A}
 \mapsto
\overline{\mathbf{U}_{2n}}
\mathbf{A}
\mathbf{U}_{2n}^T$
is an $\mathbb{R}$-algebra isomorphism
from $\overline{\pi}(2n)$
to $\gl(2n,\mathbb{R})$.
\end{lemma}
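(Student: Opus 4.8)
The plan is to verify that the $\mathbb{C}$-linear map $\Phi\colon \gl(2n,\mathbb{C}) \to \gl(2n,\mathbb{C})$ defined by $\Phi(\mathbf{A}) = \overline{\mathbf{U}_{2n}}\,\mathbf{A}\,\mathbf{U}_{2n}^T$ is a bijective unital algebra homomorphism, and then to show that it restricts to a bijection from $\overline{\pi}(2n)$ onto $\gl(2n,\mathbb{R})$. Since $\overline{\pi}(2n)$ is an $\mathbb{R}$-algebra by Lemma~\ref{lemma_pibar}, this will give the asserted $\mathbb{R}$-algebra isomorphism.

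First I would record the identities coming from the unitarity of $\mathbf{U}_{2n}$. From $\mathbf{U}_{2n}^*\mathbf{U}_{2n} = \mathbf{I}$, that is $\overline{\mathbf{U}_{2n}}^T\mathbf{U}_{2n} = \mathbf{I}$, transposition gives $\mathbf{U}_{2n}^T\,\overline{\mathbf{U}_{2n}} = \mathbf{I}$, and conjugating $\mathbf{U}_{2n}\mathbf{U}_{2n}^* = \mathbf{I}$ gives $\overline{\mathbf{U}_{2n}}\,\mathbf{U}_{2n}^T = \mathbf{I}$. The first of these yields $\Phi(\mathbf{A})\Phi(\mathbf{B}) = \overline{\mathbf{U}_{2n}}\,\mathbf{A}\,\bigl(\mathbf{U}_{2n}^T\overline{\mathbf{U}_{2n}}\bigr)\,\mathbf{B}\,\mathbf{U}_{2n}^T = \overline{\mathbf{U}_{2n}}\,\mathbf{A}\mathbf{B}\,\mathbf{U}_{2n}^T = \Phi(\mathbf{A}\mathbf{B})$, and the second yields $\Phi(\mathbf{I}) = \mathbf{I}$; together with linearity, $\Phi$ is a unital algebra homomorphism. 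It is bijective because $\overline{\mathbf{U}_{2n}}$ and $\mathbf{U}_{2n}^T$ are invertible, its inverse being $\mathbf{B} \mapsto \mathbf{U}_{2n}^T\,\mathbf{B}\,\overline{\mathbf{U}_{2n}}$ by the same identities.

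The crux is the single computation
\[
\overline{\Phi(\mathbf{A})}
 =
\mathbf{U}_{2n}\,\overline{\mathbf{A}}\,\overline{\mathbf{U}_{2n}}^T
 =
\bigl( \overline{\mathbf{U}_{2n}}\mathbf{R} \bigr)\,\overline{\mathbf{A}}\,\bigl( \mathbf{R}\,\mathbf{U}_{2n}^T \bigr)
 =
\Phi\bigl( \mathbf{R}\,\overline{\mathbf{A}}\,\mathbf{R} \bigr),
\]
where I use \eqref{equation_U2nR=U2nbar} to write $\mathbf{U}_{2n} = \overline{\mathbf{U}_{2n}}\mathbf{R}$ and its transpose to write $\overline{\mathbf{U}_{2n}}^T = \mathbf{R}\,\mathbf{U}_{2n}^T$ (recall $\mathbf{R}^T = \mathbf{R} = \mathbf{R}^{-1}$ from Definition~\ref{definition_R}). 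From this, $\Phi(\mathbf{A})$ has real entries, i.e.\ $\overline{\Phi(\mathbf{A})} = \Phi(\mathbf{A})$, precisely when $\Phi(\mathbf{R}\,\overline{\mathbf{A}}\,\mathbf{R}) = \Phi(\mathbf{A})$, which by injectivity of $\Phi$ is equivalent to $\mathbf{R}\,\overline{\mathbf{A}}\,\mathbf{R} = \mathbf{A}$, i.e.\ to $\mathbf{A} \in \overline{\pi}(2n)$ by Definition~\ref{definition_pibar}. Hence $\overline{\pi}(2n) = \Phi^{-1}\bigl( \gl(2n,\mathbb{R}) \bigr)$, so the restriction of the bijective homomorphism $\Phi$ to $\overline{\pi}(2n)$ is an $\mathbb{R}$-algebra isomorphism onto $\gl(2n,\mathbb{R})$.

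I do not expect a serious obstacle here; the argument is bookkeeping with the conjugate and transpose of $\mathbf{U}_{2n}$. The one point that needs care is that $\Phi$ is built from $\mathbf{U}_{2n}^T$ rather than $\mathbf{U}_{2n}^*$, so unitarity must be used in the transposed form $\mathbf{U}_{2n}^T\overline{\mathbf{U}_{2n}} = \mathbf{I}$ (and $\overline{\mathbf{U}_{2n}}\,\mathbf{U}_{2n}^T = \mathbf{I}$) to obtain multiplicativity and the unit, and likewise the transposed version of \eqref{equation_U2nR=U2nbar} must be invoked in the computation of $\overline{\Phi(\mathbf{A})}$; keeping these straight is all that is required.
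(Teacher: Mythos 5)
Your proof is correct and follows essentially the same route as the paper's: both establish that $\Phi$ is a $\mathbb{C}$-algebra automorphism of $\gl(2n,\mathbb{C})$ via unitarity, then use \eqref{equation_U2nR=U2nbar} to characterize $\overline{\pi}(2n)$ as the preimage of $\gl(2n,\mathbb{R})$. The only difference is a mild streamlining: you compute $\overline{\Phi(\mathbf{A})} = \Phi(\mathbf{R}\overline{\mathbf{A}}\mathbf{R})$ once and invoke injectivity to get the biconditional, whereas the paper verifies the two implications separately by essentially the same manipulation.
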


\begin{proof}
Since
$\mathbf{U}_{2n}$ is unitary,
and hence
$
\overline{\mathbf{U}_{2n}}^{-1}
 =
\mathbf{U}_{2n}^T$,
it is clear that the map
$
\mathbf{A}
 \mapsto
\overline{\mathbf{U}_{2n}}
\mathbf{A}
\mathbf{U}_{2n}^T$
is a $\mathbb{C}$-algebra isomorphism from
$\gl(2n,\mathbb{C})$ to $\gl(2n,\mathbb{C})$.
So it is sufficient to show that
$\mathbf{A} \in \overline{\pi}(2n)$
if and only if
$
\overline{\mathbf{U}_{2n}}
\mathbf{A}
\mathbf{U}_{2n}^T
\in \gl(2n,\mathbb{R})$.
Let
$
\mathbf{B}
 =
\overline{\mathbf{U}_{2n}}
\mathbf{A}
\mathbf{U}_{2n}^T
$.
Suppose $\mathbf{A} \in \overline{\pi}(2n)$.
Then by Definition~\ref{definition_pibar},
$
\overline{
\mathbf{A}
}
 =
\overline{
\mathbf{R}
\overline{
\mathbf{A}
}
\mathbf{R}
}
 =
\mathbf{R}
\mathbf{A}
\mathbf{R}
$,
hence
$
\overline{
\mathbf{B}
}
 =
\overline{
\overline{\mathbf{U}_{2n}}
\mathbf{A}
\mathbf{U}_{2n}^T
}
 =
\mathbf{U}_{2n}
\overline{
\mathbf{A}
}
\mathbf{U}_{2n}^*
 =
\mathbf{U}_{2n}
\cdot
\mathbf{R}
\mathbf{A}
\mathbf{R}
\cdot
\mathbf{U}_{2n}^*
 =
\left(
\mathbf{U}_{2n}
\mathbf{R}
\right)
\mathbf{A}
\left(
\mathbf{U}_{2n}
\mathbf{R}
\right)^*
 =
\overline{
\mathbf{U}_{2n}
}
\mathbf{A}
\mathbf{U}_{2n}^T
 =
\mathbf{B}
$
by \eqref{equation_U2nR=U2nbar}.
Thus
$
\mathbf{B}
\in \gl(2n,\mathbb{R})
$.
Conversely, suppose
$
\mathbf{B}
\in \gl(2n,\mathbb{R})
$.
Since
$
\mathbf{A}
 =
\left\{
\overline{\mathbf{U}_{2n}}
\right\}^{-1}
\mathbf{B}
\left\{
\mathbf{U}_{2n}^T
\right\}^{-1}
 =
\mathbf{U}_{2n}^T
\mathbf{B}
\overline{\mathbf{U}_{2n}}
$,
we have
$
\mathbf{R}
\overline{
\mathbf{A}
}
\mathbf{R}
 =
\mathbf{R}
\overline{
\left(
\mathbf{U}_{2n}^T
\mathbf{B}
\overline{
\mathbf{U}_{2n}
}
\right)
}
\mathbf{R}
 =
\left(
\mathbf{U}_{2n}
\mathbf{R}
\right)^*
\overline{
\mathbf{B}
}
\left(
\mathbf{U}_{2n}
\mathbf{R}
\right)
 =
\mathbf{U}_{2n}^T
\mathbf{B}
\overline{
\mathbf{U}_{2n}
}
 =
\mathbf{A}
$
by \eqref{equation_U2nR=U2nbar}.
Thus 
$\mathbf{A} \in \overline{\pi}(2n)$ by Definition~\ref{definition_pibar},
and the proof is complete.
\end{proof}

Note that $\mathbf{U}_4 = \mathbf{U}$,
where $\mathbf{U}$ is defined by \eqref{equation_U}.

\begin{lemma}
\label{lemma_existence}
For any $\mathbf{O} \neq \mathbf{G}_0 \in \overline{\pi}(4)$,
there exists $\mathbf{G} \in \overline{\pi}(4)$
such that
$
\det
\left(
\mathbf{G}
\mathbf{G}_0
-
\mathbf{I}
\right)
$
$
 =
0
$.
\end{lemma}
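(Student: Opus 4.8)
The plan is to reduce the statement to a routine linear-algebra fact by using the algebra isomorphism $\overline{\pi}(4) \cong \gl(4,\mathbb{R})$ from Lemma~\ref{lemma_pibar2n=gl2nR}. Let $\Phi : \overline{\pi}(4) \to \gl(4,\mathbb{R})$ be the map $\mathbf{A} \mapsto \overline{\mathbf{U}}\,\mathbf{A}\,\mathbf{U}^T$, which is an $\mathbb{R}$-algebra isomorphism. Write $\mathbf{A}_0 = \Phi(\mathbf{G}_0) \in \gl(4,\mathbb{R})$. Since $\mathbf{G}_0 \neq \mathbf{O}$ and $\Phi$ is an isomorphism, $\mathbf{A}_0 \neq \mathbf{O}$. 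Because $\Phi$ is multiplicative, sends $\mathbf{I}$ to $\mathbf{I}$, and is a bijection, the existence of $\mathbf{G} \in \overline{\pi}(4)$ with $\det(\mathbf{G}\mathbf{G}_0 - \mathbf{I}) = 0$ is equivalent to the existence of $\mathbf{A} = \Phi(\mathbf{G}) \in \gl(4,\mathbb{R})$ with $\det(\mathbf{A}\mathbf{A}_0 - \mathbf{I}) = 0$; here I use that $\det$ is invariant under the conjugation-type transformation $\Phi$ applied to $\mathbf{A}\mathbf{A}_0 - \mathbf{I}$, or more simply that $\Phi$ preserves determinants up to a fixed nonzero unitary factor, so $\det(\mathbf{A}\mathbf{A}_0 - \mathbf{I}) = 0 \iff \det(\mathbf{G}\mathbf{G}_0 - \mathbf{I}) = 0$.

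So it suffices to prove the purely real statement: for any nonzero $\mathbf{A}_0 \in \gl(4,\mathbb{R})$ there exists $\mathbf{A} \in \gl(4,\mathbb{R})$ such that $\mathbf{A}\mathbf{A}_0$ has $1$ as an eigenvalue, i.e.\ $\mathbf{A}\mathbf{A}_0 \mathbf{v} = \mathbf{v}$ for some $\mathbf{0} \neq \mathbf{v} \in \gl(4,1,\mathbb{R})$. Since $\mathbf{A}_0 \neq \mathbf{O}$, pick $\mathbf{w} \in \gl(4,1,\mathbb{R})$ with $\mathbf{A}_0 \mathbf{w} \neq \mathbf{0}$; set $\mathbf{u} = \mathbf{A}_0 \mathbf{w} \neq \mathbf{0}$. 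It now suffices to find $\mathbf{A} \in \gl(4,\mathbb{R})$ with $\mathbf{A}\mathbf{u} = \mathbf{w}$, for then $\mathbf{A}\mathbf{A}_0\mathbf{w} = \mathbf{A}\mathbf{u} = \mathbf{w}$, and we must make sure $\mathbf{w} \neq \mathbf{0}$ — which holds automatically since $\mathbf{u} = \mathbf{A}_0\mathbf{w} \neq \mathbf{0}$ forces $\mathbf{w} \neq \mathbf{0}$. Finding a (not necessarily invertible) real $4\times 4$ matrix $\mathbf{A}$ sending a given nonzero vector $\mathbf{u}$ to a given nonzero vector $\mathbf{w}$ is elementary: extend $\mathbf{u}$ to a basis of $\mathbb{R}^4$ and define $\mathbf{A}$ by sending $\mathbf{u} \mapsto \mathbf{w}$ and the other basis vectors anywhere (e.g.\ to $\mathbf{0}$). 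Concretely one may take $\mathbf{A} = \mathbf{w}\,\mathbf{u}^T / (\mathbf{u}^T\mathbf{u})$, a rank-one matrix with $\mathbf{A}\mathbf{u} = \mathbf{w}$.

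The only point requiring a little care — and the step I would flag as the main (if modest) obstacle — is confirming that $\Phi$ genuinely transports the vanishing of $\det(\mathbf{G}\mathbf{G}_0 - \mathbf{I})$ to that of $\det(\mathbf{A}\mathbf{A}_0 - \mathbf{I})$. This follows because $\Phi(\mathbf{X}) = \overline{\mathbf{U}}\,\mathbf{X}\,\mathbf{U}^T$ with $\mathbf{U}$ unitary, so $\overline{\mathbf{U}}$ and $\mathbf{U}^T = \overline{\mathbf{U}}^{-1}$ are mutually inverse; hence $\Phi(\mathbf{G}\mathbf{G}_0 - \mathbf{I}) = \overline{\mathbf{U}}(\mathbf{G}\mathbf{G}_0 - \mathbf{I})\overline{\mathbf{U}}^{-1} = \mathbf{A}\mathbf{A}_0 - \mathbf{I}$, a genuine similarity, which preserves the determinant exactly. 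Given Lemma~\ref{lemma_pibar2n=gl2nR}, everything else is the trivial rank-one construction above, so the proof is short once the translation is set up. Finally, I note that this $\mathbf{G}$ need not be invertible, which is fine: the statement only asks for $\mathbf{G} \in \overline{\pi}(4)$, not $\mathbf{G} \in GL(4,\mathbb{C}) \cap \overline{\pi}(4)$.
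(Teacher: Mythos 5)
Your proof is correct and follows essentially the same route as the paper's: both transport the problem to $\gl(4,\mathbb{R})$ via the isomorphism of Lemma~\ref{lemma_pibar2n=gl2nR}, pick a vector $\mathbf{r}$ (your $\mathbf{w}$) with $\hat{\mathbf{G}}_0\mathbf{r}\neq\mathbf{0}$, choose a real matrix sending $\hat{\mathbf{G}}_0\mathbf{r}$ back to $\mathbf{r}$, and pull back. The only cosmetic difference is that the paper verifies $1\in\Spec(\mathbf{G}\mathbf{G}_0)$ directly with the eigenvector $\mathbf{U}^T\mathbf{r}$, whereas you invoke determinant-invariance under the (similarity) map $\Phi$; both are equivalent and equally short.
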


\begin{proof}
Let 
$
\hat{\mathbf{G}}_0
 =
\overline{\mathbf{U}}
\mathbf{G}_0
\mathbf{U}^T$,
which is not $\mathbf{O}$,
since $\mathbf{G}_0 \neq \mathbf{O}$.
By Lemma~\ref{lemma_pibar2n=gl2nR},
$\hat{\mathbf{G}}_0 \in \gl(4,\mathbb{R})$.
So there exists 
$
\mathbf{0}
 \neq
\mathbf{r}
\in \gl(4,1,\mathbb{R})$ such that
$
\hat{\mathbf{G}}_0
\mathbf{r}
 \neq 
\mathbf{0}
$.
It is clear that there exists
$\hat{\mathbf{G}} \in \gl(4,\mathbb{R})$
such that
$
\hat{\mathbf{G}}
\cdot
\hat{\mathbf{G}}_0
\mathbf{r}
 =
\mathbf{r}
$,
since
$
\hat{\mathbf{G}}_0
\mathbf{r}
 \neq 
\mathbf{0}
$.
Take
$
\mathbf{G}
 =
\mathbf{U}^T
\hat{\mathbf{G}}
\overline{\mathbf{U}}
$.
Then by Lemma~\ref{lemma_pibar2n=gl2nR},
$\mathbf{G} \in \overline{\pi}(4)$,
and
$
\mathbf{G}
\mathbf{G}_0
\cdot
\mathbf{U}^T
\mathbf{r}
 =
\mathbf{U}^T
\hat{\mathbf{G}}
\overline{\mathbf{U}}
\cdot
\mathbf{U}^T
\hat{\mathbf{G}}_0
\overline{\mathbf{U}}
\cdot
\mathbf{U}^T
\mathbf{r}
 =
\mathbf{U}^T
\cdot
\hat{\mathbf{G}}
\hat{\mathbf{G}}_0
\mathbf{r}
 =
\mathbf{U}^T
\mathbf{r}
$.
Since
$
\mathbf{U}^T
\mathbf{r}
 \neq
\mathbf{0}$,
it follows that $1$ is an eigenvalue of  
$
\mathbf{G}
\mathbf{G}_0
$,
which is equivalent to
$
\det
\left(
\mathbf{G}
\mathbf{G}_0
-
\mathbf{I}
\right)
 =
0
$.
\end{proof}

Lemma~\ref{lemma_Ylambda(x)neqO} below,
which is the last ingredient for the proof of Theorem~\ref{theorem_existence},
shows  that
$\mathbf{Y}_\lambda(x)$, when defined, never becomes the zero matrix for $x > 0$.
See Appendix~\ref{appendix_Ylambda(x)neqO} for its proof.

\begin{lemma}
\label{lemma_Ylambda(x)neqO}
$
\mathbf{Y}_\lambda(x)
 \neq
\mathbf{O}$
for every 
$0 \neq \lambda \in \mathbb{C}$ 
and 
$x > 0$
such that $\det\mathbf{X}_\lambda(x) \neq 0$.
\end{lemma}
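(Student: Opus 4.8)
The plan is to argue by contradiction. By Definition~\ref{definition_Ylambda(x)}, the equality $\mathbf{Y}_\lambda(x) = \mathbf{O}$ is equivalent to $\mathbf{X}_\lambda(-x) = \mathbf{X}_\lambda(x)$, and by \eqref{equation_s;fhgs;fovij} together with the invertibility of $\mathcal{E}$ this is in turn equivalent to $\mathbf{W}(x)^{-1}\mathbf{W}_\lambda(x) = \mathbf{W}(-x)^{-1}\mathbf{W}_\lambda(-x)$. Writing $z = \alpha x$ as in \eqref{equation_z}, so that $z$ is real and strictly positive, I would use the explicit decompositions \eqref{equation_Wxdecomposedz}, \eqref{equation_Wx-1decomposedz} to rewrite this matrix identity as a conjugation relation involving only the diagonal matrices $e^{\pm\mathbf{\Omega} z}$, and then exploit the fact that $z$ is real and positive to reach a contradiction. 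The cases $\lambda \neq 1/k$ and $\lambda = 1/k$ are handled in parallel.

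For $\lambda \neq 1/k$, set $\kappa = \chi(\lambda) \neq 0$. Combining \eqref{equation_Wx-1decomposedz} and \eqref{equation_Wlambdaxdecomposedz} (and $\mathbf{W}_0^{-1} = \tfrac14 \mathbf{W}_0^*$) gives $\mathbf{W}(x)^{-1}\mathbf{W}_\lambda(x) = e^{-\mathbf{\Omega} z}\,\mathbf{A}\,e^{\mathbf{\Omega}\kappa z}$, where $\mathbf{A} = \mathbf{W}_0^{-1}\diag(1,\kappa,\kappa^2,\kappa^3)\mathbf{W}_0$ does not depend on $z$; hence the identity above becomes $\mathbf{A}\,e^{2\mathbf{\Omega}\kappa z} = e^{2\mathbf{\Omega} z}\,\mathbf{A}$. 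The key observation is that $\mathbf{A}$ has no zero entry: a geometric‑sum computation using $\omega_j^4 = -1$ shows $\mathbf{A}_{ij} = 1$ when $\kappa\omega_j = \omega_i$ and $\mathbf{A}_{ij} = \tfrac14(1-\kappa^4)/(1-\kappa\omega_j\omega_i^{-1})$ otherwise, both of which are nonzero since $\kappa^4 = 1 - 1/(\lambda k) \neq 1$. Comparing entries in $\mathbf{A}\,e^{2\mathbf{\Omega}\kappa z} = e^{2\mathbf{\Omega} z}\,\mathbf{A}$ then forces $e^{2\omega_i z} = e^{2\omega_j\kappa z}$ for all $i,j$; in particular $e^{2\omega_1 z} = e^{2\omega_3 z}$, and since $\omega_3 = -\omega_1$ by \eqref{equation_omegaL2} this yields $e^{4\omega_1 z} = 1$, i.e.\ $\omega_1 z \in \tfrac{\pi}{2}\mathbbm{i}\,\mathbb{Z}$. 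But $\Real(\omega_1 z) = z/\sqrt{2} > 0$, a contradiction.

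For $\lambda = 1/k$, recall from \eqref{equation_Wlambdax} that $\mathbf{W}_{1/k}(z) = e^{zN}$, where $N$ is the nilpotent matrix with $1$'s on the first superdiagonal and $0$'s elsewhere. Using the relation $\diag(1,\alpha,\alpha^2,\alpha^3)^{-1}\mathbf{W}_{1/k}(x) = \mathbf{W}_{1/k}(z)\diag(1,\alpha,\alpha^2,\alpha^3)^{-1}$ from Section~\ref{section_1/k} together with \eqref{equation_Wx-1decomposedz}, the identity $\mathbf{W}(x)^{-1}\mathbf{W}_{1/k}(x) = \mathbf{W}(-x)^{-1}\mathbf{W}_{1/k}(-x)$ reduces to $e^{2zN} = \mathbf{W}_0\,e^{2\mathbf{\Omega} z}\,\mathbf{W}_0^{-1}$. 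The left-hand side has all eigenvalues equal to $1$ because $N$ is nilpotent, while the right-hand side is similar to $\diag\bigl(e^{2\omega_1 z},\ldots,e^{2\omega_4 z}\bigr)$ and has $e^{2\omega_1 z}$, of modulus $e^{\sqrt{2}\,z} \neq 1$, among its eigenvalues — again a contradiction. This settles both cases.

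The bulk of the work is the routine bookkeeping in the two reductions to the conjugated forms. The one genuinely non-mechanical ingredient, which I expect to be the main obstacle, is the verification that $\mathbf{A}$ has no zero entry in the case $\lambda \neq 1/k$ (and the analogous nilpotency remark for $\lambda = 1/k$); once the identity is brought into the form $\mathbf{A}\,e^{2\mathbf{\Omega}\kappa z} = e^{2\mathbf{\Omega} z}\,\mathbf{A}$ or $e^{2zN} = \mathbf{W}_0\,e^{2\mathbf{\Omega} z}\,\mathbf{W}_0^{-1}$, the reality and positivity of $z = \alpha x$ makes the contradiction immediate.
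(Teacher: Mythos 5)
Your proposal is correct, and it reaches the contradiction by a cleaner route than the paper's. Both arguments start identically (contradiction, $\mathbf{Y}_\lambda(x) = \mathbf{O}$ reduced via Definition~\ref{definition_Ylambda(x)} and \eqref{equation_s;fhgs;fovij} to $\mathbf{W}(x)^{-1}\mathbf{W}_\lambda(x) = \mathbf{W}(-x)^{-1}\mathbf{W}_\lambda(-x)$, then the Wronskian decompositions with $z = \alpha x$), but they diverge at the key step. The paper converts the identity into the conjugation relation $\mathbf{W}_0 e^{-2\mathbf{\Omega} z_0}\mathbf{W}_0^* = \diag(1,\kappa_0,\kappa_0^2,\kappa_0^3)\,\mathbf{W}_0 e^{-2\mathbf{\Omega}\kappa_0 z_0}\mathbf{W}_0^*\,\diag(1,\kappa_0,\kappa_0^2,\kappa_0^3)^{-1}$, compares entries to produce the scalar family $\kappa_0^n\sum_r\omega_r^n e^{-2\omega_r\kappa_0 z_0} = \sum_r\omega_r^n e^{-2\omega_r z_0}$ for $|n|\leq 3$, manipulates with $\omega_r^4=-1$ and $\kappa_0^4\neq 1$ to force $\sum_r\omega_r^n e^{-2\omega_r z_0}=0$ for $n=1,2,3$, and only then invokes an auxiliary lemma (the paper's Lemma C1) to deduce $z_0=0$; the $\lambda=1/k$ case is folded into the same framework via the upper-triangularity of $\mathbf{W}_{1/k}$. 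You instead pass directly to the intertwining relation $\mathbf{A}\,e^{2\mathbf{\Omega}\kappa z} = e^{2\mathbf{\Omega} z}\,\mathbf{A}$ with $\mathbf{A} = \mathbf{W}_0^{-1}\diag(1,\kappa,\kappa^2,\kappa^3)\mathbf{W}_0$, observe that the geometric‑sum formula together with $\kappa^4\neq 1$ forces every entry of $\mathbf{A}$ to be nonzero (since $(\kappa\omega_j/\omega_i)^4 = \kappa^4$ can never be $1$), and so read off $e^{2\omega_j\kappa z}=e^{2\omega_i z}$ for all $i,j$; with $\omega_3=-\omega_1$ this gives $e^{4\omega_1 z}=1$, impossible because $\Real(\omega_1 z) = z/\sqrt 2 > 0$. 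For $\lambda=1/k$ you use the unipotent--versus--nonunimodular spectrum clash between $e^{2zN}$ and $\mathbf{W}_0 e^{2\mathbf{\Omega} z}\mathbf{W}_0^{-1}$. The payoff is that you bypass the paper's Lemma C1 entirely and obtain the contradiction from elementary facts about $|e^{w}|$; the trade-off is that the two cases receive genuinely different treatments, whereas the paper funnels both into the same vanishing trigonometric sums. Both are valid; yours is arguably easier to audit once the no-zero-entry observation about $\mathbf{A}$ is in hand.
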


\begin{proof}[Proof of Theorem~\ref{theorem_existence}]
Suppose
$
0
 \neq
\lambda 
\in \mathbb{R} \setminus \Spec\mathcal{K}_\mathbf{Q}$.
Then
$
\mathbf{O}
 \neq
\mathbf{Y}_\lambda(l) 
\in \overline{\pi}(4)
$
by Lemmas~\ref{lemma_Ylambdainpibar} and \ref{lemma_Ylambda(x)neqO}.
So by Lemma~\ref{lemma_existence}.
there exists $\mathbf{G} \in \overline{\pi}(4)$ such that
$
\det
\left\{
\mathbf{G}
\cdot
\mathbf{Y}_\lambda(l)
-
\mathbf{I}
\right\}
 =
0
$.
By Definitions~\ref{definition_wpR}, \ref{definition_GM}, and Lemma~\ref{lemma_GM},
there exists $\mathbf{M} \in \mathrm{wp}(4,8,\mathbb{R})$
such that
$
\mathbf{G}_\mathbf{M}
 =
\mathbf{G}
$,
since $\mathbf{G} \in \overline{\pi}(4)$.
Thus we have
$
\det
\left\{
\mathbf{G}_\mathbf{M}
\cdot
\mathbf{Y}_\lambda(l)
-
\mathbf{I}
\right\}
$
$
 =
0
$,
which
implies that
$\lambda \in \Spec\mathcal{K}_\mathbf{M}$
by Corollary~\ref{corollary_eigencondition-Y}.
\end{proof}

\section{Discussion}
\label{section_discussion}

The $4 \times 4$ matrices $\mathbf{X}_\lambda(x)$ and $\mathbf{Y}_\lambda(x)$ 
turn out to be rich in symmetries.
In fact, only part of their symmetries are exploited
to prove our results in this paper.
We have also tried to refrain, as possible as we can,
from resorting to more explicit forms of $\mathbf{X}_\lambda(x)$ and $\mathbf{Y}_\lambda(x)$,
despite of their explicit nature.
In view of what can be done more in these respects,
it is expected that we will have a clearer picture
of general well-posed boundary value problems
for finite beam deflection,
if we pursue closer investigations on $\mathbf{X}_\lambda(x)$ and $\mathbf{Y}_\lambda(x)$.
Especially, detailed results such as Proposition~\ref{proposition_Q},
which is for only one specific boundary condition $\mathbf{Q}$,
are expected to be obtained for the class of {\em all} well-posed boundary conditions.

\appendix

\renewcommand{\theequation}{\thesection.\arabic{equation}}
\renewcommand{\thelemma}{\thesection\arabic{lemma}}


\section{Proof of Lemma~\ref{lemma_KM}}
\label{appendix_Green}

By Definition~\ref{definition_GMKM},
we have
\begin{align}
\lefteqn{
\mathcal{K}_\mathbf{M}[w](x)
 =
-
\frac{\alpha}{4k}
\int_{-l}^x
\mathbf{y}(x)^T
\cdot
\mathbf{G}_\mathbf{M}^-
\cdot
\mathbf{y}(\xi)
w(\xi)
\, d\xi
}
\nonumber \\
 &\qquad\qquad\qquad
+
\frac{\alpha}{4k}
\int_x^l
\mathbf{y}(x)^T
\cdot
\mathbf{G}_\mathbf{M}^+
\cdot
\mathbf{y}(\xi)
w(\xi)
\, d\xi
\nonumber \\
 &=
\frac{\alpha}{4k}
\cdot
\mathbf{y}(x)^T
\left\{
-
\mathbf{G}_\mathbf{M}^-
\int_{-l}^x
\mathbf{y}(\xi)
w(\xi)
\, d\xi
+
\mathbf{G}_\mathbf{M}^+
\int_x^l
\mathbf{y}(\xi)
w(\xi)
\, d\xi
\right\},
\label{equation_KMux}
\end{align}
and by \eqref{equation_GM-+GM+},
\begin{align}
\lefteqn{
\frac{d}{dx}
\left\{
-
\mathbf{G}_\mathbf{M}^-
\int_{-l}^x
\mathbf{y}(\xi)
w(\xi)
\, d\xi
+
\mathbf{G}_\mathbf{M}^+
\int_x^l
\mathbf{y}(\xi)
w(\xi)
\, d\xi
\right\}
}
\nonumber \\
 &=
-
\mathbf{G}_\mathbf{M}^-
\cdot
\frac{d}{dx}
\int_{-l}^x
\mathbf{y}(\xi)
w(\xi)
\, d\xi
+
\mathbf{G}_\mathbf{M}^+
\cdot
\frac{d}{dx}
\int_x^l
\mathbf{y}(\xi)
w(\xi)
\, d\xi
\nonumber \\
 &=
-
\mathbf{G}_\mathbf{M}^-
\cdot
\mathbf{y}(x)
w(x)
-
\mathbf{G}_\mathbf{M}^+
\cdot
\mathbf{y}(x)
w(x)
 =
-
\mathbf{\Omega}
\mathbf{L}^2
\cdot
\mathbf{y}(x)
w(x).
\label{equation_ao;fvafoj}
\end{align}
Let
\begin{equation}
\label{equation_boldfx}
\mathbf{f}(x)
 =
-
\mathbf{G}_\mathbf{M}^-
\int_{-l}^x
\mathbf{y}(\xi)
w(\xi)
\, d\xi
+
\mathbf{G}_\mathbf{M}^+
\int_x^l
\mathbf{y}(\xi)
w(\xi)
\, d\xi,
\end{equation}
so that
\begin{equation}
\label{equation_ahvofv'afvj}
\mathcal{K}_\mathbf{M}[w](x)
 =
\frac{\alpha}{4k}
\cdot
\mathbf{y}(x)^T
\cdot
\mathbf{f}(x),
\end{equation}
and
$
\mathbf{f}^\prime(x)
 =
-
\mathbf{\Omega}
\mathbf{L}^2
\cdot
\mathbf{y}(x)
w(x)
$
by \eqref{equation_KMux}, \eqref{equation_ao;fvafoj}.
By \eqref{equation_L2},
\begin{align}
\lefteqn{
\mathbf{y}(x)^T
\cdot
\mathbf{\Omega}^n
\cdot
\mathbf{f}^\prime(x)
 =
-
\mathbf{y}(x)^T
\cdot
\mathbf{\Omega}^n
\cdot
\mathbf{\Omega}
\mathbf{L}^2
\cdot
\mathbf{y}(x)
w(x)
}
\nonumber \\
 &=
-
w(x)
\left\{
\mathbf{y}(x)^T
\cdot
\mathbf{\Omega}^{n+1}
\cdot
\mathbf{y}(-x)
\right\}
\nonumber \\
 &=
-w(x)
\begin{pmatrix}
e^{\omega_1 \alpha x} &
e^{\omega_2 \alpha x} &
e^{\omega_3 \alpha x} &
e^{\omega_4 \alpha x}
\end{pmatrix}
\cdot
\nonumber \\
 &\qquad
\cdot
\diag
\left(
\omega_1^{n+1},
\omega_2^{n+1},
\omega_3^{n+1},
\omega_4^{n+1}
\right)
\cdot
\begin{pmatrix}
e^{-\omega_1 \alpha x} \\
e^{-\omega_2 \alpha x} \\
e^{-\omega_3 \alpha x} \\
e^{-\omega_4 \alpha x}
\end{pmatrix}
\nonumber \\
 &=
-
w(x)
\sum_{j=1}^4
e^{\omega_j \alpha x}
\omega_j^{n+1}
e^{-\omega_j \alpha x}
 =
-
w(x)
\sum_{j=1}^4
\omega_j^{n+1},
\quad
n = 0,1,2,\ldots
\label{equation_asfovhafvhj}
\end{align}
By \eqref{equation_omega},
$
\sum_{j=1}^4
\omega_j^4
 =
\sum_{j=1}^4
(-1)
 =
-4
$,
and
$
\sum_{j=1}^4
\omega_j^2
 =
\sum_{j=1}^4
\left(
\mathbbm{i}^{j-1}
\omega_1
\right)^2
 =
\omega_1^2
\cdot
$
$
\cdot
\sum_{j=1}^4
(-1)^{j-1}
 =
0
$.
By \eqref{equation_omegaL2},
$
\sum_{j=1}^4
\omega_j
 =
0
$,
hence by \eqref{equation_omega},
$
\sum_{j=1}^4
\omega_j^3
 =
\sum_{j=1}^4
\left(
-\overline{\omega_j}
\right)
$
$
 =
-
\overline{
\sum_{j=1}^4
\omega_j
}
 =
0
$.
So by \eqref{equation_asfovhafvhj},
we have
\begin{equation}
\label{equation_av;oafhvj;afjhvb}
\mathbf{y}(x)^T
\cdot
\mathbf{\Omega}^n
\cdot
\mathbf{f}^\prime(x)
 =
0,
\quad
n = 0,1,2,
\qquad
\mathbf{y}(x)^T
\cdot
\mathbf{\Omega}^3
\cdot
\mathbf{f}^\prime(x)
 =
4
\cdot
w(x).
\end{equation}
By \eqref{equation_y'x},
$
\mathbf{y}^\prime(x)^T
 =
\left\{
\alpha
\mathbf{\Omega}
\cdot
\mathbf{y}(x)
\right\}^T
 =
\alpha
\cdot
\mathbf{y}(x)^T
\cdot
\mathbf{\Omega}
$.
So by \eqref{equation_ahvofv'afvj}, \eqref{equation_av;oafhvj;afjhvb},
we have
\begin{align}
\mathcal{K}_\mathbf{M}[w]^\prime(x)
 &=
\frac{\alpha}{4k}
\left\{
\mathbf{y}^\prime(x)^T
\cdot
\mathbf{f}(x)
+
\mathbf{y}(x)^T
\cdot
\mathbf{f}^\prime(x)
\right\}
\nonumber \\
 &=
\frac{\alpha^2}{4k}
\cdot
\mathbf{y}(x)^T
\cdot
\mathbf{\Omega}
\cdot
\mathbf{f}(x),
\label{equation_sojvoiujvb1} \\
\mathcal{K}_\mathbf{M}[w]^{\prime\prime}(x)
 &=
\frac{\alpha^2}{4k}
\left\{
\mathbf{y}^\prime(x)^T
\cdot
\mathbf{\Omega}
\cdot
\mathbf{f}(x)
+
\mathbf{y}(x)^T
\cdot
\mathbf{\Omega}
\cdot
\mathbf{f}^\prime(x)
\right\}
\nonumber \\
 &=
\frac{\alpha^3}{4k}
\cdot
\mathbf{y}(x)^T
\cdot
\mathbf{\Omega}^2
\cdot
\mathbf{f}(x),
\label{equation_sojvoiujvb2} \\
\mathcal{K}_\mathbf{M}[w]^{\prime\prime\prime}(x)
 &=
\frac{\alpha^3}{4k}
\left\{
\mathbf{y}^\prime(x)^T
\cdot
\mathbf{\Omega}^2
\cdot
\mathbf{f}(x)
+
\mathbf{y}(x)^T
\cdot
\mathbf{\Omega}^2
\cdot
\mathbf{f}^\prime(x)
\right\}
\nonumber \\
 &=
\frac{\alpha^4}{4k}
\cdot
\mathbf{y}(x)^T
\cdot
\mathbf{\Omega}^3
\cdot
\mathbf{f}(x),
\label{equation_sojvoiujvb3}
\end{align}
hence by 
\eqref{equation_Omega}, 
\eqref{equation_y'x},
\eqref{equation_ahvofv'afvj}, 
\eqref{equation_av;oafhvj;afjhvb},
\begin{align*}
\lefteqn{
\mathcal{K}_\mathbf{M}[w]^{(4)}(x)
 =
\frac{\alpha^4}{4k}
\left\{
\mathbf{y}^\prime(x)^T
\cdot
\mathbf{\Omega}^3
\cdot
\mathbf{f}(x)
+
\mathbf{y}(x)^T
\cdot
\mathbf{\Omega}^3
\cdot
\mathbf{f}^\prime(x)
\right\}
} \\
 &=
\frac{\alpha^4}{4k}
\left\{
\alpha
\cdot
\mathbf{y}(x)^T
\cdot
\mathbf{\Omega}^4
\cdot
\mathbf{f}(x)
+
4
\cdot
w(x)
\right\}
 =
\frac{\alpha^4}{4k}
\left\{
-
\alpha
\cdot
\mathbf{y}(x)^T
\cdot
\mathbf{f}(x)
+
4
\cdot
w(x)
\right\} \\
 &=
\frac{\alpha^4}{4k}
\left\{
-
\alpha
\cdot
\frac{4k}{\alpha}
\mathcal{K}_\mathbf{M}[w](x)
+
4
\cdot
w(x)
\right\}
 =
-
\alpha^4
\cdot
\mathcal{K}_\mathbf{M}[w](x)
+
\frac{\alpha^4}{k}
\cdot
w(x).
\end{align*}
This shows that $\mathcal{K}_\mathbf{M}[w](x)$
satisfies
$\mathrm{DE}(w)$.

By 
\eqref{equation_ahvofv'afvj},
\eqref{equation_sojvoiujvb1},
\eqref{equation_sojvoiujvb2},
\eqref{equation_sojvoiujvb3},
and
\eqref{equation_y'x},
we have
\begin{align}
\mathcal{K}_\mathbf{M}[w]^{(n)}(x)
 &=
\frac{\alpha^{n+1}}{4k}
\cdot
\mathbf{y}(x)^T
\cdot
\mathbf{\Omega}^n
\cdot
\mathbf{f}(x)
 =
\frac{\alpha}{4k}
\left\{
\alpha^n
\mathbf{\Omega}^n
\cdot
\mathbf{y}(x)
\right\}^T
\cdot
\mathbf{f}(x)
\nonumber \\
 &=
\frac{\alpha}{4k}
\cdot
\mathbf{y}^{(n)}(x)^T
\cdot
\mathbf{f}(x),
\qquad
n = 0,1,2,3.
\label{equation_afoba;foivjaj}
\end{align}
By \eqref{equation_boldfx},
$
\mathbf{f}(\pm l)
 =
\mp
\mathbf{G}_\mathbf{M}^\mp
\int_{-l}^l
\mathbf{y}(\xi)
w(\xi)
\, d\xi
$,
hence by \eqref{equation_afoba;foivjaj},
\[
\mathcal{K}_\mathbf{M}[w]^{(n)}(\pm l)
 =
\mp
\frac{\alpha}{4k}
\cdot
\mathbf{y}^{(n)}(\pm l)^T
\cdot
\mathbf{G}_\mathbf{M}^\mp
\int_{-l}^l
\mathbf{y}(\xi)
w(\xi)
\, d\xi,
\qquad
n = 0,1,2,3.
\]
So by Definitions~\ref{definition_W(x)} and \ref{definition_calBpm},
\begin{equation}
\label{equation_lih;hfvav}
\mathcal{B}^\pm
\left[
\mathcal{K}_\mathbf{M}[w]
\right]
 =
\mp
\frac{\alpha}{4k}
\cdot
\mathbf{W}(\pm l)
\cdot
\mathbf{G}_\mathbf{M}^\mp
\int_{-l}^l
\mathbf{y}(\xi)
w(\xi)
\, d\xi.
\end{equation}
Let
$\mathbf{M}^-, \mathbf{M}^+ \in \gl(4,\mathbb{C})$
be the $4 \times 4$ minors of 
$\mathbf{M}$ such that
$
\mathbf{M}
 = 
\begin{pmatrix}
\mathbf{M}^-  & \mathbf{M}^+ 
\end{pmatrix}
$.
By Definitions~\ref{definition_Mtilde}, \ref{definition_GMpm}, and \eqref{equation_lih;hfvav},
\begin{align*}
\mathbf{M}^\pm
\cdot
\mathcal{B}^\pm
\left[
\mathcal{K}_\mathbf{M}[w]
\right]
 &=
\mp
\frac{\alpha}{4k}
\cdot
\mathbf{M}^\pm
\mathbf{W}(\pm l)
\cdot
\mathbf{G}_\mathbf{M}^\mp
\int_{-l}^l
\mathbf{y}(\xi)
w(\xi)
\, d\xi \\
 &=
\mp
\frac{\alpha}{4k}
\cdot
\widetilde{\mathbf{M}}^\pm
\cdot
\widetilde{\mathbf{M}}^{-1}
\widetilde{\mathbf{M}}^\mp
\mathbf{\Omega}
\mathbf{L}^2
\int_{-l}^l
\mathbf{y}(\xi)
w(\xi)
\, d\xi,
\end{align*}
hence by \eqref{equation_MBpm},
\begin{align*}
\mathbf{M}
\cdot
\mathcal{B}
\left[
\mathcal{K}_\mathbf{M}[w]
\right]
 &=
\mathbf{M}^-
\cdot
\mathcal{B}^-
\left[
\mathcal{K}_\mathbf{M}[w]
\right]
+
\mathbf{M}^+
\cdot
\mathcal{B}^+
\left[
\mathcal{K}_\mathbf{M}[w]
\right] \\
 &=
\frac{\alpha}{4k}
\widetilde{\mathbf{M}}^-
\widetilde{\mathbf{M}}^{-1}
\widetilde{\mathbf{M}}^+
\mathbf{\Omega}
\mathbf{L}^2
\int_{-l}^l
\mathbf{y}(\xi)
w(\xi)
\, d\xi \\
 &\quad
-
\frac{\alpha}{4k}
\widetilde{\mathbf{M}}^+
\widetilde{\mathbf{M}}^{-1}
\widetilde{\mathbf{M}}^-
\mathbf{\Omega}
\mathbf{L}^2
\int_{-l}^l
\mathbf{y}(\xi)
w(\xi)
\, d\xi \\
 &=
\frac{\alpha}{4k}
\left(
\widetilde{\mathbf{M}}^-
\widetilde{\mathbf{M}}^{-1}
\widetilde{\mathbf{M}}^+
-
\widetilde{\mathbf{M}}^+
\widetilde{\mathbf{M}}^{-1}
\widetilde{\mathbf{M}}^-
\right)
\mathbf{\Omega}
\mathbf{L}^2
\int_{-l}^l
\mathbf{y}(\xi)
w(\xi)
\, d\xi.
\end{align*}
Thus we have
$
\mathbf{M}
\cdot
\mathcal{B}
\left[
\mathcal{K}_\mathbf{M}[w]
\right]
 =
\mathbf{0}
$,
since
$\widetilde{\mathbf{M}} = \widetilde{\mathbf{M}}^- + \widetilde{\mathbf{M}}^+$ 
by Definition~\ref{definition_Mtilde}
and hence
\begin{align*}
\lefteqn{
\widetilde{\mathbf{M}}^-
\widetilde{\mathbf{M}}^{-1}
\widetilde{\mathbf{M}}^+
-
\widetilde{\mathbf{M}}^+
\widetilde{\mathbf{M}}^{-1}
\widetilde{\mathbf{M}}^-
} \\
 &=
\left(
\widetilde{\mathbf{M}} - \widetilde{\mathbf{M}}^+
\right)
\widetilde{\mathbf{M}}^{-1}
\widetilde{\mathbf{M}}^+
-
\widetilde{\mathbf{M}}^+
\widetilde{\mathbf{M}}^{-1}
\left(
\widetilde{\mathbf{M}} - \widetilde{\mathbf{M}}^+
\right) \\
 &=
\widetilde{\mathbf{M}}^+
-
\widetilde{\mathbf{M}}^+
\widetilde{\mathbf{M}}^{-1}
\widetilde{\mathbf{M}}^+
-
\widetilde{\mathbf{M}}^+
+
\widetilde{\mathbf{M}}^+
\widetilde{\mathbf{M}}^{-1}
\widetilde{\mathbf{M}}^+
 =
\mathbf{O}.
\end{align*}
This shows that $\mathcal{K}_\mathbf{M}[w]$
satisfies
$\mathrm{BC}(\mathbf{M})$,
and the proof is complete.

\section{Proof of Lemma~\ref{lemma_Y1/kpibar}}
\label{appendix_Y1/kpibar}


Denote
\begin{equation}
\label{equation_pniz}
p_{n,i}(z)
 =
\sum_{r=0}^n
\frac{\omega_i^{n-r}}{r!}
z^r,
\qquad
n = 0,1,2,3,
\
i \in \mathbb{Z},
\end{equation}
where it is understood that $0^0 = 1$.
In particular, denote
\begin{equation}
\label{equation_pnz}
p_n(z)
 =
p_{n,1}(z),
\qquad
n = 0,1,2,3.
\end{equation}
By Definitions~\ref{definition_omega}, \ref{definition_R}, and \eqref{equation_R},
\[
\mathbf{W}_0^*
 =
\overline{
\mathbf{W}_0
}^T
 =
\left(
\mathbf{W}_0
\mathbf{R}
\right)^T
 =
\mathbf{R}
\mathbf{W}_0^T
 =
\mathbf{R}
\cdot
\left(
\omega_i^{j-1}
\right)_{1 \leq i,j \leq 4}
 =
\left(
\omega_{5-i}^{j-1}
\right)_{1 \leq i,j \leq 4},
\]
hence by \eqref{equation_Wlambdax}, \eqref{equation_pniz},
we have
\begin{align*}
\lefteqn{
\diag(0,1,1,0)
\cdot
\mathbf{W}_0^*
\mathbf{W}_\frac{1}{k}(-z)
} \\
 &=
\begin{pmatrix}
0 & 0 & 0 & 0 \\
1 & \omega_3 & \omega_3^2 & \omega_3^3 \\
1 & \omega_2 & \omega_2^2 & \omega_2^3 \\
0 & 0 & 0 & 0
\end{pmatrix}
\begin{pmatrix}
(-z)^0 & (-z)^1 & \frac{1}{2} (-z)^2 & \frac{1}{6} (-z)^3 \\
0 & (-z)^0 & (-z)^1 & \frac{1}{2} (-z)^2 \\
0 & 0 & (-z)^0 & (-z)^1 \\
0 & 0 & 0 & (-z)^0
\end{pmatrix} \\
 &=
\begin{pmatrix}
0 & 0 & 0 & 0 \\
p_{0,3}(-z) & p_{1,3}(-z) & p_{2,3}(-z) & p_{3,3}(-z) \\
p_{0,2}(-z) & p_{1,2}(-z) & p_{2,2}(-z) & p_{3,2}(-z) \\
0 & 0 & 0 & 0
\end{pmatrix},
\end{align*}
\begin{align*}
\diag(1,0,0,1)
\cdot
\mathbf{W}_0^*
\mathbf{W}_\frac{1}{k}(z)
 &=
\begin{pmatrix}
1 & \omega_4 & \omega_4^2 & \omega_4^3 \\
0 & 0 & 0 & 0 \\
0 & 0 & 0 & 0 \\
1 & \omega_1 & \omega_1^2 & \omega_1^3
\end{pmatrix}
\begin{pmatrix}
z^0 & z^1 & \frac{1}{2} z^2 & \frac{1}{6} z^3 \\
0 & z^0 & z^1 & \frac{1}{2} z^2 \\
0 & 0 & z^0 & z^1 \\
0 & 0 & 0 & z^0
\end{pmatrix} \\
 &=
\begin{pmatrix}
p_{0,4}(z) & p_{1,4}(z) & p_{2,4}(z) & p_{3,4}(z) \\
0 & 0 & 0 & 0 \\
0 & 0 & 0 & 0 \\
p_{0,1}(z) & p_{1,1}(z) & p_{2,1}(z) & p_{3,1}(z)
\end{pmatrix}.
\end{align*}
Thus by \eqref{equation_Pz},
we have
\begin{equation}
\label{equation_adofhbad;fov}
\mathbf{P}(z)
 =
\begin{pmatrix}
p_{0,4}(z) & p_{1,4}(z) & p_{2,4}(z) & p_{3,4}(z) \\
p_{0,3}(-z) & p_{1,3}(-z) & p_{2,3}(-z) & p_{3,3}(-z) \\
p_{0,2}(-z) & p_{1,2}(-z) & p_{2,2}(-z) & p_{3,2}(-z) \\
p_{0,1}(z) & p_{1,1}(z) & p_{2,1}(z) & p_{3,1}(z)
\end{pmatrix}.
\end{equation}
Note from 
\eqref{equation_omegaR}, \eqref{equation_omegaL2},
\eqref{equation_pniz}, \eqref{equation_pnz} that,
for $n = 0,1,2,3$,
\begin{align*}
p_{n,4}(z)
 &=
\sum_{r=0}^n
\frac{\omega_4^{n-r}}{r!}
z^r
 =
\sum_{r=0}^n
\frac{\overline{\omega_1}^{n-r}}{r!}
z^r
 =
\overline{
\left(
\sum_{r=0}^n
\frac{\omega_1^{n-r}}{r!}
z^r
\right)
}
 =
\overline{
p_{n,1}(z)
}
 =
\overline{
p_n(z)
}, \\
p_{n,3}(-z)
 &=
\sum_{r=0}^n
\frac{\omega_3^{n-r}}{r!}
(-z)^r
 =
\sum_{r=0}^n
\frac{\left( -\omega_1 \right)^{n-r}}{r!}
(-z)^r
 =
(-1)^n
\sum_{r=0}^n
\frac{\omega_1^{n-r}}{r!}
z^r \\
 &=
(-1)^n
p_{n,1}(z)
 =
(-1)^n
p_n(z), \\
p_{n,2}(-z)
 &=
\sum_{r=0}^n
\frac{\omega_2^{n-r}}{r!}
(-z)^r
 =
\sum_{r=0}^n
\frac{\overline{\omega_3}^{n-r}}{r!}
(-z)^r
 =
\overline{
\left\{
\sum_{r=0}^n
\frac{\omega_3^{n-r}}{r!}
(-z)^r
\right\}
} \\
 &=
\overline{
p_{n,3}(-z)
}
 =
\overline{
(-1)^n
p_n(z)
}
 =
(-1)^n
\overline{
p_n(z)
}.
\end{align*}
Thus by \eqref{equation_adofhbad;fov},
we have
\begin{equation}
\label{equation_Pzexplicit}
\mathbf{P}(z)
 =
\begin{pmatrix}
\overline{p_0(z)} & \overline{p_1(z)} & \overline{p_2(z)} & \overline{p_3(z)} \\
p_0(z) & -p_1(z) & p_2(z) & -p_3(z) \\
\overline{p_0(z)} & -\overline{p_1(z)} & \overline{p_2(z)} & -\overline{p_3(z)} \\
p_0(z) & p_1(z) & p_2(z) & p_3(z)
\end{pmatrix}.
\end{equation}

Denote
\begin{equation}
\label{equation_Ppmz}
\mathbf{P}^+(z)
 =
\begin{pmatrix}
\overline{p_0(z)} & \overline{p_2(z)} \\
p_0(z) & p_2(z)
\end{pmatrix},
\qquad
\mathbf{P}^-(z)
 =
\begin{pmatrix}
-\overline{p_1(z)} & -\overline{p_3(z)} \\
p_1(z) & p_3(z)
\end{pmatrix},
\end{equation}
\begin{equation}
\label{equation_VVhat}
\mathbf{V}
 =
\frac{1}{\sqrt{2}}
\begin{pmatrix}
\mathbf{I} & \mathbf{I} \\
-\mathbf{I} & \mathbf{I}
\end{pmatrix}
 =
\frac{1}{\sqrt{2}}
\begin{pmatrix}
1 & 0 & 1 & 0 \\
0 & 1 & 0 & 1 \\
-1 & 0 & 1 & 0 \\
0 & -1 & 0 & 1
\end{pmatrix},
\
\hat{\mathbf{V}}
 =
\begin{pmatrix}
1 & 0 & 0 & 0 \\
0 & 0 & 1 & 0 \\
0 & 1 & 0 & 0 \\
0 & 0 & 0 & 1
\end{pmatrix}.
\end{equation}
Note that 
$\mathbf{V}, \hat{\mathbf{V}} \in O(4)$
and
\begin{equation}
\label{equation_detVVhat}
\det\mathbf{V} = 1,
\qquad
\det\hat{\mathbf{V}} = -1.
\end{equation}

\begin{lemma}
\label{lemma_VPVhat}
$
\mathbf{V}
\cdot
\mathbf{P}(z)
\cdot
\hat{\mathbf{V}}
 =
\sqrt{2}
\begin{pmatrix}
\mathbf{P}^+(z) & \mathbf{O} \\
\mathbf{O} & \mathbf{P}^-(z)
\end{pmatrix}
$.
\end{lemma}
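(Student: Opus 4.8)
The plan is to verify the claimed block-diagonalization by a direct computation using the explicit form \eqref{equation_Pzexplicit} of $\mathbf{P}(z)$ together with the explicit entries of $\mathbf{V}$ and $\hat{\mathbf{V}}$ in \eqref{equation_VVhat}. The underlying mechanism is simple: $\hat{\mathbf{V}}$ swaps columns $2$ and $3$ of $\mathbf{P}(z)$, which reorganizes the four columns $(p_0,-p_1,p_2,-p_3)$-type data into two pairs $\{p_0,p_2\}$ and $\{p_1,p_3\}$; and left-multiplication by $\mathbf{V}$ takes suitable sums and differences of the rows, which separates the two ``reflection parities'' visible in \eqref{equation_Pzexplicit}, namely rows $1,3$ (carrying $\overline{p_n}$ with signs $(+,+,+,+)$ and $(+,-,+,-)$) versus rows $2,4$ (carrying $p_n$ with signs $(+,-,+,-)$ and $(+,+,+,+)$).

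Concretely, first I would compute $\mathbf{P}(z)\hat{\mathbf{V}}$, which by \eqref{equation_Pzexplicit} and the permutation structure of $\hat{\mathbf{V}}$ is the matrix obtained from $\mathbf{P}(z)$ by interchanging its second and third columns; this yields
\[
\mathbf{P}(z)\hat{\mathbf{V}}
 =
\begin{pmatrix}
\overline{p_0(z)} & \overline{p_2(z)} & \overline{p_1(z)} & \overline{p_3(z)} \\
p_0(z) & p_2(z) & -p_1(z) & -p_3(z) \\
\overline{p_0(z)} & \overline{p_2(z)} & -\overline{p_1(z)} & -\overline{p_3(z)} \\
p_0(z) & p_2(z) & p_1(z) & p_3(z)
\end{pmatrix}.
\]
Then I would left-multiply by $\mathbf{V}=\tfrac{1}{\sqrt2}\left(\begin{smallmatrix}\mathbf{I}&\mathbf{I}\\-\mathbf{I}&\mathbf{I}\end{smallmatrix}\right)$, which replaces the row pair $(\text{row }1,\text{row }3)$ by $\tfrac{1}{\sqrt2}(\text{row }1+\text{row }3)$ and $\tfrac{1}{\sqrt2}(-\text{row }1+\text{row }3)$, and similarly for the pair $(\text{row }2,\text{row }4)$, but arranged in the interleaved order dictated by the $2\times2$ block pattern of $\mathbf{V}$. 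Reading off the interleaving carefully: rows $1$ and $3$ of $\mathbf{P}(z)\hat{\mathbf V}$ agree in the first two columns and are negatives of each other in the last two, so the combinations $\text{row }1+\text{row }3$ and $-\text{row }1+\text{row }3$ produce, up to the factor $\sqrt2$, a row supported on the first two columns and a row supported on the last two; the same happens for rows $2$ and $4$. Collecting the surviving entries gives exactly the two $2\times2$ blocks $\mathbf{P}^+(z)$ and $\mathbf{P}^-(z)$ of \eqref{equation_Ppmz}, with the overall scalar $\sqrt2$ coming from $1/\sqrt2$ times the factor $2$ produced by each addition, consistent with the right-hand side of the claim.

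The only real care needed — and the one step where a sign or an index slip would be easy — is matching the \emph{order} in which $\mathbf{V}$ places the four output rows against the $2\times2$ block layout $\left(\begin{smallmatrix}\mathbf{P}^+&\mathbf{O}\\\mathbf{O}&\mathbf{P}^-\end{smallmatrix}\right)$ on the right-hand side; in particular one must check that the ``$+$'' combination of rows $1,3$ lands in the top-left block and the ``$-$'' combination of rows $2,4$ lands in the bottom-right block (and not the other diagonal), and that the signs inside $\mathbf{P}^-(z)$ — the leading minus signs on $\overline{p_1}$ and $\overline{p_3}$ — come out correctly from $-\text{row }1+\text{row }3$. I expect no conceptual obstacle beyond this bookkeeping; once the row/column permutations and the $\pm$ combinations are written out against \eqref{equation_Pzexplicit}, the identity follows by inspection, so the proof is essentially a short explicit matrix multiplication.
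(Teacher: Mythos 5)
Your proposal is correct and takes essentially the same approach as the paper: a direct matrix computation using the explicit form \eqref{equation_Pzexplicit} together with the row-combining action of $\mathbf{V}$ and the column-swapping action of $\hat{\mathbf{V}}$. The only cosmetic difference is that you apply $\hat{\mathbf{V}}$ (columns) before $\mathbf{V}$ (rows), whereas the paper does the reverse; since left and right multiplications commute, the two computations are equivalent, and your sign and block bookkeeping checks out.
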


\begin{proof}
By \eqref{equation_Pzexplicit}, \eqref{equation_VVhat},
\begin{align*}
\mathbf{V}
\cdot
\mathbf{P}(z)
 &=
\frac{1}{\sqrt{2}}
\begin{pmatrix}
1 & 0 & 1 & 0 \\
0 & 1 & 0 & 1 \\
-1 & 0 & 1 & 0 \\
0 & -1 & 0 & 1
\end{pmatrix}
\begin{pmatrix}
\overline{p_0(z)} & \overline{p_1(z)} & \overline{p_2(z)} & \overline{p_3(z)} \\
p_0(z) & -p_1(z) & p_2(z) & -p_3(z) \\
\overline{p_0(z)} & -\overline{p_1(z)} & \overline{p_2(z)} & -\overline{p_3(z)} \\
p_0(z) & p_1(z) & p_2(z) & p_3(z)
\end{pmatrix} \\
 &=
\sqrt{2}
\begin{pmatrix}
\overline{p_0(z)} & 0 & \overline{p_2(z)} & 0 \\
p_0(z) & 0 & p_2(z) & 0 \\
0 & -\overline{p_1(z)} & 0 & -\overline{p_3(z)} \\
0 & p_1(z) & 0 & p_3(z)
\end{pmatrix},
\end{align*}
hence the lemma follows,
since
multiplying $\hat{\mathbf{V}}$ on the right
amounts to interchanging
the second and the third columns.
\end{proof}

By Lemma~\ref{lemma_VPVhat},
we have
\begin{equation}
\label{equation_va;ofvijafov}
\mathbf{P}(z)
 =
\mathbf{V}^{-1}
\cdot
\sqrt{2}
\begin{pmatrix}
\mathbf{P}^+(z) & \mathbf{O} \\
\mathbf{O} & \mathbf{P}^-(z)
\end{pmatrix}
\cdot
\hat{\mathbf{V}}^{-1}
 =
\sqrt{2}
\cdot
\mathbf{V}^T
\begin{pmatrix}
\mathbf{P}^+(z) & \mathbf{O} \\
\mathbf{O} & \mathbf{P}^-(z)
\end{pmatrix}
\hat{\mathbf{V}}^T,
\end{equation}
since $\mathbf{V}$, $\hat{\mathbf{V}}$ are orthogonal.
So by \eqref{equation_detVVhat}
\begin{align*}
\det
\mathbf{P}(z)
 &=
\sqrt{2}^4
\cdot
\det
\mathbf{V}
\cdot
\det
\begin{pmatrix}
\mathbf{P}^+(z) & \mathbf{O} \\
\mathbf{O} & \mathbf{P}^-(z)
\end{pmatrix}
\cdot
\det
\hat{\mathbf{V}} \\
 &=
-
4
\cdot
\det
\begin{pmatrix}
\mathbf{P}^+(z) & \mathbf{O} \\
\mathbf{O} & \mathbf{P}^-(z)
\end{pmatrix}
 =
-
4
\cdot
\det
\mathbf{P}^+(z)
\cdot
\det
\mathbf{P}^-(z),
\end{align*}
hence by \eqref{equation_anvrfuvh;aof},
\begin{align}
\det
\mathbf{X}_\frac{1}{k}(x)
 &=
\left(
\frac{1}{4}
\right)^4
\cdot
\det
e^{-\mathcal{E} \mathbf{\Omega} z}
\cdot
\det
\mathbf{P}(z)
\cdot
\det
\diag
\left( 1, \alpha, \alpha^2, \alpha^3 \right)^{-1}
\nonumber \\
 &=
-
\frac{e^{-2\sqrt{2} z}}{4^3 \alpha^6}
\cdot
\det
\mathbf{P}^+(z)
\cdot
\det
\mathbf{P}^-(z),
\label{equation_avofjvafpjk}
\end{align}
since
$
\det
\diag
\left( 1, \alpha, \alpha^2, \alpha^3 \right)^{-1}
 =
1
\cdot
\alpha^{-1}
\cdot
\alpha^{-2}
\cdot
\alpha^{-3}
 =
\alpha^{-6}
$,
and
$
\det
e^{-\mathcal{E} \mathbf{\Omega} z}
 =
e^{-\omega_1 z}
\cdot
e^{\omega_2 z}
\cdot
e^{\omega_3 z}
\cdot
e^{-\omega_4 z}
 =
e^{-\left\{ \left( \omega_1 - \omega_3 \right) + \left( \omega_4 - \omega_2 \right) \right\} z}
 =
e^{-2 \left( \omega_1 + \omega_4 \right) z} 
 =
e^{-2 \cdot 2 \Real{\omega_1} \cdot z}
 =
e^{-2\sqrt{2} z}
$
by
\eqref{equation_omegaR}, \eqref{equation_omegaL2}, \eqref{equation_omegaReIm}.
Since 
$
\det
\mathbf{X}_\frac{1}{k}(x)
 \neq
0
$
for every $x > 0$
by Proposition~\ref{proposition_Q}, Corollary~\ref{corollary_eigencondition-det},
and Lemma~\ref{lemma_GQ=Glalphak},
it follows from \eqref{equation_z}, \eqref{equation_avofjvafpjk} that
$\det\mathbf{P}^+(z) \neq 0$ and $\det\mathbf{P}^-(z) \neq 0$
for every $z > 0$.
From \eqref{equation_Ppmz},
we have
\begin{align}
\det
\mathbf{P}^+(z)
 &=
\overline{p_0(z)}
p_2(z)
-
p_0(z)
\overline{p_2(z)}
 =
2
\mathbbm{i}
\Imag
\left\{
\overline{p_0(z)}
p_2(z)
\right\},
\label{equation_vfhvafvjvasf+} \\
\det
\mathbf{P}^-(z)
 &=
p_1(z)
\overline{p_3(z)}
-
\overline{p_1(z)}
p_3(z)
 =
2
\mathbbm{i}
\Imag
\left\{
p_1(z)
\overline{p_3(z)}
\right\}.
\label{equation_vfhvafvjvasf-}
\end{align}

Note from Definition~\ref{definition_pibar} that 
$
\begin{pmatrix}
a_{11} & a_{12} \\
a_{21} & a_{22}
\end{pmatrix}
\in \overline{\pi}(2)
$,
if and only if
$a_{11} = \overline{a_{22}}$
and
$a_{12} = \overline{a_{21}}$.

\begin{lemma}
\label{lemma_Ppibar}
$
\mathbf{P}^+(-z)
\mathbf{P}^+(z)^{-1},
\mathbf{P}^-(-z)
\mathbf{P}^-(z)^{-1}
\in
\overline{\pi}(2)
$
for every $z > 0$.
\end{lemma}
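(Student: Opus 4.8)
The plan is to extract a single conjugation symmetry of the $2\times 2$ blocks $\mathbf{P}^+(z)$ and $\mathbf{P}^-(z)$ straight from their definition in \eqref{equation_Ppmz}, and then turn the desired membership into a one-line matrix manipulation. First I would record the elementary observation that, in $\mathbf{P}^+(z)$, the second row $(p_0(z),\,p_2(z))$ is the entrywise complex conjugate of the first row $(\overline{p_0(z)},\,\overline{p_2(z)})$, while in $\mathbf{P}^-(z)$ the second row $(p_1(z),\,p_3(z))$ is the entrywise complex conjugate of $-1$ times the first row $(-\overline{p_1(z)},\,-\overline{p_3(z)})$. Since $z\in\mathbb{R}$ and conjugation is entrywise, and since left multiplication by $\mathbf{R}$ interchanges the two rows, this says precisely
\[
\overline{\mathbf{P}^+(z)}=\mathbf{R}\,\mathbf{P}^+(z),
\qquad
\overline{\mathbf{P}^-(z)}=-\,\mathbf{R}\,\mathbf{P}^-(z),
\qquad z\in\mathbb{R}.
\]
Writing $\sigma=+1$ in the $+$ case and $\sigma=-1$ in the $-$ case, both identities become $\overline{\mathbf{P}^\pm(z)}=\sigma\,\mathbf{R}\,\mathbf{P}^\pm(z)$, valid for every real $z$, hence simultaneously at $z$ and at $-z$.

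Next, for $z>0$ the matrix $\mathbf{P}^\pm(z)$ is invertible, as established in the paragraph immediately preceding the lemma (from $\det\mathbf{X}_{\frac{1}{k}}(x)\neq 0$ via \eqref{equation_avofjvafpjk}); note that only $\mathbf{P}^\pm(z)$, and not $\mathbf{P}^\pm(-z)$, must be inverted. Substituting the displayed identities and using $\mathbf{R}^2=\mathbf{I}$ together with $\sigma^2=1$,
\begin{align*}
\mathbf{R}\,\overline{\mathbf{P}^\pm(-z)\,\mathbf{P}^\pm(z)^{-1}}\,\mathbf{R}
&=\mathbf{R}\,\overline{\mathbf{P}^\pm(-z)}\;\bigl(\overline{\mathbf{P}^\pm(z)}\bigr)^{-1}\mathbf{R}\\
&=\mathbf{R}\bigl(\sigma\mathbf{R}\,\mathbf{P}^\pm(-z)\bigr)\bigl(\sigma\mathbf{R}\,\mathbf{P}^\pm(z)\bigr)^{-1}\mathbf{R}\\
&=\mathbf{R}\mathbf{R}\,\mathbf{P}^\pm(-z)\,\mathbf{P}^\pm(z)^{-1}\mathbf{R}\mathbf{R}
=\mathbf{P}^\pm(-z)\,\mathbf{P}^\pm(z)^{-1},
\end{align*}
which is exactly the condition $\mathbf{P}^\pm(-z)\,\mathbf{P}^\pm(z)^{-1}\in\overline{\pi}(2)$ of Definition~\ref{definition_pibar}.

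There is no serious obstacle: the lemma is essentially a bookkeeping consequence of the row-conjugation structure of $\mathbf{P}^\pm$. The only points requiring a little care are (i) invoking $\overline{\mathbf{P}^\pm(z)}=\sigma\mathbf{R}\mathbf{P}^\pm(z)$ with $z$ real — the standing hypothesis on the argument of $\mathbf{P}^\pm$ — so that it also applies at $-z$, and (ii) justifying that one may form $\mathbf{P}^\pm(z)^{-1}$ at all, which is where the restriction $z>0$ enters and for which the nonvanishing of $\det\mathbf{P}^\pm(z)$ on $(0,\infty)$ has already been obtained. Treating the $+$ and $-$ cases uniformly through the sign $\sigma$ keeps the computation a single line; I expect the write-up to be short.
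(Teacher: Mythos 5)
Your proof is correct, and it takes a genuinely different and shorter route than the paper's. The paper's proof proceeds by brute force: it multiplies out $\mathbbm{i}\,\mathbf{P}^\pm(-z)\cdot\adj\mathbf{P}^\pm(z)$ entry by entry, checks by inspection that the resulting $2\times 2$ matrix has the $\overline{\pi}(2)$ pattern, and then writes $\mathbf{P}^\pm(-z)\mathbf{P}^\pm(z)^{-1} = \mathbf{P}^\pm(-z)\adj\mathbf{P}^\pm(z)/\det\mathbf{P}^\pm(z)$, appealing to \eqref{equation_vfhvafvjvasf+}--\eqref{equation_vfhvafvjvasf-} to see that $\det\mathbf{P}^\pm(z)$ is purely imaginary, so that the quotient is the $\overline{\pi}(2)$-matrix divided by a real scalar and hence stays in $\overline{\pi}(2)$ by Lemma~\ref{lemma_pibar}. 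You instead isolate the single row-swap conjugation symmetry $\overline{\mathbf{P}^\pm(z)} = \sigma\mathbf{R}\mathbf{P}^\pm(z)$ (with $\sigma=\pm1$), valid for all real $z$ by the row structure of \eqref{equation_Ppmz}, and then verify $\mathbf{R}\,\overline{\mathbf{P}^\pm(-z)\mathbf{P}^\pm(z)^{-1}}\,\mathbf{R}=\mathbf{P}^\pm(-z)\mathbf{P}^\pm(z)^{-1}$ in one display using $\mathbf{R}^2=\mathbf{I}$ and $\sigma^2=1$. This bypasses the adjugate computation and the detour through the imaginary determinant entirely; you only need invertibility of $\mathbf{P}^\pm(z)$ for $z>0$, which you correctly source from \eqref{equation_avofjvafpjk} and the preceding paragraph. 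The structural identity you extract is arguably the cleaner statement and the more reusable one; the paper's computation has the minor side benefit of exhibiting $\adj\mathbf{P}^\pm(z)$ explicitly, which it does not reuse later.
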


\begin{proof}
From \eqref{equation_Ppmz},
we have
\begin{align*}
\lefteqn{
\mathbbm{i}
\mathbf{P}^+(-z)
\cdot
\adj{\mathbf{P}^+(z)}
 =
\mathbbm{i}
\begin{pmatrix}
\overline{p_0(-z)} & \overline{p_2(-z)} \\
p_0(-z) & p_2(-z)
\end{pmatrix}
\begin{pmatrix}
p_2(z) & -\overline{p_2(z)} \\
-p_0(z) & \overline{p_0(z)}
\end{pmatrix}
} \\
 &=
\begin{pmatrix}
\mathbbm{i}
\left\{
\overline{p_0(-z)}
p_2(z)
-
\overline{p_2(-z)}
p_0(z)
\right\}
 &
\mathbbm{i}
\left\{
-
\overline{p_0(-z)}
\overline{p_2(z)}
+
\overline{p_2(-z)}
\overline{p_0(z)}
\right\}
 \\
\mathbbm{i}
\left\{
p_0(-z)
p_2(z)
-
p_2(-z)
p_0(z)
\right\}
 &
\mathbbm{i}
\left\{
-
p_0(-z)
\overline{p_2(z)}
+
p_2(-z)
\overline{p_0(z)}
\right\}
\end{pmatrix} \\
 &=
\begin{pmatrix}
\overline{
\mathbbm{i}
\left\{
-
p_0(-z)
\overline{p_2(z)}
+
p_2(-z)
\overline{p_0(z)}
\right\}
}
 &
\overline{
\mathbbm{i}
\left\{
p_0(-z)
p_2(z)
-
p_2(-z)
p_0(z)
\right\}
}
 \\
\mathbbm{i}
\left\{
p_0(-z)
p_2(z)
-
p_2(-z)
p_0(z)
\right\}
 &
\mathbbm{i}
\left\{
-
p_0(-z)
\overline{p_2(z)}
+
p_2(-z)
\overline{p_0(z)}
\right\}
\end{pmatrix}, \\
\lefteqn{
\mathbbm{i}
\mathbf{P}^-(-z)
\cdot
\adj{\mathbf{P}^-(z)}
 =
\mathbbm{i}
\begin{pmatrix}
-\overline{p_1(-z)} & -\overline{p_3(-z)} \\
p_1(-z) & p_3(-z)
\end{pmatrix}
\begin{pmatrix}
p_3(z) & \overline{p_3(z)} \\
-p_1(z) & -\overline{p_1(z)}
\end{pmatrix}
} \\
 &=
\begin{pmatrix}
\mathbbm{i}
\left\{
-
\overline{p_1(-z)}
p_3(z)
+
\overline{p_3(-z)}
p_1(z)
\right\}
 &
\mathbbm{i}
\left\{
-
\overline{p_1(-z)}
\overline{p_3(z)}
+
\overline{p_3(-z)}
\overline{p_1(z)}
\right\}
 \\
\mathbbm{i}
\left\{
p_1(-z)
p_3(z)
-
p_3(-z)
p_1(z)
\right\}
 &
\mathbbm{i}
\left\{
p_1(-z)
\overline{p_3(z)}
-
p_3(-z)
\overline{p_1(z)}
\right\}
\end{pmatrix} \\
 &=
\begin{pmatrix}
\overline{
\mathbbm{i}
\left\{
p_1(-z)
\overline{p_3(z)}
-
p_3(-z)
\overline{p_1(z)}
\right\}
}
 &
\overline{
\mathbbm{i}
\left\{
p_1(-z)
p_3(z)
-
p_3(-z)
p_1(z)
\right\}
}
 \\
\mathbbm{i}
\left\{
p_1(-z)
p_3(z)
-
p_3(-z)
p_1(z)
\right\}
 &
\mathbbm{i}
\left\{
p_1(-z)
\overline{p_3(z)}
-
p_3(-z)
\overline{p_1(z)}
\right\}
\end{pmatrix},
\end{align*}
hence we have
$
\mathbbm{i}
\mathbf{P}^+(-z)
\cdot
\adj{\mathbf{P}^+(z)},
\mathbbm{i}
\mathbf{P}^-(-z)
\cdot
\adj{\mathbf{P}^-(z)}
\in \overline{\pi}(2)
$
by Definition~\ref{definition_pibar}.
Thus by \eqref{equation_vfhvafvjvasf+}, \eqref{equation_vfhvafvjvasf-},
\begin{align*}
\mathbf{P}^+(-z)
\mathbf{P}^+(z)^{-1}
 &=
\mathbf{P}^+(-z)
\cdot
\frac
{
\adj
\mathbf{P}^+(z)
}
{
\det
\mathbf{P}^+(z)
}
 =
-
\frac
{
\mathbbm{i}
\mathbf{P}^+(-z)
\cdot
\adj
\mathbf{P}^+(z)
}
{
2
\Imag
\left\{
\overline{p_0(z)}
p_2(z)
\right\}
}, \\
\mathbf{P}^-(-z)
\mathbf{P}^-(z)^{-1}
 &=
\mathbf{P}^-(-z)
\cdot
\frac
{
\adj
\mathbf{P}^-(z)
}
{
\det
\mathbf{P}^-(z)
}
 =
-
\frac
{
\mathbbm{i}
\mathbf{P}^-(-z)
\cdot
\adj
\mathbf{P}^-(z)
}
{
2
\Imag
\left\{
p_1(z)
\overline{p_3(z)}
\right\}
},
\end{align*}
both of which are in $\overline{\pi}(2)$ by Lemma~\ref{lemma_pibar}.
\end{proof}

Note from \eqref{equation_va;ofvijafov} that
\begin{align}
\lefteqn{
\mathbf{P}(-z)
\mathbf{P}(z)^{-1}
}
\nonumber \\
 &=
\left\{
\sqrt{2}
\cdot
\mathbf{V}^T
\begin{pmatrix}
\mathbf{P}^+(-z) & \mathbf{O} \\
\mathbf{O} & \mathbf{P}^-(-z)
\end{pmatrix}
\hat{\mathbf{V}}^{-1}
\right\}
\cdot
\nonumber \\
 &\qquad
\cdot
\left\{
\sqrt{2}
\cdot
\mathbf{V}^T
\begin{pmatrix}
\mathbf{P}^+(z) & \mathbf{O} \\
\mathbf{O} & \mathbf{P}^-(z)
\end{pmatrix}
\hat{\mathbf{V}}^{-1}
\right\}^{-1}
\nonumber \\
 &=
\sqrt{2}
\cdot
\mathbf{V}^T
\begin{pmatrix}
\mathbf{P}^+(-z) & \mathbf{O} \\
\mathbf{O} & \mathbf{P}^-(-z)
\end{pmatrix}
\hat{\mathbf{V}}^{-1}
\cdot
\frac{1}{\sqrt{2}}
\cdot
\hat{\mathbf{V}}
\begin{pmatrix}
\mathbf{P}^+(z) & \mathbf{O} \\
\mathbf{O} & \mathbf{P}^-(z)
\end{pmatrix}^{-1}
\mathbf{V}
\nonumber \\
 &=
\mathbf{V}^T
\begin{pmatrix}
\mathbf{P}^+(-z) & \mathbf{O} \\
\mathbf{O} & \mathbf{P}^-(-z)
\end{pmatrix}
\begin{pmatrix}
\mathbf{P}^+(z)^{-1} & \mathbf{O} \\
\mathbf{O} & \mathbf{P}^-(z)^{-1}
\end{pmatrix}
\mathbf{V}
\nonumber \\
 &=
\mathbf{V}^T
\begin{pmatrix}
\mathbf{P}^+(-z) \mathbf{P}^+(z)^{-1} & \mathbf{O} \\
\mathbf{O} & \mathbf{P}^-(-z) \mathbf{P}^-(z)^{-1}
\end{pmatrix}
\mathbf{V},
\qquad
z > 0.
\label{equation_a;fvjaofvj}
\end{align}

\begin{proof}[Proof of Lemma~\ref{lemma_Y1/kpibar}]
By Definition~\ref{definition_Ylambda(x)} and Lemma~\ref{lemma_pibar},
it is sufficient to show that
$
\mathbf{X}_\frac{1}{k}(-x)
\mathbf{X}_\frac{1}{k}(x)^{-1}
\in \overline{\pi}(4)
$
for $x > 0$.
By \eqref{equation_z}, \eqref{equation_anvrfuvh;aof},
\begin{align*}
\lefteqn{
\mathbf{X}_\frac{1}{k}(-x)
\mathbf{X}_\frac{1}{k}(x)^{-1}
} \\
 &=
\left\{
\frac{1}{4}
e^{-\mathcal{E} \mathbf{\Omega} (-z)}
\mathbf{P}(-z)
\cdot
\diag
\left( 1, \alpha, \alpha^2, \alpha^3 \right)^{-1}
\right\}
\cdot \\
 &\qquad
\cdot
\left\{
\frac{1}{4}
e^{-\mathcal{E} \mathbf{\Omega} z}
\mathbf{P}(z)
\cdot
\diag
\left( 1, \alpha, \alpha^2, \alpha^3 \right)^{-1}
\right\}^{-1} \\
 &=
\frac{1}{4}
e^{\mathcal{E} \mathbf{\Omega} z}
\mathbf{P}(-z)
\cdot
\diag
\left( 1, \alpha, \alpha^2, \alpha^3 \right)^{-1}
\cdot
4
\diag
\left( 1, \alpha, \alpha^2, \alpha^3 \right)
\cdot
\mathbf{P}(z)^{-1}
e^{\mathcal{E} \mathbf{\Omega} z} \\
 &=
e^{\mathcal{E} \mathbf{\Omega} z}
\mathbf{P}(-z)
\mathbf{P}(z)^{-1}
e^{\mathcal{E} \mathbf{\Omega} z},
\end{align*}
hence it is sufficient to show that
$
\mathbf{P}(-z)
\mathbf{P}(z)^{-1}
\in \overline{\pi}(4)
$
for $z > 0$,
since
$
e^{\mathcal{E} \mathbf{\Omega} z}
\in \overline{\pi}(4)
$
for $z \in \mathbb{R}$.
By \eqref{equation_VVhat}, \eqref{equation_a;fvjaofvj},
\begin{align}
\lefteqn{
\mathbf{R}
\overline{
\left\{
\mathbf{P}(-z)
\mathbf{P}(z)^{-1}
\right\}
}
\mathbf{R}
}
\nonumber \\
 &=
\mathbf{R}
\overline{
\left\{
\mathbf{V}^T
\begin{pmatrix}
\mathbf{P}^+(-z) \mathbf{P}^+(z)^{-1} & \mathbf{O} \\
\mathbf{O} & \mathbf{P}^-(-z) \mathbf{P}^-(z)^{-1}
\end{pmatrix}
\mathbf{V}
\right\}
}
\mathbf{R}
\nonumber \\
 &=
\mathbf{R}
\mathbf{V}^T
\mathbf{R}
\cdot
\mathbf{R}
\overline{
\begin{pmatrix}
\mathbf{P}^+(-z) \mathbf{P}^+(z)^{-1} & \mathbf{O} \\
\mathbf{O} & \mathbf{P}^-(-z) \mathbf{P}^-(z)^{-1}
\end{pmatrix}
}
\mathbf{R}
\cdot
\mathbf{R}
\mathbf{V}
\mathbf{R}.
\label{equation_a;vhafhvao}
\end{align}
By Lemma~\ref{lemma_Ppibar},
\begin{align}
\lefteqn{
\mathbf{R}
\overline{
\begin{pmatrix}
\mathbf{P}^+(-z) \mathbf{P}^+(z)^{-1} & \mathbf{O} \\
\mathbf{O} & \mathbf{P}^-(-z) \mathbf{P}^-(z)^{-1}
\end{pmatrix}
}
\mathbf{R}
}
\nonumber \\
 &=
\begin{pmatrix}
\mathbf{O} & \mathbf{R} \\
\mathbf{R} & \mathbf{O}
\end{pmatrix}
\begin{pmatrix}
\overline{
\mathbf{P}^+(-z) \mathbf{P}^+(z)^{-1}
} &
\mathbf{O} \\
\mathbf{O} & 
\overline{
\mathbf{P}^-(-z) \mathbf{P}^-(z)^{-1}
}
\end{pmatrix}
\begin{pmatrix}
\mathbf{O} & \mathbf{R} \\
\mathbf{R} & \mathbf{O}
\end{pmatrix}
\nonumber \\
 &=
\begin{pmatrix}
\mathbf{O} &
\mathbf{R}
\overline{
\mathbf{P}^-(-z) \mathbf{P}^-(z)^{-1}
} \\
\mathbf{R}
\overline{
\mathbf{P}^+(-z) \mathbf{P}^+(z)^{-1}
} &
\mathbf{O}
\end{pmatrix}
\begin{pmatrix}
\mathbf{O} & \mathbf{R} \\
\mathbf{R} & \mathbf{O}
\end{pmatrix}
\nonumber \\
 &=
\begin{pmatrix}
\mathbf{R}
\overline{
\mathbf{P}^-(-z) \mathbf{P}^-(z)^{-1}
}
\mathbf{R} & 
\mathbf{O} \\
\mathbf{O} & 
\mathbf{R}
\overline{
\mathbf{P}^+(-z) \mathbf{P}^+(z)^{-1}
}
\mathbf{R}
\end{pmatrix}
\nonumber \\
 &=
\begin{pmatrix}
\mathbf{P}^-(-z) \mathbf{P}^-(z)^{-1} & 
\mathbf{O} \\
\mathbf{O} & 
\mathbf{P}^+(-z) \mathbf{P}^+(z)^{-1}
\end{pmatrix},
\qquad
z > 0.
\label{equation_qvhaofvaijva}
\end{align}
By \eqref{equation_VVhat},
\begin{align}
\mathbf{R}
\mathbf{V}
\mathbf{R}
 &=
\begin{pmatrix}
\mathbf{O} & \mathbf{R} \\
\mathbf{R} & \mathbf{O}
\end{pmatrix}
\cdot
\frac{1}{\sqrt{2}}
\begin{pmatrix}
\mathbf{I} & \mathbf{I} \\
-\mathbf{I} & \mathbf{I}
\end{pmatrix}
\cdot
\begin{pmatrix}
\mathbf{O} & \mathbf{R} \\
\mathbf{R} & \mathbf{O}
\end{pmatrix}
 =
\frac{1}{\sqrt{2}}
\begin{pmatrix}
-\mathbf{R} & \mathbf{R} \\
\mathbf{R} & \mathbf{R}
\end{pmatrix}
\begin{pmatrix}
\mathbf{O} & \mathbf{R} \\
\mathbf{R} & \mathbf{O}
\end{pmatrix}
\nonumber \\
 &=
\frac{1}{\sqrt{2}}
\begin{pmatrix}
\mathbf{I} & -\mathbf{I} \\
\mathbf{I} & \mathbf{I}
\end{pmatrix}
 =
\mathbf{V}^T,
\label{equation_asij[asjvav} \\
\mathbf{V}^2
 &=
\frac{1}{\sqrt{2}}
\begin{pmatrix}
\mathbf{I} & \mathbf{I} \\
-\mathbf{I} & \mathbf{I}
\end{pmatrix}
\cdot
\frac{1}{\sqrt{2}}
\begin{pmatrix}
\mathbf{I} & \mathbf{I} \\
-\mathbf{I} & \mathbf{I}
\end{pmatrix}
 =
\begin{pmatrix}
\mathbf{O} & \mathbf{I} \\
-\mathbf{I} & \mathbf{O}
\end{pmatrix}.
\label{equation_sofghbodgj}
\end{align}
So by 
\eqref{equation_a;fvjaofvj}, 
\eqref{equation_a;vhafhvao}, 
\eqref{equation_qvhaofvaijva}, 
\eqref{equation_asij[asjvav},
\eqref{equation_sofghbodgj},
we have
\begin{align*}
\lefteqn{
\mathbf{R}
\overline{
\left\{
\mathbf{P}(-z)
\mathbf{P}(z)^{-1}
\right\}
}
\mathbf{R}
} \\
 &=
\left(
\mathbf{R}
\mathbf{V}
\mathbf{R}
\right)^T
\cdot
\begin{pmatrix}
\mathbf{P}^-(-z) \mathbf{P}^-(z)^{-1} & 
\mathbf{O} \\
\mathbf{O} & 
\mathbf{P}^+(-z) \mathbf{P}^+(z)^{-1}
\end{pmatrix}
\cdot
\mathbf{R}
\mathbf{V}
\mathbf{R} \\
 &=
\mathbf{V}
\begin{pmatrix}
\mathbf{P}^-(-z) \mathbf{P}^-(z)^{-1} & \mathbf{O} \\
\mathbf{O} & \mathbf{P}^+(-z) \mathbf{P}^+(z)^{-1}
\end{pmatrix}
\mathbf{V}^T \\
 &=
\mathbf{V}^T
\mathbf{V}^2
\begin{pmatrix}
\mathbf{P}^-(-z) \mathbf{P}^-(z)^{-1} & \mathbf{O} \\
\mathbf{O} & \mathbf{P}^+(-z) \mathbf{P}^+(z)^{-1}
\end{pmatrix}
\left( \mathbf{V}^2 \right)^T
\mathbf{V} \\
 &=
\mathbf{V}^T
\begin{pmatrix}
\mathbf{O} & \mathbf{I} \\
-\mathbf{I} & \mathbf{O}
\end{pmatrix}
\begin{pmatrix}
\mathbf{P}^-(-z) \mathbf{P}^-(z)^{-1} & \mathbf{O} \\
\mathbf{O} & \mathbf{P}^+(-z) \mathbf{P}^+(z)^{-1}
\end{pmatrix}
\begin{pmatrix}
\mathbf{O} & -\mathbf{I} \\
\mathbf{I} & \mathbf{O}
\end{pmatrix}
\mathbf{V} \\
 &=
\mathbf{V}^T
\begin{pmatrix}
\mathbf{O} & \mathbf{P}^+(-z) \mathbf{P}^+(z)^{-1} \\
-\mathbf{P}^-(-z) \mathbf{P}^-(z)^{-1} & \mathbf{O}
\end{pmatrix}
\begin{pmatrix}
\mathbf{O} & -\mathbf{I} \\
\mathbf{I} & \mathbf{O}
\end{pmatrix}
\mathbf{V} \\
 &=
\mathbf{V}^T
\begin{pmatrix}
\mathbf{P}^+(-z) \mathbf{P}^+(z)^{-1} & \mathbf{O} \\
\mathbf{O} & \mathbf{P}^-(-z) \mathbf{P}^-(z)^{-1}
\end{pmatrix}
\mathbf{V}
 =
\mathbf{P}(-z)
\mathbf{P}(z)^{-1},
\quad
z > 0.
\end{align*}
Thus
$
\mathbf{P}(-z)
\mathbf{P}(z)^{-1}
\in \overline{\pi}(4)
$
for $z > 0$,
and the proof is complete.
\end{proof}


\section{Proof of Lemma~\ref{lemma_Ylambda(x)neqO}}
\label{appendix_Ylambda(x)neqO}

\begin{lemma}
\label{lemma_prep}
Suppose 
$t_0 \in \mathbb{R}$
satisfies
$
\sum_{r=1}^4
\omega_r^n
e^{\omega_r t_0}
 =
0
$
for $n=1,2,3$.
Then $t_0 = 0$.
\end{lemma}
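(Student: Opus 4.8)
\textbf{Proof proposal for Lemma~\ref{lemma_prep}.}

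The plan is to reinterpret the three hypotheses as a single linear condition on the vector
$
\mathbf{v}
 =
\begin{pmatrix}
e^{\omega_1 t_0} & e^{\omega_2 t_0} & e^{\omega_3 t_0} & e^{\omega_4 t_0}
\end{pmatrix}^T
\in \gl(4,1,\mathbb{C})
$.
Indeed, the assumption $\sum_{r=1}^4 \omega_r^n e^{\omega_r t_0} = 0$ for $n = 1,2,3$ says exactly that $\mathbf{A}\mathbf{v} = \mathbf{0}$, where $\mathbf{A} = \left( \omega_r^n \right)_{1 \leq n \leq 3,\, 1 \leq r \leq 4} \in \gl(3,4,\mathbb{C})$. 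The rows of $\mathbf{A}$ are precisely the second, third, and fourth rows of $\mathbf{W}_0$ (Definition~\ref{definition_omega}), which is invertible by Lemma~\ref{lemma_W0}; hence $\mathbf{A}$ has rank $3$ and $\ker\mathbf{A}$ is one-dimensional.

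Next I would pin down this kernel explicitly. By the identities $\sum_{j=1}^4 \omega_j^n = 0$ for $n = 1,2,3$ — which are established in the proof of Lemma~\ref{lemma_KM} in Appendix~\ref{appendix_Green} — the all-ones vector $\mathbf{1} = \begin{pmatrix} 1 & 1 & 1 & 1 \end{pmatrix}^T$ lies in $\ker\mathbf{A}$. Combined with the dimension count, this gives $\ker\mathbf{A} = \mathbb{C}\cdot\mathbf{1}$. Therefore $\mathbf{v}$ is a scalar multiple of $\mathbf{1}$; in particular $e^{\omega_1 t_0} = e^{\omega_3 t_0}$.

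Finally, using $\omega_3 = -\omega_1$ from \eqref{equation_omegaL2}, the last equality reads $e^{2\omega_1 t_0} = 1$, so $2\omega_1 t_0 \in 2\pi\mathbbm{i}\,\mathbb{Z}$ is purely imaginary. Since $t_0$ is real and $\Real\omega_1 = 1/\sqrt{2}$ by \eqref{equation_omegaReIm}, we get $\Real(2\omega_1 t_0) = \sqrt{2}\,t_0 = 0$, whence $t_0 = 0$.

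There is essentially no hard analytic step here: the only part demanding care is the linear-algebra bookkeeping showing that $\ker\mathbf{A}$ is exactly one-dimensional and spanned by $\mathbf{1}$, which collapses the four exponential equations down to the single relation $e^{2\omega_1 t_0} = 1$; once that is in hand, the conclusion follows at once from $\Real\omega_1 \neq 0$ and the reality of $t_0$.
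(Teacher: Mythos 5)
Your proposal is correct and takes essentially the same approach as the paper: both arguments reduce the three hypotheses to the conclusion that $e^{\omega_1 t_0} = e^{\omega_2 t_0} = e^{\omega_3 t_0} = e^{\omega_4 t_0}$ via the invertibility of $\mathbf{W}_0$ (Lemma~\ref{lemma_W0}), and then extract $t_0=0$ from the reality of $t_0$. The only cosmetic differences are that the paper inverts $\mathbf{W}_0$ directly by appending the auxiliary first row $\sum_r e^{\omega_r t_0} = a$ and reading off $\mathbf{v} = \tfrac{a}{4}\mathbf{1}$ (avoiding the separate check that $\mathbf{1}\in\ker\mathbf{A}$), and that the final step uses $e^{(\omega_1-\omega_2)t_0}=e^{\sqrt{2}\,t_0}=1$ rather than your $e^{2\omega_1 t_0}=1$ followed by taking real parts — both deductions are valid and equally short.
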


\begin{proof}
Let
$
a = \sum_{r=1}^4 e^{\omega_r t_0}
$.
Then the condition for $t_0$
is equivalent to

\[
\begin{pmatrix}
a \\ 0 \\ 0 \\ 0
\end{pmatrix}
 =
\left(
\omega_j^{i-1}
\right)_{1 \leq i,j \leq 4}
\cdot
\begin{pmatrix}
e^{\omega_1 t_0} \\
e^{\omega_2 t_0} \\
e^{\omega_3 t_0} \\
e^{\omega_4 t_0}
\end{pmatrix}
 =
\mathbf{W}_0
\begin{pmatrix}
e^{\omega_1 t_0} \\
e^{\omega_2 t_0} \\
e^{\omega_3 t_0} \\
e^{\omega_4 t_0}
\end{pmatrix},
\]
which, by Lemma~\ref{lemma_W0} and \eqref{equation_W0*}, is equivalent again to
\[
\begin{pmatrix}
e^{\omega_1 t_0} \\
e^{\omega_2 t_0} \\
e^{\omega_3 t_0} \\
e^{\omega_4 t_0}
\end{pmatrix}
 =
\frac{1}{4}
\mathbf{W}_0^*
\begin{pmatrix}
a \\ 0 \\ 0 \\ 0
\end{pmatrix}
 =
\frac{a}{4}
\left(
\omega_i^{1-j}
\right)_{1 \leq i,j \leq 4}
\cdot
\begin{pmatrix}
1 \\ 0 \\ 0 \\ 0
\end{pmatrix}
 =
\frac{a}{4}
\begin{pmatrix}
1 \\ 1 \\ 1 \\ 1
\end{pmatrix}.
\]
It follows that
$
e^{\omega_i t_0} = e^{\omega_j t_0}
$
for every $i,j \in \mathbb{Z}$.
In particular,
$
e^{\omega_1 t_0} = e^{\omega_2 t_0}
$,
hence
$
1
 =
{e^{\omega_1 t_0}}
/
{e^{\omega_2 t_0}}
 =
e^{\left( \omega_1 - \omega_2 \right) t_0}
 =
e^{\sqrt{2} t_0}
$
by Definition~\ref{definition_omega},
which implies that $t_0 = 0$.
\end{proof}

\begin{proof}[Proof of Lemma~\ref{lemma_Ylambda(x)neqO}]
Suppose
on the contrary
that
$
\mathbf{Y}_{\lambda_0}\left( x_0 \right)
 =
\mathbf{O}
$
for some
$0 \neq \lambda_0 \in \mathbb{C}$ 
and 
$x_0 > 0$
such that $\det\mathbf{X}_{\lambda_0}(x_0) \neq 0$.
Then by Definition~\ref{definition_Ylambda(x)},
we have
$
\mathbf{X}_{\lambda_0}\left( -x_0 \right)
\cdot
\mathbf{X}_{\lambda_0}\left( x_0 \right)^{-1}
 -
\mathbf{I}
 =
\mathbf{O}
$,
hence
$
\mathbf{X}_{\lambda_0}\left( -x_0 \right)
-
\mathbf{X}_{\lambda_0}\left( x_0 \right)
 =
\mathbf{O}
$.
So by \eqref{equation_s;fhgs;fovij},
we have
$
\mathbf{W}\left( x_0 \right)^{-1}
\mathbf{W}_{\lambda_0}\left( x_0 \right)
 =
\mathbf{W}\left( -x_0 \right)^{-1}
\mathbf{W}_{\lambda_0}\left( -x_0 \right)
$,
hence
\begin{equation}
\label{equation_aofvjafv}
\mathbf{W}_{\lambda_0}\left( -x_0 \right)
\mathbf{W}_{\lambda_0}\left( x_0 \right)^{-1}
 =
\mathbf{W}\left( -x_0 \right)
\mathbf{W}\left( x_0 \right)^{-1}.
\end{equation}
Let $z_0 = \alpha x_0 > 0$.
By \eqref{equation_Wxdecomposedz}, \eqref{equation_Wx-1decomposedz},
\begin{align}
\lefteqn{
\mathbf{W}\left( -x_0 \right)
\mathbf{W}\left( x_0 \right)^{-1}
}
\nonumber \\
 &=
\left\{
\diag
\left( 1, \alpha, \alpha^2, \alpha^3 \right)
\cdot
\mathbf{W}_0
e^{\mathbf{\Omega} \left( -z_0 \right)}
\right\}
\left\{
\frac{1}{4}
e^{-\mathbf{\Omega} z_0}
\mathbf{W}_0^*
\cdot
\diag
\left( 1, \alpha, \alpha^2, \alpha^3 \right)^{-1}
\right\}
\nonumber \\
 &=
\frac{1}{4}
\diag
\left( 1, \alpha, \alpha^2, \alpha^3 \right)
\cdot
\mathbf{W}_0
e^{-2 \mathbf{\Omega} z_0}
\mathbf{W}_0^*
\cdot
\diag
\left( 1, \alpha, \alpha^2, \alpha^3 \right)^{-1}.
\label{equation_sogbjs'fgij}
\end{align}
By \eqref{equation_W0*},
\begin{align}
\lefteqn{
\mathbf{W}_0
e^{-2 \mathbf{\Omega} z_0}
\mathbf{W}_0^*
} \\
 &=
\left(
\omega_j^{i-1}
\right)_{1 \leq i,j \leq 4}
\cdot
\diag
\left(
e^{-2 \omega_1 z_0},
e^{-2 \omega_2 z_0},
e^{-2 \omega_3 z_0},
e^{-2 \omega_4 z_0}
\right)
\cdot
\left(
\omega_i^{1-j}
\right)_{1 \leq i,j \leq 4}
\nonumber \\
 &=
\left(
\omega_j^{i-1}
e^{-2 \omega_j z_0}
\right)_{1 \leq i,j \leq 4}
\cdot
\left(
\omega_i^{1-j}
\right)_{1 \leq i,j \leq 4}
 =
\left(
\sum_{r=1}^4
\omega_r^{i-1}
e^{-2 \omega_r z_0}
\omega_r^{1-j}
\right)_{1 \leq i,j \leq 4}
\nonumber \\
 &=
\left(
\sum_{r=1}^4
\omega_r^{i-j}
e^{-2 \omega_r z_0}
\right)_{1 \leq i,j \leq 4},
\label{equation_af'gba'fpbaj}
\end{align}
hence by \eqref{equation_sogbjs'fgij},
\begin{align}
\left\{
\mathbf{W}\left( -x_0 \right)
\mathbf{W}\left( x_0 \right)^{-1}
\right\}_{i,j}
 &=
\frac{\alpha^{i-1} \cdot \alpha^{1-j}}{4}
\cdot
\left(
\mathbf{W}_0
e^{-2 \mathbf{\Omega} z_0}
\mathbf{W}_0^*
\right)_{i,j}
\nonumber \\
 &=
\frac{\alpha^{i-j}}{4}
\cdot
\sum_{r=1}^4
\omega_r^{i-j}
e^{-2 \omega_r z_0},
\qquad
1 \leq i,j \leq 4.
\label{equation_asndfvajdfv}
\end{align}

Suppose $\lambda_0 = 1/k$.
Note from \eqref{equation_Wlambdax} that
$\mathbf{W}_\frac{1}{k}(x)$ is upper diagonal.
So
$
\mathbf{W}_\frac{1}{k}(-x)
\cdot
$
$
\cdot
\mathbf{W}_\frac{1}{k}(x)^{-1}
$
is upper diagonal
as well.
Hence by \eqref{equation_aofvjafv}, \eqref{equation_asndfvajdfv},
we have
$
\sum_{r=1}^4
\omega_r^n
e^{-2 \omega_r z_0}
$
$
 =
\sum_{r=1}^4
\omega_r^n
e^{\omega_r \left( -2 \alpha x_0 \right)}
 =
0
$
for $n = 1,2,3$.
This implies that $x_0 = 0$
by Lemma~\ref{lemma_prep},
which contradicts the assumption that $x_0 > 0$.
Thus we conclude that $\lambda_0 \neq 1/k$.
Let $\kappa_0 = \chi\left( \lambda_0 \right)$,
where $\chi$ is as in Definition~\ref{definition_kappa}.
Note that $\kappa_0^4 \neq 1$.
$\kappa_0 \neq 0$,
since $\lambda_0 \neq 1/k$.
By \eqref{equation_Wlambdaxdecomposedz} and Lemma~\ref{lemma_W0},
\begin{align*}
\lefteqn{
\mathbf{W}_{\lambda_0}\left( -x_0 \right)
\mathbf{W}_{\lambda_0}\left( x_0 \right)^{-1}
} \\
 &=
\left\{
\diag
\left( 1, \alpha, \alpha^2, \alpha^3 \right)
\cdot
\diag
\left( 1, \kappa_0, \kappa_0^2, \kappa_0^3 \right)
\mathbf{W}_0
e^{\mathbf{\Omega} \kappa_0 \left( -z_0 \right)}
\right\}
\cdot \\
 &\quad
\cdot
\left\{
\frac{1}{4}
e^{-\mathbf{\Omega} \kappa_0 z_0}
\mathbf{W}_0^*
\cdot
\diag
\left( 1, \kappa_0, \kappa_0^2, \kappa_0^3 \right)^{-1}
\cdot
\diag
\left( 1, \alpha, \alpha^2, \alpha^3 \right)^{-1}
\right\} \\
 &=
\frac{1}{4}
\diag
\left( 1, \alpha, \alpha^2, \alpha^3 \right)
\cdot
\diag
\left( 1, \kappa_0, \kappa_0^2, \kappa_0^3 \right)
\cdot
\mathbf{W}_0
e^{-2 \mathbf{\Omega} \kappa_0 z_0}
\mathbf{W}_0^*
\cdot \\
 &\quad
\cdot
\diag
\left( 1, \kappa_0, \kappa_0^2, \kappa_0^3 \right)^{-1}
\cdot
\diag
\left( 1, \alpha, \alpha^2, \alpha^3 \right)^{-1},
\end{align*}
hence by \eqref{equation_aofvjafv}, \eqref{equation_sogbjs'fgij},
we have
\begin{align}
\lefteqn{
\mathbf{W}_0
e^{-2 \mathbf{\Omega} z_0}
\mathbf{W}_0^*
}
\nonumber \\
 &=
\diag
\left( 1, \kappa_0, \kappa_0^2, \kappa_0^3 \right)
\cdot
\mathbf{W}_0
e^{-2 \mathbf{\Omega} \kappa_0 z_0}
\mathbf{W}_0^*
\cdot
\diag
\left( 1, \kappa_0, \kappa_0^2, \kappa_0^3 \right)^{-1}.
\label{equation_vafvha;ofvjh}
\end{align}
Similarly to \eqref{equation_af'gba'fpbaj},
we have
\[
\mathbf{W}_0
e^{-2 \mathbf{\Omega} \kappa_0 z_0}
\mathbf{W}_0^*
 =
\left(
\sum_{r=1}^4
\omega_r^{i-j}
e^{-2 \omega_r \kappa_0 z_0}
\right)_{1 \leq i,j \leq 4},
\]
hence by \eqref{equation_af'gba'fpbaj}, \eqref{equation_vafvha;ofvjh},
$
\kappa_0^{i-j}
\sum_{r=1}^4
\omega_r^{i-j}
e^{-2 \omega_r \kappa_0 z_0}
 =
\sum_{r=1}^4
\omega_r^{i-j}
e^{-2 \omega_r z_0}
$
for
$
1 \leq i,j \leq 4
$,
or equivalently,
$
\kappa_0^n
\sum_{r=1}^4
\omega_r^n
e^{-2 \omega_r \kappa_0 z_0}
 =
\sum_{r=1}^4
\omega_r^n
e^{-2 \omega_r z_0}
$
for
$
-3 \leq n \leq 3
$.
So by \eqref{equation_omega},
we have
\begin{align*}
\sum_{r=1}^4
\omega_r^n
e^{-2 \omega_r z_0}
 &=
\kappa_0^n
\sum_{r=1}^4
\omega_r^n
e^{-2 \omega_r \kappa_0 z_0}
 =
\kappa_0^n
\sum_{r=1}^4
\left( -\omega_r^{n-4} \right)
e^{-2 \omega_r \kappa_0 z_0} \\
 &=
-
\kappa_0^4
\cdot
\kappa_0^{n-4}
\sum_{r=1}^4
\omega_r^{n-4}
e^{-2 \omega_r \kappa_0 z_0}
 =
-
\kappa_0^4
\cdot
\sum_{r=1}^4
\omega_r^{n-4}
e^{-2 \omega_r z_0} \\
 &=
-
\kappa_0^4
\cdot
\sum_{r=1}^4
\left( -\omega_r^n \right)
e^{-2 \omega_r z_0} 
 =
\kappa_0^4
\cdot
\sum_{r=1}^4
\omega_r^n
e^{-2 \omega_r z_0},
\quad
n = 1,2,3.
\end{align*}
Since
$
\kappa_0^4
 \neq
1
$,
it follows that
$
\sum_{r=1}^4
\omega_r^n
e^{-2 \omega_r z_0}
 =
\sum_{r=1}^4
\omega_r^n
e^{\omega_r \left( -2 \alpha x_0 \right)}
 =
0
$
for $n = 1,2,3$,
which implies
$
x_0
 =
0$
by Lemma~\ref{lemma_prep}.
This again contradicts the assumption that $x_0 > 0$.
Thus we conclude
$
\mathbf{Y}_\lambda(x)
 \neq
\mathbf{O}
$
for every $0 \neq \lambda \in \mathbb{C}$
and $x > 0$
such that $\det\mathbf{X}_\lambda(x) \neq 0$.
\end{proof}

\end{document}